\let\oldtocsection=\tocsection
\let\oldtocsubsection=\tocsubsection 
\let\oldtocsubsubsection=\tocsubsubsection
\renewcommand{\tocsection}[2]{\vspace{0.5em}\hspace{0em}\oldtocsection{#1}{#2}}
\renewcommand{\tocsubsection}[2]{\vspace{0.5em}\hspace{1em}\oldtocsubsection{#1}{#2}}
\renewcommand{\tocsubsubsection}[2]{\vspace{0.5em}\hspace{2em}\oldtocsubsubsection{#1}{#2}}
\newcommand{\isEquivTo}[1]{\underset{#1}{\sim}}
\newtheorem{theoreme}{Theorem}[section]
\newtheorem{pro}[theoreme]{Proposition}
\newtheorem{lemma}[theoreme]{Lemma}
\newtheorem{rem}[theoreme]{Remark}
\newtheorem{definition}[theoreme]{Definition}
\theoremstyle{definition}
\numberwithin{equation}{section}
 \renewenvironment{proof}{{\bfseries \noindent Proof.}}{\demo}
\newcommand\xqed[1]{%
  \leavevmode\unskip\penalty9999 \hbox{}\nobreak\hfill
  \quad\hbox{#1}}
\newcommand\demo{\xqed{$\square$}}
\def\R{\mathbb R}
\def\HH{\mathcal H}
\def\AA{\mathcal A}
\def\la {{\lambda}}
\newcommand {\nc}   {\newcommand}
\nc {\be}   {\begin{equation}} \nc {\ee}   {\end{equation}} \nc
\nc {\eeq}  {\end{eqnarray}} \nc {\beqs}
\nc {\eeqs} {\end{eqnarray*}}
\def\edc{\end{document}}
\providecommand{\abs}[1]{\lvert#1\rvert}
\newcounter{dummy} 
\numberwithin{dummy}{section}
\newtheorem{Lemma}[dummy]{Lemma}
\numberwithin{equation}{section}
\begin{document}
\title[\fontsize{7}{9}\selectfont  ]{Energy Decay of some boundary coupled systems involving wave$\backslash$ Euler-Bernoulli beam with one locally singular fractional Kelvin-Voigt damping}
\author{Mohammad Akil$^{1}$}
\author{Ibtissam Issa$^{2,3}$}
\author{Ali Wehbe$^{2}$}
\address{$^1$ Universit\'e Savoie Mont Blanc - Chamb\'ery - France, Laboratoire LAMA}
\address{$^2$Lebanese University, Faculty of sciences 1, Khawarizmi Laboratory of  Mathematics and Applications-KALMA, Hadath-Beirut, Lebanon.}
\address{$^3$ Universit\'e Aix-Marseilles - Marseille  - France, Laboratoire I2M}
\email{mohammad.akil@univ-smb.fr}
\email{ali.wehbe@ul.edu.lb}
\email{ibtissam.issa@etu.univ-amu.fr}

\keywords{Wave equation; Euler-Bernoulli beam; fractional Kelvin-Voigt damping; Semigroup; Polynomial stability.}

\begin{abstract}
In this paper, we investigate the energy decay of hyperbolic systems of wave-wave, wave-Euler-Bernoulli beam and beam-beam types. The two equations are coupled through boundary connection with only one localized non-smooth fractional Kelvin-Voigt damping. First, we reformulate each system into an augmented model and using a general criteria of Arendt-Batty, we prove that our models are strongly stable. Next, by using frequency domain approach, combined with multiplier technique and some interpolation inequalities, we establish different types of polynomial energy decay rate which depends on the order of the fractional derivative and the type of the damped equation in the system.

\end{abstract}
\maketitle
\pagenumbering{roman}
\maketitle
\tableofcontents
\pagenumbering{arabic}
\setcounter{page}{1}
\section{Introduction} 
\subsection{Literature}
In recent years, many researches showed interest in studying the stability and controllability of certain system. The wave equation with different kinds of damping was studied extensively. The wave is created when a vibrating source disturbs the medium. In order to restrain those vibrations, several dampings can be added such as Kelvin-Voigt damping. Many researchers were interested in problems involving this kind of damping (local or global) where different types of stability have been showed. We refer to \cite{Huang-falun,chen1,bardos,chen-1998,liu-liu-1998,zhang-2010,Alves2,liu-zhang-2016,Ammari03} and the rich references therein.\\

The beam, or flexural member, is frequently encountered in structures and machines, and its elementary stress analysis constitutes one of the most interesting facts of mechanics of materials. For beams, there was an extensive studying, since 80's, on the stabilization of the beam equations (see \cite{Ch1987,K-R-1987} for the one dimensional system, and \cite{B-T1991} for n-dimensional system). Also, the studies considered the linear and nonlinear boundary feedback acting through shear forces and moments  \cite{LAS1987,LAS1990,LAS1989} and the case control by moment has been studied in \cite{LAS1999}. \\
The studying of the beam equation with different types of damping was extensively considered. In 1998, the author in \cite{liu-liu-1998} considered the longitudinal and transversal vibrations of the Euler-Bernoulli beam with Kelvin-Voigt damping distributed locally on any subinterval of the region occupied by the beam. It was shown that when the viscoelastic damping is distributed only on a subinterval in the interior of the domain, the exponential stability holds for the transversal but not for the longitudinal motion.
In \cite{Raposo}, they considered a transmission problem for the longitudinal displacement of a Euler-Bernoulli beam, where one small part of the beam is made of a viscoelastic material with Kelvin-Voigt constitutive relation and they proved that the semigroup associated to the system is exponentially stable.\\

Another type of damping which was studied extensively in the past few years is the fractional damping. It is widely applied in the domain of science. The fractional-order type is not only important from the theoretical point of view but also for applications.
They naturally arise in physical, chemical, biological, and ecological phenomena see for example \cite{app}, and the rich
references therein. They are used to describe memory and hereditary properties of various materials and processes. For example, in viscoelasticity, due to the nature of the material microstructure, both elastic solid and viscous fluid-like response qualities are involved. Using Boltzmann’s assumption, we end up with a stress strain relationship defined by a time convolution. The viscoelastic response occurs in a variety of materials,
such as soils, concrete, rubber, cartilage, biological tissue, glasses, and polymers (see \cite{Ronald,peter,RL,Bonetti}). Fractional computing in modeling can improve the capturing of the complex dynamics of natural systems, and controls of fractional order type can improve performance not achievable before using controls of integer-order type. For example, systems in many quantum mechanics, nuclear physics and biological phenomena such as fluid flow are indeed fractional (see for example \cite{Torvik,benotti,Igor}).\\

Fractional calculus includes various extensions of the usual definition of derivative from integer to real order, including the Hadamard, Erdelyi-Kober, Riemann-Liouville, Riesz, Weyl, Grünwald-Letnikov, Jumarie and the Caputo representation. A thorough analysis of fractional dynamical systems is necessary to achieve an appropriate definition of the fractional derivative. For example, the Riemann-Liouville definition entails physically unacceptable initial conditions (fractional order
initial conditions); conversely, for the Caputo representation, which is introduced by Michele Caputo \cite{caputo} in 1967, the initial conditions are expressed in terms of integer-order derivatives having direct physical significance; this definition is mainly used to include memory effects. Recently, in \cite{Caputo-Fab} a new definition of the fractional derivative was presented without a singular kernel; this derivative possesses very interesting properties, for instance the possibility to describe fluctuations and structures with different scales.
The case of wave equation with boundary fractional damping have been treated in \cite{Mbodje1,Mbodje2} where they proved the strong stability and the lack of uniform stabilization. However, the case of the plate equation or the beam equation with boundary fractional damping was treated in
\cite{benaissa} where they showed that the energy is
polynomially stable. In \cite{akil-mcrf}, they considered a multidimensional wave equation with boundary fractional damping acting on a part of the boundary of the domain. They established a polynomial energy decay rate for smooth solutions, under some geometric conditions. Ammari et al., in \cite{Ammari-fathi}, studied the stabilization for a class of evolution systems with fractional-damping. They proved the polynomial stability of the system.\\

The notion of indirect damping mechanisms has been introduced by Russell \cite{RUSSELL1993} and since that time, it retains the attention of many authors. In particular, the fact that only one equation of the coupled system is damped refers to the so-called class of “indirect” stabilization problems initiated and studied in previous studies \cite{Alabau1999,Alabau2002} and further studied by many authors; see for instance \cite{Alabau-cannarsa-komornik,Alabau2007,Alabau-Cannarsa-Guglielmi2011} and the rich references therein. Over the past few years, the coupled systems received a vast attention due to their potential applications. The coupled systems have many applications in the modeling and control of engineering, such as: aircraft, satellite antennas and road traffic(see \cite{saroj} for example). Most of the work in the coupled system considers the stability of the system with various coupling, damping locations, and damping types.
Many researches studied coupling systems with a Kelvin-Voigt damping such as wave-wave system, heat-wave system, Timoshinko (see \cite{akil2020stability,Zhang-Zuazua,alabau}). In 2012, Tebou in \cite{Teb12} considered the Euler-Bernoulli equation coupled with a wave equation in a
bounded domain. The frictional damping is distributed everywhere
in the domain and acts through one of the equations only. For the case where the dissipation acts through the Euler-Bernoulli equation he showed that the
system is not exponentially stable and that the energy decays polynomially was proved. For the case where the damping acts through the wave equation polynomial stability was proved. \\
Benaissa et al., in \cite{Benissa-time}, considered the large time behavior of one dimensional coupled wave equations with fractional control applied at the coupled point. They showed an optimal decay result.\\
In \cite{Fathi-2015}, Hassine considered a beam and a wave equations coupled on an elastic beam through transmission conditions where the locally distributed damping acts through one of the two equations only. The systems are described as follows
{\small{\begin{equation}\label{Fathi}
\left\{\begin{array}{ll}
u_{tt}-(u_x+D_a u_{xt})_x=0, &\text{in}\,\Omega_1,\\ 
y_{tt}+y_{xxxx}=0,&\text{in}\, \Omega_2,\\ 
u(\ell, t) =y(\ell,t) ,&t>0,\\ 
y_{x}(\ell, t) = 0,&t>0,\\ 
u_x(\ell,t)+y_{xxx}(\ell,t)=0, &t>0,\\ 
u(0,t)=y(L,t)=y_x(L,t)=0, &t>0,\\ 
u(x,0) = u_0(x), u_t(x, 0) = u_1(x), &x\in (0,\ell),\\ 
y(x,0) = y_0(x), y_t(x, 0) = y_1(x), &x\in (\ell,L)
\end{array}\right.
\ \text{and} \ 
\left\{\begin{array}{ll}
y_{tt}-(y_{xx}+D_b y_{xxt})_{xx}=0, &\text{in} \,\Omega_1,\\ 
u_{tt}-u_{xx}=0,&\text{in} \,\Omega_2,\\ 
u(\ell, t) =y(\ell,t) ,&t>0,\\ 
y_{x}(\ell, t) = 0,&t>0,\\ 
u_x(\ell,t)+y_{xxx}(\ell,t)=0, &t>0,\\ 
u(0,t)=y(L,t)=y_x(L,t)=0, &t>0,\\ 
u(x,0) = u_0(x), u_t(x, 0) = u_1(x), &x\in (0,\ell),\\ 
y(x,0) = y_0(x), y_t(x, 0) = y_1(x), &x\in (\ell,L)
\end{array}\right.
\end{equation}
}}
\noindent where $\Omega_1=(0,\ell)\times \R^+$, $\Omega_2=(\ell,L)\times \R^+$
, $D_a=a(x)\chi_{(e,f)}$ and $D_b=b(x)\chi_{(e,f)}$ with $0<e<f<\ell<L$ and $a(x),b(x)\geq c_0>0$ in $(e,f)$. The author proved that for the case when the dissipation acts through the wave, the energy of this coupled system decays polynomially as the time variable goes to infinity. Also, for the case where the damping acts through the beam equation polynomial stability was proved. \\
The case of a  Euller-Bernoulli beam  and a wave equations coupled via the interface by transmission conditions was considered by Hassine in \cite{Fathi-2016-N-dim}, where he supposed that the beam equation is stabilized by a localized distributed feedback. He reached that sufficiently smooth solutions decay logarithmically at infinity even the feedback dissipation affects an arbitrarily small open subset of the interior.
\\
In \cite{lu}, the authors studied the stabilization system of a coupled wave and a Euler-Bernoulli plate equation where only one equation is supposed to be damped with a frictional damping in the multidimensional case. Under some assumption about the damping and the coupling terms, they showed that sufficiently smooth solutions of the system decay logarithmically at infinity without any geometric conditions on the effective damping domain. \\
In \cite{Ammari-serge}, Ammari and Nicaise, considered the stabilization problem for coupling the damped wave equation with a damped Kirchhoff plate equation. They proved an exponential stability result under some geometric condition.
In 2018, the authors considered in \cite{Li-Han-Xu}, a system of 1-d coupled string-beam. They obtained two kinds of energy decay rates of the string-beam system with different locations of the frictional damping. On one hand, if the frictional damping is only actuated in the beam part, the system lacks exponential decay. Specifically, the optimal polynomial decay rate $t^{-1}$ is obtained under smooth initial conditions. On the other hand, if the frictional damping is only effective in the string part, the exponential decay of energy is presented.
In 2020, the authors in \cite{wang}, considered a system of two-dimensional coupled wave-plate with local frictional damping in a bounded domain. The frictional damping is only distributed in the part of the plate's or wave's domain, and the other is stabilized by the transmission through the interface of the plate's and wave's domains. They showed that the  energy of the system decays polynomially under some geometric condition when the frictional damping only acts on the part of the plate, and the energy of the system is exponentially stable when the frictional damping acts only on the other part of the wave. \\  
In 2018, Guo and Ren in \cite{Guo2020}, studied the stabilization for a hyperbolic-hyperbolic coupled system consisting of Euler-Bernoulli beam and wave equations, where the structural damping of the wave equation is taken into account. The coupling is actuated through boundary weak connection. The system is described as follows
\begin{equation}\label{Guo}
\left\{\begin{array}{ll}
w_{tt}+w_{xxxx}=0, &(x,t)\in (0,1)\times \R^+,\\ \\
u_{tt}-u_{xx}-s u_{xxt}=0,&(x,t)\in (0,1)\times \R^+,\\ \\
w(1, t) = w_{xx}(1, t) = w(0, t) = 0,&t>0,\\ \\
w_{xx}(0, t) = r u_t(0, t),\,u(1, t) = 0,&x\in (0,1),\\ \\
s u_{xt}(0,t)+u_x(0,t)=-r w_{xt}(0,t), &x\in (0,1),\\ \\
w(x, 0) = w_0(x), w_t(x, 0) = w_1(x), &x\in (0,1),\\ \\
u(x, 0) = u_0(x), u_t(x, 0) = u_1(x), &x\in (0,1),
\end{array}\right.
\end{equation}
where $(w_0, w_1, u_0, u_1)$ is the initial state and $r\neq 0,s>0$ are constants. They concluded the Riesz basis property
and the exponential stability of the system.

In \cite{Ammari-J-Mehren2009}, the authors considered a stabilization problem for a coupled string-beam system. They proved some decay results of the energy of the system. Moreover, they proved, for the same model but with two control functions, independently of the length of the beam that the energy decay with a polynomial rate for all regular initial data. In \cite{Ammari-mehren2009}, the authors considered a stabilization problem for a string-beams network and proved an exponential decay result. 
In \cite{Ammari-2009-CUBO}, the author considered  a  boundary stabilization  problem  for  the transmission  Bernoulli-Euler plate equation and proved a uniform exponential energy decay under natural conditions. In \cite{Rivera-Santos2011}, a coupled system of wave-plate type with thermal effect was studied and exponentially stability was proved. In \cite{Denk}, the authors considered the transmission problem for a coupled system of undamped and structurally damped plate equations in two sufficiently smooth and bounded subdomains. They showed, independently of the size
of the damped part, that the damping is strong enough to produce uniform exponential decay of the energy of the coupled system. In 2019, Liu and Han \cite{LIU-HAN2019}, considered a system of coupled plate equations where indirect structural or Kelvin-Voigt damping is imposed, i.e., only one equation is directly damped by one of these two damping. They showed that the associated semigroup of the system with indirect structural damping is analytic and exponentially stable. However, with the much stronger indirect Kelvin-Voigt damping, they proved that the semigroup is even not differentiable and that the exponential stability is still maintained.

\subsection{Physical interpretation of the models}
In the first model (EBB)-W$_{FKV}$, we investigate the stability of coupled Euler-Bernoulli beam and wave equations. The coupling is via boundary connections with localized non-regular fractional Kelvin-Voigt damping, where the damping acts through the wave equation only (see Figure \ref{fig1}). The system that describes this model is as follows
\begin{equation*}\label{Sys1}\tag{(EBB)-W$_{FKV}$}
\left\lbrace\begin{array}{ll}
u_{tt}-\left(au_x+d(x)\partial_t^{\alpha,\eta}u_x\right)_x=0,&(x,t)\in (0,L)\times (0,\infty),\\[0.1in]
y_{tt}+by_{xxxx}=0,&(x,t)\in (-L,0)\times (0,\infty),\\[0.1in]
u(L,t)=y(-L,t)=y_{x}(-L,t)=0,&t\in (0,\infty),\\[0.1in]
au_{x}(0,t)+by_{xxx}(0,t)=0,y_{xx}(0,t)=0,&t\in (0,\infty),\\[0.1in]
u(0,t)=y(0,t),&t\in (0,\infty),\\[0.1in]
u(x,0)=u_0(x),\, u_t(x,0)=u_1(x),& x\in (0,L),\\[0.1in]
y(x,0)=y_0(x),\, y_t(x,0)=y_1(x),& x\in (-L,0).
\end{array}
\right.
\end{equation*}
The coefficients $a, b$ are strictly positive constant numbers, $\alpha\in (0,1)$ and $\eta\geq 0$. We suppose that there exists $0< l_0<l_1< L$ and a strictly positive constant $d_0$, such that 
\begin{equation}
d(x)=\left\{
\begin{array}{ll}
d_0,&x\in (l_0,l_1)\\[0.1in]
0,&x\in (0,l_0)\cup (l_1,L).
\end{array}
\right.
\end{equation}
\noindent The Caputo's fractional derivative $\partial_t^{\alpha,\eta}$ of order $\alpha\in (0,1)$ with respect to time variable $t$ defined by 
\begin{equation}\label{Caputo}
[D^{\alpha,\eta}\omega](t)=\partial_t^{\alpha,\eta}\omega(t)=\frac{1}{\Gamma(1-\alpha)}\int_0^t(t-s)^{-\alpha}e^{-\eta(t-s)}\frac{d\omega}{ds}(s)ds,
\end{equation}
where $\Gamma$ denotes the Gamma function.\\
\begin{figure}[H]\label{fig1}
	\begin{center}
		\begin{tikzpicture}
		\draw[blue,-](0.7,1)--(3.7,1);
		\draw[blue,-] (0.7,0)--(0.7,1);
		\draw[blue,-] (3.7,0)--(3.7,1);
		\draw[blue,-](0.7,0)--(3.7,0);
		\draw[blue,dashed] (0.7,0.5)--(6.7,0.5);
		
		\draw[blue,-] (0.7,-0.5)--(0.7,1.5);
		\draw[blue,-] (6.7,-0.5)--(6.7,1.5);

		\draw[blue,-] (3.7,0.5)--(4,1);
		\draw[blue,-](4,1)--(4.5,0.1);
		\draw[red,-](4.5,0.1)--(5,1);
		\draw[red,-](5,1)--(5.5,0.1);
		\draw[red,-](5.5,0.1)--(6,1);
		\draw[blue,-](6,1)--(6.5,0.1);
		\draw[blue,-](6.5,0.1)--(6.7,0.5);

		\node[blue,above] at (3.59,0.4){\scalebox{0.75}{$0$}};
		\node at (3.7,0.5) [circle, scale=0.3, draw=blue!80,fill=blue!80] {};

		\node[red,above] at (6.05,0.5){\scalebox{0.75}{$l_1$}};
		\node at (6,0.5) [circle, scale=0.3, draw=red!80,fill=red!80] {};
		
		\node[blue,above] at (6.6,0.5){\scalebox{0.75}{$L$}};
		\node at (6.7,0.5) [circle, scale=0.3, draw=blue!80,fill=blue!80] {};
		\node[red,above] at (4.5,0.5){\scalebox{0.75}{$l_0$}};
		\node at (4.5,0.5) [circle, scale=0.3, draw=red!80,fill=red!80] {};
		
		\node[blue,above] at (1,0.4){\scalebox{0.75}{$-L$}};
		\node at (0.7,0.5) [circle, scale=0.3, draw=blue!80,fill=blue!80] {};
		
		\tikzstyle{ground}=[pattern=north east lines,draw=none,minimum width=0.75cm,minimum height=0.3cm]
		
		\node (wall) at (3.55,0.5) [ground, rotate=-90, minimum width=2cm,yshift=-3cm] {};
		
		\node (wall) at (9.85,0.5) [ground, rotate=-90, minimum width=2cm,yshift=-3cm] {};
		
		\draw [red,decorate,decoration={brace,amplitude=10pt},xshift=4pt,yshift=0pt]
		(4.3,1.25) -- (5.9,1.25)  ;
		
		\node[red,above] at (5.3,1.5){\scalebox{0.75}{FKV-damping}}; 
		
		\draw [blue,decorate,decoration={brace,amplitude=10pt,mirror,raise=4pt},yshift=0pt]
		(0.8,0) -- (3.7,0)  ;		
		\node[blue,below] at (2.25,-0.75){\scalebox{0.75}{ Beam Part}}; 
		
		\draw [blue,decorate,decoration={brace,amplitude=10pt,mirror,raise=4pt},yshift=0pt]
		(3.75,0) -- (6.6,0)  ;	
		\node[blue,below] at (5.3,-0.75){\scalebox{0.75}{ Wave Part}};

		\end{tikzpicture}
	\end{center}
		\caption{(EBB)-W$_{FKV}$ Model}\label{fig1}
\end{figure}
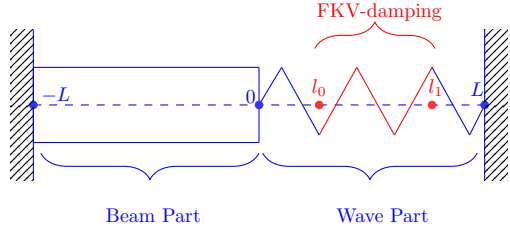

\noindent In the second model W-W$_{FKV}$, we investigate the stability of coupled wave equations coupled through boundary connections with localized non-regular fractional Kelvin-Voigt damping acting through one wave equation only (see Figure \ref{fig2}). The system that describes this model is as follows
\begin{equation*}\label{Sys1-W}\tag{W-W$_{FKV}$}
\left\lbrace\begin{array}{ll}
u_{tt}-\left(au_x+d(x)\partial_t^{\alpha,\eta}u_x\right)_x=0,&(x,t)\in (0,L)\times (0,\infty),\\[0.1in]
y_{tt}-by_{xx}=0,&(x,t)\in (-L,0)\times (0,\infty),\\[0.1in]
u(L,t)=y(-L,t)=0,&t\in (0,\infty),\\[0.1in]
au_{x}(0,t)=by_{x}(0,t),&t\in (0,\infty),\\[0.1in]
u(0,t)=y(0,t),&t\in (0,\infty),\\[0.1in]
u(x,0)=u_0(x),\, u_t(x,0)=u_1(x),& x\in (0,L),\\[0.1in]
y(x,0)=y_0(x),\, y_t(x,0)=y_1(x),& x\in (-L,0).
\end{array}
\right.
\end{equation*}

\begin{figure}[H]\label{fig2}
	\begin{center}
		\begin{tikzpicture}
			
		\draw[green,-] (0.7,0.5)--(1,1);
		\draw[green,-](1,1)--(1.5,0.1);
		\draw[green,-](1.5,0.1)--(2,1);
		\draw[green,-](2,1)--(2.5,0.1);
		\draw[green,-](2.5,0.1)--(3,1);
		\draw[green,-](3,1)--(3.5,0.1);
		\draw[green,-](3.5,0.1)--(3.7,0.5);	
		
		\draw[blue,dashed] (0.7,0.5)--(6.7,0.5);

		\draw[blue,-] (3.7,0.5)--(4,1);
		\draw[blue,-](4,1)--(4.5,0.1);
		\draw[red,-](4.5,0.1)--(5,1);
		\draw[red,-](5,1)--(5.5,0.1);
			\draw[red,-](5.5,0.1)--(6,1);
				\draw[blue,-](6,1)--(6.5,0.1);
					\draw[blue,-](6.5,0.1)--(6.7,0.5);

		\node[blue,above] at (3.59,0.4){\scalebox{0.75}{$0$}};
		\node at (3.7,0.5) [circle, scale=0.3, draw=blue!80,fill=blue!80] {};

			\node[red,above] at (6.05,0.5){\scalebox{0.75}{$l_1$}};
		\node at (6,0.5) [circle, scale=0.3, draw=red!80,fill=red!80] {};
		
			\node[blue,above] at (6.6,0.5){\scalebox{0.75}{$L$}};
		\node at (6.7,0.5) [circle, scale=0.3, draw=blue!80,fill=blue!80] {};
			\node[red,above] at (4.5,0.5){\scalebox{0.75}{$l_0$}};
		\node at (4.5,0.5) [circle, scale=0.3, draw=red!80,fill=red!80] {};
		
			\node[blue,above] at (1,0.4){\scalebox{0.75}{$-L$}};
		\node at (0.7,0.5) [circle, scale=0.3, draw=blue!80,fill=blue!80] {};
		
		\tikzstyle{ground}=[pattern=north east lines,draw=none,minimum width=0.75cm,minimum height=0.3cm]
		
	\node (wall) at (3.55,0.5) [ground, rotate=-90, minimum width=2cm,yshift=-3cm] {};
		
			\node (wall) at (9.85,0.5) [ground, rotate=-90, minimum width=2cm,yshift=-3cm] {};
		
		\draw [red,decorate,decoration={brace,amplitude=10pt},xshift=4pt,yshift=0pt]
	(4.3,1.25) -- (5.9,1.25)  ;
		
			\node[red,above] at (5.3,1.5){\scalebox{0.75}{FKV-damping}}; 
		
			\draw [blue,decorate,decoration={brace,amplitude=10pt,mirror,raise=4pt},yshift=0pt]
	(0.8,0) -- (3.7,0)  ;		
		\node[blue,below] at (2.25,-0.75){\scalebox{0.75}{ Wave Part}}; 
	
		\draw [blue,decorate,decoration={brace,amplitude=10pt,mirror,raise=4pt},yshift=0pt]
	(3.75,0) -- (6.6,0)  ;	
		\node[blue,below] at (5.3,-0.75){\scalebox{0.75}{ Wave Part}};

		\end{tikzpicture}
	\end{center}
	\caption{W-W$_{FKV}$ Model}\label{fig2}
\end{figure}
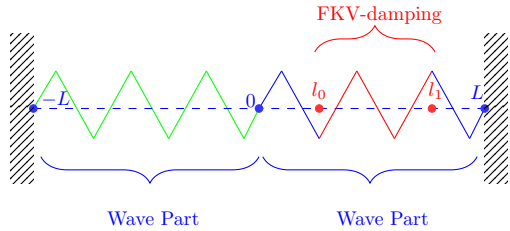

\noindent In the third model W-(EBB)$_{FKV}$, we consider a system of coupled Euler-Bernouli beam and wave equations. These two equations are coupled through boundary connections. In this case the localized non-smooth fractional Kelvin-Voigt damping acts only on  the Euler-Bernoulli beam (see Figure \ref{fig3}). The system that represents this model is as follows
\begin{equation*}\label{beam1}\tag{W-(EBB)$_{FKV}$}
\left\lbrace\begin{array}{ll}
u_{tt}-au_{xx}=0,&(x,t)\in (-L,0)\times (0,\infty),\\[0.1in]
y_{tt}+\left(by_{xx}+d(x)\partial_t^{\alpha,\eta}y_{xx}\right)_{xx}=0,&(x,t)\in (0,L)\times (0,\infty),\\[0.1in]
u(-L,t)=y(L,t)=y_{x}(L,t)=0,&t\in (0,\infty),\\[0.1in]
au_{x}(0,t)+by_{xxx}(0,t)=0,y_{xx}(0,t)=0,&t\in (0,\infty),\\[0.1in]
u(0,t)=y(0,t),&t\in (0,\infty),\\[0.1in]
u(x,0)=u_0(x),\, u_t(x,0)=u_1(x),& x\in (-L,0),\\[0.1in]
y(x,0)=y_0(x),\, y_t(x,0)=y_1(x),& x\in (0,L).
\end{array}
\right.
\end{equation*}

\begin{figure}[H]\label{fig3}
	\begin{center}
		\begin{tikzpicture}
		\draw[blue,-](3.7,1)--(4.5,1);
		\draw[blue,-](6,1)--(6.7,1);
		\draw[blue,-] (3.7,0)--(3.7,1);
		\draw[blue,-] (6.7,0)--(6.7,1);
		\draw[blue,-](3.7,0)--(4.5,0);
		\draw[blue,-](6,0)--(6.7,0);
		\draw[blue,dashed] (0.7,0.5)--(4.5,0.5);
		\draw[blue,dashed] (6,0.5)--(6.7,0.5);
		
		\draw[red,-](4.5,0)--(4.5,1);
		\draw[red,-](6,0)--(6,1);

		\draw[red,-](4.5,0)--(6,0);
		\draw[red,-](4.5,0.1)--(6,0.1);
		\draw[red,-](4.5,0.2)--(6,0.2);
		\draw[red,-](4.5,0.3)--(6,0.3);
		\draw[red,-](4.5,0.4)--(6,0.4);
		\draw[red,-](4.5,0.5)--(6,0.5);
		\draw[red,-](4.5,0.6)--(6,0.6);
		\draw[red,-](4.5,0.7)--(6,0.7);
		\draw[red,-](4.5,0.8)--(6,0.8);
		\draw[red,-](4.5,0.9)--(6,0.9);
		\draw[red,-](4.5,1)--(6,1);
		
		\draw[blue,-] (0.7,-0.5)--(0.7,1.5);
		\draw[blue,-] (6.7,-0.5)--(6.7,1.5);

		\draw[blue,-] (0.7,0.5)--(1,1);
		\draw[blue,-](1,1)--(1.5,0.1);
		\draw[blue,-](1.5,0.1)--(2,1);
		\draw[blue,-](2,1)--(2.5,0.1);
		\draw[blue,-](2.5,0.1)--(3,1);
		\draw[blue,-](3,1)--(3.5,0.1);
		\draw[blue,-](3.5,0.1)--(3.7,0.5);

		\node[blue,above] at (3.6,0.5){\scalebox{0.75}{$0$}};
		\node at (3.7,0.5) [circle, scale=0.3, draw=blue!80,fill=blue!80] {};

		\node[red,above] at (6.2,0.5){\scalebox{0.75}{$l_1$}};
		\node at (6,0.5) [circle, scale=0.3, draw=red!80,fill=red!80] {};
		
		\node[blue,above] at (6.6,0.5){\scalebox{0.75}{$L$}};
		\node at (6.7,0.5) [circle, scale=0.3, draw=blue!80,fill=blue!80] {};
		\node[red,above] at (4.3,0.5){\scalebox{0.75}{$l_0$}};
		\node at (4.5,0.5) [circle, scale=0.3, draw=red!80,fill=red!80] {};
		
		\node[blue,above] at (0.98,0.4){\scalebox{0.75}{$-L$}};
		\node at (0.7,0.5) [circle, scale=0.3, draw=blue!80,fill=blue!80] {};
		
		\tikzstyle{ground}=[pattern=north east lines,draw=none,minimum width=0.75cm,minimum height=0.3cm]
		
		\node (wall) at (3.55,0.5) [ground, rotate=-90, minimum width=2cm,yshift=-3cm] {};
		
		\node (wall) at (9.85,0.5) [ground, rotate=-90, minimum width=2cm,yshift=-3cm] {};
		
		\draw [red,decorate,decoration={brace,amplitude=10pt},xshift=4pt,yshift=0pt]
		(4.3,1.25) -- (5.9,1.25)  ;
		
		\node[red,above] at (5.3,1.5){\scalebox{0.75}{FKV-damping}}; 
		
		\draw [blue,decorate,decoration={brace,amplitude=10pt,mirror,raise=4pt},yshift=0pt]
		(0.8,0) -- (3.7,0)  ;		
		\node[blue,below] at (2.25,-0.75){\scalebox{0.75}{ Wave Part}}; 
		
		\draw [blue,decorate,decoration={brace,amplitude=10pt,mirror,raise=4pt},yshift=0pt]
		(3.75,0) -- (6.6,0)  ;	
		\node[blue,below] at (5.3,-0.75){\scalebox{0.75}{ Beam Part}};

		\end{tikzpicture}
	\end{center}
		\caption{W-(EBB)$_{FKV}$ Model}\label{fig3}
\end{figure}
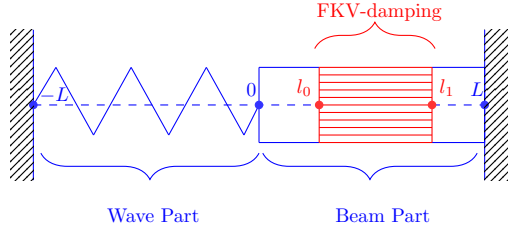

\noindent In the fourth model ((EBB)$_{FKV}$), we study a system of Euler-Bernoulli beam with a non-regular localized fractional Kelvin-Voigt damping (see Figure \ref{fig4}). The system is as follows
\begin{equation*}\label{beamAlone}\tag{(EBB)$_{FKV}$}
\left\lbrace\begin{array}{ll}
y_{tt}+\left(by_{xx}+d(x)\partial_t^{\alpha,\eta}y_{xx}\right)_{xx}=0,&(x,t)\in (0,L)\times (0,\infty),\\[0.1in]
y(0,t)=y_x(0,t)=y_{xx}(L,t)=y_{xxx}(L,t)=0,&t\in (0,\infty),\\[0.1in]
y(x,0)=y_0(x),\, y_t(x,0)=y_1(x),& x\in (0,L).
\end{array}
\right.
\end{equation*}
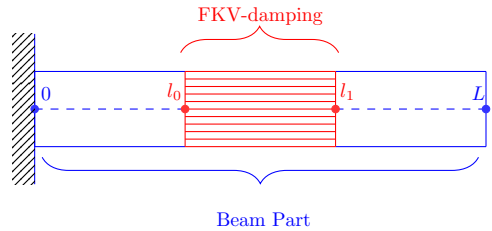
\begin{figure}[H]\label{fig4}
	\begin{center}
		\begin{tikzpicture}
		\draw[blue,-](0.7,1)--(2.7,1);
		\draw[blue,-](4.7,1)--(6.7,1);
		\draw[blue,-] (0.7,0)--(0.7,1);
		\draw[blue,-] (6.7,0)--(6.7,1);
		\draw[blue,-](0.7,0)--(2.7,0);
		\draw[blue,-](4.7,0)--(6.7,0);
		\draw[blue,dashed] (0.7,0.5)--(2.7,0.5);
		\draw[blue,dashed] (4.7,0.5)--(6.7,0.5);
		
		\draw[red,-](2.7,0)--(2.7,1);
		\draw[red,-](4.7,0)--(4.7,1);

		\draw[red,-](2.7,0)--(4.7,0);
		\draw[red,-](2.7,0.1)--(4.7,0.1);
		\draw[red,-](2.7,0.2)--(4.7,0.2);
		\draw[red,-](2.7,0.3)--(4.7,0.3);
		\draw[red,-](2.7,0.4)--(4.7,0.4);
		\draw[red,-](2.7,0.5)--(4.7,0.5);
		\draw[red,-](2.7,0.6)--(4.7,0.6);
		\draw[red,-](2.7,0.7)--(4.7,0.7);
		\draw[red,-](2.7,0.8)--(4.7,0.8);
		\draw[red,-](2.7,0.9)--(4.7,0.9);
		\draw[red,-](2.7,1)--(4.7,1);
		
		\draw[blue,-] (0.7,-0.5)--(0.7,1.5);



		\node[red,above] at (4.86,0.5){\scalebox{0.75}{$l_1$}};
		\node at (4.7,0.5) [circle, scale=0.3, draw=red!80,fill=red!80] {};
		
		\node[blue,above] at (6.6,0.5){\scalebox{0.75}{$L$}};
		\node at (6.7,0.5) [circle, scale=0.3, draw=blue!80,fill=blue!80]  {};

		\node[red,above] at (2.56,0.5){\scalebox{0.75}{$l_0$}};
		\node at (2.7,0.5) [circle, scale=0.3, draw=red!80,fill=red!80] {};
		
		\node[blue,above] at (0.85,0.5){\scalebox{0.75}{$0$}};
		\node at (0.7,0.5) [circle, scale=0.3, draw=blue!80,fill=blue!80] {};
		
		\tikzstyle{ground}=[pattern=north east lines,draw=none,minimum width=0.75cm,minimum height=0.3cm]
		
		\node (wall) at (3.55,0.5) [ground, rotate=-90, minimum width=2cm,yshift=-3cm] {};
		
		
		\draw [red,decorate,decoration={brace,amplitude=10pt},xshift=4pt,yshift=0pt]
		(2.5,1.25) -- (4.6,1.25)  ;
		
		\node[red,above] at (3.7,1.5){\scalebox{0.75}{FKV-damping}}; 
		
		
		\draw [blue,decorate,decoration={brace,amplitude=10pt,mirror,raise=4pt},yshift=0pt]
		(0.8,0) -- (6.6,0)  ;	
		\node[blue,below] at (3.7,-0.75){\scalebox{0.75}{ Beam Part}};

		\end{tikzpicture}
	\end{center}
	\caption{(EBB)$_{FKV}$ Model}\label{fig4}
\end{figure}
\noindent In the last model (EBB)-(EBB)$_{FKV}$, we consider a system of two Euler-Bernouli beam equations coupled through boundary connections. The localized non-smooth fractional Kelvin-Voigt damping acts only on  one of the two equations (see Figure \ref{fig5}). The system that represents this model is as follows
\begin{equation*}\label{coupledBeam}\tag{(EBB)-(EBB)$_{FKV}$}
\left\lbrace\begin{array}{ll}
u_{tt}+au_{xxxx}=0,&(x,t)\in (-L,0)\times (0,\infty),\\[0.1in]
y_{tt}+\left(by_{xx}+d(x)\partial_t^{\alpha,\eta}y_{xx}\right)_{xx}=0,&(x,t)\in (0,L)\times (0,\infty),\\[0.1in]
u(-L,t)=u_{x}(-L,t)=y(L,t)=y_{x}(L,t)=0,&t\in (0,\infty),\\[0.1in]
au_{xxx}(0,t)-by_{xxx}(0,t)=0,u_{xx}(0)=y_{xx}(0,t)=0,&t\in (0,\infty),\\[0.1in]
u(0,t)=y(0,t),&t\in (0,\infty),\\[0.1in]
u(x,0)=u_0(x),\, u_t(x,0)=u_1(x),& x\in (-L,0),\\[0.1in]
y(x,0)=y_0(x),\, y_t(x,0)=y_1(x),& x\in (0,L).
\end{array}
\right.
\end{equation*}
\begin{figure}[H]
	\begin{center}
		\begin{tikzpicture}
		\draw[blue,-](0.7,1)--(4.5,1);
		\draw[blue,-](5.9,1)--(6.7,1);
		\draw[blue,-] (0.7,0)--(0.7,1);
		\draw[blue,-] (6.7,0)--(6.7,1);
		\draw[blue,-](0.7,0)--(4.5,0);
		\draw[blue,-](5.9,0)--(6.7,0);
		\draw[blue,dashed] (0.7,0.5)--(4.5,0.5);
		\draw[blue,dashed] (5.9,0.5)--(6.7,0.5);
		
		\draw[red,-](5.9,0)--(5.9,1);
		\draw[red,-](4.5,0)--(4.5,1);

		\draw[red,-](4.5,0)--(5.9,0);
		\draw[red,-](4.5,0.1)--(5.9,0.1);
		\draw[red,-](4.5,0.2)--(5.9,0.2);
		\draw[red,-](4.5,0.3)--(5.9,0.3);
		\draw[red,-](4.5,0.4)--(5.9,0.4);
		\draw[red,-](4.5,0.5)--(5.9,0.5);
		\draw[red,-](4.5,0.6)--(5.9,0.6);
		\draw[red,-](4.5,0.7)--(5.9,0.7);
		\draw[red,-](4.5,0.8)--(5.9,0.8);
		\draw[red,-](4.5,0.9)--(5.9,0.9);
		\draw[red,-](4.5,1)--(5.9,1);
		
		\draw[blue,-] (0.7,-0.5)--(0.7,1.5);
			\draw[blue,-] (6.7,-0.5)--(6.7,1.5);
				\draw[blue,-] (3.7,0)--(3.7,1);



		\node[red,above] at (6.09,0.5){\scalebox{0.75}{$l_1$}};
		\node at (5.9,0.5) [circle, scale=0.3, draw=red!80,fill=red!80] {};
		
		\node[blue,above] at (6.6,0.5){\scalebox{0.75}{$L$}};
		\node at (6.7,0.5) [circle, scale=0.3, draw=blue!80,fill=blue!80]  {};

		\node[blue,above] at (3.6,0.5){\scalebox{0.75}{$0$}};
		\node at (3.7,0.5) [circle, scale=0.3, draw=blue!80,fill=blue!80]  {};

		\node[red,above] at (4.35,0.5){\scalebox{0.75}{$l_0$}};
		\node at (4.5,0.5) [circle, scale=0.3, draw=red!80,fill=red!80] {};
		
		\node[blue,above] at (0.95,0.5){\scalebox{0.75}{$-L$}};
		\node at (0.7,0.5) [circle, scale=0.3, draw=blue!80,fill=blue!80] {};
		
		\tikzstyle{ground}=[pattern=north east lines,draw=none,minimum width=0.75cm,minimum height=0.3cm]
		
		\node (wall) at (3.55,0.5) [ground, rotate=-90, minimum width=2cm,yshift=-3cm] {};
		
		\node (wall) at (9.85,0.5) [ground, rotate=-90, minimum width=2cm,yshift=-3cm] {};
		
		\draw [red,decorate,decoration={brace,amplitude=10pt},xshift=4pt,yshift=0pt]
		(4.3,1.25) -- (5.8,1.25)  ;
		
		\node[red,above] at (5.2,1.5){\scalebox{0.65}{FKV-damping}}; 
		
		
		\draw [blue,decorate,decoration={brace,amplitude=10pt,mirror,raise=4pt},yshift=0pt]
		(0.8,0) -- (3.6,0)  ;	
		\node[blue,below] at (2.19,-0.65){\scalebox{0.65}{ Beam Part}}; 
		
		\draw [blue,decorate,decoration={brace,amplitude=10pt,mirror,raise=4pt},yshift=0pt]
		(3.7,0) -- (6.6,0)  ;	
		\node[blue,below] at (5.1,-0.65){\scalebox{0.65}{ Beam Part}};

		\end{tikzpicture}
	\end{center}
		\caption{(EBB)-(EBB)$_{FKV}$}\label{fig5}
\end{figure}

\noindent 
We give the physical meaning of the following variables. 
\begin{equation}
\begin{array}{lll}
y=\text{vertical displacement}\, ,\hspace{0.5cm}u_x=\text{the stress of the wave}.\\
y_x=\text{rotation},\\
y_{xx}=\text{Bending moment},\\
y_{xxx}=\text{Shear Force},\\
y_{xxxx}=\text{Loading}.
\end{array}
\end{equation}
The way the beam supported is translated into conditions on the function $y$ and its derivatives. These conditions are collectively referred to as boundary conditions. They are meaningful in physics and engineering. The boundary conditions in the model \eqref{beamAlone} signifies the following\\
$\bullet \quad y(0,t)=0$: This signifies that the beam is pinned to its support, which means that the beam cannot experience any deflection at $x=0$.\\
$\bullet \quad y_x(0,t)=0$: It signifies that the rotation at the pinned support is zero.\\ 
$\bullet \quad y_{xx}(L,t)=0$: It means that there is no bending moment at the free end of the beam.\\ 
$\bullet \quad y_{xxx}(L,t)=0$: This boundary conditions gives the assumption that there is no shearing force acting at the free end of the beam.\\
This kind of models, supported at one end with the other end free, described in the above four conditions can be referred to as a cantilever beam. A good example on the cantilever beam is a balcony, it is supported at one end only, the rest of the beam extends over the open space. Other examples are a cantilever roof in a bus shelter, car park or railway station. We give some of the advantages and disadvantages of the cantilever beam.\\
Advantages:
\begin{itemize}
\item Cantilever beams do not require support on the opposite side.
\item The negative bending moment created in cantilever beams helps to counteract the positive bending moments created.
\item Cantilever beams can be easily constructed.
\item These beam enables erection with little disturbance in navigation.
\end{itemize}
Disadvantages:
\begin{itemize}
\item Cantilever beams are subjected to large deflections.
\item Cantilever beams are subjected to larger moments.
\item A strong fixed support or a backspan is necessary to keep the structure stable.
\end{itemize}
In a cantilever beam, the bending moment at the free end always vanishes. In fact, if we connect the beam with a wave (see \eqref{Sys1}, \eqref{beam1}) at the free end the bending moment will be zero and this induces a shear force on the end of the beam. Consequently, the fourth boundary condition above is no longer valid, and it is replaced by $au_x(0,t)+by_{xxx}(0,t)=0$. This condition signifies that the shear force of the beam and the stress force of the wave are such that one cancels the other.
\subsection{Description of the paper}
In this paper, we investigate the stability results of four models of systems with a non-smooth localized fractional Kelvin-Voigt damping where the coupling is made via boundary connections. In the first model \eqref{Sys1} we consider the coupled Euler-Bernoulli beam and wave equation with the damping acts on the wave equation only.  In  Subsection \ref{strongStabilityWave}, we reformulate \eqref{Sys1} into an augmented model and we prove the well-posedness of the system by semigroup approach. Moreover, using a general criteria of Arendt and Batty, we show the strong stability of our system in the absence of the compactness of the resolvent. In section \ref{Section-poly}, using the semigroup theory of linear operators and a result obtained by Borichev and Tomilov we show that the energy of the System \eqref{Sys1} has a polynomial decay rate of type $t^{\frac{-4}{2-\alpha}}$. In the second model \eqref{Sys1-W}, we consider two wave equations coupled through boundary connections with a non-smooth localized fractional Kelvin-Voigt damping acting only on one of the two equations. We establish a polynomial energy decay rate of type $t^{\frac{-4}{2-\alpha}}$.
In the third model \eqref{beam1}, we consider Euler-Bernoulli beam and wave equations coupled through boundary connections with the damping to act through the Euler-Bernoulli beam equation only. For this model, we show that the energy of the System \eqref{beam1} has a polynomial decay rate of type $t^{\frac{-2}{3-\alpha}}$. For the model \eqref{beamAlone} we consider the Euler-Bernoulli beam with a non-smooth localized fractional Kelvin-Voigt damping. We prove that the energy of the system \eqref{Beam-Alone} decays polynomially with a decay rate $t^{\frac{-2}{1-\alpha}}$. Finally, for the fourth model \eqref{coupledBeam}, we study the polynomial stability of two  Euler-Bernoulli beam equations coupled through boundary connections with damping acting only on one of the two equation. We establish a polynomial energy decay rate of type $t^{\frac{-2}{3-\alpha}}$.
The table below (Table \ref{tabel}) summarizes the decay rate of the energy for the five models. Also, it gives the decay rate of the same four models but with Kelvin-Voigt damping (as $\alpha\to 1$).

\begin{table}[!htb]
\centering
\scalebox{1.3}{
\begin{tabular}{|c|c|c|}
\hline
Model & Decay Rate & $\alpha\to 1$\\
\hline
(EBB)-W$_{FKV}$& $t^{\frac{-4}{2-\alpha}}$ & $t^{-4}$\\
\hline
W-W$_{FKV}$& $t^{\frac{-4}{2-\alpha}}$ & $t^{-4}$\\
\hline
W-(EBB)$_{FKV}$& $t^{\frac{-2}{3-\alpha}}$ & $t^{-1}$\\
\hline
(EBB)$_{FKV}$&$t^{\frac{-2}{1-\alpha}}$ & Exponential\\
\hline
(EBB)-(EBB)$_{FKV}$&$t^{\frac{-2}{3-\alpha}}$ & $t^{-1}$\\
\hline
\end{tabular}}
\caption{Decay Results}
\label{tabel}
\end{table}
\noindent Some significant deductions on the energy deacy rate are given below:\\
$\bullet$ From the decay rate of the models \eqref{Sys1} and \eqref{beam1}  we can deduce that if we want to choose the place where the fractional Kelvin-Voigt damping acts it is better to choose the damping on the wave. Since the energy of this model decays faster compared with that of \eqref{beam1} model.\\
$\bullet$ For the model \eqref{Sys1}, if we replace the condition of the null bending moment $(y_{xx}(0)=0)$ at the connecting boundary by taking the rotation to be zero $(y_x(0)=0)$, we get the same decay rate. So, this result improves the work in \cite{Fathi-2015} where they reached energy decay rate of type $t^{-2}$, however in our work we proved an energy decay rate of type $t^{-4}$( as $\alpha\to 1$)( see Section \ref{DAMPED-WAVE}).\\
$\bullet$ For the model \eqref{Sys1}, we established an energy decay rate of type $t^{-4}$ (as $\alpha\to 1$). By comparing this energy decay rate with that in \cite{akil2020stability}, where the authors considered two wave equations coupled through velocity with localized non smooth Kelvin-Voigt and they established an energy decay rate of type $t^{-1}$. We can see that, by comparing the energy decay rate of these two system that it is better, when considering Kelvin-Voigt damping, to consider the coupling through the boundary connection rather through the velocity.
\begin{rem}
We note that in the upcoming sections, the letters used to denote the variables are independent from each other in each section.
\end{rem}
\section{(EBB)-W$_{FKV}$ Model}\label{DAMPED-WAVE}
\noindent In this section, we consider the \eqref{Sys1} model, where we study the stability of the system a Euler-Bernoulli and wave equations coupled through boundary connection with a localized fractional Kelvin-Voigt damping acting on the wave equation only. 
\subsection{Well-Posedness and Strong Stability}\label{strongStabilityWave}
\noindent In this subsection, we study the strong stability of the system \eqref{Sys1} in the absence of the compactness of the resolvent. First, we will study the existence, uniqueness and regularity of the solution of the system.
\subsubsection{Augmented model and Well-Posedness.}
In this part, using a semigroup approach, we establish well-posedness for the system \eqref{Sys1}. First, we recall theorem 2 stated in \cite{Mbodje1,Akil-AA}.
\begin{theoreme}\label{theorem1}
Let $\alpha\in (0,1)$, $\eta\geq 0$ and $\mu(\xi)=\abs{\xi}^{\frac{2\alpha-1}{2}}$ be the function defined almost everywhere on $\mathbb{R}$. The relation between the 'input' V and the 'output' O of the following system
\begin{eqnarray}
\partial_t\omega(x,\xi,t)+(\xi^2+\eta)\omega(x,\xi,t)-V(x,t)|\xi|^{\frac{2\alpha-1}{2}}&=&0,(x,\xi,t)\in (0,L)\times \mathbb{R}\times (0,\infty),\label{aug1}\\ 
\omega(x,\xi,0)&=&0,(x,\xi)\in (0,L)\times \mathbb{R},\label{aug2}\\
\displaystyle{O(x,t)-\kappa(\alpha)\int_{\mathbb{R}}|\xi|^{\frac{2\alpha-1}{2}}\omega(x,\xi,t)d\xi }&=&0,(x,t)\in (0,L)\times (0,\infty),\label{aug3}
\end{eqnarray}
is given by 
\begin{equation}\label{relation}
O=I^{1-\alpha,\eta}V,
\end{equation}
where 
$$
[I^{\alpha,\eta}V](x,t)=\frac{1}{\Gamma(\alpha)}\int_0^t(t-s)^{\alpha-1}e^{-\eta(t-s)}V(s)ds\quad \text{and}\quad \kappa(\alpha)=\frac{\sin(\alpha \pi)}{\pi} .
$$
\end{theoreme}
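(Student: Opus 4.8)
The plan is to solve the evolution equation \eqref{aug1} explicitly in the time variable, insert the result into the output relation \eqref{aug3}, interchange the order of integration, and reduce the surviving $\xi$-integral to a Gamma-function identity settled by Euler's reflection formula. Throughout, $(x,\xi)$ are treated as parameters and \eqref{aug1} is read as a scalar linear ODE in $t$.

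\textbf{Step 1 (solve for $\omega$).} For fixed $(x,\xi)$, equation \eqref{aug1} together with the initial condition \eqref{aug2} is a first-order linear ODE, whose Duhamel representation is
\begin{equation*}
\omega(x,\xi,t)=|\xi|^{\frac{2\alpha-1}{2}}\int_0^t e^{-(\xi^2+\eta)(t-s)}V(x,s)\,ds.
\end{equation*}
Substituting into \eqref{aug3} and using $|\xi|^{\frac{2\alpha-1}{2}}\cdot|\xi|^{\frac{2\alpha-1}{2}}=|\xi|^{2\alpha-1}$ gives
\begin{equation*}
O(x,t)=\kappa(\alpha)\int_{\mathbb{R}}|\xi|^{2\alpha-1}\int_0^t e^{-(\xi^2+\eta)(t-s)}V(x,s)\,ds\,d\xi.
\end{equation*}

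\textbf{Step 2 (Fubini and the kernel).} I would next justify swapping the two integrals and then write
\begin{equation*}
O(x,t)=\kappa(\alpha)\int_0^t\left(\int_{\mathbb{R}}|\xi|^{2\alpha-1}e^{-(\xi^2+\eta)(t-s)}\,d\xi\right)V(x,s)\,ds.
\end{equation*}
Setting $\tau=t-s>0$, I expect the inner integral to reduce to a Gamma integral via the substitution $u=\xi^2\tau$:
\begin{equation*}
\int_{\mathbb{R}}|\xi|^{2\alpha-1}e^{-(\xi^2+\eta)\tau}\,d\xi=2e^{-\eta\tau}\int_0^\infty \xi^{2\alpha-1}e^{-\xi^2\tau}\,d\xi=\Gamma(\alpha)\,\tau^{-\alpha}e^{-\eta\tau},
\end{equation*}
so that
\begin{equation*}
O(x,t)=\kappa(\alpha)\,\Gamma(\alpha)\int_0^t(t-s)^{-\alpha}e^{-\eta(t-s)}V(x,s)\,ds.
\end{equation*}

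\textbf{Step 3 (match the constant).} It then remains only to verify $\kappa(\alpha)\Gamma(\alpha)=1/\Gamma(1-\alpha)$, which is immediate from the reflection formula $\Gamma(\alpha)\Gamma(1-\alpha)=\pi/\sin(\alpha\pi)$ combined with the definition $\kappa(\alpha)=\sin(\alpha\pi)/\pi$. Comparing the resulting expression with the definition of $I^{1-\alpha,\eta}$ in the statement yields $O=I^{1-\alpha,\eta}V$, which is \eqref{relation}.

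The computation is elementary, so the only genuine care is needed in Step 2: the interchange of integration and the convergence of the $\xi$-integral both near $\xi=0$ and at infinity. This is precisely where the hypothesis $\alpha\in(0,1)$ is used, since it guarantees $-1<2\alpha-1$ (local integrability of $|\xi|^{2\alpha-1}$ at the origin) while the Gaussian factor $e^{-\xi^2\tau}$ handles the tail; for $V$ continuous in $s$ the double integrand is then absolutely integrable on $\mathbb{R}\times(0,t)$ and Fubini applies. This integrability check is the main, albeit mild, obstacle.
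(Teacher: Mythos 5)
Your proof is correct. Note that the paper itself contains no proof of Theorem \ref{theorem1}: it is explicitly recalled from the cited references (Mbodje; Akil et al.), and the argument given there is essentially the one you reconstructed — Duhamel's formula for the $\xi$-parametrized scalar ODE, interchange of integrals, the Gaussian--Gamma identity $\int_{\mathbb{R}}|\xi|^{2\alpha-1}e^{-(\xi^2+\eta)\tau}\,d\xi=\Gamma(\alpha)\,\tau^{-\alpha}e^{-\eta\tau}$, and Euler's reflection formula to identify $\kappa(\alpha)\Gamma(\alpha)=1/\Gamma(1-\alpha)$. One minor point worth making explicit: the hypothesis $\alpha<1$ is needed not only implicitly but visibly at the last stage, since after the $\xi$-integration the kernel $(t-s)^{-\alpha}e^{-\eta(t-s)}$ must be integrable near $s=t$; your Tonelli computation already delivers this, but it deserves mention alongside the $\xi=0$ integrability that uses $\alpha>0$.
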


\noindent In the above theorem, taking the input $V(x,t)=\sqrt{d(x)}u_{xt}(x,t)$, then  using Equation \eqref{Caputo}, we get that the output $O$ is given by 
$$
O(x,t)=\sqrt{d(x)}I^{1-\alpha,\eta}u_{xt}(x,t)=\frac{\sqrt{d(x)}}{\Gamma(1-\alpha)}\int_0^t(t-s)^{-\alpha}e^{-\eta(t-s)}\partial_su_x(x,s)ds=\sqrt{d(x)}\partial_t^{\alpha,\eta}u_{x}(x,t).
$$
Therefore, by taking the input $V(x,t)=\sqrt{d(x)}u_{xt}(x,t)$ in Theorem \ref{theorem1} and using the above equation, we get 
\begin{equation}\label{augnew}
\begin{array}{llll}
\partial_t\omega(x,\xi,t)+(\xi^2+\eta)\omega(x,\xi,t)-\sqrt{d(x)}u_{xt}(x,t)|\xi|^{\frac{2\alpha-1}{2}}&=&0,&(x,\xi,t)\in (0,L)\times \mathbb{R}\times (0,\infty),\\
\omega(x,\xi,0)&=&0,&(x,\xi)\in (0,L)\times \mathbb{R},\\
\displaystyle{\sqrt{d(x)}\partial_t^{\alpha,\eta}u_{x}(x,t)-\kappa(\alpha)\int_{\mathbb{R}}|\xi|^{\frac{2\alpha-1}{2}}\omega(x,\xi,t)d\xi }&=&0,&(x,t)\in (0,L)\times (0,\infty).
\end{array}
\end{equation}
From system \eqref{augnew}, we deduce that system \eqref{Sys1} can be recast into the following augmented model 
\begin{equation}\label{AUG1}
\left\{\begin{array}{ll}
 \displaystyle{u_{tt}-\left(au_x+\sqrt{d(x)}\kappa(\alpha)\int_{\mathbb{R}}\abs{\xi}^{\frac{2\alpha-1}{2}}\omega(x,\xi,t)d\xi\right)_{x}=0},&(x,t)\in (0,L)\times \R_{\ast}^+,\\[0.1in]
\displaystyle{y_{tt}+by_{xxxx}=0},&(x,t)\in (-L,0)\times \times \R_{\ast}^+,\\[0.1in]
\omega_t(x,\xi,t)+\left(|\xi|^2+\eta\right)\omega(x,\xi,t)-\sqrt{d(x)}u_{xt}(x,t)\abs{\xi}^{\frac{2\alpha-1}{2}}=0,&(x,\xi,t)\in (0,L)\times \mathbb{R}\times \R_{\ast}^+,
\end{array}
\right.
\end{equation}
with the following transmission and boundary conditions 
\begin{equation}\label{AUG2}
\left\{\begin{array}{ll}
u(L,t)=y(-L,t)=y_{x}(-L,t)=0,&t\in (0,\infty),\\[0.1in]
au_{x}(0,t)+by_{xxx}(0,t)=0,y_{xx}(0,t)=0,&t\in (0,\infty),\\[0.1in]
u(0,t)=y(0,t),&t\in (0,\infty),
\end{array}\right.
\end{equation}
and with the following initial conditions 
\begin{equation}\label{AUG3}
\begin{array}{lll}
u(x,0)=u_0(x),& u_t(x,0)=u_1(x),\hspace{0.2cm}\omega(x,\xi,0)=0& x\in (0,L),\xi\in\mathbb{R},\\[0.1in]
y(x,0)=y_0(x),& y_t(x,0)=y_1(x),& x\in (-L,0).
\end{array}
\end{equation}
The energy of the system \eqref{AUG1}-\eqref{AUG3} is given by  
\begin{equation*}
E_1(t)=\frac{1}{2}\int_0^L\left(\abs{u_t}^2+a\abs{u_x}^2\right)dx+\frac{1}{2}\int_{-L}^0\left(\abs{y_t}^2+b\abs{y_{xx}}^2\right)dx+\frac{\kappa(\alpha)}{2}\int_0^L\int_{\R}\abs{\omega(x,\xi,t)}^2d\xi dx.
\end{equation*}
\begin{lemma}\label{Denergy}
Let $U=(u,u_t,y,y_t,\omega)$ be a regular solution of the System \eqref{AUG1}-\eqref{AUG3}. Then, the energy $E_1(t)$ satisfies the following estimation 
\begin{equation}\label{denergy}
\frac{d}{dt}E_1(t)=-\kappa(\alpha)\int_{0}^{L}\int_{\mathbb{R}}(\xi^2+\eta)\abs{\omega(x,\xi,t)}^2d\xi dx.
\end{equation}
\end{lemma}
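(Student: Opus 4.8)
The plan is to differentiate $E_1$ in time, substitute the three evolution equations of \eqref{AUG1}, and integrate by parts so that the conservative cross terms cancel against the terms coming from the elastic energies, leaving only the dissipation produced by the $\omega$-equation. For a regular solution differentiation under the integral sign is legitimate and gives
\[
\frac{d}{dt}E_1(t)=\mathrm{Re}\int_0^L\left(u_{tt}\bar u_t+au_x\bar u_{xt}\right)dx+\mathrm{Re}\int_{-L}^0\left(y_{tt}\bar y_t+by_{xx}\bar y_{xxt}\right)dx+\kappa(\alpha)\,\mathrm{Re}\int_0^L\int_{\R}\omega_t\bar\omega\,d\xi\,dx.
\]
For the wave part I would set $\Psi:=au_x+\sqrt{d(x)}\,\kappa(\alpha)\int_{\R}|\xi|^{\frac{2\alpha-1}{2}}\omega\,d\xi$ so that $u_{tt}=\Psi_x$, integrate by parts once, and observe that the $au_x$ piece of $\Psi$ annihilates the elastic cross term $au_x\bar u_{xt}$; this leaves the boundary bracket $[\Psi\bar u_t]_0^L$ and the coupling integral $-\kappa(\alpha)\,\mathrm{Re}\int_0^L\sqrt{d}\,\bar u_{xt}\int_{\R}|\xi|^{\frac{2\alpha-1}{2}}\omega\,d\xi\,dx$. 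For the beam part I would substitute $y_{tt}=-by_{xxxx}$ and integrate by parts twice; the innermost term cancels $by_{xx}\bar y_{xxt}$, and what survives is the boundary contribution $\mathrm{Re}\big(-b[y_{xxx}\bar y_t]_{-L}^0+b[y_{xx}\bar y_{xt}]_{-L}^0\big)$.

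The crux is to show that every boundary contribution vanishes. At $x=L$ the condition $u(L,t)=0$ gives $u_t(L,t)=0$, so the wave bracket vanishes there; at $x=0$ one has $d(0)=0$ since $\supp d\subset(l_0,l_1)$ with $l_0>0$, hence $\Psi(0,t)=au_x(0,t)$ and the wave boundary term reduces to $-a\,u_x(0)\bar u_t(0)$. For the beam, $y(-L)=y_x(-L)=0$ force $y_t(-L)=y_{xt}(-L)=0$, killing both brackets at $-L$, while $y_{xx}(0,t)=0$ kills the moment bracket at $0$, leaving $-b\,y_{xxx}(0)\bar y_t(0)$. I then combine the two surviving junction terms via the transmission conditions from \eqref{AUG2}: differentiating $u(0,t)=y(0,t)$ in time gives $u_t(0)=y_t(0)$, and $au_x(0)+by_{xxx}(0)=0$ gives $by_{xxx}(0)=-au_x(0)$, so $-b\,y_{xxx}(0)\bar y_t(0)=a\,u_x(0)\bar u_t(0)$, which exactly cancels the wave boundary term $-a\,u_x(0)\bar u_t(0)$.

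Once all boundary terms disappear, the remaining pieces are the coupling integral above and $\kappa(\alpha)\,\mathrm{Re}\int_0^L\int_{\R}\omega_t\bar\omega\,d\xi\,dx$. I would insert the third equation of \eqref{AUG1} in the form $\omega_t=-(\xi^2+\eta)\omega+\sqrt{d}\,u_{xt}|\xi|^{\frac{2\alpha-1}{2}}$: its dissipative part yields $-\kappa(\alpha)\int_0^L\int_{\R}(\xi^2+\eta)|\omega|^2\,d\xi\,dx$, and its source part yields $+\kappa(\alpha)\,\mathrm{Re}\int_0^L\sqrt{d}\,u_{xt}\int_{\R}|\xi|^{\frac{2\alpha-1}{2}}\bar\omega\,d\xi\,dx$, which cancels the coupling integral because $\mathrm{Re}\,z=\mathrm{Re}\,\bar z$. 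This leaves precisely \eqref{denergy}. I expect the only genuinely delicate point to be the junction bookkeeping at $x=0$, where one must use $d(0)=0$, $y_{xx}(0)=0$, and crucially \emph{both} transmission relations simultaneously; the rest is routine integration by parts, so any error would most plausibly hide in the sign accounting of the surviving boundary terms at the interface.
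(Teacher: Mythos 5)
Your proposal is correct and follows essentially the same route as the paper's proof: the paper multiplies the first two equations of \eqref{AUG1} by $\overline{u}_t$ and $\overline{y}_t$ and the third by $\kappa(\alpha)\overline{\omega}$, integrates by parts using \eqref{AUG2}, takes real parts, and adds, which is exactly your computation organized as multipliers rather than as direct differentiation of $E_1$. The only difference is presentational: you spell out the junction bookkeeping at $x=0$ (using $d(0)=0$, $y_{xx}(0)=0$, $u_t(0)=y_t(0)$ and $au_x(0)=-by_{xxx}(0)$) and the cancellation of the coupling integrals via $\Re z=\Re\bar z$, steps the paper compresses into ``using integration by parts with \eqref{AUG2}''; your sign accounting at the interface is accurate.
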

\begin{proof}
First, multiplying the first and the second equations  of \eqref{AUG1} by $\overline{u}_t$ and $\overline{y}_t$ respectively, integrating over $(0,L)$ and $(-L,0)$ respectively, using integration by parts with \eqref{AUG2} and taking the real part $\Re$, we get
\begin{equation}\label{denergy1}
\begin{array}{cc}
\displaystyle{\frac{1}{2}\frac{d}{dt}\left(\int_0^L\abs{u_t}^2+a\abs{u_x}^2\right)dx+
\frac{1}{2}\frac{d}{dt}\int_{-L}^0\left(\abs{y_t}^2+b\abs{y_{xx}}^2\right)dx}\\ [0.1in]
\displaystyle{+\Re\left(\kappa(\alpha)\int_0^L\sqrt{d(x)}\bar{u}_{tx}\left(\int_{\mathbb{R}}|\xi|^{\frac{2\alpha-1}{2}}\omega(x,\xi,t)d\xi\right)dx\right)=0}.
\end{array}
\end{equation}
Now, multiplying the third equation of \eqref{AUG1} by $\kappa(\alpha)\bar{\omega}$, integrating over $(0,L)\times \mathbb{R}$, then taking the real part, we get 
\begin{equation}\label{denergy2}
\begin{array}{cc}
\displaystyle{\frac{\kappa(\alpha)}{2}\frac{d}{dt}\int_{0}^{L}\int_{\mathbb{R}}\abs{\omega(x,\xi,t)}^2d\xi dx+\kappa(\alpha)\int_{0}^{L}\int_{\R}\left(\xi^2+\eta\right)\abs{\omega(x,\xi,t)}^2d\xi dx}\\[0.1in]
\displaystyle{=\Re\left(\kappa(\alpha)\int_0^L\sqrt{d(x)}u_{xt}\left(\int_{\mathbb{R}}|\xi|^{\frac{2\alpha-1}{2}}\overline{\omega}(x,\xi,t)d\xi\right)dx\right)}.
\end{array}
\end{equation}
Finally, by adding \eqref{denergy1} and \eqref{denergy2}, we obtain \eqref{denergy}. The proof is thus complete.
\end{proof}
$\newline$
\noindent Since $\alpha\in (0,1)$, then $\kappa(\alpha)>0$, and therefore $\displaystyle\frac{d}{dt}E_1(t)\leq 0$. Thus, system \eqref{AUG1}-\eqref{AUG3} is dissipative in the sense that its energy is a non-increasing function with respect to time variable $t$. Now, we define the following Hilbert energy space $\mathcal{H}_1$ by 
$$
\mathcal{H}_1=\left\{(u,v,y,z,\omega)\in H_R^1(0,L)\times L^2(0,L)\times H^2_L(-L,0)\times L^2(-L,0)\times W;\ \ u(0)=y(0)\right\},
$$
where $W=L^2\left((0,L)\times \R\right)$ and 
\begin{equation}
\left\{
\begin{array}{ll}
H_R^1(0,L)=\{u\in H^1(0,L); u(L)=0\},\\[0.1in]
H_L^2(-L,0)=\{y\in H^2(-L,0); y(-L)=y_x(-L)=0\}.
\end{array}\right.
\end{equation}
We note that the space $\mathcal{H}_2$ is a closed subspace of $H_R^1(0,L)\times L^2(0,L)\times H^2_L(-L,0)\times L^2(-L,0)\times W$.\\
The energy space $\mathcal{H}_1$ is equipped with the inner product defined by 
$$
\begin{array}{lll}
\displaystyle
\left<U,U_1\right>_{\mathcal{H}_1}&=&\displaystyle
\int_0^Lv\overline{v_1}dx+a\int_0^Lu_x(\overline{u_1})_xdx+\int_{-L}^0 z\overline{z_1}dx+b\int_{-L}^0 y_{xx}(\overline{y_1})_{xx}dx
+\kappa(\alpha)\int_{0}^{L}\int_{\mathbb{R}}\omega(x,\xi)\overline{\omega_1}(x,\xi)d\xi dx,
\end{array}
$$
for all $U=(u,v,y,z,\omega)$ and $U_1=(u_1,v_1,y_1,z_1,\omega_1)$ in $\mathcal{H}_1$. We use $\|U\|_{\mathcal{H}_1}$ to denote the corresponding norm. We define the unbounded linear operator $\AA_1:D(\AA_1)\subset\mathcal{H}_1\rightarrow\mathcal{H}_1$ by 
\begin{equation*}
D(\mathcal{A}_1)=\left\{\begin{array}{c}
\displaystyle{U=(u,v,y,z,\omega)\in \mathcal{H}_1;\ (v,z)\in H_R^1(0,L)\times H^2_L(-L,0),\ y\in  H^4(-L,0)},\\[0.1in]  
\displaystyle{\left(au_x+\sqrt{d(x)}\kappa(\alpha)\int_{\mathbb{R}}\abs{\xi}^{\frac{2\alpha-1}{2}}\omega(x,\xi)d\xi\right)_{x}}\in L^2(0,L),\vspace{0.2cm}\\ [0.1in]
-\left(\abs{\xi}^2+\eta\right)\omega(x,\xi)+\sqrt{d(x)}v_x|\xi|^{\frac{2\alpha-1}{2}},\quad|\xi|\omega(x,\xi)\in W, \vspace{0.2cm}\\[0.1in]
au_x(0)+by_{xxx}(0)=0,\,  y_{xx}(0)=0,\,\text{and}\, v(0)=z(0)
\end{array}
\right\},
\end{equation*}
and for all $U=(u,v,y,z,\omega)\in D(\mathcal{A}_1)$, 
$$
\mathcal{A}_1(u,v,y,z,\omega)^{\top}=\begin{pmatrix}
v\\ \vspace{0.2cm} \displaystyle{\left(au_x+\sqrt{d(x)}\kappa(\alpha)\int_{\mathbb{R}}\abs{\xi}^{\frac{2\alpha-1}{2}}\omega(x,\xi)d\xi\right)_{x}}\\ \vspace{0.2cm} z\\\vspace{0.2cm} -by_{xxxx}\\ \vspace{0.2cm}
-\left(\abs{\xi}^2+\eta\right)\omega(x,\xi)+\sqrt{d(x)}v_x|\xi|^{\frac{2\alpha-1}{2}}
\end{pmatrix}.
$$
\begin{rem}\label{Rem1}
The condition $\abs{\xi}\omega(x,\xi)\in W$ is imposed to insure the existence of $\displaystyle{\int_{0}^{L}\int_{\R}(\xi^2+\eta)\abs{\omega(x,\xi)}^2d\xi dx}$ in \eqref{denergy} and $\displaystyle{\sqrt{d(x)}\int_{\R}|\xi|^{\frac{2\alpha-1}{2}}\omega(x,\xi)d\xi}\in L^2(0,L)$.
\end{rem}
\noindent If $U=(u,u_t,y,y_t,\omega)$ is a regular solution of system \eqref{AUG1}-\eqref{AUG3}, then the system can be rewritten as evolution equation on the Hilbert space $\mathcal{H}_1$ given by
\begin{equation}\label{evolution-w}
U_t=\mathcal{A}_1U,\quad U(0)=U_0,
\end{equation}
where $U_0=(u_0,u_1,y_0,y_1,0)$.

\begin{Lemma}\label{lemI123}
Let $\alpha\in (0,1)$, $\eta\geq 0$, then the following  integrals 
\begin{equation}\label{I123}
\begin{array}{lll}
\displaystyle{\mathtt{I}_1(\eta,\alpha)=\kappa(\alpha)\int_{\R}\frac{\abs{\xi}^{2\alpha-1}}{1+\xi^2+\eta}d\xi\quad ,\quad\mathtt{I}_2(\eta,\alpha)=\int_{\R}\frac{\abs{\xi}^{2\alpha-1}}{(1+\xi^2+\eta)^2}d\xi} \vspace{0.6cm}\\ 
\hspace{0.8cm}\displaystyle{\text{and} \quad\mathtt{I}_3(\eta,\alpha)=\int_0^{+\infty}\frac{\xi^{2\alpha+1}}{(1+\xi^2+\eta)^2}d\xi}
\end{array}
\end{equation}
are well defined. 
\end{Lemma}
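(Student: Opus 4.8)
The plan is to verify that each of the three integrands is absolutely integrable over its domain by examining its behaviour separately near the origin $\xi=0$ and near infinity $\abs{\xi}\to\infty$; I expect the convergence at these two ends to be controlled respectively by the lower bound $\alpha>0$ and the upper bound $\alpha<1$, which is exactly what the hypothesis $\alpha\in(0,1)$ supplies.

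The first reduction I would make exploits that $\eta\geq 0$ gives $1+\xi^2+\eta\geq 1+\xi^2>0$ for every $\xi\in\R$, so each of the three integrands is pointwise dominated by the corresponding integrand with $\eta=0$. Hence it suffices to establish convergence in the case $\eta=0$, and the finiteness for arbitrary $\eta\geq 0$ follows at once (this also yields the monotonicity $\mathtt{I}_j(\eta,\alpha)\le \mathtt{I}_j(0,\alpha)$, which may be convenient later).

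Next I would treat each integral by splitting its domain as $\{\abs{\xi}\le 1\}\cup\{\abs{\xi}>1\}$ and invoking the elementary criterion that $\int_{\abs{\xi}\le 1}\abs{\xi}^{p}\,d\xi<\infty$ iff $p>-1$ and $\int_{\abs{\xi}>1}\abs{\xi}^{q}\,d\xi<\infty$ iff $q<-1$. For $\mathtt{I}_1$, on $\{\abs{\xi}\le 1\}$ the denominator is bounded below by $1$, so the integrand is $\le\abs{\xi}^{2\alpha-1}$, integrable because $2\alpha-1>-1\Leftrightarrow\alpha>0$; on $\{\abs{\xi}>1\}$ it is $\le\abs{\xi}^{2\alpha-1}/\xi^2=\abs{\xi}^{2\alpha-3}$, integrable because $2\alpha-3<-1\Leftrightarrow\alpha<1$. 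The same splitting handles $\mathtt{I}_2$ and $\mathtt{I}_3$: near the origin the integrands behave like $\abs{\xi}^{2\alpha-1}$ and $\xi^{2\alpha+1}$ respectively (both integrable for $\alpha>0$, the latter even continuous since $2\alpha+1>0$), while near infinity they decay like $\abs{\xi}^{2\alpha-5}$ and $\xi^{2\alpha-3}$, integrable since $\alpha<1$ (for $\mathtt{I}_2$ the tail is in fact integrable for all $\alpha<2$). Collecting these four endpoint estimates gives the finiteness of $\mathtt{I}_1,\mathtt{I}_2,\mathtt{I}_3$ for every $\alpha\in(0,1)$ and $\eta\ge 0$.

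I do not anticipate a serious obstacle: the argument is a routine power-counting comparison. The only point worth flagging is that the range $\alpha\in(0,1)$ is sharp for this reasoning — the lower endpoint $\alpha>0$ is precisely what makes the factor $\abs{\xi}^{2\alpha-1}$ (singular at $0$ when $\alpha<\tfrac12$) integrable near the origin, while the upper endpoint $\alpha<1$ is precisely what guarantees the $\abs{\xi}^{2\alpha-3}$ tail decay needed for $\mathtt{I}_1$ and $\mathtt{I}_3$. If an explicit value were ever needed one could push further and evaluate the integrals via the Beta function after the substitution $t=\xi^2/(1+\eta)$, but for the present statement the comparison estimates above are all that is required.
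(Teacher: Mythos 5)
Your proof is correct: the domination of each integrand by its $\eta=0$ counterpart, the split of the domain at $\abs{\xi}=1$, and the four power-counting estimates are all valid, and together they establish the finiteness of $\mathtt{I}_1$, $\mathtt{I}_2$, $\mathtt{I}_3$ for all $\alpha\in(0,1)$ and $\eta\ge 0$. The underlying principle --- integrability at the origin from $\alpha>0$ and at infinity from $\alpha<1$ --- is the same one the paper uses, but the execution differs. The paper treats $\mathtt{I}_1$ by the substitution $y=1+\xi^2/(1+\eta)$ that you mention only as an optional afterthought, obtaining
\[
\mathtt{I}_1(\eta,\alpha)=\frac{\kappa(\alpha)}{(1+\eta)^{1-\alpha}}\int_1^{+\infty}\frac{dy}{y\,(y-1)^{1-\alpha}},
\]
and reading off convergence of the one-dimensional integral; it then gets $\mathtt{I}_2$ in one line from the pointwise bound $(1+\xi^2+\eta)^{-2}\le(1+\xi^2+\eta)^{-1}$, i.e.\ $\mathtt{I}_2<\mathtt{I}_1/\kappa(\alpha)$; only $\mathtt{I}_3$ is handled by asymptotic equivalents at $0$ and $+\infty$, which is exactly your power counting. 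What the paper's route buys is the explicit scaling $(1+\eta)^{\alpha-1}$ of $\mathtt{I}_1$ in $\eta$, the kind of quantitative dependence that the later explicit integral computations in the polynomial-stability analysis are built on, together with the immediate comparison for $\mathtt{I}_2$. What your route buys is uniformity --- all three integrals dispatched by a single elementary scheme after one reduction to $\eta=0$ --- plus the monotonicity $\mathtt{I}_j(\eta,\alpha)\le\mathtt{I}_j(0,\alpha)$ as a free by-product, which the paper's argument does not record.
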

\begin{proof}
First, $\mathtt{I}_1(\eta,\alpha)$ can be written as 
\begin{equation}\label{eqI1}
\mathtt{I}_1(\eta,\alpha)=2\frac{\kappa(\alpha)}{1+\eta}\int_0^{+\infty}\frac{\xi^{2\alpha-1}}{1+\frac{\xi^2}{1+\eta}}d\xi.
\end{equation}
Thus equation \eqref{eqI1} can be simplified by defining a new variable $y=1+\frac{\xi^2}{1+\eta}$. Substituting $\xi$ by $(y-1)^{\frac{1}{2}}(1+\eta)^{\frac{1}{2}}$ in equation \eqref{eqI1} , we get 
$$
\mathtt{I}_1(\eta,\alpha)=\frac{\kappa(\alpha)}{(1+\eta)^{1-\alpha}}\int_1^{+\infty}\frac{1}{y(y-1)^{1-\alpha}}dy.
$$
Using the fact that $\alpha\in (0,1)$, its easy to see that $y^{-1}(y-1)^{\alpha-1}\in L^1(1,+\infty)$, therefore $\mathtt{I}_1(\eta,\alpha)$ is well defined. Now, for $\mathtt{I}_2(\eta,\alpha)$, using $\eta\geq 0$ and $\alpha \in (0,1)$, we get 
$$
\mathtt{I}_2(\eta,\alpha)< \int_{\R}\frac{\abs{\xi}^{2\alpha-1}}{1+\xi^2+\eta}d\xi=\frac{\mathtt{I}_1(\eta,\alpha)}{\kappa(\alpha)}<+\infty.
$$
Then, $\mathtt{I}_2(\eta,\alpha)$ is well-defined. \\
Now, for the integral $\mathtt{I}_3(\eta,\alpha)$, since 
\begin{equation*}
\frac{\xi^{2\alpha+1}}{(1+\xi^2+\eta)^2}\isEquivTo{0}\frac{\xi^{2\alpha+1}}{(1+\eta)^2}\quad \text{and}\quad \frac{\xi^{2\alpha+1}}{(1+\xi^2+\eta)^2}\isEquivTo{+\infty}\frac{1}{\xi^{3-2\alpha}},
\end{equation*}
and the fact that $\alpha\in (0,1)$, we get $\mathtt{I}_3(\eta,\alpha)$ is well-defined.\\
The proof is thus complete. 

\end{proof}

\begin{pro}\label{mdissipatif}
The unbounded linear operator $\mathcal{A}_1$ is m-dissipative in the energy space $\mathcal{H}_1$.
\end{pro}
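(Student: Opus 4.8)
The plan is to verify the two ingredients of m-dissipativity: (i) that $\mathcal{A}_1$ is dissipative, i.e. $\Re\langle\mathcal{A}_1U,U\rangle_{\mathcal{H}_1}\le 0$ for every $U\in D(\mathcal{A}_1)$; and (ii) that $\mathrm{Id}-\mathcal{A}_1$ maps $D(\mathcal{A}_1)$ onto all of $\mathcal{H}_1$. Together, (i) and (ii) give that $\mathcal{A}_1$ is m-dissipative (and, via Lumer--Phillips, generates a contraction semigroup).

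For (i), fix $U=(u,v,y,z,\omega)\in D(\mathcal{A}_1)$ and compute $\langle\mathcal{A}_1U,U\rangle_{\mathcal{H}_1}$ directly from the definition of the inner product, integrating by parts in each of the five terms. The elastic contributions $a\int_0^L(v_x\overline{u_x}-u_x\overline{v_x})\,dx$ and $b\int_{-L}^0(z_{xx}\overline{y_{xx}}-y_{xx}\overline{z_{xx}})\,dx$ are purely imaginary and drop out after taking real parts. The boundary terms produced at $x=0$ are $by_{xxx}(0)\overline{v(0)}$ from the wave term (using $d(0)=0$, so only $au_x(0)$ survives, together with $au_x(0)=-by_{xxx}(0)$) and $-by_{xxx}(0)\overline{z(0)}$ from the beam term (using $y_{xx}(0)=0$); these cancel because $v(0)=z(0)$. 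The terms at $x=L$ and $x=-L$ vanish by the clamping conditions $v(L)=0$ and $z(-L)=z_x(-L)=0$. Finally, the cross term coupling $v_x$ to $\omega$ from the wave flux cancels against the forcing term of the $\omega$-equation after taking real parts, leaving exactly
\[
\Re\langle\mathcal{A}_1U,U\rangle_{\mathcal{H}_1}=-\kappa(\alpha)\int_0^L\int_{\mathbb{R}}(\xi^2+\eta)\abs{\omega(x,\xi)}^2\,d\xi\,dx\le 0,
\]
which is consistent with Lemma \ref{Denergy} and uses $\kappa(\alpha)>0$.

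For (ii), given $F=(f_1,f_2,f_3,f_4,f_5)\in\mathcal{H}_1$ I would solve $(\mathrm{Id}-\mathcal{A}_1)U=F$. The first, third and fifth components determine $v=u-f_1$, $z=y-f_3$, and $\omega=(1+\xi^2+\eta)^{-1}\big(f_5+\sqrt{d}\,v_x|\xi|^{\frac{2\alpha-1}{2}}\big)$ explicitly; substituting $v_x=u_x-(f_1)_x$ and integrating in $\xi$ with the help of Lemma \ref{lemI123} replaces the nonlocal term by the local coefficient $d(x)\,\mathtt{I}_1(\eta,\alpha)$, so the remaining equations reduce to the coupled elliptic problem
\[
u-\big((a+d\,\mathtt{I}_1)u_x\big)_x=g_1\ \text{on}\ (0,L),\qquad y+by_{xxxx}=g_2\ \text{on}\ (-L,0),
\]
with known data $g_1,g_2$ built from $F$, subject to $u(L)=y(-L)=y_x(-L)=0$, $u(0)=y(0)$, and the natural interface conditions. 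I would solve this variationally: on $V=\{(u,y)\in H_R^1(0,L)\times H_L^2(-L,0):u(0)=y(0)\}$, test against $(\phi,\psi)\in V$ to obtain a sesquilinear form that is continuous and coercive (coercivity from $a+d\,\mathtt{I}_1\ge a>0$, positivity of the zeroth-order terms, and Poincaré inequalities on $H_R^1$ and $H_L^2$). Lax--Milgram yields a unique weak solution; elliptic regularity upgrades it to $u\in H^2(0,L)$, $y\in H^4(-L,0)$, and choosing test functions with $\phi(0)=\psi(0)$ arbitrary, respectively with $\psi_x(0)$ arbitrary, recovers the natural conditions $au_x(0)+by_{xxx}(0)=0$ and $y_{xx}(0)=0$. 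Lemma \ref{lemI123} then guarantees $\omega,|\xi|\omega\in W$, and the compatibility $f_1(0)=f_3(0)$ built into $\mathcal{H}_1$ gives $v(0)=z(0)$; hence $U\in D(\mathcal{A}_1)$.

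The integration by parts in (i) is routine; the main obstacle is (ii), and within it the variational setup: one must identify the constrained space $V$, confirm that testing reproduces \emph{exactly} the two interface conditions carried by $D(\mathcal{A}_1)$, and verify coercivity with the nonlocal effective coefficient $a+d\,\mathtt{I}_1$. Checking $|\xi|\omega\in W$ is precisely where finiteness of $\mathtt{I}_2(\eta,\alpha)$ and $\mathtt{I}_3(\eta,\alpha)$ from Lemma \ref{lemI123} is used.
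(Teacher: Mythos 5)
Your proposal is correct and follows essentially the same route as the paper's own proof: dissipativity by direct computation of $\Re\langle\mathcal{A}_1U,U\rangle_{\mathcal{H}_1}$, and maximality by eliminating $v$, $z$, $\omega$ from $(I-\mathcal{A}_1)U=F$, solving the resulting coupled elliptic system for $(u,y)$ via Lax--Milgram on the constrained space $V=\{(\varphi,\psi)\in H^1_R(0,L)\times H^2_L(-L,0):\varphi(0)=\psi(0)\}$, and invoking Lemma \ref{lemI123} (finiteness of $\mathtt{I}_1,\mathtt{I}_2,\mathtt{I}_3$) to verify $\omega,|\xi|\omega\in W$. One small correction: elliptic regularity does not give $u\in H^2(0,L)$ here, because the effective coefficient $a+d(x)\mathtt{I}_1(\eta,\alpha)$ jumps at $l_0$ and $l_1$, so $u_x$ is generically discontinuous there; what actually follows --- and all that membership in $D(\mathcal{A}_1)$ requires --- is that the flux $au_x+\sqrt{d(x)}\,\kappa(\alpha)\int_{\mathbb{R}}|\xi|^{\frac{2\alpha-1}{2}}\omega(x,\xi)\,d\xi$ lies in $H^1(0,L)$, since its distributional derivative equals $u-F_1\in L^2(0,L)$ by the equation itself.
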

\begin{proof}
For all $U=(u,v,y,z,\omega)\in D(\mathcal{A}_1)$, one has  
$$
\Re\left(\left<\mathcal{A}_1U,U\right>_{\mathcal{H}_1}\right)=-\kappa(\alpha)\int_{0}^{L}\int_{\mathbb{R}}(\xi^2+\eta)\abs{\omega(x,\xi)}^2d\xi dx\leq 0,
$$
which implies that $\mathcal{A}_1$ is dissipative. Now, let  $F=(f_1,f_2,f_3,f_4,f_5)\in \mathcal{H}_1$, we prove the existence of $U=(u,v,y,z,\omega)\in D(\mathcal{A}_1)$, solution of the equation 
$$
(I-\mathcal{A}_1)U=F.
$$
Equivalently, one must consider the system given by 
\begin{eqnarray}
u-v&=&f_1,\label{mdiss1}\\
v-\left(au_x+\sqrt{d(x)}\kappa(\alpha)\int_{\mathbb{R}}\abs{\xi}^{\frac{2\alpha-1}{2}}\omega(x,\xi)d\xi\right)_{x}&=&f_2,\label{mdiss2}\\
y-z&=&f_3,\label{mdiss3}\\
z+by_{xxxx}&=&f_4,\label{mdiss4}\\
(1+\xi^2+\eta)\omega(x,\xi)-\sqrt{d(x)}v_x|\xi|^{\frac{2\alpha-1}{2}}&=&f_5(x,\xi).\label{mdiss5}
\end{eqnarray}
Using Equations  \eqref{mdiss1}, \eqref{mdiss5} and the fact that $\eta\geq 0$, we get 
$$
\omega(x,\xi)=\frac{f_5(x,\xi)}{1+\xi^2+\eta}+\frac{\sqrt{d(x)}u_x\,\abs{\xi}^{\frac{2\alpha-1}{2}}}{1+\xi^2+\eta}-\frac{\sqrt{d(x)}(f_1)_x\,\abs{\xi}^{\frac{2\alpha-1}{2}}}{1+\xi^2+\eta}.
$$
Inserting the above equation and \eqref{mdiss1} in \eqref{mdiss2} and \eqref{mdiss3} in \eqref{mdiss4}, we get 
\begin{eqnarray}
\hspace{0.7cm}u-\left(au_{x}+d(x)\mathtt{I}_1(\eta,\alpha)u_x+d(x)\mathtt{I}_1(\eta,\alpha)(f_1)_x-\sqrt{d(x)}\kappa(\alpha)\int_{\mathbb{R}}\frac{\abs{\xi}^{\frac{2\alpha-1}{2}}f_5(x,\xi)}{1+\xi^2+\eta}d\xi\right)_x&=&F_1,\label{mdiss6}\\
y+by_{xxxx}&=&F_2\label{mdiss7''}
\end{eqnarray}
where $\mathtt{I}_1(\eta,\alpha)$ is defined in Equation \eqref{I123}, $F_1=f_1+f_2$ and $F_2=f_3+f_4$. And
with the following boundary conditions 
\begin{equation}\label{mdiss4'}
u(L)=y(-L)=y_{x}(-L)=y_{xx}(0)=0,\,au_{x}(0)+by_{xxx}(0)=0,\,\text{and}\, u(0)=y(0).
\end{equation}
Now, we define
$$V=\left\{(\varphi,\psi)\in H^1_R(0,L)\times H^2_L(-L,0); \ \ \varphi(0)=\psi(0)\right\}.$$
The space $V$ is equipped with the following inner product
$$
\begin{array}{lll}
\displaystyle
\left<(\varphi,\psi),(\varphi_1,\psi_1)\right>_{V}&=&\displaystyle
a\int_0^L \varphi_x\overline{\varphi_1}_xdx+b\int_{-L}^0 \psi_{xx}(\overline{\psi_1})_{xx}dx.
\end{array}
$$
Let $(\varphi,\psi)\in V$. Multiplying equations \eqref{mdiss6} and \eqref{mdiss7''}  by $\bar{\varphi}$ and $\bar{\psi}$, and integrating respectively on $(0,L) $ and $(-L,0)$, then using by parts integration, we get 
\begin{equation}\label{mdiss8}
a\left((u,y),(\varphi,\psi)\right)=L(\varphi,\psi)\quad \forall \left(\varphi,\psi\right)\in V,
\end{equation}
where 
$$
\begin{array}{l}
\displaystyle
a\left((u,y),(\varphi,\psi)\right)=\int_0^Lu\overline{\varphi}dx+a\int_0^Lu_x\overline{\varphi}_xdx+\int_{-L}^0y\overline{\psi}dx+b\int_{-L}^0y_{xx}\overline{\psi}_{xx}dx
+\mathtt{I}_1(\eta,\alpha)\int_0^Ld(x)u_{x}\overline{\varphi}_{x}dx
\end{array}
$$
and 
$$
\begin{array}{l}
\displaystyle 
L(\varphi,\psi)=\int_0^LF_1\overline{\varphi}dx+\mathtt{I}_1(\eta,\alpha)\int_0^L d(x)(f_1)_x\overline{\varphi}_xdx-\kappa(\alpha)\int_0^L\sqrt{d(x)}\overline{\varphi}_x\left(\int_{\R}\frac{\abs{\xi}^{\frac{2\alpha-1}{2}}f_5(x,\xi)}{1+\xi^2+\eta}d\xi\right)dx+\int_{-L}^0 F_2\overline{\psi}dx.
\end{array}
$$
Using the fact that $\mathtt{I}_1(\eta,\alpha)>0$, we get $a$ is a bilinear, continuous coercive form on $V\times V$. Next, by using Cauchy-Schwartz inequality and the definition of $d(x)$, we get 
\begin{equation}\label{mdiss9}
\left|\int_0^L\sqrt{d(x)}\bar{\varphi}_x\left(\int_{\R}\frac{\abs{\xi}^{\frac{2\alpha-1}{2}}f_5(x,\xi)}{1+\xi^2+\eta}d\xi\right)dx\right|\leq \sqrt{\frac{d_0}{\kappa(\alpha)}}\sqrt{\mathtt{I}_2(\eta,\alpha)}\|\varphi_x\|_{L^2(l_0,l_1)}\|f_5\|_W,
\end{equation}
where $\mathtt{I}_2(\eta,\alpha)$ is defined in Equation \eqref{I123}. Hence, L is a linear continuous form on $V$. Then, using Lax-Milgram theorem, we deduce that there exists unique $(u,y)\in V$ solution of the variational problem \eqref{mdiss8}. Applying the classical elliptic regularity, we deduce that $y\in H^4(-L,0)$, and 
$$\displaystyle{\left(au_x+\sqrt{d(x)}\kappa(\alpha)\int_{\mathbb{R}}\abs{\xi}^{\frac{2\alpha-1}{2}}\omega(x,\xi,t)d\xi\right)_{x}}\in L^2(0,L).$$
Defining 
\begin{equation}\label{mdiss7}
v:=u-f_1,\ z:=y-f_3\quad \text{and}\quad \omega(x,\xi)=\frac{f_5(x,\xi)}{1+\xi^2+\eta}+\frac{\sqrt{d(x)}u_x\,\abs{\xi}^{\frac{2\alpha-1}{2}}}{1+\xi^2+\eta}-\frac{\sqrt{d(x)}(f_1)_x\,\abs{\xi}^{\frac{2\alpha-1}{2}}}{1+\xi^2+\eta}.
\end{equation}
It is easy to see that $(v,z)\in H^1_R(0,L)\times H_L^2(-L,0)$ and $v(0)=z(0)$.\\
In order to complete the existence of $U\in D(\mathcal{A}_1)$, we need to prove $\omega(x,\xi)$ and $|\xi|\omega(x,\xi)\in W$. From equation \eqref{mdiss7}, we obtain 
\begin{equation*}
\int_{0}^{L}\int_{\R}\abs{\omega(x,\xi)}^2dx\leq 3\int_{0}^{L}\int_{\R}\frac{\abs{f_5(x,\xi)}^2}{(1+\xi^2+\eta)^2}d\xi dx+3d_0\mathtt{I}_2(\eta,\alpha)\int_{l_0}^{l_1}\left(\abs{u_x}^2+\abs{(f_1)_x}^2\right)dx.
\end{equation*}
Using Lemma \ref{lemI123}, the fact that $(u,f_1)\in H^1_R(0,L)\times H_R^1(0,L)$, we obtain 
\begin{equation*}
\mathtt{I}_2(\eta,\alpha)\int_{l_0}^{l_1}\left(\abs{u_x}^2+\abs{(f_1)_x}^2\right)dx<\infty. 
\end{equation*}
On the other hand, using the fact that $f_5\in W$, we get 
\begin{equation*}
\int_{0}^{L}\int_{\R}\frac{\abs{f_5(x,\xi)}^2}{(1+\xi^2+\eta)^2}d\xi dx\leq \frac{1}{(1+\eta)^2}\int_{0}^{L}\int_{\R}\abs{f_5(x,\xi)}^2d\xi dx<+\infty.
\end{equation*}
It follows that $\omega(x,\xi) \in W$. Next, using equation \eqref{mdiss7}, we get 
\begin{equation*}
\int_{0}^{L}\int_{\R}\abs{\xi\,\omega(x,\xi)}^2d\xi dx\leq 3\int_{0}^{L}\int_{\R}\frac{\xi^2\abs{f_5(x,\xi)}^2}{(1+\xi^2+\eta)^2}d\xi dx+6d_0\mathtt{I}_3(\eta,\alpha)\left(\int_{l_0}^{l_1}\left(\abs{u_x}^2+\abs{(f_1)_x}^2\right)dx\right),
\end{equation*}
where $\displaystyle{\mathtt{I}_3(\eta,\alpha)=\int_0^{+\infty}\frac{\xi^{2\alpha+1}}{(1+\xi^2+\eta)^2}d\xi}$. Using Lemma \ref{lemI123} we get that $\mathtt{I}_3(\eta,\alpha)$ is well-defined.\\
Now, using the fact that $f_5(x,\xi)\in W$ and 
\begin{equation*}
\max_{\xi\in\mathbb{R}}\frac{\xi^2}{(1+\xi^2+\eta)^2}=\frac{1}{4\left(1+\eta\right)}<\frac{1}{4},
\end{equation*}
we get
\begin{equation*} 
\int_{0}^{L}\int_{\R}\frac{\xi^2\abs{f_5(x,\xi)}^2}{(1+\xi^2+\eta)^2}d\xi dx\leq \max_{\xi\in\mathbb{R}}\frac{\xi^2}{(1+\xi^2+\eta)^2}\int_{0}^{L}\int_{\R}\abs{f_5(x,\xi)}^2d\xi dx<\frac{1}{4}\int_{0}^{L}\int_{\R}\abs{f_5(x,\xi)}^2d\xi dx<+\infty.
\end{equation*}
It follows that $\abs{\xi}\omega \in W$. Finally, since $\omega,f_5\in W$, we get 
\begin{equation*}
-\left(\abs{\xi}^2+\eta\right)\omega(x,\xi)+\sqrt{d(x)}v_x|\xi|^{\frac{2\alpha-1}{2}}=\omega(x,\xi)-f_5(x,\xi)\in W.
\end{equation*}
Therefore, there exists  $U:=(u,v,y,z,\omega)\in D(\mathcal{A}_1)$ solution $(I-\mathcal{A}_1)U=F$. The proof is thus complete. 
\end{proof}

\vspace{0.5cm}
\noindent From proposition \ref{mdissipatif}, the operator $\mathcal{A}_1$ is m-dissipative on $\mathcal{H}_1$, consequently it  generates a $C_0$-semigroup of contractions $(e^{t\mathcal{A}_1})_{t\geq 0}$ following Lummer-Phillips theorem (see in \cite{Pazy01} and \cite{LiuZheng01}). Then the solution of the evolution Equation \eqref{evolution-w} admits the following representation
$$
U(t)=e^{t\mathcal{A}_1}U_0,\quad t\geq 0,
$$
which leads to the well-posedness of \eqref{evolution-w}. Hence, we have the following result. 
\begin{theoreme}
Let $U_0\in \mathcal{H}_1$, then problem \eqref{evolution-w} admits a unique weak solution $U$ satisfies 
$$
U(t)\in C^0\left(\R^+,\mathcal{H}_1\right).
$$
Moreover, if $U_0\in D(\mathcal{A}_1)$, then problem \eqref{evolution-w} admits a unique strong solution $U$ satisfies 
$$
U(t)\in C^1\left(\R^+,\mathcal{H}_1\right)\cap C^0\left(\R^+,D(\mathcal{A}_1)\right).
$$
\end{theoreme}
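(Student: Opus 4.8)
The plan is to deduce this well-posedness statement directly from the m-dissipativity of $\mathcal{A}_1$ established in Proposition \ref{mdissipatif}, by way of the Lumer--Phillips theorem. The substantive work has in fact already been done: Proposition \ref{mdissipatif} shows both that $\mathcal{A}_1$ is dissipative, with $\Re\langle \mathcal{A}_1 U,U\rangle_{\mathcal{H}_1}\leq 0$ reflecting the energy law \eqref{denergy}, and that $I-\mathcal{A}_1$ maps $D(\mathcal{A}_1)$ onto $\mathcal{H}_1$. Together with density of the domain, these are exactly the hypotheses of Lumer--Phillips.

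First I would invoke the Lumer--Phillips theorem (as stated in \cite{Pazy01,LiuZheng01}) to conclude that $\mathcal{A}_1$ generates a $C_0$-semigroup of contractions $(e^{t\mathcal{A}_1})_{t\geq 0}$ on $\mathcal{H}_1$. Then I would appeal to the standard existence, uniqueness and regularity theory for the abstract Cauchy problem governed by a semigroup generator. For $U_0\in\mathcal{H}_1$, the function $U(t)=e^{t\mathcal{A}_1}U_0$ is the unique mild (weak) solution of \eqref{evolution-w}, and strong continuity of the semigroup gives $U\in C^0(\R^+,\mathcal{H}_1)$. For $U_0\in D(\mathcal{A}_1)$, the same theory upgrades this to a strong solution: $t\mapsto U(t)$ is continuously differentiable into $\mathcal{H}_1$, solves $U_t=\mathcal{A}_1 U$ pointwise, and stays in $D(\mathcal{A}_1)$ for all $t\geq 0$, yielding $U\in C^1(\R^+,\mathcal{H}_1)\cap C^0(\R^+,D(\mathcal{A}_1))$. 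Uniqueness in both classes is automatic from the semigroup representation.

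There is essentially no genuine obstacle remaining at this stage. The only hypothesis of Lumer--Phillips not explicitly verified in Proposition \ref{mdissipatif} is density of $D(\mathcal{A}_1)$ in $\mathcal{H}_1$, and this is the one point I would pause on: it follows from the explicit description of $D(\mathcal{A}_1)$, which contains the obviously dense family of data whose spatial components are smooth with the required boundary compatibility and whose memory variable $\omega$ is compactly supported in $\xi$. Granting this, the theorem is an immediate corollary of the m-dissipativity already proved.
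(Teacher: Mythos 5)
Your proposal is correct and follows essentially the same route as the paper: both deduce the result from the m-dissipativity established in Proposition \ref{mdissipatif} via the Lumer--Phillips theorem and the standard semigroup solution theory for the abstract Cauchy problem \eqref{evolution-w}. Your extra remark on density of $D(\mathcal{A}_1)$ is a reasonable precaution (and in fact, on a Hilbert space, a dissipative operator with $\mathrm{Ran}(I-\mathcal{A}_1)=\mathcal{H}_1$ is automatically densely defined), so nothing is missing.
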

\subsubsection{Strong Stability}\label{SS}
\noindent This part is devoted to study the strong stability
of the system. It is easy to see that the resolvent of A is not
compact. For this aim,  we use a general criteria of Arendt-Battay in \cite{Arendt01} (see Theorem \ref{arendtbatty}) to obtain the strong stability of the $C_0$-semigroup $(e^{t\mathcal{A}_1})_{t\geq 0}$. Our main result in this part is the following theorem.
\begin{theoreme}\label{Strong}
Assume that $\eta\geq 0$, then the $C_0-$semigroup of contractions $e^{t\mathcal{A}_1}$ is strongly stable on $\mathcal{H}_1$ in the sense that 
$$\lim_{t\to+\infty}\left\|e^{t\mathcal{A}_1}U_0\right\|_{\mathcal{H}_1}=0\quad\forall\ \ U_0\in\mathcal{H}_1.
$$
\end{theoreme}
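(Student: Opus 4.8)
The plan is to verify the hypotheses of the Arendt--Batty criterion (Theorem \ref{arendtbatty}) for the contraction semigroup $\left(e^{t\mathcal{A}_1}\right)_{t\geq 0}$. Since $\mathcal{H}_1$ is a Hilbert (hence reflexive) space and the semigroup is a contraction semigroup by Proposition \ref{mdissipatif}, it is bounded, so it suffices to prove that $\mathcal{A}_1$ has no eigenvalue on the imaginary axis and that $\sigma(\mathcal{A}_1)\cap i\R$ is countable. In fact I would prove the stronger statement $i\R\subset\rho(\mathcal{A}_1)$, which makes $\sigma(\mathcal{A}_1)\cap i\R=\emptyset$ and is moreover exactly the ingredient needed for the polynomial decay established in Section \ref{Section-poly}.

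First I would show $i\R\cap\sigma_p(\mathcal{A}_1)=\emptyset$. Let $\lambda\in\R$ and $U=(u,v,y,z,\omega)\in D(\mathcal{A}_1)$ satisfy $\mathcal{A}_1U=i\lambda U$. Taking the real part of $\langle\mathcal{A}_1U,U\rangle_{\mathcal{H}_1}$ and using the dissipation identity of Lemma \ref{Denergy} gives $\kappa(\alpha)\int_0^L\int_{\R}(\xi^2+\eta)\abs{\omega}^2\,d\xi\,dx=0$, hence $\omega\equiv 0$. The third evolution equation then forces $\sqrt{d(x)}\,v_x=0$, and since $v=i\lambda u$ this yields $u_x\equiv 0$ on the damping region $(l_0,l_1)$ whenever $\lambda\neq 0$. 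With $\omega\equiv 0$ the first equation reduces to the free spectral problem $au_{xx}+\lambda^2u=0$ on all of $(0,L)$; since $u$ is constant on $(l_0,l_1)$ it must vanish there (as $\lambda\neq 0$), and Cauchy uniqueness for this second-order ODE propagates $u\equiv 0$ to the whole interval, so $v=0$. The transmission and boundary conditions then give $y(0)=0$, $y_{xx}(0)=0$, and, through $au_x(0)+by_{xxx}(0)=0$, also $y_{xxx}(0)=0$, while the beam equation becomes $by_{xxxx}-\lambda^2y=0$ on $(-L,0)$ with $y(-L)=y_x(-L)=0$. This is overdetermined, and I would close it by an explicit computation: writing $s=(\lambda^2/b)^{1/4}$, the solution with Cauchy data $(0,1,0,0)$ at $x=0$ is $\tfrac{1}{2s}\bigl(\sinh(sx)+\sin(sx)\bigr)$, and since $t\mapsto\sinh t+\sin t$ is strictly increasing and positive for $t>0$, the clamped condition at $x=-L$ forces the free constant to vanish, so $y\equiv 0$ and $U=0$. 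The case $\lambda=0$ is handled separately by a direct computation of $\ker\mathcal{A}_1$: the beam reduces to a cubic and the wave to an affine function, and the five boundary and transmission relations force both to vanish.

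Next I would establish surjectivity of $i\lambda I-\mathcal{A}_1$ for every $\lambda\in\R$. Given $F=(f_1,f_2,f_3,f_4,f_5)\in\mathcal{H}_1$, I would solve the last component algebraically, obtaining $\omega=\bigl(f_5+\sqrt{d(x)}\,v_x\abs{\xi}^{\frac{2\alpha-1}{2}}\bigr)(i\lambda+\xi^2+\eta)^{-1}$ with $v=i\lambda u-f_1$, exactly as in the proof of Proposition \ref{mdissipatif}, and substitute to reduce $(i\lambda I-\mathcal{A}_1)U=F$ to a coupled boundary value problem for $(u,y)$ on the space $V$ of Proposition \ref{mdissipatif}. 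Writing this variationally, the principal part (the forms $a\int_0^Lu_x\overline{\varphi}_x\,dx$ and $b\int_{-L}^0 y_{xx}\overline{\psi}_{xx}\,dx$, together with a localized damping term whose $\lambda$-dependent coefficient $\int_{\R}\abs{\xi}^{2\alpha-1}(i\lambda+\xi^2+\eta)^{-1}d\xi$ has nonnegative real part) is coercive on $V$, while the zeroth-order terms $-\lambda^2u$, $-\lambda^2y$ define a compact operator via the compact embeddings $H^1\hookrightarrow L^2$ and $H^2\hookrightarrow L^2$. The problem is therefore of Fredholm type: for $\lambda=0$ it is directly coercive (Lax--Milgram), and for $\lambda\neq 0$ surjectivity follows from injectivity, which is precisely the eigenvalue statement just proved. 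Hence $i\lambda I-\mathcal{A}_1$ is bijective with bounded inverse, so $i\R\subset\rho(\mathcal{A}_1)$.

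I expect the eigenvalue step to be the main obstacle, and within it the transfer of information from the localized damping to the full configuration. The delicate points are the unique continuation that upgrades $u_x\equiv 0$ on the small interval $(l_0,l_1)$ to $u\equiv 0$ on $(0,L)$, and the overdetermined clamped beam problem, where one must verify that no real frequency $\lambda$ admits a nontrivial solution; the monotonicity of $t\mapsto\sinh t+\sin t$ on $(0,\infty)$ is what rules this out and must be stated with care. The surjectivity step is comparatively routine once the machinery of Proposition \ref{mdissipatif} is reused, the only new feature being the Fredholm alternative in place of direct coercivity. Having shown $i\R\subset\rho(\mathcal{A}_1)$, an appeal to Theorem \ref{arendtbatty} yields the claimed strong stability.
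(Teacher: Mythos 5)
Your eigenvalue analysis (the $\lambda\neq 0$ case via unique continuation plus the overdetermined clamped beam, and the separate $\lambda=0$ kernel computation) is correct and is essentially the paper's Lemma \ref{ker}, with the beam step worked out more explicitly than the paper's ``easy to see.'' The genuine gap is your strengthening ``$i\R\subset\rho(\mathcal{A}_1)$'': this is \emph{false} when $\eta=0$, and the theorem is stated for all $\eta\geq 0$. The paper's Lemma \ref{eta=0} exhibits $F=\left(\cos\left(\frac{\pi x}{2L}\right),0,0,0,0\right)\in\mathcal{H}_1$ with no preimage under $-\mathcal{A}_1$. The obstruction sits precisely where you wave it away (``for $\lambda=0$ it is directly coercive (Lax--Milgram)''): at $\lambda=0$, $\eta=0$ the algebraic recovery of the fifth component reads $\omega=\left(f_5+\sqrt{d(x)}\,v_x\,|\xi|^{\frac{2\alpha-1}{2}}\right)\xi^{-2}$, and the term $\sqrt{d(x)}\,v_x\,|\xi|^{\frac{2\alpha-5}{2}}$ is never in $W=L^2\left((0,L)\times\R\right)$ unless $v_x\equiv 0$ on $(l_0,l_1)$, because $2\alpha-5<-1$ makes $|\xi|^{2\alpha-5}$ non-integrable at $\xi=0$; moreover your coercivity coefficient becomes $\int_{\R}|\xi|^{2\alpha-1}\xi^{-2}\,d\xi$, which diverges, so the reduced variational problem is not even well defined there. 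Hence $0\in\sigma(\mathcal{A}_1)$ when $\eta=0$, and no Fredholm argument can rescue surjectivity at that point: the failure is not in the $(u,y)$ problem but in recovering the $\omega$-component inside $W$.

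The repair is exactly the paper's route, and it costs nothing at the level of the conclusion: prove surjectivity of $i\lambda I-\mathcal{A}_1$ only for ($\eta>0$ and $\lambda\in\R$) or ($\eta=0$ and $\lambda\in\R^{\ast}$) --- your Fredholm argument works verbatim in those cases, as in Lemma \ref{surjective} --- and note that for $\eta=0$ one then gets $\sigma(\mathcal{A}_1)\cap i\R=\{0\}$, which is countable, while $0$ is not an eigenvalue by your own kernel computation. The Arendt--Batty criterion (Theorem \ref{arendtbatty}) needs only ``no imaginary eigenvalues'' plus ``countable imaginary spectrum,'' so strong stability follows for all $\eta\geq 0$. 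Your stronger claim $i\R\subset\rho(\mathcal{A}_1)$ should be retained only for $\eta>0$, where it is indeed the ingredient the polynomial-decay section uses.
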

\noindent In order to proof Theorem \ref{Strong} we need to prove that the operator $\mathcal{A}_1$ has no pure imaginary eigenvalues and $\sigma(\mathcal{A}_1)\cap i\R$ is countable, where $\sigma(\mathcal{A}_1)$ denotes the spectrum of $\mathcal{A}_1$. For clarity, we divide the proof into several lemmas. 
\begin{Lemma}\label{lemI2}
Let $\alpha\in (0,1)$, $\eta\geq 0$, $\la\in \R$ and $f_5\in W$. For {\rm(}$\eta>0$ and $\lambda\in \mathbb{R}${\rm)} or {\rm(}$\eta=0$ and $\lambda\in \mathbb{R}^{\ast}${\rm)}, we have
$$
\mathtt{I}_4(\la,\eta,\alpha)=i\la\kappa(\alpha)\int_{\R}\frac{\abs{\xi}^{2\alpha-1}}{i\la+\xi^2+\eta}d\xi< \infty,\quad
 \mathtt{I}_5(\la,\eta,\alpha)=\kappa(\alpha)\int_{\R}\frac{\abs{\xi}^{2\alpha-1}}{i\la+\xi^2+\eta}d\xi<\infty,$$
 and 
  $$\displaystyle\mathtt{I}_6(x,\la,\eta,\alpha):=\kappa(\alpha)\int_{\R}\frac{\abs{\xi}^{\frac{2\alpha-1}{2}}f_5(x,\xi)}{i\la+\xi^2+\eta}d\xi\in L^2(0,L)$$
\end{Lemma}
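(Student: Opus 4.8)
The plan is to prove the finiteness (and, for $\mathtt{I}_6$, the $L^2$ membership) of each integral by splitting the $\xi$-domain into a neighbourhood of the origin $\{\abs{\xi}\le 1\}$ and its complement, and comparing the tail with the integrals $\mathtt{I}_1,\mathtt{I}_2$ already controlled in Lemma \ref{lemI123}. The decisive observation is that the hypothesis ``($\eta>0$ and $\la\in\R$) or ($\eta=0$ and $\la\in\R^\ast$)'' is exactly what guarantees $\la^2+\eta^2>0$, so that the modulus of the denominator
$$
\abs{i\la+\xi^2+\eta}=\sqrt{\la^2+(\xi^2+\eta)^2}
$$
stays bounded away from $0$ near $\xi=0$; this is what tames the singularity $\abs{\xi}^{2\alpha-1}$ arising when $\alpha<\tfrac12$.

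For $\mathtt{I}_5$ I would first pass to absolute values, so that $\abs{\mathtt{I}_5}\le \kappa(\alpha)\int_{\R}\abs{\xi}^{2\alpha-1}\big(\la^2+(\xi^2+\eta)^2\big)^{-1/2}\,d\xi$. On $\abs{\xi}\le 1$ I bound the denominator below by $\sqrt{\la^2+\eta^2}>0$ and use $\int_{-1}^{1}\abs{\xi}^{2\alpha-1}\,d\xi=\alpha^{-1}<\infty$, which is valid precisely because $2\alpha-1>-1$. On $\abs{\xi}>1$ I use $\sqrt{\la^2+(\xi^2+\eta)^2}\ge \xi^2+\eta\ge \tfrac12(1+\xi^2+\eta)$ (the last step since $\xi^2+\eta\ge 1$ there), which bounds this tail by $2\,\mathtt{I}_1(\eta,\alpha)$, finite by Lemma \ref{lemI123}. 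Hence $\mathtt{I}_5<\infty$, and since $\mathtt{I}_4=i\la\,\mathtt{I}_5$, the finiteness of $\mathtt{I}_4$ is immediate.

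For $\mathtt{I}_6$, which must lie in $L^2(0,L)$ as a function of $x$, I would apply the Cauchy--Schwarz inequality in the variable $\xi$, writing the integrand as $\big(\abs{\xi}^{\frac{2\alpha-1}{2}}\abs{i\la+\xi^2+\eta}^{-1}\big)\cdot f_5(x,\xi)$. This yields the pointwise bound
$$
\abs{\mathtt{I}_6(x,\la,\eta,\alpha)}^2\le \kappa(\alpha)^2\,J(\la,\eta,\alpha)\int_{\R}\abs{f_5(x,\xi)}^2\,d\xi,
\qquad
J(\la,\eta,\alpha):=\int_{\R}\frac{\abs{\xi}^{2\alpha-1}}{\la^2+(\xi^2+\eta)^2}\,d\xi .
$$
The constant $J$ is finite by the same splitting: near $\xi=0$ the denominator is $\ge \la^2+\eta^2>0$, while on $\abs{\xi}>1$ one compares with $\mathtt{I}_2(\eta,\alpha)$ via $\la^2+(\xi^2+\eta)^2\ge(\xi^2+\eta)^2\ge\tfrac14(1+\xi^2+\eta)^2$. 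Integrating the pointwise bound over $x\in(0,L)$ and invoking $f_5\in W$ then gives $\norm{\mathtt{I}_6}_{L^2(0,L)}^2\le \kappa(\alpha)^2\,J\,\norm{f_5}_W^2<\infty$, so $\mathtt{I}_6\in L^2(0,L)$.

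The only genuine subtlety — the hard part — is the behaviour at $\xi=0$ for $\alpha<\tfrac12$, where $\abs{\xi}^{2\alpha-1}$ is singular; everything there hinges on $\la^2+\eta^2>0$, which is precisely why the degenerate case $(\eta=0,\ \la=0)$ is excluded from the statement. The tails require nothing beyond Lemma \ref{lemI123}, and the measurability of $x\mapsto\mathtt{I}_6(x,\la,\eta,\alpha)$ follows routinely from $f_5\in W$ by Fubini's theorem.
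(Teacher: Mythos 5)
Your proof is correct and follows essentially the same route as the paper: both reduce everything to the real integral with denominator $\la^2+(\xi^2+\eta)^2$, exploit $\la^2+\eta^2>0$ to tame the singularity $\abs{\xi}^{2\alpha-1}$ at the origin, use polynomial decay at infinity for the tails, and obtain the $L^2(0,L)$ membership of $\mathtt{I}_6$ by the identical Cauchy--Schwarz argument against that same integral. The only cosmetic difference is that the paper splits $\mathtt{I}_4,\mathtt{I}_5$ into real and imaginary parts ($\mathtt{I}_7,\mathtt{I}_8$) and checks each by asymptotic equivalence near $0$ and $+\infty$, whereas you bound the modulus of the integrand directly (absolute convergence) and compare the tails with $\mathtt{I}_1,\mathtt{I}_2$ of Lemma \ref{lemI123}.
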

\begin{proof}
The integrals $\mathtt{I}_4$ and $\mathtt{I}_5$ can be written in the following form
$$\mathtt{I}_4(\la,\eta,\alpha)=\la^2\mathtt{I}_7(\la,\eta,\alpha)+i\la\mathtt{I}_8(\la,\eta,\alpha),\ \text{and} \ \mathtt{I}_5(\la,\eta,\alpha)=-i\la\mathtt{I}_7(\la,\eta,\alpha)+\mathtt{I}_8(\la,\eta,\alpha)$$
where
$$
\mathtt{I}_7(\la,\eta,\alpha)=\kappa(\alpha)\int_{\R}\frac{\abs{\xi}^{2\alpha-1}}{\la^2+(\xi^2+\eta)^2}d\xi,\ \text{and}\, \mathtt{I}_8(\la,\eta,\alpha)=\kappa(\alpha)\int_{\R}\frac{\abs{\xi}^{2\alpha-1}\left(\xi^2+\eta\right)}{\la^2+(\xi^2+\eta)^2}d\xi.
$$


\noindent So, we need to prove that $\mathtt{I}_7(\la,\eta,\alpha), \mathtt{I}_8(\la,\eta,\alpha)$ are well defined.\\
First, we have 
\begin{equation*}
\mathtt{I}_7(\lambda,\eta,\alpha)=2\kappa(\alpha)\int_{0}^{+\infty}\frac{\xi^{2\alpha-1}}{\lambda^2+(\xi^2+\eta)^2}d\xi=2\kappa(\alpha)\int_0^1\frac{\xi^{2\alpha-1}}{\lambda^2+(\xi^2+\eta)^2}d\xi+2\kappa(\alpha)\int_1^{+\infty}\frac{\xi^{2\alpha-1}}{\lambda^2+(\xi^2+\eta)^2}d\xi.
\end{equation*}
Hence in the both cases where {\rm(}$\eta>0$ and $\lambda\in \mathbb{R}${\rm)} or {\rm(}$\eta=0$ and $\lambda\in \mathbb{R}^{\ast}${\rm)}, we have 
\begin{equation*}
\frac{\xi^{2\alpha-1}}{\lambda^2+(\xi^2+\eta)^2}\isEquivTo{0}\frac{\xi^{2\alpha-1}}{\lambda^2+\eta^2}\quad \text{and}\quad \frac{\xi^{2\alpha-1}}{\lambda^2+(\xi^2+\eta)^2}\isEquivTo{+\infty}\frac{1}{\xi^{5-2\alpha}}.
\end{equation*}
Since $0<\alpha<1$ then $\mathtt{I}_7(\lambda,\eta,\alpha)$ is well-defined. 
Now, we have
\begin{equation*}
\mathtt{I}_8(\lambda,\eta,\alpha)=2\kappa(\alpha)\int_0^{+\infty}\frac{\xi^{2\alpha-1}(\xi^2+\eta)}{\lambda^2+(\xi^2+\eta)^2}d\xi=2\kappa(\alpha)\int_0^{1}\frac{\xi^{2\alpha-1}(\xi^2+\eta)}{\lambda^2+(\xi^2+\eta)^2}d\xi+2\kappa(\alpha)\int_1^{+\infty}\frac{\xi^{2\alpha-1}(\xi^2+\eta)}{\lambda^2+(\xi^2+\eta)^2}d\xi.
\end{equation*}
Similar to $\mathtt{I}_7$, in the both cases where {\rm(}$\eta>0$ and $\lambda\in \mathbb{R}${\rm)} or {\rm(}$\eta=0$ and $\lambda\in \mathbb{R}^{\ast}${\rm)}, we have 
\begin{equation*}
\frac{\xi^{2\alpha-1}(\xi^2+\eta)}{\lambda^2+(\xi^2+\eta)^2}\isEquivTo{0}\frac{\xi^{2\alpha-1}(\xi^2+\eta)}{\lambda^2+\eta^2}\quad \text{and}\quad \frac{\xi^{2\alpha-1}(\xi^2+\eta)}{\lambda^2+(\xi^2+\eta)^2}\isEquivTo{+\infty}\frac{1}{\xi^{3-2\alpha}}.
\end{equation*}
Since $0<\alpha<1$, then $\mathtt{I}_8(\lambda,\eta,\alpha)$ is well-defined. 
For $\mathtt{I}_6$, using Cauchy-Schwarz inequality and the fact that $f_5\in W$ and that $\mathtt{I_7}<\infty$ , we get
\begin{equation*}
\begin{array}{ll}
\displaystyle\int_0^L\left|\mathtt{I}_6(x,\la,\eta,\alpha)\right|^2dx & \displaystyle=\kappa(\alpha)^2\int_0^L\left|\int_{\R}\frac{\abs{\xi}^{\frac{2\alpha-1}{2}}f_5(x,\xi)}{i\la+\xi^2+\eta}d\xi\right|^2dx\\
&\displaystyle \leq
\kappa(\alpha)^2\left(\int_{\R}\frac{\abs{\xi}^{2\alpha-1}}{\la^2+(\xi^2+\eta)^2}d\xi\right)\int_0^L \int_{\R}\abs{f_5(x,\xi)}^2d\xi dx< +\infty.
\end{array}
\end{equation*}
The proof is thus complete. 
\end{proof}

\begin{Lemma}\label{lemI3}
Let $\alpha\in (0,1)$, $\eta\geq 0$, $\la\in\R$. For {\rm(}$\eta>0$ and $\lambda\in \mathbb{R}${\rm)} or {\rm(}$\eta=0$ and $\lambda\in \mathbb{R}^{\ast}${\rm)}, we have 
$$ \mathtt{I}_{11}(\la,\eta,\alpha)=\int_{\R}\frac{\abs{\xi}^{2\alpha-1}}{\sqrt{\la^2+(\xi^2+\eta)^2}}d\xi\quad\text{and}\quad \mathtt{I}_{12}(\la,\eta,\alpha)=\int_{\R}\frac{\abs{\xi}^{2\alpha+1}}{\la^2+(\xi^2+\eta)^2}d\xi
$$
are well-defined.
\end{Lemma}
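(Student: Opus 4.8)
The plan is to mimic the argument used for $\mathtt{I}_7$ and $\mathtt{I}_8$ in Lemma~\ref{lemI2}: since both integrands are even in $\xi$, I would first reduce each integral to twice the integral over $(0,+\infty)$ and then split that range into $(0,1)$ and $(1,+\infty)$, so that the only possible sources of divergence are the behaviour at $\xi=0$ and at $\xi=+\infty$. The hypothesis that either {\rm(}$\eta>0$ and $\lambda\in\R${\rm)} or {\rm(}$\eta=0$ and $\lambda\in\R^{\ast}${\rm)} guarantees that $\lambda^2+\eta^2>0$, which is exactly what keeps the denominators bounded away from zero near $\xi=0$.

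For $\mathtt{I}_{11}$ I would record the two asymptotic equivalents
$$
\frac{\abs{\xi}^{2\alpha-1}}{\sqrt{\la^2+(\xi^2+\eta)^2}}\isEquivTo{0}\frac{\xi^{2\alpha-1}}{\sqrt{\la^2+\eta^2}}\quad\text{and}\quad \frac{\abs{\xi}^{2\alpha-1}}{\sqrt{\la^2+(\xi^2+\eta)^2}}\isEquivTo{+\infty}\frac{1}{\xi^{3-2\alpha}},
$$
the second coming from $\sqrt{\la^2+(\xi^2+\eta)^2}\sim \xi^2$ as $\xi\to+\infty$. Near $0$ the exponent $2\alpha-1>-1$ (because $\alpha>0$) yields integrability, while near $+\infty$ the exponent $3-2\alpha>1$ (because $\alpha<1$) does the same; hence $\mathtt{I}_{11}$ is finite.

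For $\mathtt{I}_{12}$ I would proceed identically, using
$$
\frac{\abs{\xi}^{2\alpha+1}}{\la^2+(\xi^2+\eta)^2}\isEquivTo{0}\frac{\xi^{2\alpha+1}}{\la^2+\eta^2}\quad\text{and}\quad \frac{\abs{\xi}^{2\alpha+1}}{\la^2+(\xi^2+\eta)^2}\isEquivTo{+\infty}\frac{1}{\xi^{3-2\alpha}}.
$$
Here integrability at $0$ is automatic since $2\alpha+1>-1$, and at $+\infty$ it follows again from $3-2\alpha>1$, i.e. from $\alpha<1$. There is essentially no obstacle in this lemma: its content is purely a pair of convergence checks of the same type already carried out for $\mathtt{I}_7$ and $\mathtt{I}_8$. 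The single point that must not be overlooked is invoking the case hypothesis to ensure $\lambda^2+\eta^2>0$, which rules out the non-integrable $\xi^{2\alpha-3}$ singularity at the origin that would otherwise appear if both $\lambda$ and $\eta$ vanished; the same range-splitting bookkeeping as in Lemma~\ref{lemI2} makes each of the above equivalents rigorous.
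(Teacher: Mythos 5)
Your proposal is correct and follows essentially the same route as the paper: reduction by evenness, splitting at $\xi=1$, and the asymptotic equivalents $\frac{\xi^{2\alpha-1}}{\sqrt{\la^2+\eta^2}}$, $\frac{\xi^{2\alpha+1}}{\la^2+\eta^2}$ at the origin and $\frac{1}{\xi^{3-2\alpha}}$ at infinity, with $\alpha\in(0,1)$ and the case hypothesis guaranteeing $\la^2+\eta^2>0$. Your explicit exponent checks ($2\alpha-1>-1$, $2\alpha+1>-1$, $3-2\alpha>1$) are exactly what the paper's appeal to $0<\alpha<1$ amounts to.
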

\begin{proof}
We have
\begin{equation*} 
\mathtt{I}_{11}(\lambda,\eta,\alpha)=2\int_{0}^1\frac{\xi^{2\alpha-1}}{\sqrt{\la^2+(\xi^2+\eta)^2}}d\xi+2\int_{1}^{+\infty}\frac{\xi^{2\alpha-1}}{\sqrt{\la^2+(\xi^2+\eta)^2}}d\xi
\end{equation*}
Hence in the both cases where {\rm(}$\eta>0$ and $\lambda\in \mathbb{R}${\rm)} or {\rm(}$\eta=0$ and $\lambda\in \mathbb{R}^{\ast}${\rm)}, we have 
\begin{equation*}
\frac{\xi^{2\alpha-1}}{\sqrt{\la^2+(\xi^2+\eta)^2}}\isEquivTo{0} \frac{\xi^{2\alpha-1}}{\sqrt{\la^2+\eta^2}}\quad \text{and}\quad \frac{\xi^{2\alpha-1}}{\sqrt{\la^2+(\xi^2+\eta)^2}}\isEquivTo{+\infty}\frac{1}{\xi^{3-2\alpha}}.  
\end{equation*}
Since $0<\alpha<1$ then $\mathtt{I}_{11}(\lambda,\eta,\alpha)$ is well-defined. Now,
\begin{equation*}
\mathtt{I}_{12}(\lambda,\eta,\alpha)=2\int_0^{+\infty}\frac{\xi^{2\alpha+1}}{\lambda^2+(\xi^2+\eta)^2}d\xi=2\int_0^{1}\frac{\xi^{2\alpha-1}(\xi^2+\eta)}{\lambda^2+(\xi^2+\eta)^2}d\xi+2\int_1^{+\infty}\frac{\xi^{2\alpha-1}(\xi^2+\eta)}{\lambda^2+(\xi^2+\eta)^2}d\xi.
\end{equation*}
In a similar way, in the both cases where {\rm(}$\eta>0$ and $\lambda\in \mathbb{R}${\rm)} or {\rm(}$\eta=0$ and $\lambda\in \mathbb{R}^{\ast}${\rm)}, we have 
\begin{equation*}
\frac{\xi^{2\alpha+1}}{\lambda^2+(\xi^2+\eta)^2}\isEquivTo{0}\frac{\xi^{2\alpha+1}}{\lambda^2+\eta^2}\quad \text{and}\quad \frac{\xi^{2\alpha+1}}{\lambda^2+(\xi^2+\eta)^2}\isEquivTo{+\infty}\frac{1}{\xi^{3-2\alpha}}.
\end{equation*}
Since $0<\alpha<1$, then $\mathtt{I}_{12}(\lambda,\eta,\alpha)$ is well-defined. The proof is thus complete. 
\end{proof}

\begin{Lemma}\label{ker}
Assume that $\eta\geq 0$. Then, for all $\la\in \R$, we have $i\la I-\mathcal{A}_1$ is injective, i.e. 
$$
\ker\left(i\la I-\mathcal{A}_1\right)=\left\{0\right\}.
$$
\end{Lemma}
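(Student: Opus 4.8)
Let $\la\in\R$ and let $U=(u,v,y,z,\omega)\in D(\mathcal{A}_1)$ satisfy $(i\la I-\mathcal{A}_1)U=0$. The plan is to first annihilate the dissipative variable $\omega$ using the energy identity, then propagate the resulting rigidity through the wave component $u$, and finally transfer it to the beam component $y$ via the transmission conditions in \eqref{AUG2}. First I would pair the identity $(i\la I-\mathcal{A}_1)U=0$ with $U$ in $\mathcal{H}_1$ and take real parts. Since $\Re\langle i\la U,U\rangle_{\mathcal{H}_1}=0$ and, by Proposition \ref{mdissipatif}, $\Re\langle\mathcal{A}_1U,U\rangle_{\mathcal{H}_1}=-\kappa(\alpha)\int_0^L\int_{\R}(\xi^2+\eta)\abs{\omega(x,\xi)}^2\,d\xi\,dx$, I obtain
\begin{equation*}
\kappa(\alpha)\int_0^L\int_{\R}(\xi^2+\eta)\abs{\omega(x,\xi)}^2\,d\xi\,dx=0.
\end{equation*}
As $\kappa(\alpha)>0$ and $\xi^2+\eta>0$ for a.e.\ $\xi\in\R$ (even when $\eta=0$, since $\{\xi=0\}$ is null), this forces $\omega\equiv 0$ in $W$.

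With $\omega\equiv0$ the eigenvalue system decouples into ODEs: the first two components give $v=i\la u$ and $(au_x)_x=-\la^2u$, i.e. $au_{xx}+\la^2u=0$ on $(0,L)$; the last two give $z=i\la y$ and $by_{xxxx}=\la^2 y$ on $(-L,0)$; and the $\omega$-equation collapses to $\sqrt{d(x)}\,v_x\abs{\xi}^{\frac{2\alpha-1}{2}}=0$, whence $v_x=0$ on $(l_0,l_1)$. Treating $\la\neq 0$ first, this yields $u_x=0$ on $(l_0,l_1)$, so $u$ is constant there, and substituting into $au_{xx}+\la^2u=0$ gives $u\equiv0$ on $(l_0,l_1)$. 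Thus $u(l_0)=u_x(l_0)=0$; since $u$ solves the second-order linear ODE $au_{xx}+\la^2u=0$ on all of $(0,L)$, unique continuation for this ODE yields $u\equiv0$ on $(0,L)$, hence $v\equiv0$.

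It then remains to pass to the beam. From $u\equiv0$ and the conditions in \eqref{AUG2} I read off $y(0)=u(0)=0$, $y_{xxx}(0)=-\tfrac{a}{b}u_x(0)=0$, and $y_{xx}(0)=0$, together with the clamped data $y(-L)=y_x(-L)=0$. Writing the general solution of $by_{xxxx}=\la^2y$ with $\mu:=(\la^2/b)^{1/4}>0$, the three conditions at $x=0$ force $y(x)=\tfrac{c}{2\mu}\bigl(\sinh(\mu x)+\sin(\mu x)\bigr)$ for a single constant $c=y_x(0)$; imposing $y(-L)=0$ then requires $\sinh(\mu L)+\sin(\mu L)=0$, which is impossible because $\sinh t+\sin t>0$ for every $t>0$. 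Hence $c=0$, so $y\equiv z\equiv0$ and $U=0$. For the remaining case $\la=0$ one has $u_{xx}=0$ and $y_{xxxx}=0$, so $u$ is affine and $y$ a cubic; imposing $u(L)=0$, $y(-L)=y_x(-L)=y_{xx}(0)=0$, $u(0)=y(0)$ and $au_x(0)+by_{xxx}(0)=0$ gives a homogeneous linear system whose force condition reduces to $c_3(2aL^2+6b)=0$, forcing $c_3=0$ and then all coefficients to vanish, so again $U=0$.

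The main obstacle is precisely the transfer of the vanishing of the wave part to the beam part: on the wave side the conclusion $u\equiv0$ is the routine unique-continuation property of a second-order ODE from interior Cauchy data, but on the beam side one must check that the \emph{over-determined} fourth-order boundary value problem admits only the trivial solution for every frequency $\la$. This is where the positivity $\sinh t+\sin t>0$ (equivalently $\cosh t+\cos t>0$) for $t>0$ does the essential work, ruling out a nontrivial eigenfunction uniformly in $\la$.
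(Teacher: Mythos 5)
Your proof is correct, and its overall skeleton coincides with the paper's: the energy identity kills $\omega$, the $\omega$-equation then forces $v_x=0$ (hence $\la u_x=0$) on the damped region $(l_0,l_1)$, and a case split on $\la=0$ versus $\la\neq 0$ finishes the wave part first and the beam part second via the transmission conditions. Where you genuinely diverge is in how the two closing steps are executed, and in both places your route is more elementary and more self-contained. First, to propagate $u\equiv 0$ from $(l_0,l_1)$ to all of $(0,L)$, the paper invokes a unique continuation theorem via Carleman estimates (citing Le~Rousseau--Lebeau), whereas you observe that $u$ solves the second-order linear ODE $au_{xx}+\la^2u=0$ with vanishing Cauchy data $u(l_0)=u_x(l_0)=0$, so classical ODE uniqueness suffices --- for a one-dimensional constant-coefficient equation this is the natural argument, and the Carleman machinery is overkill. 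Second, for the over-determined beam problem $\la^2 y-by_{xxxx}=0$ with $y(0)=y_{xx}(0)=y_{xxx}(0)=0$ and $y(-L)=y_x(-L)=0$, the paper merely asserts that $y=0$ is ``easy to see''; you actually prove it, showing the three conditions at $x=0$ reduce the general solution to $y=B\left(\sinh(\mu x)+\sin(\mu x)\right)$ and then using the strict positivity $\sinh t+\sin t>0$ for $t>0$ to force $B=0$ from $y(-L)=0$ alone. This fills in precisely the detail the paper glosses over, and it makes transparent that the conclusion holds uniformly in $\la$. (In the $\la=0$ case your coefficient $c_3(2aL^2+6b)$ and the paper's $c_3(aL^2/2+b)$ differ only by the parametrization of the cubic; both are positive multiples of the same quantity, so the conclusion $c_3=0$ is unaffected.)
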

\begin{proof}
Let $\la\in \mathbb{R}$, such that $i\la$ be an eigenvalue of the operator $\mathcal{A}_1$ and $U=(u,v,y,z,\omega)\in D(\mathcal{A}_1)$ a corresponding eigenvector. Therefore, we have 
\begin{equation}\label{ker1}
\mathcal{A}_1U=i\la U.
\end{equation}
Equivalently, we have 
\begin{eqnarray}
v&=&i\la u,\label{ker2.1}\\
\displaystyle{\left(au_x+\sqrt{d(x)}\kappa(\alpha)\int_{\mathbb{R}}\abs{\xi}^{\frac{2\alpha-1}{2}}\omega(x,\xi)d\xi\right)_{x}}&=&i\la v, \label{ker2.2}\\
z&=&i\la y,\label{ker2.3}\\
-by_{xxxx}&=&i\la z, \label{ker2.4}\\
\left(i\la+|\xi|^2+\eta\right)\omega(x,\xi)&=&\sqrt{d(x)}v_x|\xi|^{\frac{2\alpha-1}{2}}. \label{ker2.5}
\end{eqnarray}
with the boundary conditions
\begin{equation}\label{boundary}
\left\{\begin{array}{ll}
u(L)=y(-L)=y_{x}(-L)=0,\\[0.1in]
y_{xx}(0)=0,
\end{array}\right.
\end{equation}
and with the continuity transmission conditions
\begin{equation}\label{transmission}
\left\{\begin{array}{ll}
u(0)=y(0),\\[0.1in]
au_{x}(0)=-by_{xxx}(0).
\end{array}\right.
\end{equation}
A straightforward calculation gives 
\begin{equation*}
0=\Re\left(\left<i\la U,U\right>_{\mathcal{H}_1}\right)=\Re\left(\left<\mathcal{A}_1U,U\right>_{\mathcal{H}_1}\right)=-\kappa(\alpha)\int_0^L\int_{\R}(\xi^2+\eta)|\omega(x,\xi)|^2d\xi dx.
\end{equation*}
Consequently, we deduce that
\begin{equation}\label{ker3.1}
\omega(x,\xi)=0\ \ \text{a.e. in}\ (0,L)\times \R.
\end{equation}
Inserting Equation \eqref{ker3.1} into \eqref{ker2.5} and using the definition of $d(x)$, we get
\begin{equation}\label{ker3.2}
v_x=0\quad\text{in}\quad (l_0,l_1).
\end{equation}
It follows, from Equation \eqref{ker2.1}, that
\begin{equation}\label{ker3.3}
\la u_x=0\quad\text{in}\quad (l_0,l_1).
\end{equation}
Here we will distinguish two cases.\\
\textbf{Case 1.} If $\la=0$:\\
From \eqref{ker2.1} and \eqref{ker2.3} we get
$$v=z=0\quad \text{on}\quad (0,L).$$
Using Equations \eqref{ker2.2}, \eqref{ker2.4} and \eqref{ker3.1} we get
$$u_{xx}=y_{xxxx}=0.$$
Using the boundary conditions in \eqref{boundary} we can write $u$ and $y$ as 
$$u=c_1(x-L)\quad \text{and}\quad y=c_3\left(\frac{x^3}{6}-\frac{L^2}{2}x-\frac{L^3}{3}\right)$$
where $c_1, c_3$ are constant numbers to be determined.
Now, using conditions in \eqref{transmission} we get
\begin{equation}
\left\{\begin{array}{ll}
\displaystyle c_1=\frac{L^2}{2}c_3,\\[0.1in]
\displaystyle ac_1=-bc_3.
\end{array}\right.
\end{equation}
Then, $\displaystyle c_3(a\frac{L^2}{2}+b)=0$. Since $a,b>0$, we deduce that $c_1=c_3=0$. Then we get $u=y=0$. Hence, $U=0$. In this case the proof is complete.\\
\textbf{Case 2.} If $\la\neq 0$:\\
From Equation \eqref{ker3.3}, we get
\begin{equation}\label{ker3.3'}
u_x=0\quad\text{in}\quad (l_0,l_1).
\end{equation}
Using Equations \eqref{ker3.1} and \eqref{ker3.3'} in \eqref{ker2.2}, and using Equation \eqref{ker2.1} we get
\begin{equation}\label{ker3.4}
u=0\quad\text{in}\quad (l_0,l_1).
\end{equation}
Substituting equations \eqref{ker2.1} and \eqref{ker2.3} into Equations \eqref{ker2.2} and \eqref{ker2.4} and using Equation \eqref{ker3.1}, we get
\begin{eqnarray}
\displaystyle{\la^2u+au_{xx}=0},&\text{over}&(0,L),\label{ker4.1}\\
\displaystyle{\la^2y-by_{xxxx}=0},&\text{over}&(-L,0),\label{ker4.2}
\end{eqnarray}
From Equation \eqref{ker4.1} and \eqref{ker3.4}, and using the unique continuation theorem (see \cite{Lebau}) we get
\begin{equation}\label{ker4.4}
u=0\quad\text{in}\quad (0,L).
\end{equation}
From Equation \eqref{ker4.2}, \eqref{boundary}-\eqref{transmission}, and using \eqref{ker4.4} we get the following system
\begin{equation}\label{eqys}
\left\{
\begin{array}{l}
\la^2y-by_{xxxx}=0,\  \text{over}\ (-L,0)\\ \\
y(0)=y_{xx}(0)=y_{(xxx)}(0)=0,\\ \\
y(-L)=y_x(-L)=0.
\end{array}\right.
\end{equation}
It's easy to see that $y=0$ is the unique solution of \eqref{eqys}.
Hence $U=0$. The proof is thus completed. 
\end{proof}
\begin{Lemma}\label{eta=0}
Assume that $\eta=0$. Then, the operator $-\mathcal{A}_1$ is not invertible and consequently $0\in \sigma(\mathcal{A}_1)$.
\end{Lemma}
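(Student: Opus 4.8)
The plan is to prove that $-\mathcal{A}_1$ fails to be surjective onto $\mathcal{H}_1$ when $\eta=0$; since $0\in\rho(\mathcal{A}_1)$ would require $-\mathcal{A}_1=0\cdot I-\mathcal{A}_1$ to be a bijection of $D(\mathcal{A}_1)$ onto $\mathcal{H}_1$ with bounded inverse, exhibiting one right-hand side outside the range already gives $0\in\sigma(\mathcal{A}_1)$. The mechanism is the singularity at $\xi=0$: when $\eta=0$ the last component of the equation $-\mathcal{A}_1U=F$ forces a division by $\xi^{2}$, and I expect this to destroy square-integrability in $\xi$ near the origin, so that the only admissible $\omega$ lies outside $W=L^{2}\left((0,L)\times\R\right)$. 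This is precisely the term $(\xi^{2}+\eta)^{-1}$ that remained harmless in the $\eta>0$ construction of Proposition \ref{mdissipatif}.

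Concretely, I would fix $f_{1}\in C_{c}^{\infty}(l_{0},l_{1})$ with $f_{1}\not\equiv 0$ and set $F=(f_{1},0,0,0,0)$. Since $f_{1}(0)=f_{1}(L)=0$, we have $f_{1}\in H^{1}_{R}(0,L)$ and the compatibility condition $f_{1}(0)=f_{3}(0)=0$ holds, so $F\in\mathcal{H}_1$. Suppose, for contradiction, that there exists $U=(u,v,y,z,\omega)\in D(\mathcal{A}_1)$ with $-\mathcal{A}_1U=F$. Reading off the first and the last components gives $v=-f_{1}$ and, with $\eta=0$, $\xi^{2}\omega(x,\xi)-\sqrt{d(x)}\,v_{x}\abs{\xi}^{\frac{2\alpha-1}{2}}=0$, whence $\omega$ is forced, for a.e. $\xi\neq0$, to equal
\[
\omega(x,\xi)=-\frac{\sqrt{d(x)}\,(f_{1})_{x}\,\abs{\xi}^{\frac{2\alpha-1}{2}}}{\xi^{2}}.
\]

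The key computation is then that this $\omega$ does not belong to $W$. Indeed $\abs{\omega(x,\xi)}^{2}=d(x)\abs{(f_{1})_{x}}^{2}\abs{\xi}^{2\alpha-5}$, and since $f_{1}$ is supported in $(l_{0},l_{1})$ where $d(x)=d_{0}>0$,
\[
\int_{0}^{L}\int_{\R}\abs{\omega(x,\xi)}^{2}\,d\xi\,dx=d_{0}\left(\int_{l_{0}}^{l_{1}}\abs{(f_{1})_{x}}^{2}dx\right)\int_{\R}\abs{\xi}^{2\alpha-5}\,d\xi.
\]
The first factor is strictly positive because $f_{1}\not\equiv0$ is smooth and compactly supported, while the $\xi$-integral diverges at the origin, as $2\alpha-5<-1$ for every $\alpha\in(0,1)$. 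Hence $\|\omega\|_{W}=+\infty$, contradicting $U\in D(\mathcal{A}_1)\subset\mathcal{H}_1$. Therefore no such $U$ exists, $-\mathcal{A}_1$ is not surjective, and $0\in\sigma(\mathcal{A}_1)$.

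The argument is essentially self-contained, and the only delicate point is the bookkeeping at $\xi=0$: one must verify that it is exactly the loss of the factor $(\xi^{2}+\eta)^{-1}$ at $\eta=0$ that produces the non-integrable exponent $2\alpha-5$, which is why the same strategy does not contradict the invertibility available for $\eta>0$. I would also note that the choice $f_{5}=0$ is only for simplicity: any $F\in\mathcal{H}_1$ with $(f_{1})_{x}\not\equiv0$ on $(l_{0},l_{1})$ would serve, since the singular contribution coming from $v_{x}$ near $\xi=0$ cannot be cancelled by any $f_{5}\in W$.
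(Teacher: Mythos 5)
Your main argument is correct and follows essentially the same route as the paper: both exhibit a datum $F$ whose only nonzero entry is the first component, observe that $-\mathcal{A}_1U=F$ forces $v=-f_1$ and hence $\omega(x,\xi)=-\sqrt{d(x)}\,(f_1)_x|\xi|^{\frac{2\alpha-5}{2}}$, and conclude $\omega\notin W$ because $|\xi|^{2\alpha-5}$ is non-integrable at $\xi=0$ for $\alpha\in(0,1)$. The only difference is the choice of $f_1$: the paper takes $f_1=\cos\left(\frac{\pi x}{2L}\right)$, while you take a bump function in $C_c^{\infty}(l_0,l_1)$, and your choice is in fact the sounder one, since the paper's datum has $f_1(0)=1\neq 0=f_3(0)$ and therefore violates the compatibility condition $u(0)=y(0)$ built into the definition of $\mathcal{H}_1$, whereas your $f_1$ satisfies it. One caveat: your closing remark, that for any $F\in\mathcal{H}_1$ with $(f_1)_x\not\equiv 0$ on $(l_0,l_1)$ the singularity ``cannot be cancelled by any $f_5\in W$,'' is false. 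Take $f_5(x,\xi)=\sqrt{d(x)}\,(f_1)_x|\xi|^{\frac{2\alpha-1}{2}}\chi(\xi)$ with $\chi\in C_c^{\infty}(\mathbb{R})$ equal to $1$ near $\xi=0$; then $f_5\in W$ (since $2\alpha-1>-1$ and $\chi$ has compact support), and the forced relation $\xi^2\omega=f_5-\sqrt{d(x)}\,(f_1)_x|\xi|^{\frac{2\alpha-1}{2}}$ has a right-hand side vanishing identically near $\xi=0$, so the resulting $\omega$ does lie in $W$ and the obstruction coming from the $\omega$-equation alone disappears. This does not affect your proof, which fixes $f_5=0$, but the generalization would need a different argument.
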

\begin{proof}
Let $\displaystyle F=\left(\cos\left(\frac{\pi x}{2L}\right),0,0,0,0\right)\in \mathcal{H}_1$ and assume that there exists $U=(u,v,y,z,\omega)\in D(\mathcal{A}_1)$ such that $-\mathcal{A}_1U=F$. It follows that 
$$
v_x=-\dfrac{\pi}{2L}\sin\left(\frac{\pi x}{2L}\right)\ \ \text{in}\ \ (0,L)\quad \text{and}\quad \xi^2\omega(x,\xi)+\dfrac{\pi}{2L}\sqrt{d(x)}\sin\left(\frac{\pi x}{2L}\right)\abs{\xi}^{\frac{2\alpha-1}{2}}=0.$$
From the above equation, we deduce that $\displaystyle{\omega(x,\xi)=-\dfrac{\pi}{2L}\abs{\xi}^{\frac{2\alpha-5}{2}}\sqrt{d(x)}\cos\left(\frac{\pi x}{L}\right)\notin W}$, 
therefore  the assumption of the existence of $U$ is false and consequently the operator $-\mathcal{A}_1$ is not invertible. The proof is thus complete. 
\end{proof}
\begin{Lemma}\label{surjective}
If $(\eta>0\ \text{and}\ \la\in \R)$ or $(\eta=0\ \text{and}\ \la\in \R^{\ast})$, then $i\la I-\AA_1$ is surjective.
\end{Lemma}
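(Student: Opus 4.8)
The plan is to mirror the proof of Proposition~\ref{mdissipatif}, replacing $I-\mathcal{A}_1$ by $i\la I-\mathcal{A}_1$; the one genuinely new difficulty is that the spectral parameter destroys the coercivity that made Lax--Milgram directly applicable there, so I would close the argument with a Fredholm alternative. Given $F=(f_1,f_2,f_3,f_4,f_5)\in\mathcal{H}_1$, I would write $(i\la I-\mathcal{A}_1)U=F$ componentwise and eliminate $v=i\la u-f_1$, $z=i\la y-f_3$, and
$$
\omega(x,\xi)=\frac{f_5(x,\xi)+\sqrt{d(x)}\left(i\la u_x-(f_1)_x\right)\abs{\xi}^{\frac{2\alpha-1}{2}}}{i\la+\xi^2+\eta}.
$$
Inserting these expressions into the two remaining (momentum) equations and using the identity $i\la\,\mathtt{I}_5=\mathtt{I}_4$, the nonlocal damping term collapses to the local multiplier $d(x)\,\mathtt{I}_4(\la,\eta,\alpha)\,u_x$, which leaves the reduced elliptic system
$$
-\la^2 u-\left(au_x+d(x)\,\mathtt{I}_4\,u_x\right)_x=g_1,\qquad -\la^2 y+by_{xxxx}=g_2,
$$
subject to the transmission and boundary conditions \eqref{mdiss4'}, where $g_1,g_2$ are assembled from $f_1,f_2,f_3,f_4$ and from the $L^2$-function $\mathtt{I}_6(\cdot,\la,\eta,\alpha)$.

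Next I would set the reduced system in variational form on $V=\{(\varphi,\psi)\in H^1_R(0,L)\times H^2_L(-L,0);\ \varphi(0)=\psi(0)\}$, integrating by parts and using $y_{xx}(0)=0$, $\varphi(0)=\psi(0)$ and $au_x(0)+by_{xxx}(0)=0$ to cancel all residual boundary terms exactly as in Proposition~\ref{mdissipatif}. This produces the problem of finding $(u,y)\in V$ with $\mathcal{B}((u,y),(\varphi,\psi))=\mathcal{L}(\varphi,\psi)$ for all $(\varphi,\psi)\in V$, where
$$
\mathcal{B}\big((u,y),(\varphi,\psi)\big)=-\la^2\!\int_0^L u\overline{\varphi}\,dx-\la^2\!\int_{-L}^0 y\overline{\psi}\,dx+a\!\int_0^L u_x\overline{\varphi}_x\,dx+\mathtt{I}_4\!\int_0^L d(x)\,u_x\overline{\varphi}_x\,dx+b\!\int_{-L}^0 y_{xx}\overline{\psi}_{xx}\,dx .
$$
The continuity of $\mathcal{L}$ on $V$ follows from $\mathtt{I}_6\in L^2(0,L)$ (Lemma~\ref{lemI2}) together with a Cauchy--Schwarz estimate of the type \eqref{mdiss9}.

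The main obstacle is that $\mathcal{B}$ is not coercive: the terms $-\la^2\int u\overline\varphi$ and $-\la^2\int y\overline\psi$ carry the wrong sign, so Lax--Milgram cannot be invoked on $\mathcal{B}$ itself. To get around this I would split $\mathcal{B}=\mathcal{B}_0+\mathcal{C}$, with $\mathcal{B}_0$ the principal part $a\int u_x\overline\varphi_x+\mathtt{I}_4\int d\,u_x\overline\varphi_x+b\int y_{xx}\overline\psi_{xx}$ and $\mathcal{C}=-\la^2\big(\int u\overline\varphi+\int y\overline\psi\big)$. Since $\Re\,\mathtt{I}_4=\la^2\mathtt{I}_7\ge0$ (Lemma~\ref{lemI2}) and $\|(u,y)\|_V^2=a\|u_x\|^2+b\|y_{xx}\|^2$, the form $\mathcal{B}_0$ is coercive on $V$, hence an isomorphism by Lax--Milgram; and the compact embedding $V\hookrightarrow L^2(0,L)\times L^2(-L,0)$ makes the operator associated with $\mathcal{C}$ compact. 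The problem therefore reads $(I-\mathcal{K})(u,y)=\tilde F$ with $\mathcal{K}$ compact, so by the Fredholm alternative surjectivity reduces to injectivity, and injectivity is precisely Lemma~\ref{ker}. Finally I would reconstruct $v,z,\omega$ from the solution $(u,y)$ and check $\omega\in W$ and $\abs{\xi}\omega\in W$ exactly as in Proposition~\ref{mdissipatif}; here the finiteness of $\mathtt{I}_4,\mathtt{I}_5,\mathtt{I}_6$ (Lemma~\ref{lemI2}) and of $\mathtt{I}_{11},\mathtt{I}_{12}$ (Lemma~\ref{lemI3}) is exactly what restricts the argument to the cases ($\eta>0$, $\la\in\R$) or ($\eta=0$, $\la\in\R^{\ast}$), in accordance with the divergence at $\la=\eta=0$ underlying Lemma~\ref{eta=0}. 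This yields $U\in D(\mathcal{A}_1)$ solving $(i\la I-\mathcal{A}_1)U=F$, so $i\la I-\mathcal{A}_1$ is surjective.
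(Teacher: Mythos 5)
Your proposal is correct and follows essentially the same route as the paper's proof: eliminate $v,z,\omega$ to get the reduced system with the local coefficient $d(x)\mathtt{I}_4(\la,\eta,\alpha)$, apply Lax--Milgram to the coercive principal part (using $\Re\,\mathtt{I}_4=\la^2\mathtt{I}_7\geq 0$), treat the $\la^2$ terms as a compact perturbation, invoke the Fredholm alternative so that surjectivity reduces to injectivity via Lemma \ref{ker}, and finally reconstruct $v,z,\omega$ and verify $\omega,\abs{\xi}\omega\in W$ using Lemmas \ref{lemI2} and \ref{lemI3}. The only cosmetic differences are that you phrase the argument with sesquilinear forms instead of the operator $\mathcal{L}:\mathbb{H}\to\mathbb{H}'$ and absorb the case $\eta>0$, $\la=0$ into the same Fredholm framework, whereas the paper treats it separately by a direct Lax--Milgram argument.
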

\begin{proof}
Let $\mathrm{F} =(f_1,f_2,f_3,f_4,f_5)\in \HH$, we look for $U=(u,v,y,z,\omega)\in D(\AA_1)$ solution of 
\begin{equation}\label{surjective1}
(i\la I-\AA_1)U=\mathrm{F}. 
\end{equation}
Equivalently, we have 
\begin{eqnarray}
i\la u-v&=&f_1, \label{surj1.1}\\
\displaystyle{i\la v-\left(au_x+\sqrt{d(x)}\kappa(\alpha)\int_{\mathbb{R}}\abs{\xi}^{\frac{2\alpha-1}{2}}\omega(x,\xi)d\xi\right)_{x}}&=&f_2,\label{surj1.2}\\
i\la y-z&=&f_3,\label{surj1.3}\\
\displaystyle{i\la z+by_{xxxx} }&=&f_4,\label{surj1.4}\\
\left(i\la +\xi^2+\eta\right)\omega(x,\xi)-\sqrt{d(x)}v_x\abs{\xi}^{\frac{2\alpha-1}{2}}&=&f_5(x,\xi).\label{surj1.5}
\end{eqnarray}
Using Equations \eqref{surj1.1} and \eqref{surj1.5} and that fact that $\eta\geq 0$ we get
\begin{equation}\label{omega2}
\omega(x,\xi)=\frac{f_5(x,\xi)}{i\la+\xi^2+\eta}+\frac{\sqrt{d(x)}i\la u_x\,\abs{\xi}^{\frac{2\alpha-1}{2}}}{i\la+\xi^2+\eta}-\frac{\sqrt{d(x)}(f_1)_x\,\abs{\xi}^{\frac{2\alpha-1}{2}}}{i\la+\xi^2+\eta}.
\end{equation}
Substituting $v$ and $z$ in \eqref{surj1.1} and \eqref{surj1.3} into Equations \eqref{surj1.2} and \eqref{surj1.4}, and using \eqref{omega2} we get
\begin{eqnarray}
\la^2u+(au_x+d(x)\mathtt{I}_4(\la,\eta,\alpha)u_x-g\left(x,\la,\eta,\alpha\right))_x&=& f,\label{surj2.1}\\
\la^2y-by_{xxxx}&=&F\label{surj2.2},
\end{eqnarray}
such that 
\begin{equation*}
\left\lbrace
\begin{array}{l}
  f=-\left(f_2+i\la f_1\right)\in L^2(0,L),\\ \\ 
  g\left(x,\la,\eta,\alpha\right)=\mathtt{I}_5(\la,\eta,\alpha)d(x)(f_1)_x-\sqrt{d(x)}\mathtt{I}_6(x,\la,\eta,\alpha),\\ \\
F=-\left(f_4+i\la f_3\right)\in L^2(-L,0),
\end{array}
\right.
\end{equation*}
and $\mathtt{I}_4(\la,\eta,\alpha)$, $\mathtt{I}_5(\la,\eta,\alpha)$ and $\mathtt{I}_6(\la,\eta,\alpha)$ are defined in Lemma \ref{lemI2}.

\noindent Now, we distinguish two cases:\\[0.1in]
\textbf{Case 1:} $\eta>0$ and $\la=0$, then System \eqref{surj2.1}-\eqref{surj2.2} becomes 
\begin{eqnarray*}
(au_x-g\left(x,0,\eta,\alpha\right))_x&=&-f_2,\\
by_{xxxx}&=&f_4.\\
\end{eqnarray*}
By applying Lax-Milgram theorem, and using Lemma \ref{lemI2} it is easy to see that the above system has a unique strong solution $(u,y)\in V$. \\[0.1in]
\textbf{Case 2:} $\eta\geq 0$ and $\la\in \R^{\ast}$. The system  \eqref{surj2.1}-\eqref{surj2.2} becomes
\begin{eqnarray}
\la^2u+(au_x+d(x)\mathtt{I}_4(\la,\eta,\alpha)u_x)_x&=&G,\label{sj1}\\
\la^2y-by_{xxxx}&=&F\label{sj2},
\end{eqnarray}
such that 
$$G=f+g_x(x,\la,\eta,\alpha).$$
We first define the linear unbounded operator $\mathcal{L}:\mathbb{H}:=H^1_R(0,L)\times H_L^2(-L,0)\longmapsto \mathbb{H}^{\prime}$ where $\mathbb{H}^{\prime}$ is the dual space of $\mathbb{H}$ by 
$$
\mathcal{L}\mathrm{U}=\begin{pmatrix}
-\left(au_x+d(x)\mathtt{I}_4(\la,\eta,\alpha)u_x\right)_x\\[0.1in]
by_{xxxx}
\end{pmatrix},\quad \forall\ \mathrm{U}\in \mathbb{H}. 
$$
Thanks to Lax-Milgram theorem, it is easy to see that $\mathcal{L}$ is isomorphism. The system \eqref{sj1}-\eqref{sj2} is equivalent to
\begin{equation}\label{surjective7}
\left(\la^2\mathcal{L}^{-1}-I\right)\mathrm{U}=\mathcal{L}^{-1}\mathcal{F},\quad \text{where}\  \mathrm{U}=(u,y)^{\top}\ \text{and}\ \mathcal{F}=(G,F)^{\top}.
\end{equation}
Since the operator $\mathcal{L}^{-1}$ is isomorphism and $I$ is a compact operator from $\mathbb{H}$ to $\mathbb{H}^{\prime}$. Then, $\mathcal{L}^{-1}$ is compact operator from $\mathbb{H}$ to $\mathbb{H}$.
Consequently, by Fredholm's alternative, proving the existence of $\mathrm{U}$ solution of \eqref{surjective7} reduces to proving $\ker\left(\la^2\mathcal{L}^{-1}-I\right)=\left\{0\right\}$. Indeed, if $(\tilde{u},\tilde{y})\in \ker\left(\la^2\mathcal{L}^{-1}-I\right)$, then $\la^2(\tilde{u},\tilde{y})-\mathcal{L}\left(\tilde{u},\tilde{y}\right)=0$. It follows that, 
\begin{eqnarray}
\la^2\tilde{u}+\left(a\tilde{u}_x+d(x)\mathtt{I}_4(\la,\eta,\alpha)\tilde{u}_x\right)_x=0,\label{surjective8}\\
\la^2\tilde{y}-b\tilde{y}_{xxxx}=0\label{surjective9},\\
\tilde{u}(L)=\tilde{y}(-L)=\tilde{y}_x(-L)=\tilde{y}_{xx}(0)=0, \label{surjective10} \\ 
a\tilde{u}_x(0)+b\tilde{y}_{xxx}(0)=0,\tilde{u}(0)=\tilde{y}(0).\label{surjective11}
\end{eqnarray}
Multiplying \eqref{surjective8} and \eqref{surjective9} by $\overline{\tilde{u}}$ and $\overline{\tilde{y}}$ respectively, integrating over $(0,L)$ and $(-L,0)$ respectively and taking the sum, then using by parts integration and the boundary conditions \eqref{surjective10}-\eqref{surjective11}, and take the imaginary part we get 
\begin{equation*}
d_0\Im\left(\mathtt{I}_4(\la,\alpha,\eta)\right)\int_{l_0}^{l_1}\abs{\tilde{u}_x}^2dx=0. 
\end{equation*}
From Lemma \ref{lemI2} we have $\Im\left(\mathtt{I}_4(\la,\alpha,\eta)\right)=\la \mathtt{I}_8(\la,\eta,\alpha)\neq 0$,
we get $\tilde{u}_x=0$ in $\left(l_0,l_1\right)$.\\
Then, system \eqref{surjective8}-\eqref{surjective10} becomes 
\begin{eqnarray}
\la^2\tilde{u}+a\tilde{u}_{xx} &=&0 \quad \text{over}\quad(0,L)\label{surjective12},\\
\la^2\tilde{y}-b\tilde{y}_{xxxx} &=& 0\quad \text{over}\quad(-L,0)\label{surjective13},\\
\tilde{u}_x &=0& \quad \text{over}\quad (l_0,l_1).\label{surjective14}
\end{eqnarray}
It is now easy to see that if $(\tilde{u},\tilde{y})$ is a solution of system \eqref{surjective12}-\eqref{surjective14}, then the vector $\widetilde{U}$ defined by $\widetilde{U}:=(\tilde{u},i\la \tilde{u},\tilde{y},i\la \tilde{y},0)$ belongs to $D\left(\mathcal{A}_1\right)$, and $i\la \widetilde{U}-\mathcal{A}\widetilde{U}=0$.\\
Therefore, $\widetilde{U}\in \ker\left(i\la I-\mathcal{A}_1\right)$, then by using Lemma \ref{ker}, we get $\widetilde{U}=0$. This implies that system \eqref{surjective7} admits a unique solution due to Fredholm's alternative, hence \eqref{surjective7} admits a unique solution in $V$. Thus, we define $v:=i\la u-f_1$, $z:=i\la y-f_3$ and 
\begin{equation}\label{surjective16}
\omega(x,\xi)=\frac{f_5(x,\xi)}{i\la+\xi^2+\eta}+\frac{\sqrt{d(x)}i\la u_x\,\abs{\xi}^{\frac{2\alpha-1}{2}}}{i\la+\xi^2+\eta}-\frac{\sqrt{d(x)}(f_1)_x\,\abs{\xi}^{\frac{2\alpha-1}{2}}}{i\la+\xi^2+\eta}.
\end{equation}
Since $\mathrm{F}\in \mathcal{H}_1$, it is easy to see that $v\in H^1_R(0,L)$, $z\in H_L^2(-L,0)$, $v(0)=z(0)$ and 
$$\displaystyle{\left(au_x+\sqrt{d(x)}\kappa(\alpha)\int_{\mathbb{R}}\abs{\xi}^{\frac{2\alpha-1}{2}}\omega(x,\xi,t)d\xi\right)_{x}\in L^2(0,L)}.$$
 It is left to prove that $\omega$ and $\abs{\xi}\omega\in W$ (for the both cases). From equation \eqref{surjective16}, we get 
\begin{equation*}
\int_{0}^L\int_{\R}\abs{\omega(x,\xi)}^2d\xi dx\leq 3\int_0^L\int_{\R}\frac{\abs{f_5(x,\xi)}^2}{\la^2+(\xi^2+\eta)^2}d\xi dx+3a_0\left(\int_{l_0}^{l_1}(\abs{\la u_x}^2+\abs{(f_1)_x}^2)dx\right)\mathtt{I}_{7}(\la,\eta,\alpha).
\end{equation*}
Using the fact that $f_5\in W$ and $(\eta>0\ \text{and}\ \la\in \R)$ or $(\eta=0\ \text{and}\ \la\in \R^{\ast})$, we obtain 
$$
\int_{0}^L\int_{\R}\frac{\abs{f_5(x,\xi)}^2}{\la^2+(\xi^2+\eta)^2}d\xi\leq \frac{1}{\la^2+\eta^2}\int_{0}^{L}\int_{\R}\abs{f_5(x,\xi)}^2d\xi dx<+\infty. 
$$
Using Lemma \ref{lemI2}, it follows that $\omega \in W$. Next, using equation \eqref{surjective16}, we get 
$$
\int_0^L\int_{\R}\abs{\xi \omega}^2d\xi\leq 3\int_0^L\int_{\R}\frac{\xi^2\abs{f_5(x,\xi)}^2}{\la^2+(\xi^2+\eta)^2}d\xi dx+3a_0\int_0^L\left(|\la u_x|^2+\abs{(f_1)_x}^2\right)\mathtt{I}_{12}(\la,\eta,\alpha), 
$$ 
where $\displaystyle{\mathtt{I}_{12}(\la,\eta,\alpha)=\int_{\R}\frac{\abs{\xi}^{2\alpha+1}}{\la^2+(\xi^2+\eta)^2}d\xi}<+\infty$ by using Lemma  \ref{lemI3}. Now, using the fact that $f_5\in W$ and 
$$
\max_{\xi\in \R}\frac{\xi^2}{\lambda^2+(\xi^2+\eta)^2}=\frac{\sqrt{\eta^2+\la^2}}{\la^2+\left(\sqrt{\eta^2+\lambda^2}+\eta\right)^2}=C(\la,\eta),
$$
we get 
$$
\int_0^L\int_{\R}\frac{\xi^2\abs{f_5(x,\xi)}^2}{\la^2+(\xi^2+\eta)^2}d\xi dx\leq \int_0^L\max_{\xi\in \R}\frac{\xi^2}{\la^2+(\xi^2+\eta)^2}\int_{\R}\abs{f_5(x,\xi)}^2d\xi=C(\la,\eta)\int_0^L\int_{\R}\abs{f_5(x,\xi)}^2d\xi dx<+\infty. 
$$
It follows that $\abs{\xi}\omega\in W$. Finally, since $\omega\in W$, we get 
$$
-(\xi^2+\eta)\omega(x,\xi)+\sqrt{d(x)}v_x\abs{\xi}^{\frac{2\alpha-1}{2}}=i\la \omega(x,\xi)-f_5(x,\xi)\in W. 
$$
Thus, we. obtain $U=(u,v,y,z,\omega)\in D(\AA_1)$ solution of $(i\la I-\mathcal{A}_1)U=\mathrm{F}$. The proof is thus. complete.
\end{proof}
$\newline$ 
\textbf{Proof of Theorem \ref{Strong}.} First, using Lemma \ref{ker}, we directly deduce that $\AA_1$ has no pure imaginary eigenvalues. Next, using Lemmas \ref{eta=0}, \ref{surjective} and with the help of the closed graph theorem of Banach, we deduce that $\sigma(\AA_1)\cap i\R=\{\emptyset\}$ if $\eta>0$ and $\sigma(\AA_1)\cap i\R=\{0\}$ if $\eta=0$. Thus, we get the conclusion by Applying theorem \ref{arendtbatty} of Arendt Batty.

\subsection{Polynomial Stability in the case $\eta>0$}\label{Section-poly}
\noindent In this section, we study the polynomial stability of the system \eqref{AUG1}-\eqref{AUG3} in the case $\eta>0$. For this purpose, we will use a frequency domain approach method, namely we will use  Theorem \ref{bt}. Our main result in this section is the following theorem.


\begin{theoreme}\label{pol}
Assume that $\eta>0$. The $C_0-$semigroup $(e^{t\AA_1})_{t\geq 0}$ is polynomially stable; i.e. there exists constant $C_1>0$ such that for every $U_0\in D(\AA_1)$, we have 
\begin{equation}\label{Energypol}
E_1(t)\leq \frac{C_1}{t^{\frac{4}{2-\alpha}}}\|U_0\|^2_{D(\AA_1)},\quad t>0,\,\forall U_0\in D(\mathcal{A}_1).
\end{equation}
\end{theoreme}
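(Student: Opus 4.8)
The plan is to invoke the Borichev--Tomilov frequency-domain criterion (Theorem \ref{bt}). Since $\eta>0$, the argument in the proof of Theorem \ref{Strong} already yields $i\mathbb{R}\subset\rho(\mathcal{A}_1)$, so the only thing left is the resolvent estimate
$$\sup_{\lambda\in\mathbb{R}}|\lambda|^{-\frac{2-\alpha}{2}}\left\|(i\lambda I-\mathcal{A}_1)^{-1}\right\|_{\mathcal{L}(\mathcal{H}_1)}<\infty;$$
the exponent $\tfrac{2-\alpha}{2}$ is exactly the value for which Theorem \ref{bt} returns the rate $t^{-4/(2-\alpha)}$ in \eqref{Energypol}, after writing $\|e^{t\mathcal{A}_1}U_0\|_{\mathcal{H}_1}\le \|e^{t\mathcal{A}_1}\mathcal{A}_1^{-1}\|\,\|\mathcal{A}_1U_0\|_{\mathcal{H}_1}$ and squaring. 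I would prove this bound by contradiction: if it fails, there are $\lambda_n\in\mathbb{R}$ with $|\lambda_n|\to\infty$ and $U_n=(u_n,v_n,y_n,z_n,\omega_n)\in D(\mathcal{A}_1)$, normalised by $\|U_n\|_{\mathcal{H}_1}=1$, with
$$|\lambda_n|^{\frac{2-\alpha}{2}}(i\lambda_n I-\mathcal{A}_1)U_n=:F_n=(f_{1,n},f_{2,n},f_{3,n},f_{4,n},f_{5,n})\longrightarrow 0\ \text{in}\ \mathcal{H}_1.$$
The goal is then to derive $\|U_n\|_{\mathcal{H}_1}\to 0$, contradicting the normalisation. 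Throughout I would use the componentwise form \eqref{surj1.1}--\eqref{surj1.5} of the resolvent equation.

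The first estimate comes from dissipativity: taking the real part of $\langle(i\lambda_nI-\mathcal{A}_1)U_n,U_n\rangle_{\mathcal{H}_1}$ and using Lemma \ref{Denergy} gives
$$\kappa(\alpha)\int_0^L\int_{\mathbb{R}}(\xi^2+\eta)|\omega_n|^2\,d\xi\,dx=\Re\langle F_n,U_n\rangle_{\mathcal{H}_1}=o\!\left(|\lambda_n|^{-\frac{2-\alpha}{2}}\right),$$
which, since $\eta>0$, already forces $\int_0^L\int_{\mathbb{R}}|\omega_n|^2\,d\xi\,dx\to0$. I would then recover control of the wave on the damping band $(l_0,l_1)$: solving \eqref{surj1.5} for $\omega_n$, multiplying by $\kappa(\alpha)|\xi|^{\frac{2\alpha-1}{2}}$ and integrating in $\xi$ expresses $\sqrt{d_0}\,(v_n)_x\,\mathtt{I}_5$ through $\kappa(\alpha)\int_{\mathbb{R}}|\xi|^{\frac{2\alpha-1}{2}}\omega_n\,d\xi$ and $\mathtt{I}_6$. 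Applying Cauchy--Schwarz against the dissipation, together with the large-$\lambda$ asymptotics $|\mathtt{I}_5(\lambda,\eta,\alpha)|\sim|\lambda|^{\alpha-1}$ and $\mathtt{I}_7(\lambda,\eta,\alpha)\sim|\lambda|^{\alpha-2}$ (read off from the decomposition in Lemma \ref{lemI2} and from Lemma \ref{lemI3} via the scaling $\xi=|\lambda|^{1/2}\sigma$), I would obtain $\|(u_n)_x\|_{L^2(l_0,l_1)}\to0$ together with an explicit bound on $\|\lambda_n(u_n)_x\|_{L^2(l_0,l_1)}$. The competition between the order $|\lambda|^{\alpha-1}$ of $\mathtt{I}_5$ and the dissipation rate $|\lambda|^{-(2-\alpha)/2}$ is exactly what pins down the exponent $\tfrac{2-\alpha}{2}$.

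The heart of the proof is to propagate this local information to the whole energy by the multiplier method. Eliminating $v_n$ via \eqref{surj1.1} turns \eqref{surj1.2} into the second-order equation $\lambda_n^2u_n+(a(u_n)_x+\sqrt{d}\,\kappa(\alpha)\int_{\mathbb{R}}|\xi|^{\frac{2\alpha-1}{2}}\omega_n\,d\xi)_x=-(f_{2,n}+i\lambda_nf_{1,n})$. Multiplying it by $h(x)\overline{(u_n)}_x$ for a well-chosen piecewise-smooth weight $h$ (adapted to the band $(l_0,l_1)$ and to the boundary at $x=0,L$), and separately by $\overline{u_n}$, then integrating by parts, relates $\int_0^L(\lambda_n^2|u_n|^2+a|(u_n)_x|^2)\,dx$ to the already-controlled damping band and to the traces at the endpoints. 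The non-local stress term is absorbed using the dissipation estimate again, while the trace terms $u_n(0)$ and $(u_n)_x(0)$ are controlled by one-dimensional interpolation (Gagliardo--Nirenberg) inequalities. This should give $\int_0^L(\lambda_n^2|u_n|^2+a|(u_n)_x|^2)\,dx\to0$. Finally, the transmission conditions $u_n(0)=y_n(0)$, $a(u_n)_x(0)+b(y_n)_{xxx}(0)=0$ and $(y_n)_{xx}(0)=0$ hand the controlled data to the beam equation $\lambda_n^2y_n-b(y_n)_{xxxx}=-(f_{4,n}+i\lambda_nf_{3,n})$, and a multiplier adapted to the fourth-order operator (such as $x\overline{(y_n)}_x$, supplemented by $\overline{y_n}$) propagates the decay over $(-L,0)$ to give $\int_{-L}^0(\lambda_n^2|y_n|^2+b|(y_n)_{xx}|^2)\,dx\to0$.

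Summing the three pieces yields $\|U_n\|_{\mathcal{H}_1}\to0$, the desired contradiction; hence the resolvent bound holds and Theorem \ref{bt} delivers \eqref{Energypol}. I expect the main obstacle to be the propagation across the coupling interface at $x=0$: the damping is only local and $d$ is non-smooth, the stress carries the non-local fractional tail $\kappa(\alpha)\int_{\mathbb{R}}|\xi|^{\frac{2\alpha-1}{2}}\omega_n\,d\xi$, and the transmission conditions couple the second-order wave traces to the fourth-order beam traces. Arranging the multiplier weights and the interpolation bounds so that every remainder term is $o(1)$ while the powers of $\lambda_n$ are tracked exactly through the asymptotics of $\mathtt{I}_5$ and $\mathtt{I}_7$ is where the bulk of the technical work lies.
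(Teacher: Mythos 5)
Your skeleton (Borichev--Tomilov with $\ell=1-\tfrac{\alpha}{2}$, contradiction argument, dissipation estimate, multipliers across the interface) is the same as the paper's, but two of your concrete steps do not survive scrutiny. First, your route to the band estimate is too lossy. Solving \eqref{pol7} for $\omega$ and integrating against $\kappa(\alpha)|\xi|^{\frac{2\alpha-1}{2}}$ gives $\sqrt{d_0}\,v_x\,\mathtt{I}_5=\kappa(\alpha)\int_{\mathbb{R}}|\xi|^{\frac{2\alpha-1}{2}}\omega\,d\xi-\lambda^{-1+\frac{\alpha}{2}}\mathtt{I}_6$, and Cauchy--Schwarz against the dissipation bounds the integral by a $\lambda$-independent constant times $\left(\int_{\mathbb{R}}(\xi^2+\eta)|\omega|^2d\xi\right)^{\frac{1}{2}}$. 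Since $|\mathtt{I}_5|\sim|\lambda|^{\alpha-1}$, this yields only $\int_{l_0}^{l_1}|v_x|^2dx\lesssim\lambda^{2-2\alpha}\,o\!\left(\lambda^{-1+\frac{\alpha}{2}}\right)=o\!\left(\lambda^{1-\frac{3\alpha}{2}}\right)$, which does not even tend to zero when $\alpha<\tfrac{2}{3}$. The paper instead integrates the pointwise inequality coming from \eqref{pol7} against the weight $|\xi|\left(|\lambda|+\xi^2+\eta\right)^{-2}$, whose associated integrals $\mathtt{I}_{12},\mathtt{I}_{13},\mathtt{I}_{14}$ (Lemma \ref{lemI4}) have precisely the right asymptotics to give the sharp rates $\int_{l_0}^{l_1}|v_x|^2dx=o(\lambda^{-\frac{\alpha}{2}})$ and $\int_{l_0}^{l_1}|u_x|^2dx=o(\lambda^{-2-\frac{\alpha}{2}})$ of Lemma \ref{First-Estimation}; every later step consumes these sharp rates.

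Second, and more seriously, you never estimate $v$ itself on the damping band, and ``absorbing the non-local stress term using the dissipation estimate again'' cannot work without that. Whatever multiplier you use on the wave part (the paper's $2a^{-1}h\bar{S}_d$, or your $h\overline{u_x}$ after substituting $u_x=a^{-1}(S_d-\text{non-local part})$), the coupling inevitably produces the term
\begin{equation*}
\Re\left(2i\lambda a^{-1}\kappa(\alpha)\int_0^L h\,v\,\sqrt{d(x)}\left(\int_{\mathbb{R}}|\xi|^{\frac{2\alpha-1}{2}}\bar{\omega}(x,\xi)\,d\xi\right)dx\right),
\end{equation*}
which carries an explicit factor $\lambda$; the dissipation supplies only $o(\lambda^{-\frac{1}{2}+\frac{\alpha}{4}})$ for the $\omega$-factor, so you need $\|v\|_{L^2(l_0,l_1)}=o(\lambda^{-\frac{1}{2}-\frac{\alpha}{4}})$ to conclude the term is $o(1)$, while the normalization gives merely $O(1)$. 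This is exactly what the paper's Lemmas \ref{vs} and \ref{2nd-Estimation} provide (trace estimates at $l_0,l_1$, then the multiplier $-i\lambda^{-1}\bar{v}$ on the band), and both rest on the sharp rates of Lemma \ref{First-Estimation} that your first step fails to deliver. Two smaller problems: your multiplier $h\overline{(u_n)_x}$, used directly, requires integrating by parts through the non-local stress, i.e.\ either $u_{xx}\in L^2$ on the band or differentiability of $\int_{\mathbb{R}}|\xi|^{\frac{2\alpha-1}{2}}\omega\,d\xi$ in $x$, neither of which holds ($\omega(\cdot,\xi)$ is only $L^2$ in $x$; only the combination $S_d$ lies in $H^1$) --- this is precisely why the paper multiplies by $h\bar{S}_d$; and Gagliardo--Nirenberg interpolation can only give boundedness of the traces $u_n(0)$, $(u_n)_x(0)$, never their smallness --- in the paper the smallness of the interface traces is extracted from the multiplier identity itself with $h=x-L$ (Step 2 of Lemma \ref{lem7}), after the interior estimates on $(0,L)$ are already in hand, and only then is it transferred to the beam through the transmission conditions.
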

According to Theorem \ref{bt}, by taking $\ell=1-\frac{\alpha}{2}$, the polynomial energy decay \eqref{Energypol} holds if the following conditions 
\begin{equation}\label{H1}\tag{${\rm{H_1}}$}
i\R\subset \rho(\mathcal{A}_1),
\end{equation}
and
\begin{equation}\label{H2}\tag{${\rm{H_2}}$}
\sup_{\la\in \R}\left\|(i\la I-\AA_1)^{-1}\right\|_{\mathcal{L}(\mathcal{H}_1)}=O\left(\abs{\la}^{1-\frac{\alpha}{2}}\right)
\end{equation}
are satisfied. Since Condition \eqref{H1} is already proved in Lemma \ref{ker}. We will prove condition \eqref{H2} by an argument of contradiction. For this purpose, suppose that \eqref{H2} is false,  then there exists $\left\{\left(\la_n,U_n:=(u_n,v_n,y_n,z_n,\omega_n(\cdot,\xi))^\top\right)\right\}\subset \R^{\ast}\times D(\AA_1)$ with 
\begin{equation}\label{pol1}
\abs{\la_n}\to +\infty \quad \text{and}\quad \|U_n\|_{\mathcal{H}_1}=\|(u_n,v_n,y_n,z_n,\omega_n(\cdot,\xi))\|_{\mathcal{H}_1}=1, 
\end{equation}
such that 
\begin{equation}\label{pol2-w}
\left(\la_n\right)^{1-\frac{\alpha}{2}}\left(i\la_nI-\AA_1\right)U_n=F_n:=(f_{1,n},f_{2,n},f_{3,n},f_{4,n},f_{5,n}(\cdot,\xi))^{\top}\to 0 \ \ \text{in}\ \ \mathcal{H}_1. 
\end{equation}
For simplicity, we drop the index $n$. Equivalently, from \eqref{pol2-w}, we have 
\begin{eqnarray}
i\la u-v&=&\dfrac{f_1}{\la^{1-\frac{\alpha}{2}}}  \quad\text{in}\ H_R^1(0,L),\label{pol3}\\
i\la v-\left(S_d\right)_x&=&\dfrac{f_2}{\la^{1-\frac{\alpha}{2}}} \quad \text{in}\ L^2(0,L),\label{pol4}\\
i\la y-z&=&\dfrac{f_3}{\la^{1-\frac{\alpha}{2}}} \quad\text{in}\ H_L^2(-L,0),\label{pol5}\\
i\la z+by_{xxxx}&=&\dfrac{f_4}{\la^{1-\frac{\alpha}{2}}} \quad \text{in}\ L^2(-L,0),\label{pol6}\\
(i\la+\xi^2+\eta)\omega(x,\xi)-\sqrt{d(x)}v_x|\xi|^{\frac{2\alpha-1}{2}}&=&\dfrac{f_5(x,\xi)}{\la^{1-\frac{\alpha}{2}}} \quad \text{in}\ W,\label{pol7}
\end{eqnarray}
where $\displaystyle S_d=au_x+\sqrt{d(x)}\kappa(\alpha)\int_{\mathbb{R}}|\xi|^{\frac{2\alpha-1}{2}}\omega(x,\xi)d\xi$.\\
Here we will check the condition \eqref{H2} by finding a contradiction with \eqref{pol1} by showing $\|U\|_{\mathcal{H}_1}=o(1)$. For clarity, we divide the proof into several Lemmas.
\begin{Lemma}\label{lemI4}
Let $\alpha\in (0,1)$, $\eta>0$ and $\la\in \R$, then
$$
\left\{\begin{array}{l}
\displaystyle
\mathtt{I}_{12}(\la,\eta,\alpha)=\int_{\R}\frac{\abs{\xi}^{\alpha+\frac{1}{2}}}{\left(\abs{\la}+\xi^2+\eta\right)^2}d\xi=c_1\left(\abs{\la}+\eta\right)^{\frac{\alpha}{2}-\frac{5}{4}},\\[0.1in]
\displaystyle
\mathtt{I}_{13}(\la,\eta)=\left(\int_{\R}\frac{1}{(\abs{\la}+\xi^2+\eta)^2}d\xi\right)^{\frac{1}{2}}=\sqrt{\frac{\pi}{2}}\frac{1}{(\abs{\la}+\eta)^{\frac{3}{4}}},\\[0.1in]
\displaystyle
\mathtt{I}_{14}(\la,\eta)=\left(\int_{\R}\frac{\xi^2}{\left(\abs{\la}+\xi^2+\eta\right)^4}d\xi\right)^{\frac{1}{2}}=\frac{\sqrt{\pi}}{4}\frac{1}{(\abs{\la}+\eta)^{\frac{5}{4}}}
\end{array}
\right.
$$
where $\displaystyle c_1=\int_{1}^{\infty}\frac{\left(y-1\right)^{\frac{\alpha}{2}-\frac{1}{4}}}{y^2}dy$.
\end{Lemma}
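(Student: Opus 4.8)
The plan is to evaluate each of the three integrals in closed form by a single change of variables that factors out the scale $\abs{\la}+\eta$, since all three integrands depend on $\xi$ only through $\xi^2$ and through the combination $\abs{\la}+\xi^2+\eta$. Throughout I would set $a^2:=\abs{\la}+\eta$, which is strictly positive because $\eta>0$, and use the evenness of each integrand to replace $\int_{\R}$ by $2\int_0^{\infty}$. The integral $\mathtt{I}_{12}$ is the only one carrying the fractional power $\abs{\xi}^{\alpha+\frac12}$, so it reduces to the Beta-type constant $c_1$; the integrals $\mathtt{I}_{13}$ and $\mathtt{I}_{14}$ are rational in $\xi$ and are handled by classical formulas.

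For $\mathtt{I}_{12}$ I would introduce the variable $y=1+\xi^2/(\abs{\la}+\eta)$, so that $\abs{\la}+\xi^2+\eta=(\abs{\la}+\eta)y$, $\xi=(\abs{\la}+\eta)^{1/2}(y-1)^{1/2}$ and $d\xi=\tfrac12(\abs{\la}+\eta)^{1/2}(y-1)^{-1/2}\,dy$. Substituting and collecting the powers of $(\abs{\la}+\eta)$ produces the total exponent
\begin{equation*}
\frac{\alpha+\frac12}{2}-2+\frac12=\frac{\alpha}{2}-\frac54,
\end{equation*}
while the $y$-integrand becomes $(y-1)^{\alpha/2-1/4}y^{-2}$, so that $\mathtt{I}_{12}=c_1(\abs{\la}+\eta)^{\alpha/2-5/4}$ with $c_1$ exactly the constant in the statement. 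It remains to check that $c_1$ is finite: near $y=1$ the factor $(y-1)^{\alpha/2-1/4}$ is integrable because $\alpha/2-1/4>-1$, and near $y=+\infty$ the integrand behaves like $y^{\alpha/2-9/4}$, which is integrable since $\alpha<1$ forces $\alpha/2-9/4<-1$.

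For $\mathtt{I}_{13}$ and $\mathtt{I}_{14}$ I would simply invoke the classical identities
\begin{equation*}
\int_{\R}\frac{d\xi}{(a^2+\xi^2)^2}=\frac{\pi}{2a^3},\qquad \int_{\R}\frac{\xi^2\,d\xi}{(a^2+\xi^2)^4}=\frac{\pi}{16a^5},
\end{equation*}
valid for $a>0$, which follow either from the Beta-integral identity $\int_0^{\infty}\xi^{s-1}(a^2+\xi^2)^{-n}\,d\xi=\tfrac12 a^{s-2n}\Gamma(s/2)\Gamma(n-s/2)/\Gamma(n)$ or from a residue computation. With $a^2=\abs{\la}+\eta$ these give $\int_{\R}(a^2+\xi^2)^{-2}\,d\xi=\tfrac{\pi}{2}(\abs{\la}+\eta)^{-3/2}$ and $\int_{\R}\xi^2(a^2+\xi^2)^{-4}\,d\xi=\tfrac{\pi}{16}(\abs{\la}+\eta)^{-5/2}$; taking square roots yields the stated expressions for $\mathtt{I}_{13}$ and $\mathtt{I}_{14}$.

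There is no conceptual obstacle here: the content is purely computational. The only point requiring care is the bookkeeping — tracking the fractional powers of $(\abs{\la}+\eta)$ through the substitution in $\mathtt{I}_{12}$, and pinning down the numerical constants $\pi/2$ and $\pi/16$ (equivalently the Gamma values $\Gamma(3/2)=\sqrt\pi/2$, $\Gamma(5/2)=3\sqrt\pi/4$, $\Gamma(4)=6$) in the two rational integrals. Since $\eta>0$ guarantees $a^2=\abs{\la}+\eta>0$ for every $\la\in\R$, all three integrals converge and the closed-form identities hold for all $\la\in\R$.
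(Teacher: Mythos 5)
Your proposal is correct and follows essentially the same route as the paper: the substitution $y=1+\xi^2/(\abs{\la}+\eta)$ for $\mathtt{I}_{12}$ is exactly the paper's, and your closed-form identities $\int_{\R}(a^2+\xi^2)^{-2}\,d\xi=\pi/(2a^3)$ and $\int_{\R}\xi^2(a^2+\xi^2)^{-4}\,d\xi=\pi/(16a^5)$ are just the paper's scaling substitution $s=\xi/\sqrt{\abs{\la}+\eta}$ combined with $\int_0^{\infty}(1+s^2)^{-2}\,ds=\pi/4$ and $\int_0^{\infty}s^2(1+s^2)^{-4}\,ds=\pi/32$, carried out in one step. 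All exponents and constants check out, including the convergence of $c_1$.
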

\begin{proof}
$\mathtt{I}_{12}$ can be written as
\begin{equation}\label{E==(B.2)}
\mathtt{I}_{12}(\lambda,\eta,\alpha)=\frac{2}{\left(\lambda+\eta\right)^{2}}\int_{0}^{\infty}\frac{ \xi^{\alpha+\frac{1}{2}}}{\left(1+\frac{\xi^2}{|\lambda|+\eta}\right)^{2}}d\xi.
\end{equation}
Thus, equation \eqref{E==(B.2)} may be simplified by defining a new variable $y=1+\frac{\xi^2}{\lambda+\eta}$.  Substituting $\xi$  by $\left(y-1\right)^{\frac{1}{2}}\left(\lambda+\eta\right)^{\frac{1}{2}}$ in equation \eqref{E==(B.2)}, we get
\begin{equation*}
\mathtt{I}_{12}(\lambda,\eta,\alpha)=\left(\lambda+\eta\right)^{\frac{\alpha}{2}-\frac{5}{4}}\int_{1}^{\infty}\frac{\left(y-1\right)^{\frac{\alpha}{2}-\frac{1}{4}}}{y^2}dy.
\end{equation*}
Using the fact that $\alpha\in ]0,1[$, it is easy to see that $y^{-2}\left(y-1\right)^{\frac{\alpha}{2}-\frac{1}{4}}\in L^1(1,+\infty)$. Hence, the last integral in the above equation is well defined.  
Now, $\mathtt{I}_{13}(\lambda,\eta)$ can be written as 
\begin{equation*}
\left(\mathtt{I}_{13}(\lambda,\eta)\right)^2=\frac{2}{(\lambda+\eta)^2}\int_0^{\infty}\frac{1}{\left(1+\left(\frac{\xi}{\sqrt{\lambda+\eta}}\right)^2\right)^2}d\xi=\frac{2}{(\lambda+\eta)^{\frac{3}{2}}}\int_0^{\infty}\frac{1}{\left(1+s^2\right)^2}ds=\frac{2}{(\lambda+\eta)^{\frac{3}{2}}}\times\frac{\pi}{4}=\frac{\pi}{2(\lambda+\eta)^{\frac{3}{2}}},
\end{equation*}
Therefore,\, $\mathtt{I}_{13}(\lambda,\eta)=\displaystyle{\sqrt{\frac{\pi}{2}}\frac{1}{(\lambda+\eta)^{\frac{3}{4}}}}$. Finally, $\mathtt{I}_{14}(\lambda,\eta)$ can be written as 
\begin{equation*}
\left(\mathtt{I}_{14}(\lambda,\eta)\right)^2=\frac{2}{(\lambda+\eta)^4}\int_0^{\infty}\frac{\xi^2}{\left(1+\left(\frac{\xi}{\sqrt{\lambda+\eta}}\right)^2\right)^4}d\xi=\frac{2}{(\lambda+\eta)^{\frac{5}{2}}}\int_0^{\infty}\frac{s^2}{\left(1+s^2\right)^4}ds=\frac{2}{(\lambda+\eta)^{\frac{5}{2}}}\times \frac{\pi}{32}.
\end{equation*}
Then $\mathtt{I}_{14}(\lambda,\eta)=\displaystyle{\frac{\sqrt{\pi}}{4}\frac{1}{(\lambda+\eta)^{\frac{5}{4}}}}$. The proof has been completed. 
\end{proof}
\begin{Lemma}\label{First-Estimation}
Assume that $\eta>0$. Then, the solution $(u,v,y,z,\omega)\in D(\AA_1)$ of system \eqref{pol3}-\eqref{pol7} satisfies the following asymptotic behavior estimations 
\begin{equation}\label{FE-1}
\int_0^L\int_{\R}\left(\abs{\xi}^2+\eta\right)\abs{\omega(x,\xi)}^2d\xi dx=o\left(\la^{-1+\frac{\alpha}{2}}\right)\, ,\,\int_{l_0}^{l_1}\abs{v_x}^2dx=o\left(\la^{-\frac{\alpha}{2}}\right)\, \text{and} \,\,
 \int_{l_0}^{l_1}\abs{u_x}^2dx=o\left(\la^{-2-\frac{\alpha}{2}}\right).
\end{equation}
\end{Lemma}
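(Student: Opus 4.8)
The plan is to establish the three bounds in \eqref{FE-1} in the stated order, propagating dissipation information from $\omega$ to $v_x$ and then to $u_x$. First I would read off the bound on $\omega$ directly from the dissipativity of $\mathcal{A}_1$. Since $\la\in\R$ we have $\Re\langle i\la U,U\rangle_{\mathcal{H}_1}=0$, so taking the real part of \eqref{pol2-w} and invoking the identity computed in Proposition \ref{mdissipatif},
\begin{equation*}
\kappa(\alpha)\int_0^L\int_{\R}(\xi^2+\eta)\abs{\omega(x,\xi)}^2 d\xi\, dx=\Re\langle (i\la I-\mathcal{A}_1)U,U\rangle_{\mathcal{H}_1}=\frac{1}{\la^{1-\frac{\alpha}{2}}}\Re\langle F,U\rangle_{\mathcal{H}_1}.
\end{equation*}
As $\norm{U}_{\mathcal{H}_1}=1$ and $\norm{F}_{\mathcal{H}_1}=o(1)$, Cauchy--Schwarz yields $\int_0^L\int_{\R}(\xi^2+\eta)\abs{\omega}^2 d\xi\, dx=o(\la^{-1+\frac{\alpha}{2}})$ at once. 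This is the only source of dissipation, and everything else must be extracted from it.

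The crux is the second bound, and the whole game is the choice of multiplier in $\xi$. On $(l_0,l_1)$ one has $d(x)=d_0$, so \eqref{pol7} reads $\sqrt{d_0}\,v_x|\xi|^{\frac{2\alpha-1}{2}}=(i\la+\xi^2+\eta)\omega-\la^{-1+\frac{\alpha}{2}}f_5$, whence the pointwise (in $x$, $\xi$) bound $d_0\abs{v_x}^2|\xi|^{2\alpha-1}\le 2\abs{i\la+\xi^2+\eta}^2\abs{\omega}^2+2\la^{-2+\alpha}\abs{f_5}^2$. The plan is to reconstruct $\abs{v_x}^2$ by integrating this against the weight $g(\xi)=Z^{-1}(\xi^2+\eta)\bigl(\la^2+(\xi^2+\eta)^2\bigr)^{-1}$, where $Z=\int_{\R}\frac{|\xi|^{2\alpha-1}(\xi^2+\eta)}{\la^2+(\xi^2+\eta)^2}d\xi$ is chosen precisely so that $\int_{\R}g(\xi)|\xi|^{2\alpha-1}d\xi=1$. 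Because $v_x(x)$ is constant in $\xi$, the left side collapses to $d_0\abs{v_x}^2$, while the main term on the right becomes $\frac{2}{Z}\int_{\R}(\xi^2+\eta)\abs{\omega}^2 d\xi$, using $g(\xi)\abs{i\la+\xi^2+\eta}^2=Z^{-1}(\xi^2+\eta)$. The decisive point is that the weight $\xi^2+\eta$ stays attached to $\omega$, i.e. to exactly the quantity the first step controls. It then remains only to determine the order of $Z$: up to the factor $\kappa(\alpha)$ this is the integral $\mathtt{I}_8$ of Lemma \ref{lemI2}, and the scaling substitution $y=1+\xi^2/(\cdot)$ already used in Lemma \ref{lemI4} gives $Z\sim c\,\la^{\alpha-1}$. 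Integrating over $(l_0,l_1)$ and using $\frac{2}{Z}\sim\la^{1-\alpha}$,
\begin{equation*}
d_0\int_{l_0}^{l_1}\abs{v_x}^2 dx\le \frac{2}{Z}\int_{l_0}^{l_1}\int_{\R}(\xi^2+\eta)\abs{\omega}^2 d\xi\, dx+o(\la^{-2})=\la^{1-\alpha}\,o(\la^{-1+\frac{\alpha}{2}})=o(\la^{-\frac{\alpha}{2}}),
\end{equation*}
the $f_5$ remainder being $O(\la^{-2}\norm{f_5}_W^2)=o(\la^{-2})$ after the elementary bound $g(\xi)\le (2\abs{\la}Z)^{-1}$.

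Finally, the third bound is a soft consequence of the first resolvent equation. Differentiating \eqref{pol3} in $x$ gives $u_x=(i\la)^{-1}\bigl(v_x+\la^{-1+\frac{\alpha}{2}}(f_1)_x\bigr)$ on $(l_0,l_1)$, so $\int_{l_0}^{l_1}\abs{u_x}^2\le 2\la^{-2}\int_{l_0}^{l_1}\abs{v_x}^2+2\la^{-4+\alpha}\norm{(f_1)_x}_{L^2}^2$; the first term is $\la^{-2}o(\la^{-\frac{\alpha}{2}})=o(\la^{-2-\frac{\alpha}{2}})$ and the second is $o(\la^{-4+\alpha})$, hence negligible, which yields the claim. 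I expect the entire difficulty to sit in the second step: a naive reconstruction — for instance dividing \eqref{pol7} by $i\la+\xi^2+\eta$ and integrating against $\kappa(\alpha)|\xi|^{\frac{2\alpha-1}{2}}$, which only produces the factor $\abs{\mathtt{I}_5}^{-1}\sim\la^{1-\alpha}$ together with a $\la$-independent Cauchy--Schwarz constant — detaches the weight $\xi^2+\eta$ from $\omega$, loses a full power, and gives merely $o(\la^{1-\frac{3\alpha}{2}})$. Recovering the sharp exponent $-\alpha/2$ forces the weighted multiplier $g$ above and the exact order $\la^{\alpha-1}$ of $Z$.
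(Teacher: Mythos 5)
Your argument is correct, and its first and third steps are exactly the paper's: the dissipation identity obtained by pairing the resolvent equation with $U$, giving $\kappa(\alpha)\int_0^L\int_{\R}(\xi^2+\eta)\abs{\omega}^2d\xi dx=o(\la^{-1+\frac{\alpha}{2}})$, and the transfer $i\la u_x=v_x-\la^{-1+\frac{\alpha}{2}}(f_1)_x$ on $(l_0,l_1)$ for the third bound. The genuine difference is in the middle step. The paper works with first powers: from \eqref{pol7} it writes $\sqrt{d(x)}\abs{\xi}^{\frac{2\alpha-1}{2}}\abs{v_x}\le(\abs{\la}+\xi^2+\eta)\abs{\omega}+\abs{\la}^{-1+\frac{\alpha}{2}}\abs{f_5}$, multiplies by $(\abs{\la}+\xi^2+\eta)^{-2}\abs{\xi}$, integrates in $\xi$, and applies Cauchy--Schwarz to each term; this requires the three exact asymptotics $\mathtt{I}_{12}\sim c_1(\abs{\la}+\eta)^{\frac{\alpha}{2}-\frac{5}{4}}$, $\mathtt{I}_{13}\sim(\abs{\la}+\eta)^{-\frac{3}{4}}$, $\mathtt{I}_{14}\sim(\abs{\la}+\eta)^{-\frac{5}{4}}$ of Lemma \ref{lemI4}, followed by Young's inequality, the dissipation entering via $\int_{\R}\abs{\xi\omega}^2d\xi\le\int_{\R}(\xi^2+\eta)\abs{\omega}^2d\xi$. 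You instead square the identity and integrate against the single normalized weight $g(\xi)=Z^{-1}(\xi^2+\eta)\left(\la^2+(\xi^2+\eta)^2\right)^{-1}$, which reconstructs $d_0\abs{v_x}^2$ exactly on the left and keeps the factor $\xi^2+\eta$ glued to $\abs{\omega}^2$ on the right. The mechanism is identical in both proofs --- a $\la$-dependent $\xi$-weight whose normalization costs $\la^{1-\alpha}$, which against the dissipation bound $o(\la^{-1+\frac{\alpha}{2}})$ yields $o(\la^{-\frac{\alpha}{2}})$ --- but your bookkeeping is leaner: one integral instead of three, no Cauchy--Schwarz/Young step, and in fact only the one-sided bound $Z\gtrsim\la^{\alpha-1}$ is needed. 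One point of care: the paper's Lemma \ref{lemI2} only proves \emph{finiteness} of $\mathtt{I}_8=\kappa(\alpha)Z$, not its order in $\la$, so the asymptotic $Z\sim c\,\la^{\alpha-1}$ is an extra computation your route must supply; you do supply it, correctly, via the scaling substitution, and it is no harder than the computations in Lemma \ref{lemI4} that it replaces.
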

\begin{proof}
For clarity, we divide the proof into several steps.\\ 
\textbf{Step 1.} Taking the inner product of $F$ with $U$ in $\mathcal{H}_1$, then using \eqref{pol1} and the fact that $U$ is uniformly bounded in $\mathcal{H}_1$, we get 
$$
\kappa(\alpha)\int_0^L\int_{\R}\left(\xi^2+\eta\right)\abs{\omega(x,\xi)}^2d\xi dx=-\Re\left(\left<\AA_1 U,U\right>_{\mathcal{H}_1}\right)=\Re\left(\left<(i\la I-\AA)U,U\right>_{\mathcal{H}_1}\right)=o\left(\la^{-1+\frac{\alpha}{2}}\right).
$$
\textbf{Step 2.} Our aim here is to prove the second estimation in \eqref{FE-1}.\\
From \eqref{pol7}, we get 
$$
\sqrt{d(x)}\abs{\xi}^{\frac{2\alpha-1}{2}}\abs{v_x}\leq \left(\abs{\la}+\xi^2+\eta\right)\abs{\omega(x,\xi)}+\abs{\la}^{-1+\frac{\alpha}{2}}\abs{f_5(x,\xi)}.
$$
Multiplying the above inequality by $\left(\abs{\la}+\xi^2+\eta\right)^{-2}\abs{\xi}$, integrate over $\R$, we get 
\begin{equation}\label{FE-2}
\sqrt{d(x)}\mathtt{I}_{12}(\la,\eta,\alpha)\abs{v_x}\leq \mathtt{I}_{13}(\la,\eta)\left(\int_{\R}\abs{\xi\omega(x,\xi)}^2d\xi\right)^{\frac{1}{2}}+\abs{\la}^{-1+\frac{\alpha}{2}}\mathtt{I}_{14}(\la,\eta)\left(\int_{\R}\abs{f_5(x,\xi)}^2d\xi\right)^{\frac{1}{2}},
\end{equation}
where
$\mathtt{I}_{12}(\la,\eta,\alpha), \mathtt{I}_{13}(\la,\eta)\,\text{and}\,\mathtt{I}_{14}(\la,\eta)$ are defined in Lemma \ref{lemI4}.
Using Young's inequality and the definition of the function $d(x)$ in \eqref{FE-2}, we get
\begin{equation*}
\int_{l_0}^{l_1}\abs{v_x}^2dx\leq 2\frac{\mathtt{I}_{13}^2}{\mathtt{I}_{12}^2}\frac{o(1)}{\abs{\la}^{1-\frac{\alpha}{2}}}+2\frac{\mathtt{I}_{14}^2}{\mathtt{I}_{12}^2}\frac{o(1)}{\abs{\la}^{2-\alpha}}.
\end{equation*}
It follows from Lemma \ref{lemI4} that
\begin{equation}\label{FE-3}
\int_{l_0}^{l_1}\abs{v_x}^2dx\leq \frac{1}{c_1(\abs{\la}+\eta)^{\alpha-1}}\frac{o\left(1\right)}{\abs{\la}^{1-\frac{\alpha}{2}}}+\frac{\sqrt{\pi}}{4}\frac{1}{c_1\left(\abs{\la}+\eta\right)^{\alpha}}\frac{o(1)}{\abs{\la}^{2-\alpha}}. 
\end{equation}
Since $\alpha\in (0,1)$, we have $\min(\frac{\alpha}{2},2)=\frac{\alpha}{2}$, hence from the above equation, we get the second desired estimation in \eqref{FE-1}.\\
\textbf{Step 3.}  From Equation \eqref{pol3} we have
$$i\la u_x=v_x-\la^{-1+\frac{\alpha}{2}}(f_1)_x.$$
It follows that
$$
\displaystyle{\|\la u_x\|_{L^2(l_0,l_1)}\leq \|v_x\|_{L^2(l_0,l_1)}+|\la|^{-1+\frac{\alpha}{2}}\|(f_1)_x\|_{L^2(l_0,l_1)}\leq \dfrac{o(1)}{\la^{\frac{\alpha}{4}}}+\dfrac{o(1)}{\la^{1-\frac{\alpha}{2}}}}.
 $$
Since $\alpha\in (0,1)$, we have $\min\left(1+\frac{\alpha}{4},2-\frac{\alpha}{2}\right)=1+\frac{\alpha}{4}$, hence from the above equation, we get
$$  \int_{l_0}^{l_1}\abs{u_x}^2dx=\dfrac{o(1)}{\la^{2+\frac{\alpha}{2}}}.$$
The proof is thus completed.
\end{proof}
\begin{Lemma}\label{S-Estimation}
Let $0<\alpha<1$ and $\eta >0$. Then, the solution $(u,v,y,z,\omega)\in D(\AA_1)$ of system \eqref{pol3}-\eqref{pol7} satisfies the following asymptotic behavior 
\begin{equation}\label{eqS1-w}
\int_{l_0}^{l_1}\left|S_d\right|^2dx=\frac{o(1)}{\la^{1-\frac{\alpha}{2}}}.
\end{equation}
\end{Lemma}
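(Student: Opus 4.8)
The plan is to return to the definition $S_d=au_x+\sqrt{d(x)}\kappa(\alpha)\int_{\R}|\xi|^{\frac{2\alpha-1}{2}}\omega(x,\xi)d\xi$ and to estimate its two constituent pieces separately on the damping region $(l_0,l_1)$, where $d(x)=d_0$. Using Young's inequality I would first write, pointwise on $(l_0,l_1)$,
\begin{equation*}
|S_d|^2\leq 2a^2|u_x|^2+2d_0\kappa(\alpha)^2\left|\int_{\R}|\xi|^{\frac{2\alpha-1}{2}}\omega(x,\xi)d\xi\right|^2.
\end{equation*}
The first term is immediately controlled: integrating over $(l_0,l_1)$ and invoking the third estimate of Lemma \ref{First-Estimation} gives $2a^2\int_{l_0}^{l_1}|u_x|^2dx=o(\la^{-2-\frac{\alpha}{2}})$, which decays strictly faster than the target rate $\la^{-1+\frac{\alpha}{2}}$ since $-2-\frac{\alpha}{2}<-1+\frac{\alpha}{2}$ and $\la\to+\infty$.

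For the genuinely new term I would apply a weighted Cauchy--Schwarz inequality, splitting $|\xi|^{\frac{2\alpha-1}{2}}=\dfrac{|\xi|^{\frac{2\alpha-1}{2}}}{(\xi^2+\eta)^{1/2}}\cdot(\xi^2+\eta)^{1/2}$, which yields
\begin{equation*}
\left|\int_{\R}|\xi|^{\frac{2\alpha-1}{2}}\omega(x,\xi)d\xi\right|^2\leq\left(\int_{\R}\frac{|\xi|^{2\alpha-1}}{\xi^2+\eta}d\xi\right)\int_{\R}(\xi^2+\eta)|\omega(x,\xi)|^2d\xi.
\end{equation*}
The key point, and the only delicate one, will be that the first factor is a finite constant \emph{precisely because} $\eta>0$: near $\xi=0$ the integrand behaves like $\eta^{-1}|\xi|^{2\alpha-1}$, which is integrable since $2\alpha-1>-1$, while near infinity it behaves like $|\xi|^{2\alpha-3}$, integrable since $2\alpha-3<-1$; both use $\alpha\in(0,1)$. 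I would denote this finite constant by $C_\eta$.

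Integrating the last inequality over $(l_0,l_1)\subset(0,L)$ and then using Step 1 of Lemma \ref{First-Estimation}, which supplies $\int_0^L\int_{\R}(\xi^2+\eta)|\omega(x,\xi)|^2d\xi dx=o(\la^{-1+\frac{\alpha}{2}})$, I obtain
\begin{equation*}
2d_0\kappa(\alpha)^2\int_{l_0}^{l_1}\left|\int_{\R}|\xi|^{\frac{2\alpha-1}{2}}\omega(x,\xi)d\xi\right|^2dx\leq 2d_0\kappa(\alpha)^2C_\eta\int_0^L\int_{\R}(\xi^2+\eta)|\omega(x,\xi)|^2d\xi dx=o(\la^{-1+\frac{\alpha}{2}}).
\end{equation*}
Combining this with the faster-decaying $u_x$-contribution, the integral term dominates and gives exactly $\int_{l_0}^{l_1}|S_d|^2dx=o(\la^{-1+\frac{\alpha}{2}})=\frac{o(1)}{\la^{1-\frac{\alpha}{2}}}$, which is \eqref{eqS1-w}. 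The hard part, such as it is, lies entirely in verifying the convergence of the weight integral $C_\eta$, where the hypothesis $\eta>0$ is indispensable; once that constant is in hand, the rest is a direct application of the two estimates already established in Lemma \ref{First-Estimation}.
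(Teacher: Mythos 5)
Your proof is correct and follows essentially the same route as the paper: the same splitting $|S_d|^2\leq 2a^2|u_x|^2+2d_0\kappa(\alpha)^2\bigl|\int_{\R}|\xi|^{\frac{2\alpha-1}{2}}\omega\,d\xi\bigr|^2$, the same weighted Cauchy--Schwarz with weight $\sqrt{\xi^2+\eta}$ (your constant $C_\eta$ is exactly the paper's $\mathtt{I}_{15}(\alpha,\eta)$, finite for $\alpha\in(0,1)$, $\eta>0$ by the same asymptotics at $0$ and $\infty$), and the same invocation of the first and third estimates of Lemma \ref{First-Estimation}. Nothing further is needed.
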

\begin{proof}
Using the fact that $|P+Q|^2\leq 2P^2+2Q^2$, we obtain
\begin{equation*}
\begin{array}{lll}
\displaystyle
\int_{l_0}^{l_1}\left|S_d\right|^2dx&\displaystyle=\int_{l_0}^{l_1}\left|au_x+\sqrt{d(x)}\kappa(\alpha)\int_{\mathbb{R}}|\xi|^{\frac{2\alpha-1}{2}}\omega(x,\xi)d\xi\right|^2 dx\\
&\displaystyle \leq 2a^2 \int_{l_0}^{l_1}\abs{u_x}^2dx
\displaystyle
+2d_0\kappa(\alpha)^2\int_{l_0}^{l_1}\left(\int_{\R}\frac{\abs{\xi}^{\frac{2\alpha-1}{2}}\sqrt{\xi^2+\eta}}{\sqrt{\xi^2+\eta}}\omega(x,\xi)d\xi\right)^2 dx\\
&\displaystyle \leq 2a^2 \int_{l_0}^{l_1}\abs{u_x}^2dx+c_2 \int_{l_0}^{l_1}\int_{\R}(\xi^2+\eta)\abs{\omega(x,\xi)}^2d\xi dx
\end{array}
\end{equation*}
where $\displaystyle{c_2=d_0\kappa(\alpha)^2\,\mathtt{I}_{15}(\alpha,\eta)}$ and $\displaystyle{\mathtt{I}_{15}(\alpha,\eta)=\int_{\R}\frac{\abs{\xi}^{2\alpha-1}}{\abs{\xi}^2+\eta}d\xi}$. We have  
$$
\frac{\abs{\xi}^{2\alpha-1}}{\abs{\xi}^2+\eta}\isEquivTo{0} \frac{\abs{\xi}^{2\alpha-1}}{\eta}\quad \text{and}\quad \frac{\abs{\xi}^{2\alpha-1}}{\abs{\xi}^2+\eta}\isEquivTo{+\infty}\frac{1}{\abs{\xi}^{3-2\alpha}}.
$$
Since $0<\alpha<1$ and $\eta>0$, then  $\mathtt{I}_{15}(\alpha,\eta)$ is well defined. Using the first and the third estimations in \eqref{FE-1}, we get our desired result.
\end{proof}
\begin{Lemma}\label{vs}
Assume that $\eta>0$. Let $g\in C^1([l_0,l_1])$ such that
\begin{equation*}
g(l_1)=-g(l_0)=1,\quad \max_{x\in(l_0,l_1)}\abs{g(x)}=m_g\quad\text{and}\quad \max_{x\in(l_0,l_1)}\abs{g^{\prime}(x)}=m_{g^{\prime}}
\end{equation*}
where $m_g$ and $m_{g^{\prime}}$ are strictly positive constant numbers. Then, the solution $(u,v,y,z,\omega)\in D(\AA_1)$ of system \eqref{pol3}-\eqref{pol7} satisfies the following asymptotic behavior
\begin{equation}\label{BoundaryEst1'}
\abs{v(l_1)}^2+\abs{v(l_0)}^2\leq \left(\frac{\la^{1-\frac{\alpha}{2}}}{2}+2m_{g^{\prime}}\right)\int_{\ell_0}^{l_1}\abs{v}^2dx+\frac{o(1)}{\la}
\end{equation}
and 
\begin{equation}\label{BoundaryEst2'}
\abs{S_d(l_1)}^2+\abs{S_d(l_0)}^2\leq  \frac{\la^{1+\frac{\alpha}{2}}}{2}\int_{l_0}^{l_1}\abs{v}^2dx+o(1).
\end{equation}
\end{Lemma}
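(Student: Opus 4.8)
The plan is to realize each boundary quantity as the integral of a total derivative built from the multiplier $g$, and then to balance the resulting interior cross-terms by Young's inequality with a weight that is a suitable power of $\la$; the decaying inputs supplied by Lemmas \ref{First-Estimation} and \ref{S-Estimation} are what make the remainders collapse to the claimed orders.

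For \eqref{BoundaryEst1'} I would start from the identity
\begin{equation*}
\abs{v(l_1)}^2+\abs{v(l_0)}^2=\int_{l_0}^{l_1}\left(g\abs{v}^2\right)_x\,dx=\int_{l_0}^{l_1}g^{\prime}\abs{v}^2\,dx+2\Re\int_{l_0}^{l_1}g\,v_x\overline{v}\,dx,
\end{equation*}
which uses $g(l_1)=-g(l_0)=1$. The first term is bounded by $m_{g^{\prime}}\int_{l_0}^{l_1}\abs{v}^2\,dx$, a fortiori by $2m_{g^{\prime}}\int_{l_0}^{l_1}\abs{v}^2\,dx$. For the second term I apply $2\abs{v}\abs{v_x}\leq \eps^{-1}\abs{v}^2+\eps\abs{v_x}^2$ with the balancing choice $\eps=2m_g\la^{-1+\frac{\alpha}{2}}$, so that the coefficient $m_g\eps^{-1}=\tfrac12\la^{1-\frac{\alpha}{2}}$ matches the statement, while the leftover $m_g\eps\int_{l_0}^{l_1}\abs{v_x}^2\,dx=2m_g^2\la^{-1+\frac{\alpha}{2}}\,o\!\left(\la^{-\frac{\alpha}{2}}\right)=o(\la^{-1})$ by the second estimate in \eqref{FE-1}. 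Collecting the pieces yields \eqref{BoundaryEst1'}.

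For \eqref{BoundaryEst2'} I use the same identity with $S_d$ replacing $v$,
\begin{equation*}
\abs{S_d(l_1)}^2+\abs{S_d(l_0)}^2=\int_{l_0}^{l_1}g^{\prime}\abs{S_d}^2\,dx+2\Re\int_{l_0}^{l_1}g\,(S_d)_x\overline{S_d}\,dx,
\end{equation*}
and then substitute $(S_d)_x=i\la v-\la^{-1+\frac{\alpha}{2}}f_2$ from \eqref{pol4}. The dominant contribution $2\Re\int_{l_0}^{l_1}g\,(i\la v)\overline{S_d}\,dx$ is handled by Young's inequality with the weight $\eps=2m_g\la^{-\frac{\alpha}{2}}$, giving the stated $\tfrac12\la^{1+\frac{\alpha}{2}}\int_{l_0}^{l_1}\abs{v}^2\,dx$ together with $2m_g^2\la^{1-\frac{\alpha}{2}}\int_{l_0}^{l_1}\abs{S_d}^2\,dx=o(1)$ by Lemma \ref{S-Estimation}. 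The boundary term satisfies $\int_{l_0}^{l_1}g^{\prime}\abs{S_d}^2\,dx\leq m_{g^{\prime}}\,o(\la^{-1+\frac{\alpha}{2}})=o(1)$, and the remainder carrying $f_2$ is estimated by Cauchy--Schwarz as $2m_g\la^{-1+\frac{\alpha}{2}}\norm{f_2}_{L^2(0,L)}\norm{S_d}_{L^2(l_0,l_1)}$, which is $o(1)$ since $\norm{f_2}_{L^2(0,L)}=o(1)$ and $\norm{S_d}_{L^2(l_0,l_1)}\to 0$. This gives \eqref{BoundaryEst2'}.

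The integration by parts and the Cauchy--Schwarz/Young steps are routine; the only genuine subtlety will be choosing the two Young weights $\eps$ as the correct powers of $\la$ so that the decaying inputs $\int_{l_0}^{l_1}\abs{v_x}^2=o(\la^{-\alpha/2})$ and $\int_{l_0}^{l_1}\abs{S_d}^2=o(\la^{-1+\alpha/2})$ collapse exactly to the remainders $o(\la^{-1})$ and $o(1)$. Throughout one uses $\abs{\la}\to+\infty$, so that $\la$ may be taken large and positive and written without absolute value.
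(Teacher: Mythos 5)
Your proof is correct and follows essentially the same route as the paper: the multiplier identity obtained from integrating $\left(g\abs{\cdot}^2\right)_x$ with $g(l_1)=-g(l_0)=1$, followed by Young's inequality with $\la$-weighted balancing, fed by the estimates of Lemmas \ref{First-Estimation} and \ref{S-Estimation}. The only (harmless) difference is in \eqref{BoundaryEst1'}: the paper substitutes $v_x=i\la u_x-\la^{-1+\frac{\alpha}{2}}(f_1)_x$ via \eqref{pol3} and uses $\int_{l_0}^{l_1}\abs{u_x}^2dx=o\left(\la^{-2-\frac{\alpha}{2}}\right)$, whereas you bound the cross-term with $\int_{l_0}^{l_1}\abs{v_x}^2dx=o\left(\la^{-\frac{\alpha}{2}}\right)$ directly; both inputs come from \eqref{FE-1} and both collapse to the $o(1)/\la$ remainder.
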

\begin{proof}
First we will prove Equation \eqref{BoundaryEst1'}.
From Equation \eqref{pol3}, we have
\begin{equation}\label{eq1-v}
v_x=i\la u_x-\la^{-1+\frac{\alpha}{2}}(f_1)_x.
\end{equation}
Multiply Equation \eqref{eq1-v} by $2g\bar{v}$ and integrate over $(l_0,l_1)$, we get
\begin{equation}
\abs{v(l_1)}^2+\abs{v(l_0)}^2=\int_{l_0}^{l_1}g^{\prime}\abs{v}^2dx+\Re\left(2i\la\int_{l_0}^{l_1}u_xg\bar{v}dx\right)-\Re\left(2\la^{-1+\frac{\alpha}{2}}\int_{l_0}^{l_1}(f_1)_xg\bar{v}dx\right).
\end{equation}
Then, 
\begin{equation}\label{eq2-v'}
\abs{v(l_1)}^2+\abs{v(l_0)}^2\leq m_{g^{\prime}}\int_{l_0}^{l_1}\abs{v}^2dx+2\la m_g\int_{l_0}^{l_1}\abs{u_x}\abs{\bar{v}}dx+2m_g\la^{-1+\frac{\alpha}{2}}\int_{l_0}^{l_1}\abs{(f_1)_x} \abs{\bar{v}}dx.
\end{equation}
Using Young's inequality we have
\begin{equation}\label{Young1}
2\la m_g\abs{u_x}\abs{\bar{v}}\leq \frac{\la^{1-\frac{\alpha}{2}}}{2}\abs{v}^2+2\la^{1+\frac{\alpha}{2}}m_g^2\abs{u_x}^2\quad\text{and}\quad 2m_g\la^{-1+\frac{\alpha}{2}}\abs{(f_1)_x} \abs{\bar{v}}\leq m_{g^{\prime}}\abs{v}^2+\frac{m_g^2}{m_{g^{\prime}}
}\la^{-2+\alpha}\abs{(f_1)_x}^2.
\end{equation}
Using Equation \eqref{Young1}, then Equation \eqref{eq2-v'} becomes
\begin{equation}
\abs{v(l_1)}^2+\abs{v(l_0)}^2\leq \left(\frac{\la^{1-\frac{\alpha}{2}}}{2}+2m_{g^{\prime}}\right)\int_{l_0}^{l_1}\abs{v}^2dx+2\la^{1+\frac{\alpha}{2}} m_g^2\int_{l_0}^{l_1}\abs{u_x}^2dx+\frac{m_g^2}{m_{g^{\prime}}}\la^{-2+\alpha}\int_{l_0}^{l_1}\abs{(f_1)_x}^2 .
\end{equation}
Using the third estimation in Equation \eqref{FE-1} and the fact that $\|(f_1)_x\|_{L^2(l_0,l_1)}=o(1)$, we obtain
\begin{equation}
\abs{v(l_1)}^2+\abs{v(l_0)}^2\leq \left(\frac{\la^{1-\frac{\alpha}{2}}}{2}+2m_{g^{\prime}}\right)\int_{l_0}^{l_1}\abs{v}^2dx+\frac{o(1)}{\la}+\frac{o(1)}{\la^{2-\alpha}}.
\end{equation}
Since $\alpha\in(0,1)$, hence
\begin{equation}\label{BoundaryEst1}
\abs{v(l_1)}^2+\abs{v(l_0)}^2\leq \left(\frac{\la^{1-\frac{\alpha}{2}}}{2}+2m_{g^{\prime}}\right)\int_{l_0}^{l_1}\abs{v}^2dx+\frac{o(1)}{\la}.
\end{equation}
Now, we will prove \eqref{BoundaryEst2'}. For this aim, multiply Equation \eqref{pol4} by $-2g\bar{S}_d$ and integrate over $(l_0,l_1)$, we get
\begin{equation}
\abs{S_d(l_1)}^2+\abs{S_d(l_0)}^2=\int_{l_0}^{l_1}g^{\prime}\abs{S_d}^2dx+\Re\left(2i\la\int_{l_0}^{l_1}vg\bar{S}_ddx\right)-\Re\left(2\la^{-1+\frac{\alpha}{2}}\int_{l_0}^{l_1}f_2g\bar{S}_ddx\right).
\end{equation}
Then, 
\begin{equation}\label{eq2-v}
\abs{S_d(l_1)}^2+\abs{S_d(l_0)}^2\leq m_{g^{\prime}}\int_{l_0}^{l_1}\abs{S_d}^2dx+2\la m_g\int_{l_0}^{l_1}\abs{v}\abs{S_d}dx+2m_g\la^{-1+\frac{\alpha}{2}}\int_{l_0}^{l_1}\abs{f_2} \abs{S_d}dx.
\end{equation}
Using Young's inequality and Equation \eqref{eqS1-w} we obtain
\begin{equation}\label{Young3}
2\la m_g\abs{v}\abs{\bar{S}_d}\leq \frac{\la^{1+\frac{\alpha}{2}}}{2}\abs{v}^2+2m_g^2\la^{1-\frac{\alpha}{2}}\abs{S_d}^2\leq \frac{\la^{1+\frac{\alpha}{2}}}{2}\abs{v}^2+o(1).
\end{equation}
Using Cauchy-Schwarz inequality, Equation \eqref{eqS1-w} and the fact that $\|f_2\|_{L^2(l_0,l_1)}=o(1)$, we obtain
\begin{equation}\label{Young4}
2m_g \la^{-1+\frac{\alpha}{2}}\int_{l_0}^{l_2}\abs{f_2}\abs{S_d}dx\leq \frac{1}{\la^{1-\frac{\alpha}{2}}}\|f_2\|_{L^2(l_0,l_1)}\|S\|_{L^2(l_0,l_1)}=\frac{o(1)}{\la^{\frac{3}{2}-\frac{3\alpha}{4}}}.
\end{equation}
Using Equations \eqref{eqS1-w}, \eqref{Young3} and \eqref{Young4} in \eqref{eq2-v}, and using the fact that $\alpha\in(0,1)$ we get
\begin{equation}\label{BoundaryEst2}
\abs{S_d(l_1)}^2+\abs{S_d(l_0)}^2\leq  \frac{\la^{1+\frac{\alpha}{2}}}{2}\int_{l_0}^{l_1}\abs{v}^2dx+o(1).
\end{equation}
\end{proof}
\begin{Lemma}\label{2nd-Estimation}
Let $0<\alpha<1$ and $\eta >0$. Then, the solution $(u,v,y,z,\omega)\in D(\AA)$ of system \eqref{pol3}-\eqref{pol7} satisfies the following asymptotic behavior
\begin{equation}\label{Estimation2}
\int_{l_0}^{l_1}\abs{v}^2dx=\dfrac{o(1)}{\la^{1+\frac{\alpha}{2}}}.
\end{equation}
\end{Lemma}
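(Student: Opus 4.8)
The plan is to test the second resolvent equation \eqref{pol4} against $\bar v$ on the damping region $(l_0,l_1)$ and then absorb the resulting boundary contributions using the estimates of Lemma \ref{vs}. Concretely, I would rewrite \eqref{pol4} as $i\la v=(S_d)_x+\la^{-1+\frac{\alpha}{2}}f_2$, multiply by $\bar v$, integrate over $(l_0,l_1)$, and integrate by parts in the term $\int_{l_0}^{l_1}(S_d)_x\bar v\,dx$. This produces the identity
\begin{equation*}
i\la\int_{l_0}^{l_1}\abs{v}^2dx=S_d(l_1)\overline{v}(l_1)-S_d(l_0)\overline{v}(l_0)-\int_{l_0}^{l_1}S_d\,\overline{v_x}\,dx+\frac{1}{\la^{1-\frac{\alpha}{2}}}\int_{l_0}^{l_1}f_2\overline{v}\,dx,
\end{equation*}
so that, writing $I:=\int_{l_0}^{l_1}\abs{v}^2dx$ and taking moduli,
\begin{equation*}
\abs{\la}\,I\leq \abs{S_d(l_1)}\abs{v(l_1)}+\abs{S_d(l_0)}\abs{v(l_0)}+\norm{S_d}_{L^2(l_0,l_1)}\norm{v_x}_{L^2(l_0,l_1)}+\frac{1}{\abs{\la}^{1-\frac{\alpha}{2}}}\norm{f_2}_{L^2(l_0,l_1)}\sqrt{I}.
\end{equation*}

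The interior and source terms are immediate. By \eqref{eqS1-w} together with the second estimate in \eqref{FE-1} one has $\norm{S_d}_{L^2(l_0,l_1)}\norm{v_x}_{L^2(l_0,l_1)}=o(\la^{-\frac12})$, while $\norm{f_2}_{L^2(l_0,l_1)}=o(1)$ controls the last term by $\frac{o(1)}{\abs{\la}^{1-\frac{\alpha}{2}}}\sqrt I$. The heart of the argument is the boundary term. I would bound each product $\abs{S_d(l_i)}\abs{v(l_i)}$ by Young's inequality with the balancing weight $w=\abs{\la}^{\frac{\alpha}{2}}$, namely $\abs{S_d(l_i)}\abs{v(l_i)}\leq \tfrac{1}{2w}\abs{S_d(l_i)}^2+\tfrac{w}{2}\abs{v(l_i)}^2$, and then insert \eqref{BoundaryEst1'} and \eqref{BoundaryEst2'}. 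With this choice the two $\la$-powers appearing in those two boundary estimates are equalized, and the coefficient of $I$ generated by the boundary contribution becomes
\begin{equation*}
\frac{1}{2w}\cdot\frac{\abs{\la}^{1+\frac{\alpha}{2}}}{2}+\frac{w}{2}\Big(\frac{\abs{\la}^{1-\frac{\alpha}{2}}}{2}+2m_{g^{\prime}}\Big)=\frac{\abs{\la}}{2}+m_{g^{\prime}}\abs{\la}^{\frac{\alpha}{2}},
\end{equation*}
while the additive constants from \eqref{BoundaryEst1'}--\eqref{BoundaryEst2'} reduce to $o(\abs{\la}^{-\frac{\alpha}{2}})$ because $\alpha\in(0,1)$.

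Collecting everything yields $\abs{\la}\,I\leq\big(\tfrac{\abs{\la}}{2}+m_{g^{\prime}}\abs{\la}^{\frac{\alpha}{2}}\big)I+o(\abs{\la}^{-\frac{\alpha}{2}})+\frac{o(1)}{\abs{\la}^{1-\frac{\alpha}{2}}}\sqrt I$. Since $m_{g^{\prime}}\abs{\la}^{\frac{\alpha}{2}}=o(\abs{\la})$, for $\abs{\la}$ large the leading factor on the left dominates, so I may transfer the $\tfrac{\abs{\la}}{2}I$ term to the left to leave (up to lower order) roughly $\tfrac{\abs{\la}}{4}I$; the residual cross term $\frac{o(1)}{\abs{\la}^{1-\frac{\alpha}{2}}}\sqrt I$ is then absorbed by one further Young inequality against $\tfrac{\abs{\la}}{8}I$, contributing only $o(\abs{\la}^{-(3-\alpha)})$. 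Dividing by $\abs{\la}$ gives $I=o(\abs{\la}^{-1-\frac{\alpha}{2}})$, which is exactly \eqref{Estimation2}. The main obstacle is precisely this boundary balance: one must check that the weight $w=\abs{\la}^{\frac{\alpha}{2}}$ renders the boundary coefficient of $I$ strictly below the $\abs{\la}I$ on the left (modulo the harmless $m_{g^{\prime}}\abs{\la}^{\frac{\alpha}{2}}$ correction), so that the absorption is legitimate. Once this scaling is tuned correctly, all remaining steps are routine applications of Cauchy--Schwarz and Young's inequalities.
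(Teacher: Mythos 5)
Your proposal is correct and follows essentially the same route as the paper: the paper multiplies \eqref{pol4} by $-i\la^{-1}\bar v$ (rather than $\bar v$) and integrates by parts over $(l_0,l_1)$, estimates the interior term $\int S_d\bar v_x$ via Cauchy--Schwarz with \eqref{eqS1-w} and \eqref{FE-1}, and handles the boundary terms exactly as you do --- Young's inequality with the weight $\la^{\alpha/2}$ followed by \eqref{BoundaryEst1'}--\eqref{BoundaryEst2'}, producing the absorbable coefficient $\tfrac12+m_{g'}\la^{-1+\alpha/2}$ in front of $\int_{l_0}^{l_1}|v|^2dx$. The only cosmetic differences are that you work with $\la I$ instead of $I$ and retain $\sqrt I$ in the $f_2$ term (absorbed by an extra Young step) where the paper simply uses the uniform bound on $\|v\|_{L^2}$.
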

\begin{proof}
Multiply Equation \eqref{pol4} by $-i\la^{-1}\bar{v}$ and integrate over $(l_0,l_1)$, we get
\begin{equation}\label{eq3-v}
\displaystyle
\int_{l_0}^{l_1}\abs{v}^2dx=\Re\left(i\la^{-1}\int_{l_0}^{l_1}S_d\bar{v}_xdx\right)-\left[i\la^{-1}S_d\bar{v}\right]_{l_0}^{l_1}+\Re\left(i\la^{-2+\frac{\alpha}{2}}\int_{l_0}^{l_1}f_2\bar{v}dx\right).
\end{equation}
Estimation of the term $\displaystyle\Re\left(i\la^{-1}\int_{l_0}^{l_1}S_d\bar{v}_xdx\right)$. Using Cauchy-Schwarz inequality, the second estimation in \eqref{FE-1} and the estimation in \eqref{eqS1-w}, we get
\begin{equation}\label{eq4-v}
\left|\Re\left(i\la^{-1}\int_{l_0}^{l_1}S_d\bar{v}_xdx\right)\right|\leq \frac{1}{\la}\left(\int_{l_0}^{l_1}\abs{S_d}^2dx\right)^{\frac{1}{2}}\left(\int_{l_0}^{l_1}\abs{v_x}^2dx\right)^{\frac{1}{2}}= \frac{o(1)}{\la^{\frac{3}{2}}}.
\end{equation}
Estimation for the term $\displaystyle\Re\left(i\la^{-2+\frac{\alpha}{2}}\int_{l_0}^{l_1}f_2\bar{v}dx\right)$. Using Cauchy-Schwarz inequality, $v$ is uniformly bounded in $L^2(l_0,l_1)$ and $\|f_2\|_{L^2(l_0,l_1)}=o(1)$, we get
\begin{equation}\label{eq5-v}
\left|\Re\left(i\la^{-2+\frac{\alpha}{2}}\int_{l_0}^{l_1}f_2\bar{v}dx\right)\right|\leq  \la^{-2+\frac{\alpha}{2}} \left(\int_{l_0}^{l_1}\abs{f_2}^2dx\right)^{\frac{1}{2}}\left(\int_{l_0}^{l_1}\abs{v}^2dx\right)^{\frac{1}{2}}= \frac{o(1)}{\la^{2-\frac{\alpha}{2}}}.
\end{equation}
Inserting Equations \eqref{eq4-v} and \eqref{eq5-v} in \eqref{eq3-v}, using the fact that $\min(\frac{3}{2},2-\frac{\alpha}{2})=\frac{3}{2}$, and using Young's inequality on the second term of \eqref{eq3-v} we get
\begin{equation}\label{eq6-v}
\int_{l_0}^{l_1}\abs{v}^2dx\leq\frac{\la^{-1+\frac{\alpha}{2}}}{2}\left[\abs{v(l_1)}^2+\abs{v(l_0)}^2\right]+\frac{\la^{-1-\frac{\alpha}{2}}}{2}\left[\abs{S_d(l_1)}^2+\abs{S_d(l_0)}^2\right]+\frac{o(1)}{\la^{\frac{3}{2}}}.
\end{equation}
Now, inserting \eqref{BoundaryEst1'} and \eqref{BoundaryEst2'} in \eqref{eq6-v}, we get
\begin{equation}\label{eq7-v}
\int_{l_0}^{l_1}\abs{v}^2dx\leq \left(\frac{1}{2}+m_{g^{\prime}}\la^{-1+\frac{\alpha}{2}}\right)\int_{l_0}^{l_1}\abs{v}^2dx+\frac{o(1)}{\la^{1+\frac{\alpha}{2}}}+\frac{o(1)}{\la^{2-\frac{\alpha}{2}}}.
\end{equation}
Since $\alpha\in(0,1)$ then $\min(1+\frac{\alpha}{2},2-\frac{\alpha}{2})=1+\frac{\alpha}{2}$, then Equation \eqref{eq7-v} becomes
\begin{equation}\label{eq8-v}
\left(\frac{1}{2}-m_{g^{\prime}}\la^{-1+\frac{\alpha}{2}}\right)\int_{l_0}^{l_1}\abs{v}^2dx\leq \frac{o(1)}{\la^{1+\frac{\alpha}{2}}}.
\end{equation}
Using the fact that $\abs{\la}\to +\infty $, we can take $\la\geq 4^{\frac{1}{2-\alpha}}m_{g^{\prime}}^{\frac{1}{2-\alpha}}$, and we get our desired result. The proof is thus complete.
\end{proof}
\begin{Lemma}\label{estwave}
Assume that $\eta>0$. Let $h\in C^1([0,L])$ and $\varphi\in C^2([-L,0])$, then the solution $(u,v,y,z,\omega)\in D(\AA_1)$ of system \eqref{pol3}-\eqref{pol7} satisfies the following estimation
\begin{equation}\label{finalest1}
\begin{array}{ll}
\displaystyle
\int_0^Lh^{\prime}\left(\abs{v}^2+a^{-1}\abs{S_d}^2\right)dx+\int_{-L}^0 \varphi^{\prime}\left(\abs{z}^2+3b\abs{y_{xx}}^2\right)dx+2b\int_{-L}^0 y_{xx}\varphi^{\prime\prime}\bar{y}_xdx+b\varphi(-L)\abs{y_{xx}(-L)}^2\\[0.1in]
\displaystyle+h(0)\abs{v(0)}^2-ah(L)\abs{u_x(L)}^2+ah(0)\abs{u_x(0)}^2+\Re\left(2by_{xxx}(0)\varphi(0)\bar{y}_x(0)\right)-\varphi(0)\abs{z(0)}^2=o(1).
\end{array}
\end{equation}
\end{Lemma}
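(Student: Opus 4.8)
The plan is to read \eqref{finalest1} as a multiplier identity, obtained by applying the multiplier technique separately to the wave component on $(0,L)$ and the beam component on $(-L,0)$ and then summing. The wave equation will be tested against $2h\,\overline{u_x}$ and the beam equation against $2\varphi\,\overline{y_x}$; after integration by parts the boundary conditions carried by $U\in D(\mathcal{A}_1)$ (namely $u(L)=0$, $y(-L)=y_x(-L)=0$ and $y_{xx}(0)=0$) annihilate several boundary contributions, the prescribed terms survive, and every leftover term is shown to be $o(1)$ using $F\to 0$ in $\mathcal{H}_1$ together with the estimates of Lemmas \ref{First-Estimation}, \ref{S-Estimation} and \ref{2nd-Estimation}.

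For the wave part I would first eliminate $v$ by inserting \eqref{pol3} into \eqref{pol4}, producing the second-order equation $-\la^2u-(S_d)_x=\la^{-1+\frac{\alpha}{2}}f_2+i\la^{\frac{\alpha}{2}}f_1$ on $(0,L)$. Multiplying by $2h\,\overline{u_x}$, integrating and taking real parts, the term from $-\la^2u$ gives $\la^2\int_0^Lh'|u|^2\,dx-\la^2[h|u|^2]_0^L$; since $v=i\la u-\la^{-1+\frac{\alpha}{2}}f_1$ one has $\la^2|u|^2=|v|^2+o(1)$ in $L^2$ and $u(L)=0$, so these become $\int_0^Lh'|v|^2\,dx+h(0)|v(0)|^2$ up to $o(1)$. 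The term from $-(S_d)_x$, after writing $\overline{u_x}=a^{-1}\overline{S_d}$ on $(0,l_0)\cup(l_1,L)$ where $d$ vanishes, yields $a^{-1}\int_0^Lh'|S_d|^2\,dx-a^{-1}[h|S_d|^2]_0^L$, and using $S_d(0)=au_x(0)$, $S_d(L)=au_x(L)$ the boundary part reduces to $-ah(L)|u_x(L)|^2+ah(0)|u_x(0)|^2$. This reproduces the first line of wave terms in \eqref{finalest1}.

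For the beam part, inserting \eqref{pol5} into \eqref{pol6} gives $-\la^2y+by_{xxxx}=\la^{-1+\frac{\alpha}{2}}f_4+i\la^{\frac{\alpha}{2}}f_3$ on $(-L,0)$. Testing against $2\varphi\,\overline{y_x}$, the term from $-\la^2y$ gives $\int_{-L}^0\varphi'|z|^2\,dx-\varphi(0)|z(0)|^2$ up to $o(1)$ (using $\la^2|y|^2=|z|^2+o(1)$ and $y(-L)=0$), while the conservative term $by_{xxxx}$, after integrating by parts four times, produces $3b\int_{-L}^0\varphi'|y_{xx}|^2\,dx+2b\int_{-L}^0\varphi''y_{xx}\overline{y}_x\,dx$ together with the boundary terms $b\varphi(-L)|y_{xx}(-L)|^2$ and $\Re\!\left(2by_{xxx}(0)\varphi(0)\overline{y}_x(0)\right)$; every other boundary contribution cancels because $y(-L)=y_x(-L)=0$ and $y_{xx}(0)=0$. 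Summing the wave and beam identities reproduces exactly the left-hand side of \eqref{finalest1}.

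The main obstacle is to confirm that the two remaining families of terms are genuinely $o(1)$. The first is the interior damping correction: on $(l_0,l_1)$ the multiplier $2h\,\overline{u_x}$ differs from $2h\,a^{-1}\overline{S_d}$ by $-2h\,a^{-1}$ times the nonlocal term $D:=\sqrt{d(x)}\kappa(\alpha)\int_{\R}|\xi|^{\frac{2\alpha-1}{2}}\overline{\omega}\,d\xi$, so the discrepancy is $\Re\int_{l_0}^{l_1}(S_d)_x\,2h\,a^{-1}\overline{D}\,dx$. Rewriting $(S_d)_x=i\la v-\la^{-1+\frac{\alpha}{2}}f_2$ from \eqref{pol4} bounds this by $C\,\la\|v\|_{L^2(l_0,l_1)}\|D\|_{L^2(l_0,l_1)}+o(1)$; since $\|D\|_{L^2(l_0,l_1)}^2$ is controlled by $\int_0^L\int_{\R}(\xi^2+\eta)|\omega|^2\,d\xi\,dx$ exactly as in the proof of Lemma \ref{S-Estimation}, the first estimate of Lemma \ref{First-Estimation} gives $\|D\|_{L^2(l_0,l_1)}=o(\la^{-\frac12+\frac{\alpha}{4}})$, while Lemma \ref{2nd-Estimation} gives $\|v\|_{L^2(l_0,l_1)}=o(\la^{-\frac12-\frac{\alpha}{4}})$, so the product is $\la\cdot o(\la^{-1})=o(1)$. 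The second family consists of the source terms carrying the growing factor $\la^{\frac{\alpha}{2}}$, most critically $i\la^{\frac{\alpha}{2}}f_1$ (resp. $i\la^{\frac{\alpha}{2}}f_3$) tested against $u_x$ (resp. $y_x$); the apparent growth is removed by integrating by parts to move $\la$ onto $u$ and then using $i\la\,\overline{u}=-\overline{v}-\la^{-1+\frac{\alpha}{2}}\overline{f_1}$, which trades $\la^{\frac{\alpha}{2}}$ for $\la^{\frac{\alpha}{2}-1}\to 0$, after which the boundedness of $U$ in $\mathcal{H}_1$ and $F\to 0$ leave only $o(1)$. Collecting all contributions yields \eqref{finalest1}.
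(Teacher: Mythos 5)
Your proposal is correct, and at its core it is the same multiplier argument as the paper's: the same multipliers (a cut-off $h$ against the spatial derivative of the wave unknown, $\varphi\bar{y}_x$ against the beam), the same boundary cancellations coming from $u(L)=0$, $y(-L)=y_x(-L)=0$, $y_{xx}(0)=0$, and the same two key inputs. In particular, your treatment of the damping correction on $(l_0,l_1)$, namely the bound
$\la\|v\|_{L^2(l_0,l_1)}\|D\|_{L^2(l_0,l_1)}=\la\cdot o\bigl(\la^{-\frac12-\frac{\alpha}{4}}\bigr)\cdot o\bigl(\la^{-\frac12+\frac{\alpha}{4}}\bigr)=o(1)$ with $D=\sqrt{d(x)}\kappa(\alpha)\int_{\R}\abs{\xi}^{\frac{2\alpha-1}{2}}\omega\,d\xi$, obtained from Lemmas \ref{First-Estimation}, \ref{S-Estimation} and \ref{2nd-Estimation}, is exactly the paper's estimate \eqref{estTerm2}.

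The only structural difference is bookkeeping, and it is worth noting what it costs. The paper stays in the first-order variables: it multiplies \eqref{pol4} by $2a^{-1}h\bar{S}_d$ and uses \eqref{pol3} in the form $i\la\bar{u}_x=-\bar{v}_x-\la^{-1+\frac{\alpha}{2}}(\bar{f_1})_x$ (and likewise \eqref{pol5} for the beam), so every data term enters with the decaying weight $\la^{-1+\frac{\alpha}{2}}$ and the boundary quantities $h(0)\abs{v(0)}^2$, $\varphi(0)\abs{z(0)}^2$ appear directly. You eliminate $v$ and $z$ first; this is equivalent, but it produces the source terms $i\la^{\frac{\alpha}{2}}f_1$, $i\la^{\frac{\alpha}{2}}f_3$ with an apparently growing weight --- which you correctly tame by integrating by parts and trading $i\la u$ back for $v$ --- and it makes the boundary contributions appear as $\la^2h(0)\abs{u(0)}^2$ and $\la^2\varphi(0)\abs{y(0)}^2$ rather than $h(0)\abs{v(0)}^2$ and $\varphi(0)\abs{z(0)}^2$. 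Converting these, and controlling the boundary terms left by your integration by parts, requires trace bounds that you leave implicit: e.g. $\abs{u(0)}^2\lesssim\|u\|_{L^2(0,L)}\|u\|_{H^1(0,L)}=O(\la^{-1})$ (since $\|u\|_{L^2}=O(\la^{-1})$ and $\|u_x\|_{L^2}=O(1)$), together with $\abs{f_1(0)}\lesssim\|f_1\|_{H^1_R(0,L)}=o(1)$, give $\la^{\frac{\alpha}{2}}\abs{u(0)}\abs{f_1(0)}=o\bigl(\la^{\frac{\alpha}{2}-\frac12}\bigr)=o(1)$, and similarly for $y$ at $x=0$. These are one-line fixes, so your proof goes through; the paper's choice of multiplier $\bar{S}_d$ in the first-order system simply makes them unnecessary.
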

\begin{proof}
The proof is divided into several steps.\\
\textbf{Step 1.} Multiplying Equation \eqref{pol4} by $2a^{-1}h\bar{S}_d$ and integrating over $(0,L)$, we get
\begin{equation}\label{eq8-v}
\begin{array}{ll}
\displaystyle\Re\left(2a^{-1}i\la\int_0^Lvh\bar{S}_ddx\right)+a^{-1}\int_0^Lh^{\prime}\left|S\right|^2dx-a^{-1}h(L)\abs{S_d(L)}^2\\
\displaystyle+a^{-1}h(0)\abs{S_d(0)}^2=\Re\left(2a^{-1}\la^{-1+\frac{\alpha}{2}}\int_0^Lf_2h\bar{S}_ddx\right).
\end{array}
\end{equation}
From Equation \eqref{pol3} we have
$$i\la \bar{u}_x=-\bar{v}_x-\la^{-1+\frac{\alpha}{2}}(\bar{f_1})_x.$$
Then
\begin{equation}
i\la a^{-1}\bar{S}_d=-\bar{v}_x-\la^{-1+\frac{\alpha}{2}}(\bar{f_1})_x+i\la a^{-1}\sqrt{d(x)}\kappa(\alpha)\int_{\R}\abs{\xi}^{\frac{2\alpha-1}{2}}\bar{\omega}(x,\xi)d\xi.
\end{equation}
Then the first term of \eqref{eq8-v}, become
\begin{equation}\label{eqq6}
\begin{array}{ll}
\displaystyle
\Re\left(2a^{-1}i\la \int_0^Lvh\bar{S}_ddx\right)=\int_0^Lh^{\prime}\abs{v}^2dx-h(L)\abs{v(L)}^2+h(0)\abs{v(0)}^2-\Re\left(2\la^{-1+\frac{\alpha}{2}}\int_0^Lvh(\bar{f_1})_xdx\right)\\
\displaystyle\hspace{3cm}+\Re\left(2i\la a^{-1}\kappa(\alpha)\int_0^Lhv\sqrt{d(x)}\left(\int_{\R}\abs{\xi}^{\frac{2\alpha-1}{2}}\bar{\omega}(x,\xi)d\xi\right)dx\right).
\end{array}
\end{equation}
Inserting Equation \eqref{eqq6} into \eqref{eq8-v}, and using the fact that $v$ and $S_d$ are uniformly bounded in $L^2(0,L)$ and $\|f_2\|_{L^2(0,L)}=o(1)$ and $\|f_1\|_{H^1_L(0,L)}=o(1)$, we obtain
\begin{equation}\label{eq9-v}
\begin{array}{ll}
\displaystyle
\int_0^Lh^{\prime}\left(\abs{v}^2+a^{-1}\abs{S_d}^2\right)dx+h(0)\abs{v(0)}^2-h(L)\abs{u_x(L)}^2+h(0)a\abs{u_x(0)}^2\\
\displaystyle+\Re\left(2i\la a^{-1}\kappa(\alpha)\int_0^Lhv\sqrt{d(x)}\left(\int_{\R}\abs{\xi}^{\frac{2\alpha-1}{2}}\bar{\omega}(x,\xi)d\xi\right)dx\right)=\frac{o(1)}{\la^{1-\frac{\alpha}{2}}}.
\end{array}
\end{equation}
Estimation of the term $\displaystyle\Re\left(2i\la a^{-1}\kappa(\alpha)\int_0^Lhv\sqrt{d(x)}\left(\int_{\R}\abs{\xi}^{\frac{2\alpha-1}{2}}\bar{\omega}(x,\xi)d\xi\right)dx\right)$. Using the definition of $d(x) $ and Cauchy-Schwarz inequality, the fact that $0<\alpha<1$ and $\eta>0$, and using the first estimation in \eqref{FE-1} and  Equation \eqref{Estimation2}, we obtain
\begin{equation}\label{estTerm2}
\left|\Re\left(2i\la a^{-1}\kappa(\alpha)\int_0^Lhv\sqrt{d(x)}\left(\int_{\R}\abs{\xi}^{\frac{2\alpha-1}{2}}\bar{\omega}(x,\xi)d\xi\right)dx\right)\right|=o(1).
\end{equation}
Inserting Equation \eqref{estTerm2} in Equation \eqref{eq9-v}, and using the fact that $\alpha\in(0,1)$ we obtain
\begin{equation}\label{finalest1'}
\begin{array}{ll}
\displaystyle
\int_0^Lh^{\prime}\left(\abs{v}^2+a^{-1}\abs{S_d}^2\right)dx+h(0)\abs{v(0)}^2-ah(L)\abs{u_x(L)}^2+ah(0)\abs{u_x(0)}^2=o(1).
\end{array}
\end{equation}
\textbf{Step 2.} Multiplying Equation \eqref{pol7} by $2\varphi\bar{y}_x$ and integrating over $(-L,0)$, we get
\begin{equation}
\begin{array}{l}
\displaystyle\Re\left(2i\la\int_{-L}^0 z\varphi\bar{y}_xdx\right)-2b\int_{-L}^0 y_{xxx}\varphi^{\prime}\bar{y}_xdx-2b\int_{-L}^0 y_{xxx}\varphi\bar{y}_{xx}dx\\
\displaystyle+\Re\left(\left[2by_{xxx}\varphi\bar{y}_x\right]_{-L}^0\right)=\Re\left(2\la^{-1+\frac{\alpha}{2}}\int_{-L}^0f_4\varphi\bar{y}_xdx\right).
\end{array}
\end{equation}
Integrating by parts the second and third terms of the above equation we get
\begin{equation}\label{eq-4.28}
\begin{array}{ll}
\displaystyle
\Re\left(2i\la\int_{-L}^0 z\varphi\bar{y}_xdx\right)+2b\int_{-L}^0 \varphi^{\prime}\abs{y_{xx}}^2dx+2b\int_{-L}^0 y_{xx}\varphi^{\prime\prime}\bar{y}_xdx-\left[2by_{xx}\varphi^{\prime}\bar{y}_x\right]_{-L}^0\\ 
\displaystyle
+b\int_{-L}^0 \varphi^{\prime}\abs{y_{xx}}^2dx -\left[b\varphi\abs{y_{xx}}^2\right]_{-L}^0+\Re\left(2by_{xxx}(0)\varphi(0)\bar{y}_x(0)\right)=\Re\left(2\la^{-1+\frac{\alpha}{2}}\int_{-L}^0f_4\varphi\bar{y}_xdx\right).
\end{array}
\end{equation}
From Equation \eqref{pol5}, we have
\begin{equation}\label{neww}
i\la\bar{y}_x=-\bar{z}_x-\la^{-1+\frac{\alpha}{2}}(\bar{f_3})_x
.
\end{equation}
By inserting Equation \eqref{neww} into the first term of \eqref{eq-4.28}, we get
\begin{equation}\label{eq-4.29}
\begin{array}{ll}
\displaystyle
\Re\left(2i\la\int_{-L}^0 \varphi z\bar{y}_xdx\right)
&\displaystyle=\Re\left(-2\int_{-L}^0 \varphi z\bar{z}_xdx-2\la^{-1+\frac{\alpha}{2}}\int_{-L}^0 (\bar{f_3})_x \varphi zdx\right)\\
&\displaystyle=\int_{-L}^0 \varphi^{\prime}\abs{z}^2dx-\varphi(0)\abs{z(0)}^2-\Re\left(2\la^{-1+\frac{\alpha}{2}}\int_{-L}^0 (\bar{f_3})_x \varphi zdx\right).
\end{array}
\end{equation}
Inserting Equation \eqref{eq-4.29} into \eqref{eq-4.28}, and using the boundary conditions we get
\begin{equation}\label{eq11-v}
\begin{array}{ll}
\displaystyle\int_{-L}^0 \varphi^{\prime}\left(\abs{z}^2+3b\abs{y_{xx}}^2\right)dx+2b\int_{-L}^0 \varphi^{\prime\prime}y_{xx}\bar{y}_xdx+b\varphi(-L)\abs{y_{xx}(-L)}^2+\Re\left(2b\varphi(0)y_{xxx}(0)\bar{y}_x(0)\right)\\
\displaystyle-\varphi(0)\abs{z(0)}^2=\Re\left(2\la^{-1+\frac{\alpha}{2}}\int_{-L}^0\varphi f_4\bar{y}_xdx\right)+\Re\left(2\la^{-1+\frac{\alpha}{2}}\int_{-L}^0 \varphi (\bar{f_3})_x  zdx\right).
\end{array}
\end{equation}
Estimation of the term $\displaystyle\Re\left(2\la^{-1+\frac{\alpha}{2}}\int_{-L}^0\varphi f_4\bar{y}_xdx\right)$. Using Poincare inequality, Cauchy-Schwarz inequality, the definition of $\varphi$, and $y_{xx}$ is uniformly bounded in $L^2(-L,0)$, and that $\|f_4\|_{L^2(-L,0)}=o(1)$, we get
\begin{equation}\label{estTerm5}
\begin{array}{rcl}
\displaystyle
\left|\Re\left(2\la^{-1+\frac{\alpha}{2}}\int_{-L}^0\varphi f_4\bar{y}_xdx\right)\right|&\leq& \la^{-1+\frac{\alpha}{2}}\|y_x\|_{L^2(0,L)}\|f_4\|_{L^2(0,L)}\\[0.1in]
&\leq&\displaystyle
 \la^{-1+\frac{\alpha}{2}}c_p\|y_{xx}\|_{L^2(0,L)}\|f_4\|_{L^2(0,L)}=\frac{o(1)}{\la^{1-\frac{\alpha}{2}}}.
 \end{array}
\end{equation}
Estimation of the term $\displaystyle\Re\left(2\la^{-1+\frac{\alpha}{2}}\int_{-L}^0 \varphi(\bar{f_3})_x  zdx\right)$. Using Cauchy-Schwarz inequality, the definition of $\varphi$, and $z$ is bounded in $L^2(-L,0)$, and that $\|(f_3)_x\|_{L^2(-L,0)}=o(1)$ we get
\begin{equation}\label{estTerm6}
\left|\Re\left(2\la^{-1+\frac{\alpha}{2}}\int_{-L}^0 \varphi(\bar{f_3})_x  zdx\right)\right|\leq \la^{-1+\frac{\alpha}{2}}\left(\int_0^L\abs{z}^2dx\right)^{\frac{1}{2}}\left(\int_0^L\abs{(f_3)_x}^2dx\right)^{\frac{1}{2}}=\frac{o(1)}{\la^{1-\frac{\alpha}{2}}}.
\end{equation}
Inserting \eqref{estTerm5} and \eqref{estTerm6} into \eqref{eq11-v}, we get
\begin{equation}\label{eq16}
\begin{array}{l}
\displaystyle\int_{-L}^0 \varphi^{\prime}\left(\abs{z}^2+3b\abs{y_{xx}}^2\right)dx+2b\int_{-L}^0 \varphi^{\prime\prime}y_{xx}\bar{y}_xdx+b\varphi(-L)\abs{y_{xx}(-L)}^2\\[0.1in]
\displaystyle
+\Re\left(2b\varphi(0)y_{xxx}(0)\bar{y}_x(0)\right)-\varphi(0)\abs{z(0)}^2=\frac{o\left(1\right)}{\la^{1-\frac{\alpha}{2}}}.
\end{array}
\end{equation}
Now, summing Equations \eqref{finalest1'} and \eqref{eq16}, we get our desired result.
\end{proof}

\begin{Lemma}\label{lem7}
Assume that $\eta>0$. The solution $(u,v,y,z,\omega)\in D(\AA_1)$ of system \eqref{pol3}-\eqref{pol7} satisfies the following estimation
\begin{equation}\label{Estimation3}
\|U\|_{\mathcal{H}_1}=o(1).
\end{equation} 
\end{Lemma}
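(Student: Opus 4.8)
The plan is to contradict \eqref{pol1} by proving that each of the five pieces of $\norm{U}_{\mathcal{H}_1}^2$ is $o(1)$. The damping piece is immediate: since $\eta>0$, Step~1 of Lemma~\ref{First-Estimation} gives
\[
\kappa(\alpha)\int_0^L\int_{\R}\abs{\omega(x,\xi)}^2d\xi dx\le \frac{1}{\eta}\,\kappa(\alpha)\int_0^L\int_{\R}(\xi^2+\eta)\abs{\omega(x,\xi)}^2d\xi dx=o\!\left(\la^{-1+\frac{\alpha}{2}}\right)=o(1).
\]
It then remains to control the wave energy $\int_0^L(\abs{v}^2+a\abs{u_x}^2)dx$ and the \emph{undamped} beam energy $\int_{-L}^0(\abs{z}^2+b\abs{y_{xx}}^2)dx$. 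For this I would feed the multiplier identity \eqref{finalest1} with the linear weights $h(x)=x-L$ and $\varphi(x)=x+L$: then $h(L)=0$ and $\varphi(-L)=0$ annihilate the outer boundary terms at $x=L$ and $x=-L$, the choice $\varphi''\equiv 0$ kills $\int y_{xx}\varphi''\bar y_x$, and \eqref{finalest1} collapses to
\[
\int_0^L\left(\abs{v}^2+a^{-1}\abs{S_d}^2\right)dx+\int_{-L}^0\left(\abs{z}^2+3b\abs{y_{xx}}^2\right)dx=o(1)+L\abs{v(0)}^2+aL\abs{u_x(0)}^2-2bL\,\Re\!\left(y_{xxx}(0)\bar y_x(0)\right)+L\abs{z(0)}^2.
\]
Thus everything reduces to showing that the interface traces at $x=0$ are negligible.

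The heart of the argument is the estimate $\abs{v(0)}^2+a\abs{u_x(0)}^2=o(1)$, which I would obtain by propagating the smallness established in the damping region to the coupling point through the undamped segment $(0,l_0)$, where $d\equiv 0$ and hence $S_d=au_x$. On this interval I would track the energy density $\mathcal{E}(x):=\abs{v(x)}^2+a\abs{u_x(x)}^2$; differentiating and substituting $v_x$ from \eqref{pol3} and $au_{xx}=(S_d)_x$ from \eqref{pol4}, the two $O(\la)$ contributions $2\Re(i\la u_x\bar v)$ and $2\Re(i\la v\bar u_x)$ cancel identically, leaving only
\[
\mathcal{E}'(x)=-\frac{2}{\la^{1-\frac{\alpha}{2}}}\Re\!\left((f_1)_x\bar v\right)-\frac{2}{\la^{1-\frac{\alpha}{2}}}\Re\!\left(f_2\bar u_x\right).
\]
Integrating over $(0,l_0)$ and using that $v,u_x$ are bounded in $L^2(0,l_0)$ while $\norm{(f_1)_x},\norm{f_2}=o(1)$ yields $\abs{\mathcal{E}(0)-\mathcal{E}(l_0)}=o(1)$. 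Finally $\mathcal{E}(l_0)=o(1)$ follows by inserting $\int_{l_0}^{l_1}\abs{v}^2dx=o(\la^{-1-\frac{\alpha}{2}})$ from Lemma~\ref{2nd-Estimation} into the boundary bounds \eqref{BoundaryEst1'}--\eqref{BoundaryEst2'} of Lemma~\ref{vs} (recall $S_d(l_0)=au_x(l_0)$ since $d(l_0)=0$). Hence $\mathcal{E}(0)=o(1)$, i.e. $v(0)=o(1)$ and $u_x(0)=o(1)$.

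With $v(0),u_x(0)=o(1)$ in hand, the transmission relations in $D(\AA_1)$ give $z(0)=v(0)=o(1)$ and $y_{xxx}(0)=-\tfrac{a}{b}u_x(0)=o(1)$, while the trace $y_x(0)$ is $O(1)$ because $\int_{-L}^0\abs{y_{xx}}^2dx\le\norm{U}_{\mathcal{H}_1}^2$ and $H_L^2(-L,0)\hookrightarrow C^1([-L,0])$. Consequently every interface term on the right-hand side of the first display is $o(1)$, so
\[
\int_0^L\left(\abs{v}^2+a^{-1}\abs{S_d}^2\right)dx+\int_{-L}^0\left(\abs{z}^2+3b\abs{y_{xx}}^2\right)dx=o(1),
\]
and, the two integrands being non-negative, each term is $o(1)$. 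To pass from $\int_0^L a^{-1}\abs{S_d}^2dx=o(1)$ to $\int_0^L a\abs{u_x}^2dx=o(1)$ I would write $au_x=S_d-\sqrt{d}\,\kappa(\alpha)\int_{\R}\abs{\xi}^{\frac{2\alpha-1}{2}}\omega\,d\xi$ and bound the last term by $d_0\kappa(\alpha)^2\mathtt{I}_{15}(\alpha,\eta)\int_{l_0}^{l_1}\int_{\R}(\xi^2+\eta)\abs{\omega}^2d\xi dx=o(1)$ exactly as in Lemma~\ref{S-Estimation}. Collecting the wave, beam and damping pieces gives $\norm{U}_{\mathcal{H}_1}=o(1)$, contradicting \eqref{pol1}; this establishes \eqref{H2} and, together with \eqref{H1}, Theorem~\ref{pol}. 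The main obstacle is precisely the interface estimate of the second paragraph: without the exact cancellation of the $O(\la)$ terms in $\mathcal{E}'$, the undamped segment $(0,l_0)$ would appear to lose all control of the traces at the coupling point, and the boundary contributions in \eqref{finalest1} could not be absorbed.
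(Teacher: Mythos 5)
Your proof is correct, but its key step takes a genuinely different route from the paper's. Both arguments rest on the multiplier identity \eqref{finalest1}; the difference lies in how the interface traces at $x=0$ are controlled. The paper first takes $h(x)=x\theta_1(x)+(x-L)\theta_2(x)$ and $\varphi=0$, with cutoffs $\theta_1,\theta_2$ equal to $1$ near $x=0$ resp. $x=L$ and vanishing on the opposite side of the damped region: since this $h$ vanishes at both endpoints, the identity combined with \eqref{eqS1-w} and \eqref{Estimation2} yields $\|v\|_{L^2(0,L)}=o(1)$ and $\|u_x\|_{L^2(0,L)}=o(1)$ outright, and only afterwards do the linear weights $h=x-L$ and $\varphi=x+L$ (used separately, in the paper's Steps 2 and 3) force $|v(0)|,|u_x(0)|=o(1)$ and, via the transmission conditions, the beam estimates. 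You invert this order: you first obtain the traces at $l_0$ by re-using Lemma \ref{vs} together with the improved bound of Lemma \ref{2nd-Estimation} (correctly noting that $S_d(l_0)=au_x(l_0^-)$ because $d\equiv 0$ on $(0,l_0)$ and $S_d\in H^1(0,L)$ is continuous), then transport them to $x=0$ through the undamped segment by the pointwise identity for $\mathcal{E}(x)=|v|^2+a|u_x|^2$, whose $O(\la)$ terms cancel exactly since $\Re(i\la u_x\bar v)+\Re(i\la v\bar u_x)=0$ for real $\la$; only then do you feed the traces into the combined linear-weight identity to recover the $L^2$ bounds. Both routes are sound; your propagation step is legitimate because $u_x\in H^1(0,l_0)\hookrightarrow C([0,l_0])$ (from $(S_d)_x\in L^2(0,L)$ and $d\equiv 0$ there), so $\mathcal{E}$ is absolutely continuous and the fundamental theorem of calculus applies, the error being bounded by $2\la^{-1+\frac{\alpha}{2}}\left(\|(f_1)_x\|\,\|v\|+\|f_2\|\,\|u_x\|\right)=o(1)$. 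What your approach buys is a transparent, purely one-dimensional mechanism exhibiting exactly how the undamped segment transmits smallness from $l_0$ to the junction, with no cutoff functions; what the paper's approach buys is that it stays entirely within the integrated multiplier framework, without pointwise trace propagation, which is the pattern the authors then reuse verbatim for the remaining four models.
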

\begin{proof}
The proof of this Lemma is divided into several steps.\\
\textbf{Step 1.} In this step we will prove that 
$\|v\|_{L^(0,L)}=o(1)$ and $\|u_x\|_{L^(0,L)}=o(1).$\\
Taking $h(x)=x\theta_1(x)+(x-L)\theta_2(x)$ and $\varphi(x)=0$ in Equation \eqref{finalest1}, where $\theta_1,\, \theta_2\in C^1([0,L])$ are defined as follows
 \begin{equation}
\theta_{1}(x)=\left\{\begin{array}{ccc}
1&\text{if}&x\in [0,l_0],\\
0&\text{if}& x\in [l_1,L],\\
0\leq\theta_1\leq 1&& elsewhere,
\end{array}\right.
\end{equation}
and
\begin{equation}
\theta_{2}(x)=\left\{\begin{array}{ccc}
1&\text{if}&x\in [l_1,L],\\
0&\text{if}& x\in [0,l_0],\\
0\leq\theta_2\leq 1&& elsewhere.
\end{array}\right.
\end{equation}
we get
\begin{equation}
\int_0^L(\theta_1+\theta_2)\left(\abs{v}^2+a^{-1}\abs{S_d}^2\right)dx+\int_0^L(x\theta_1^{\prime}+(x-L)\theta_2^{\prime})\left(\abs{v}^2+a^{-1}\abs{S}^2\right)dx=o(1).
\end{equation}
Using Equations \eqref{eqS1-w}, \eqref{Estimation2} and the definition of $\theta_1$ and $\theta_2$, we get
\begin{equation}\label{vs1}
\int_0^L(\theta_1+\theta_2)\left(\abs{v}^2+a^{-1}\abs{S_d}^2\right)dx=o(1).
\end{equation}
Hence, we deduce that
\begin{equation}\label{vs1}
\int_{0}^{l_0}\abs{v}^2dx=o(1)\quad\text{and}\quad a\int_{0}^{l_0}\abs{u_x}^2dx=o(1)
\end{equation}
and
\begin{equation}\label{vs2}
\int_{l_1}^{L}\abs{v}^2dx=o(1)\quad\text{and}\quad a\int_{l_1}^{L}\abs{u_x}^2dx=o(1).
\end{equation}
Using \eqref{vs1}, \eqref{vs2}, \eqref{eqS1-w} and \eqref{Estimation2}, we get the desired result of Step 1.\\
\textbf{Step 2.} Taking $h(x)=x-L$ and $\varphi(x)=0$ in Equation \eqref{finalest1}, we get
\begin{equation}\label{vs3}
\int_0^L\left(\abs{v}^2+a^{-1}\abs{S}^2\right)dx-L\abs{v(0)}^2-aL\abs{u_x(0)}^2=o(1).
\end{equation}
By using Step 1, we get
\begin{equation}\label{limitu}
\abs{v(0)}^2=o(1)\quad\text{and}\quad\abs{u_x(0)}^2=o(1).
\end{equation}
\textbf{Step 3.} The aim of this step is to prove that $\|z\|_{L^2(-L,0)}=o(1)$ and $\|y_{xx}\|_{L^2(-L,0)}=o(1)$.\\
Taking $h(x)=0$ and $\varphi(x)=x+L$ in Equation \eqref{finalest1}, we get
\begin{equation}\label{yz1}
\int_{-L}^0 \left(\abs{z}^2+3b\abs{y_{xx}}^2\right)dx+\Re\left(2Lby_{xxx}(0)\bar{y}_x(0)\right)-L\abs{z(0)}^2=o(1).
\end{equation}
Using \eqref{limitu} and the transmission conditions we have
\begin{equation}\label{limity}
b\abs{y_{xxx}(0)}=a\abs{u_x(0)}=o\left(1\right)\quad\text{and}\quad \abs{z(0)}^2=\abs{v(0)}^2=o\left(1\right).
\end{equation}
Inserting Equation \eqref{limity} in Equation \eqref{yz1} and using the fact that $\displaystyle |y_x(0)|\leq \sqrt{L}\|y_{xx}\|_{L^2(-L,0)}=O(1)$ we get
\begin{equation}\label{yz2}
\int_{-L}^0\abs{z}^2dx=o(1)\quad\text{and}\quad b\int_{-L}^0\abs{y_{xx}}^2dx=o(1).
\end{equation}
Finally, using Equations  \eqref{2nd-Estimation}, \eqref{FE-1}, \eqref{yz2},  and Step 1., we get that $\|U\|_{\mathcal{H}_1}=o(1)$.
\end{proof}


\noindent \textbf{Proof of Theorem \ref{pol}.} 
From Lemma \ref{lem7} we get that $\|U\|_{\HH_1}=o(1)$, which contradicts \eqref{pol1}. Consequently, condition \eqref{H2} holds. This implies, from Theorem \ref{bt}, the energy decay estimation \eqref{Energypol}. The proof is thus complete.

\section{W-W$_{FKV}$ Model}\label{coupled-WAVE}
\noindent In this section, we consider the \eqref{Sys1-W} model, where we study the stability of a system of two wave equations coupled through boundary connections with a localized fractional Kelvin-Voigt damping acting on one equation only. \\
By taking the input $V(x,t)=\sqrt{d(x)}u_{xt}(x,t)$ in Theorem \ref{theorem1}, we get that the output $O$ is given by 
$$
O(x,t)=\sqrt{d(x)}I^{1-\alpha,\eta}u_{xt}(x,t)=\frac{\sqrt{d(x)}}{\Gamma(1-\alpha)}\int_0^t(t-s)^{-\alpha}e^{-\eta(t-s)}\partial_su_x(x,s)ds=\sqrt{d(x)}\partial_t^{\alpha,\eta}u_{x}(x,t).
$$
Then system \eqref{Sys1-W} can be recast into the following augmented model 
\begin{equation}\label{AUG1-CW}
\left\{\begin{array}{ll}
 \displaystyle{u_{tt}-\left(au_x+\sqrt{d(x)}\kappa(\alpha)\int_{\mathbb{R}}\abs{\xi}^{\frac{2\alpha-1}{2}}\omega(x,\xi,t)d\xi\right)_{x}=0},&(x,t)\in (0,L)\times \R_{\ast}^+,\\[0.1in]
\displaystyle{y_{tt}-by_{xx}=0},&(x,t)\in (-L,0)\times \times \R_{\ast}^+,\\[0.1in]
\omega_t(x,\xi,t)+\left(|\xi|^2+\eta\right)\omega(x,\xi,t)-\sqrt{d(x)}u_{xt}(x,t)\abs{\xi}^{\frac{2\alpha-1}{2}}=0,&(x,\xi,t)\in (0,L)\times \mathbb{R}\times \R_{\ast}^+,
\end{array}
\right.
\end{equation}
with the following transmission and boundary conditions 
\begin{equation}\label{AUG2-CW}
\left\{\begin{array}{ll}
u(L,t)=y(-L,t)=0,&t\in (0,\infty),\\[0.1in]
au_{x}(0,t)=by_{x}(0,t),&t\in (0,\infty),\\[0.1in]
u(0,t)=y(0,t),&t\in (0,\infty),
\end{array}\right.
\end{equation}
and with the following initial conditions 
\begin{equation}\label{AUG3-CW}
\begin{array}{lll}
u(x,0)=u_0(x),& u_t(x,0)=u_1(x),\hspace{0.2cm}\omega(x,\xi,0)=0& x\in (0,L),\xi\in\mathbb{R},\\[0.1in]
y(x,0)=y_0(x),& y_t(x,0)=y_1(x),& x\in (-L,0).
\end{array}
\end{equation}
The energy of the system \eqref{AUG1-CW}-\eqref{AUG3-CW} is given by  
\begin{equation*}
E_2(t)=\frac{1}{2}\int_0^L\left(\abs{u_t}^2+a\abs{u_x}^2\right)dx+\frac{1}{2}\int_{-L}^0\left(\abs{y_t}^2+b\abs{y_{x}}^2\right)dx+\frac{\kappa(\alpha)}{2}\int_0^L\int_{\R}\abs{\omega(x,\xi,t)}^2d\xi dx.
\end{equation*}
Using similar computations to Lemma \ref{Denergy}, we obtain
\begin{equation}\label{denergy-CW}
\frac{d}{dt}E_2(t)=-\kappa(\alpha)\int_{0}^{L}\int_{\mathbb{R}}(\xi^2+\eta)\abs{\omega(x,\xi,t)}^2d\xi dx.
\end{equation}
\noindent Since $\alpha\in (0,1)$, then $\kappa(\alpha)>0$, and therefore $\displaystyle\frac{d}{dt}E_2(t)\leq 0$. Thus, system \eqref{AUG1-CW}-\eqref{AUG3-CW} is dissipative in the sense that its energy is a non-increasing function with respect to time variable $t$. Now, we define the following Hilbert energy space $\mathcal{H}_2$ by 
$$
\mathcal{H}_2=\left\{(u,v,y,z,\omega)\in H_R^1(0,L)\times L^2(0,L)\times H^1_L(-L,0)\times L^2(-L,0)\times W;\ \ u(0)=y(0)\right\},
$$
where $W=L^2\left((0,L)\times \R\right)$ and 
\begin{equation}
\left\{
\begin{array}{ll}
H_R^1(0,L)=\{u\in H^1(0,L); u(L)=0\},\\[0.1in]
H_L^1(-L,0)=\{y\in H^1(-L,0); y(-L)=0\}.
\end{array}\right.
\end{equation}
The energy space $\mathcal{H}_2$ is equipped with the inner product defined by 
$$
\begin{array}{lll}
\displaystyle
\left<U,U_1\right>_{\mathcal{H}_2}&=&\displaystyle
\int_0^Lv\overline{v_1}dx+a\int_0^Lu_x(\overline{u_1})_xdx+\int_{-L}^0 z\overline{z_1}dx+b\int_{-L}^0 y_{x}(\overline{y_1})_{x}dx
+\kappa(\alpha)\int_{0}^{L}\int_{\mathbb{R}}\omega(x,\xi)\overline{\omega_1}(x,\xi)d\xi dx,
\end{array}
$$
for all $U=(u,v,y,z,\omega)$ and $U_1=(u_1,v_1,y_1,z_1,\omega_1)$ in $\mathcal{H}_2$. We use $\|U\|_{\mathcal{H}_2
}$ to denote the corresponding norm. We define the unbounded linear operator $\AA_2:D(\AA_2)\subset\mathcal{H}_2\rightarrow\mathcal{H}_2$ by 
\begin{equation*}
D(\mathcal{A}_2)=\left\{\begin{array}{c}
\displaystyle{
U=(u,v,y,z,\omega)\in \mathcal{H}_2;\ (v,z)\in H_R^1(0,L)\times H^1_L(-L,0),\ y\in  H^2(-L,0)},\\[0.1in]  
\displaystyle{\left(au_x+\sqrt{d(x)}\kappa(\alpha)\int_{\mathbb{R}}\abs{\xi}^{\frac{2\alpha-1}{2}}\omega(x,\xi)d\xi\right)_{x}}\in L^2(0,L),\vspace{0.2cm}\\ [0.1in]
-\left(\abs{\xi}^2+\eta\right)\omega(x,\xi)+\sqrt{d(x)}v_x|\xi|^{\frac{2\alpha-1}{2}},\quad|\xi|\omega(x,\xi)\in W, \vspace{0.2cm}\\[0.1in]
au_x(0)=by_{x}(0),\,\text{and}\, v(0)=z(0)
\end{array}
\right\},
\end{equation*}
and for all $U=(u,v,y,z,\omega)\in D(\mathcal{A}_2)$, 
$$
\mathcal{A}_2(u,v,y,z,\omega)^{\top}=\begin{pmatrix}
v\\ \vspace{0.2cm} \displaystyle{\left(au_x+\sqrt{d(x)}\kappa(\alpha)\int_{\mathbb{R}}\abs{\xi}^{\frac{2\alpha-1}{2}}\omega(x,\xi)d\xi\right)_{x}}\\ \vspace{0.2cm} z\\\vspace{0.2cm} by_{xx}\\ \vspace{0.2cm}
-\left(\abs{\xi}^2+\eta\right)\omega(x,\xi)+\sqrt{d(x)}v_x|\xi|^{\frac{2\alpha-1}{2}}
\end{pmatrix}.
$$
\noindent If $U=(u,u_t,y,y_t,\omega)$ is a regular solution of system \eqref{AUG1-CW}-\eqref{AUG3-CW}, then the system can be rewritten as evolution equation on the Hilbert space $\mathcal{H}_2$ given by
\begin{equation}\label{evolution-las}
U_t=\mathcal{A}_{2}U,\quad U(0)=U_0,
\end{equation}
where $U_0=(u_0,u_1,y_0,y_1,0)$.\\
In a similar way to Section \ref{strongStabilityWave}, we can see that the unbounded linear operator $\mathcal{A}_2$ is m-dissipative in the energy space $\mathcal{H}_2$. Also, the $C_0$-semigroup of contractions $e^{t\mathcal{A}_2}$ is strongly stable on $\mathcal{H}_2$ in the sense that $\displaystyle{\lim_{t\to+\infty}\left\|e^{t\mathcal{A}_{2}}U_0\right\|_{\mathcal{H}_{2}}}=0$.

\begin{theoreme}\label{pol-CW}
Assume that $\eta>0$. The $C_0-$semigroup $(e^{t\AA_2})_{t\geq 0}$ is polynomially stable; i.e. there exists constant $C_2>0$ such that for every $U_0\in D(\AA_2)$, we have 
\begin{equation}\label{Energypol-CW}
E_2(t)\leq \frac{C_1}{t^{\frac{4}{2-\alpha}}}\|U_0\|^2_{D(\AA_2)},\quad t>0,\,\forall U_0\in D(\mathcal{A}_2).
\end{equation}
\end{theoreme}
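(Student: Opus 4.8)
The plan is to apply the Borichev--Tomilov criterion (Theorem \ref{bt}) with $\ell=1-\frac{\alpha}{2}$, exactly as in the proof of Theorem \ref{pol}. Condition \eqref{H1}, namely $i\R\subset\rho(\AA_2)$, follows from the strong stability already recorded for $\AA_2$: the analog of Lemma \ref{ker} shows that $\AA_2$ has no imaginary eigenvalues, and since $\eta>0$ the spectrum meets $i\R$ trivially. It therefore remains to establish the resolvent bound \eqref{H2}, that is $\sup_{\la\in\R}\|(i\la I-\AA_2)^{-1}\|_{\LL(\HH_2)}=O(|\la|^{1-\frac{\alpha}{2}})$, which I would prove by contradiction. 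Suppose it fails, so there exist $\la_n\to\infty$ and $U_n=(u_n,v_n,y_n,z_n,\omega_n)$ with $\|U_n\|_{\HH_2}=1$ and $\la_n^{1-\frac{\alpha}{2}}(i\la_nI-\AA_2)U_n=F_n\to 0$ in $\HH_2$. Dropping the index, this yields a resolvent system identical to \eqref{pol3}--\eqref{pol7}, except that the beam equation \eqref{pol6} is replaced by the wave equation $i\la z-by_{xx}=\la^{-1+\frac{\alpha}{2}}f_4$ and $y\in H^1_L(-L,0)$.

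Because the damping term, the third ($\omega$) equation, the first ($u$) equation, and the definition of $S_d$ are literally unchanged from the first model, the entire chain of estimates on the damped wave carries over verbatim. In particular, Lemmas \ref{First-Estimation}, \ref{S-Estimation}, \ref{vs} and \ref{2nd-Estimation} give, with no modification,
\begin{equation*}
\int_0^L\!\!\int_{\R}(\xi^2+\eta)|\omega|^2\,d\xi dx=o(\la^{-1+\frac{\alpha}{2}}),\quad \int_{l_0}^{l_1}|v_x|^2\,dx=o(\la^{-\frac{\alpha}{2}}),\quad \int_{l_0}^{l_1}|u_x|^2\,dx=o(\la^{-2-\frac{\alpha}{2}}),
\end{equation*}
together with $\int_{l_0}^{l_1}|S_d|^2\,dx=o(\la^{-1+\frac{\alpha}{2}})$ and $\int_{l_0}^{l_1}|v|^2\,dx=o(\la^{-1-\frac{\alpha}{2}})$. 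Thus all the information produced by the localized damping is already in hand, and the only genuinely new work is to propagate it to the full interval $(0,L)$ and then, across the coupling point $x=0$, to the undamped second wave.

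For the propagation I would reprove the multiplier identity of Lemma \ref{estwave} with the beam multiplier replaced by a wave multiplier: keep $2a^{-1}h\bar S_d$ applied to \eqref{pol4} for the $u$-part, and multiply the $y$-equation by $2\varphi\bar y_x$, integrating over $(-L,0)$. Since $y$ now solves a second-order equation, this produces the simpler identity involving $\int_{-L}^0\varphi'(|z|^2+b|y_x|^2)\,dx$ and boundary terms at $-L$ and $0$, with no bending-moment contributions. Mimicking Lemma \ref{lem7}: Step~1 uses the cutoff multipliers $\theta_1,\theta_2$ to upgrade the localized bounds to $\|v\|_{L^2(0,L)}=o(1)$ and $\|u_x\|_{L^2(0,L)}=o(1)$; Step~2 takes $h(x)=x-L$, $\varphi=0$ to extract $|v(0)|=o(1)$ and $|u_x(0)|=o(1)$; Step~3 uses the transmission conditions $au_x(0)=by_x(0)$ and $v(0)=z(0)$ to deduce $|y_x(0)|=o(1)$ and $|z(0)|=o(1)$, after which the wave multiplier with $\varphi(x)=x+L$ (so $\varphi(-L)=0$, $\varphi(0)=L$) yields $\|z\|_{L^2(-L,0)}=o(1)$ and $b\|y_x\|_{L^2(-L,0)}=o(1)$. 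Collecting these with the $\omega$-estimate gives $\|U\|_{\HH_2}=o(1)$, contradicting $\|U\|_{\HH_2}=1$, whence \eqref{H2} holds and the decay rate $t^{-4/(2-\alpha)}$ follows from Theorem \ref{bt}.

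The main obstacle, as in the first model, is the transmission step: the second equation is conservative, so its entire energy must be controlled solely through the single coupling point $x=0$. The delicate part is verifying that the boundary traces $v(0)$, $u_x(0)$, $z(0)$, $y_x(0)$ are all $o(1)$ with matching powers of $\la$, so that the boundary terms in the wave multiplier identity do not spoil the estimate. Here the beam-to-wave change is in fact a simplification --- there is no shear force $y_{xxx}(0)$ or bending moment $y_{xx}(0)$ to track --- so the transmission conditions $au_x(0)=by_x(0)$ and $v(0)=z(0)$ close the argument directly, and one expects the same decay exponent $1-\frac{\alpha}{2}$, hence the identical rate.
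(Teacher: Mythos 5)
Your proposal is correct and follows essentially the same route as the paper: the paper likewise applies Theorem \ref{bt} with $\ell=1-\frac{\alpha}{2}$, reuses Lemmas \ref{First-Estimation}, \ref{S-Estimation}, \ref{vs}, \ref{2nd-Estimation} verbatim for the damped wave, and replaces the beam multiplier identity of Lemma \ref{estwave} by the wave analogue \eqref{finalest1-CW}, then runs the same three steps (cutoffs $\theta_1,\theta_2$; $h=x-L$; $\varphi=x+L$ with the transmission conditions $au_x(0)=by_x(0)$, $v(0)=z(0)$) to conclude $\|U\|_{\mathcal{H}_2}=o(1)$. Your observation that the absence of shear-force and bending-moment traces makes the transmission step simpler than in the beam--wave model is exactly what the paper exploits.
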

According to Theorem \ref{bt}, by taking $\ell=1-\frac{\alpha}{2}$, the polynomial energy decay \eqref{Energypol-CW} holds if the following conditions 
\begin{equation}\label{G1}\tag{${\rm{G_1}}$}
i\R\subset \rho(\mathcal{A}_2),
\end{equation}
and
\begin{equation}\label{G2}\tag{${\rm{G_2}}$}
\sup_{\la\in \R}\left\|(i\la I-\AA_2)^{-1}\right\|_{\mathcal{L}(\mathcal{H}_2)}=O\left(\abs{\la}^{1-\frac{\alpha}{2}}\right)
\end{equation}
are satisfied. Since Condition \eqref{G1} is already proved. We will prove condition \eqref{G2} by an argument of contradiction. For this purpose, suppose that \eqref{G2} is false, then there exists $\left\{\left(\la_n,U_n:=(u_n,v_n,y_n,z_n,\omega_n(\cdot,\xi))^\top\right)\right\}\subset \R^{\ast}\times D(\AA_2)$ with 
\begin{equation}\label{pol1-CW}
\abs{\la_n}\to +\infty \quad \text{and}\quad \|U_n\|_{\mathcal{H}_2}=\|(u_n,v_n,y_n,z_n,\omega_n(\cdot,\xi))\|_{\mathcal{H}_2}=1, 
\end{equation}
such that 
\begin{equation}\label{pol2-w-CW}
\left(\la_n\right)^{1-\frac{\alpha}{2}}\left(i\la_nI-\AA_2\right)U_n=F_n:=(f_{1,n},f_{2,n},f_{3,n},f_{4,n},f_{5,n}(\cdot,\xi))^{\top}\to 0 \ \ \text{in}\ \ \mathcal{H}_2. 
\end{equation}
For simplicity, we drop the index $n$. Equivalently, from \eqref{pol2-w-CW}, we have 
\begin{eqnarray}
i\la u-v&=&\dfrac{f_1}{\la^{1-\frac{\alpha}{2}}}  \quad\text{in}\ H_R^1(0,L),\label{pol3-CW}\\
i\la v-\left(S_d\right)_x&=&\dfrac{f_2}{\la^{1-\frac{\alpha}{2}}} \quad \text{in}\ L^2(0,L),\label{pol4-CW}\\
i\la y-z&=&\dfrac{f_3}{\la^{1-\frac{\alpha}{2}}} \quad\text{in}\ H_L^1(-L,0),\label{pol5-CW}\\
i\la z-by_{xx}&=&\dfrac{f_4}{\la^{1-\frac{\alpha}{2}}} \quad \text{in}\ L^2(-L,0),\label{pol6-CW}\\
(i\la+\xi^2+\eta)\omega(x,\xi)-\sqrt{d(x)}v_x|\xi|^{\frac{2\alpha-1}{2}}&=&\dfrac{f_5(x,\xi)}{\la^{1-\frac{\alpha}{2}}} \quad \text{in}\ W,\label{pol7-CW}
\end{eqnarray}
where $\displaystyle S_d=au_x+\sqrt{d(x)}\kappa(\alpha)\int_{\mathbb{R}}|\xi|^{\frac{2\alpha-1}{2}}\omega(x,\xi)d\xi$.\\
Here we will check the condition \eqref{G2} by finding a contradiction with \eqref{pol1-CW} by showing $\|U\|_{\mathcal{H}_2}=o(1)$.
In order to get this contradiction, we follow similar arguments as in Section \ref{Section-poly}. 
We get the same results as in Lemmas \ref{lemI4}, \ref{First-Estimation}, \ref{S-Estimation}, \ref{vs}, \ref{2nd-Estimation}.\\
Using the same computations as in Lemma \ref{estwave}, we get that the solution $(u,v,y,z,\omega)\in D(\AA_2)$ of system \eqref{pol3-CW}-\eqref{pol7-CW} satisfies the following estimation
\begin{equation}\label{finalest1-CW}
\begin{array}{c}
\displaystyle
\int_0^Lh^{\prime}\left(\abs{v}^2+a^{-1}\abs{S_d}^2\right)dx+\int_{-L}^0 \varphi^{\prime}\left(\abs{z}^2+b\abs{y_{x}}^2\right)dx+h(0)\abs{v(0)}^2-ah(L)\abs{u_x(L)}^2+ah(0)\abs{u_x(0)}^2\\[0.1in]
\displaystyle -\varphi(0)\abs{z(0)}^2- b\varphi(0)\abs{y_{x}(0)}^2 +b\varphi(-L)\abs{y_{x}(-L)}^2=o(1).
\end{array}
\end{equation}
We procced in a similar way to Lemma \ref{lem7}.\\
\textbf{Step 1.} Taking $h(x)=x\theta_1(x)+(x-L)\theta_2(x)$ and $\varphi(x)=0$ in Equation \eqref{finalest1-CW}, where $\theta_1,\, \theta_2\in C^1([0,L])$ are defined in Lemma \ref{lem7}, yields
$$\|v\|_{L^(0,L)}=o(1)\quad \text{and}\quad \|u_x\|_{L^(0,L)}=o(1).$$
\textbf{Step 2.} Taking $h(x)=x-L$ and $\varphi(x)=0$ in Equation \eqref{finalest1-CW}, we obtain
\begin{equation}\label{vs3-CW}
\int_0^L\left(\abs{v}^2+a^{-1}\abs{S_d}^2\right)dx-L\abs{v(0)}^2-aL\abs{u_x(0)}^2=o(1).
\end{equation}
By using Step 1, we get
\begin{equation}\label{limitu-CW}
\abs{v(0)}^2=\abs{u_x(0)}^2=o(1).
\end{equation}
\textbf{Step 3.} 
Taking $h(x)=0$ and $\varphi(x)=x+L$ in Equation \eqref{finalest1-CW}, we get
\begin{equation}\label{yz1-CW}
\int_{-L}^0 \left(\abs{z}^2+b\abs{y_{x}}^2\right)dx-L\abs{z(0)}^2-bL\abs{y_x(0)}^2=o(1).
\end{equation}
Using \eqref{limitu-CW} and the transmission conditions we have
\begin{equation}\label{limity-CW}
b\abs{y_{x}(0)}=a\abs{u_x(0)}=o\left(1\right)\quad\text{and}\quad \abs{z(0)}^2=\abs{v(0)}^2=o\left(1\right).
\end{equation}
Inserting Equation \eqref{limity-CW} in Equation \eqref{yz1-CW}, we get
\begin{equation}\label{yz2}
\int_{-L}^0\abs{z}^2dx=o(1)\quad\text{and}\quad b\int_{-L}^0\abs{y_{x}}^2dx=o(1).
\end{equation}
\noindent \textbf{Proof of Theorem \ref{pol-CW}.} 
From Lemma Step 1. and Staep 3. we deduce that $\|U\|_{\HH_2}=o(1)$, which contradicts \eqref{pol1-CW}. Consequently, condition \eqref{G2} holds. This implies, from Theorem \ref{bt}, the energy decay estimation \eqref{Energypol-CW}. The proof is thus complete.

\section{W-(EBB)$_{FKV}$ Model}\label{Section I}
\noindent This section is devoted to study the stability of the model \eqref{beam1}, where we consider the Euler-Bernoulli beam and  wave equations coupled through boundary connection. We take the fractional Kelvin-Voigt damping to be a localized internal damping acting on the Euler-Bernoulli beam only. 
\subsection{Well-Posedness and Strong Stability}\label{subsec}
\noindent In this section, we give the strong stability results of  the system \eqref{beam1}. First, using a semigroup approach, we establish well-posedness result for the system \eqref{beam1}.\\
In Theorem \ref{theorem1}, taking the input $V(x,t)=\sqrt{d(x)}y_{xxt}(x,t)$, then  using \eqref{Caputo}, we get the output $O$ is given by 
$$
O(x,t)=\sqrt{d(x)}I^{1-\alpha,\eta}y_{xxt}(x,t)=\frac{\sqrt{d(x)}}{\Gamma(1-\alpha)}\int_0^t(t-s)^{-\alpha}e^{-\eta(t-s)}\partial_sy_{xx}(x,s)ds=\sqrt{d(x)}\partial_t^{\alpha,\eta}y_{xx}(x,t).
$$
Therefore, by taking the input $V(x,t)=\sqrt{d(x)}y_{xxt}(x,t)$ in Theorem \ref{theorem1} and using the above equation, we get 
\begin{equation}\label{augnew1}
\begin{array}{llll}
\partial_t\omega(x,\xi,t)+(\xi^2+\eta)\omega(x,\xi,t)-\sqrt{d(x)}y_{xxt}(x,t)|\xi|^{\frac{2\alpha-1}{2}}&=&0,&(x,\xi,t)\in (0,L)\times \mathbb{R}\times (0,\infty),\\
\omega(\xi,0)&=&0,&(x,\xi)\in (0,L)\times \mathbb{R},\\
\displaystyle{\sqrt{d(x)}\partial_t^{\alpha,\eta}y_{xx}(x,t)-\kappa(\alpha)\int_{\mathbb{R}}|\xi|^{\frac{2\alpha-1}{2}}\omega(x,\xi,t)d\xi }&=&0,&(x,t)\in (0,L)\times (0,\infty).
\end{array}
\end{equation}
From system \eqref{augnew1}, we deduce that system \eqref{beam1} can be recast into the following augmented model 
\begin{equation}\label{AUG1B}
\left\{\begin{array}{ll}
 \displaystyle{u_{tt}-au_{xx}=0},&(x,t)\in (-L,0)\times \R_{\ast}^+,\\[0.1in]
\displaystyle{y_{tt}+\left(by_{xx}+\sqrt{d(x)}\kappa(\alpha)\int_{\mathbb{R}}\abs{\xi}^{\frac{2\alpha-1}{2}}\omega(x,\xi,t)d\xi\right)_{xx}=0},&(x,t)\in (0,L)\times \R_{\ast}^+,\\[0.1in]
\omega_t(x,\xi,t)+\left(|\xi|^2+\eta\right)\omega(x,\xi,t)-\sqrt{d(x)}y_{xxt}(x,t)\abs{\xi}^{\frac{2\alpha-1}{2}}=0,&(x,\xi,t)\in (0,L)\times \mathbb{R}\times \R_{\ast}^+,
\end{array}
\right.
\end{equation}
with the following transmission and boundary conditions 
\begin{equation}\label{AUG2B}
\left\{\begin{array}{ll}
u(-L,t)=y(L,t)=y_{x}(L,t)=0,&t\in (0,\infty),\\[0.1in]
au_{x}(0,t)+by_{xxx}(0,t)=0,y_{xx}(0,t)=0,&t\in (0,\infty),\\[0.1in]
u(0,t)=y(0,t),&t\in (0,\infty),
\end{array}\right.
\end{equation}
and with the following initial conditions 
\begin{equation}\label{AUG3B}
\begin{array}{lll}
u(x,0)=u_0(x),& u_t(x,0)=u_1(x)& x\in (-L,0)\\[0.1in]
y(x,0)=y_0(x),& y_t(x,0)=y_1(x),\hspace{0.2cm}\omega(x,\xi,0)=0& x\in (0,L),\xi\in\mathbb{R}.
\end{array}
\end{equation}
The energy of the system \eqref{AUG1B}-\eqref{AUG3B} is given by  
\begin{equation*}
E_3(t)=\frac{1}{2}\int_{-L}^0\left(\abs{u_t}^2+a\abs{u_x}^2\right)dx+\frac{1}{2}\int_{0}^L\left(\abs{y_t}^2+b\abs{y_{xx}}^2\right)dx+\frac{\kappa(\alpha)}{2}\int_{0}^L\int_{\R}\abs{\omega(x,\xi,t)}^2d\xi dx.
\end{equation*}
By similar computation to Lemma \ref{Denergy}, it is easy to see that the energy $E_3(t)$ satisfies the following estimation 
\begin{equation}\label{denergybeam}
\frac{d}{dt}E_3(t)=-\kappa(\alpha)\int_{0}^L\int_{\mathbb{R}}(\xi^2+\eta)\abs{\omega(x,\xi,t)}^2d\xi dx.
\end{equation}
$\newline$
\noindent Since $\alpha\in (0,1)$, then $\kappa(\alpha)>0$, and therefore $\displaystyle\frac{d}{dt}E_3(t)\leq 0$. Thus, system \eqref{AUG1B}-\eqref{AUG3B} is dissipative in the sense that its energy is a non-increasing function with respect to time variable $t$. Now, we define the following Hilbert energy space $\mathcal{H}_{3}$ by 
$$
\mathcal{H}_{3}=\left\{(u,v,y,z,\omega)\in H_L^1(-L,0)\times L^2(-L,0)\times H^2_R(0,L)\times L^2(0,L)\times W;\ \ u(0)=y(0)\right\},
$$
where $W=L^2\left((0,L)\times \R\right)$ and
\begin{equation}
\left\{
\begin{array}{ll}
H_L^1(-L,0)=\{u\in H^1(-L,0); u(-L)=0\},\\[0.1in]
H_R^2(0,L)=\{y\in H^2(0,L); y(L)=y_x(L)=0\}.
\end{array}\right.
\end{equation} 
The energy space $\mathcal{H}_3$ is equipped with the inner product defined by 
$$
\begin{array}{lll}
\displaystyle
\left<U,U_1\right>_{\mathcal{H}_3}&=&\displaystyle
\int_{-L}^0 v\bar{v_1}dx+a\int_{-L}^0u_x(\overline{u_1})_xdx+\int_{0}^L z\bar{z_1}dx+b\int_{0}^L y_{xx}(\overline{y_1})_{xx}dx
+\kappa(\alpha)\int_{0}^L\int_{\mathbb{R}}\omega(x,\xi)\overline{\omega_1}(x,\xi)d\xi dx,
\end{array}
$$
for all $U=(u,v,y,z,\omega)$ and $U_1=(u_1,v_1,y_1,z_1,\omega_1)$ in $\mathcal{H}_{3}$. We use $\|U\|_{\mathcal{H}_{3}}$ to denote the corresponding norm. We define the unbounded linear operator $\AA_{3}:D(\AA_{3})\subset\mathcal{H}_{3}\rightarrow\mathcal{H}_{3}$ by 
\begin{equation*}
D(\mathcal{A}_{3})=\left\{\begin{array}{c}
\displaystyle{U=(u,v,y,z,\omega)\in \mathcal{H}_3;\ (v,z)\in H_L^1(-L,0)\times H^2_R(0,L),\ u\in  H^2(-L,0)},\\[0.1in]  
\displaystyle{\left(by_{xx}+\sqrt{d(x)}\kappa(\alpha)\int_{\mathbb{R}}\abs{\xi}^{\frac{2\alpha-1}{2}}\omega(x,\xi)d\xi\right)_{xx}}\in L^2(0,L),\vspace{0.2cm}\\ [0.1in]
-\left(\abs{\xi}^2+\eta\right)\omega(x,\xi)+\sqrt{d(x)}z_{xx}|\xi|^{\frac{2\alpha-1}{2}},\quad|\xi|\omega(x,\xi)\in W, \vspace{0.2cm}\\[0.1in]
au_x(0)+by_{xxx}(0)=0,\,  y_{xx}(0)=0,\,\text{and}\, v(0)=z(0)
\end{array}
\right\},
\end{equation*}
and for all $U=(u,v,y,z,\omega)\in D(\mathcal{A}_{3})$, 
$$
\mathcal{A}_{3}(u,v,y,z,\omega)^{\top}=\begin{pmatrix}
v\\ \vspace{0.2cm} au_{xx}\\ \vspace{0.2cm} z\\\vspace{0.2cm}\displaystyle{-\left(by_{xx}+\sqrt{d(x)}\kappa(\alpha)\int_{\mathbb{R}}\abs{\xi}^{\frac{2\alpha-1}{2}}\omega(x,\xi)d\xi\right)_{xx}}\\ \vspace{0.2cm}
-\left(\abs{\xi}^2+\eta\right)\omega(x,\xi)+\sqrt{d(x)}z_{xx}|\xi|^{\frac{2\alpha-1}{2}}
\end{pmatrix}.
$$
\noindent If $U=(u,u_t,y,y_t,\omega)$ is a regular solution of system \eqref{AUG1B}-\eqref{AUG3B}, then the system can be rewritten as evolution equation on the Hilbert space $\mathcal{H}_3$ given by
\begin{equation}\label{evolution}
U_t=\mathcal{A}_{3}U,\quad U(0)=U_0,
\end{equation}
where $U_0=(u_0,u_1,y_0,y_1,0)$.\\
\noindent Similar to Proposition \ref{mdissipatif}, the operator $\mathcal{A}_2$ is m-dissipative on $\mathcal{H}_3$, consequently it  generates a $C_0$-semigroup of contractions $(e^{t\mathcal{A}_3})_{t\geq 0}$ following Lummer-Phillips theorem (see in \cite{LiuZheng01} and \cite{Pazy01}). Then the solution of the evolution Equation \eqref{evolution} admits the following representation
$$
U(t)=e^{t\mathcal{A}_3}U_0,\quad t\geq 0,
$$
which leads to the well-posedness of \eqref{evolution}. Hence, we have the following result. 
\begin{theoreme}
Let $U_0\in \mathcal{H}_3$, then problem \eqref{evolution} admits a unique weak solution $U$ satisfies 
$$
U(t)\in C^0\left(\R^+,\mathcal{H}_3\right).
$$
Moreover, if $U_0\in D(\mathcal{A}_3)$, then problem \eqref{evolution} admits a unique strong solution $U$ satisfies 
$$
U(t)\in C^1\left(\R^+,\mathcal{H}_3\right)\cap C^0\left(\R^+,D(\mathcal{A}_3)\right).
$$
\end{theoreme}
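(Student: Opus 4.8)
The plan is to obtain the entire statement from the abstract generation theory for $C_0$-semigroups, so the whole task reduces to verifying that $\mathcal{A}_3$ is m-dissipative on $\mathcal{H}_3$ and then invoking the Lumer--Phillips theorem exactly as in Proposition \ref{mdissipatif}. First I would establish dissipativity by a direct computation of $\Re\left\langle \mathcal{A}_3 U, U\right\rangle_{\mathcal{H}_3}$ for $U=(u,v,y,z,\omega)\in D(\mathcal{A}_3)$: integrating by parts separately in the wave, beam and memory components and using the boundary and transmission conditions $u(0)=y(0)$, $au_x(0)+by_{xxx}(0)=0$ and $y_{xx}(0)=0$, the cross terms carried by the factor $\sqrt{d(x)}$ cancel and one is left with the steady-state analogue of the energy identity \eqref{denergybeam}, namely
\begin{equation*}
\Re\left\langle \mathcal{A}_3 U, U\right\rangle_{\mathcal{H}_3}=-\kappa(\alpha)\int_0^L\int_{\R}(\xi^2+\eta)\abs{\omega(x,\xi)}^2\,d\xi\,dx\le 0 .
\end{equation*}

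The substantive step is maximality, i.e. surjectivity of $I-\mathcal{A}_3$. Given $F=(f_1,f_2,f_3,f_4,f_5)\in\mathcal{H}_3$, I would set $v=u-f_1$ and $z=y-f_3$, then solve the fifth line algebraically. The key structural difference from model 1 is that the memory variable is now driven by the \emph{second} spatial derivative $z_{xx}=y_{xx}-(f_3)_{xx}$ of the beam velocity rather than by $v_x$, giving
\begin{equation*}
\omega(x,\xi)=\frac{f_5(x,\xi)}{1+\xi^2+\eta}+\frac{\sqrt{d(x)}\,\abs{\xi}^{\frac{2\alpha-1}{2}}\left(y_{xx}-(f_3)_{xx}\right)}{1+\xi^2+\eta}.
\end{equation*}
Substituting this back and using the definition of $\mathtt{I}_1(\eta,\alpha)$ from \eqref{I123}, the problem decouples into the elliptic system $u-au_{xx}=F_1$ on $(-L,0)$ together with $y+\left(by_{xx}+d(x)\mathtt{I}_1(\eta,\alpha)y_{xx}+\cdots\right)_{xx}=F_2$ on $(0,L)$, coupled through the transmission conditions. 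I would cast this in variational form on
\begin{equation*}
V=\left\{(\varphi,\psi)\in H^1_L(-L,0)\times H^2_R(0,L);\ \varphi(0)=\psi(0)\right\}
\end{equation*}
and apply the Lax--Milgram theorem: the bilinear form is continuous and, since $\mathtt{I}_1(\eta,\alpha)>0$ by Lemma \ref{lemI123}, coercive, while the source term involving $f_5$ is controlled by Cauchy--Schwarz together with the bound $\mathtt{I}_2(\eta,\alpha)<\infty$, in the spirit of \eqref{mdiss9}.

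Once $(u,y)\in V$ is produced, elliptic regularity upgrades $u$ to $H^2(-L,0)$ and places the fourth component in $L^2(0,L)$; I would then recover $v,z$ and check $v(0)=z(0)$. It remains to confirm $\omega\in W$ and $\abs{\xi}\omega\in W$, which follows from the displayed formula for $\omega$ by splitting into squared terms: the $f_5$-contribution is bounded using $(1+\eta)^{-1}\|f_5\|_W$ and $\max_{\xi\in\R}\xi^2(1+\xi^2+\eta)^{-2}<\tfrac14$, while the remaining terms are bounded via $\mathtt{I}_2(\eta,\alpha)$ (for $\omega$) and $\mathtt{I}_3(\eta,\alpha)$ (for $\abs{\xi}\omega$) from Lemma \ref{lemI123}, now applied to $\|y_{xx}\|_{L^2(l_0,l_1)}$ and $\|(f_3)_{xx}\|_{L^2(l_0,l_1)}$ in place of the first-derivative norms used for model 1; both are finite because $y\in H^2_R(0,L)\subset V$ and $f_3\in H^2_R(0,L)$. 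This yields $U\in D(\mathcal{A}_3)$ solving $(I-\mathcal{A}_3)U=F$, completing m-dissipativity.

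Finally, Lumer--Phillips furnishes the contraction semigroup $(e^{t\mathcal{A}_3})_{t\ge0}$, and the standard $C_0$-semigroup generation theory gives a weak solution in $C^0(\R^+,\mathcal{H}_3)$ for $U_0\in\mathcal{H}_3$ and a strong solution in $C^1(\R^+,\mathcal{H}_3)\cap C^0(\R^+,D(\mathcal{A}_3))$ for $U_0\in D(\mathcal{A}_3)$. The hard part will be the surjectivity step, and specifically checking that driving the memory variable by the second derivative $z_{xx}$ of the beam velocity is still compatible with the integrability ranges secured by Lemma \ref{lemI123}, together with the coercivity of the coupled bilinear form on $V$.
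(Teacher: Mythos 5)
Your proposal is correct and follows essentially the same route as the paper: the paper proves this theorem simply by asserting that $\mathcal{A}_3$ is m-dissipative ``similar to Proposition \ref{mdissipatif}'' and then invoking Lumer--Phillips, and your argument is precisely the correct instantiation of that deferred proof, with the right adaptations (memory variable driven by $z_{xx}$ instead of $v_x$, variational space $H^1_L(-L,0)\times H^2_R(0,L)$ with the matching condition at $0$, and the integrals $\mathtt{I}_1,\mathtt{I}_2,\mathtt{I}_3$ of Lemma \ref{lemI123} applied to $\|y_{xx}\|_{L^2(l_0,l_1)}$ and $\|(f_3)_{xx}\|_{L^2(l_0,l_1)}$).
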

\begin{theoreme}\label{Strong-Beam}
Assume that $\eta\geq 0$, then the $C_0-$semigroup of contractions $e^{t\mathcal{A}_{3}}$ is strongly stable on $\mathcal{H}_{3}$ in the sense that $\displaystyle{\lim_{t\to+\infty}\left\|e^{t\mathcal{A}_{3}}U_0\right\|_{\mathcal{H}_{3}}}=0$.  
\end{theoreme}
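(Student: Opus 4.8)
The plan is to apply the Arendt--Batty criterion (Theorem~\ref{arendtbatty}) in exactly the same way as in the proof of Theorem~\ref{Strong}. Since $\mathcal{A}_3$ is already known to be m-dissipative and hence to generate a contraction $C_0$-semigroup, strong stability will follow once I show that $\mathcal{A}_3$ has no eigenvalue on the imaginary axis and that $\sigma(\mathcal{A}_3)\cap i\R$ is at most countable. Concretely I would prove that $i\la I-\mathcal{A}_3$ is injective for every $\la\in\R$, that it is surjective for every $\la\in\R$ when $\eta>0$ (resp. for every $\la\in\R^{\ast}$ when $\eta=0$), and that $0\in\sigma(\mathcal{A}_3)$ exactly when $\eta=0$. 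Together with the closed graph theorem this gives $\sigma(\mathcal{A}_3)\cap i\R=\emptyset$ if $\eta>0$ and $\sigma(\mathcal{A}_3)\cap i\R=\{0\}$ if $\eta=0$, which is more than enough to conclude.

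For injectivity (the analogue of Lemma~\ref{ker}), suppose $\mathcal{A}_3U=i\la U$ with $U=(u,v,y,z,\omega)\in D(\mathcal{A}_3)$. Taking the real part of $\langle\mathcal{A}_3U,U\rangle_{\mathcal{H}_3}$ and using the dissipation identity \eqref{denergybeam} forces $\omega\equiv 0$ on $(0,L)\times\R$. Substituting this into the last component of the resolvent system together with the definition of $d(x)$ yields $z_{xx}=0$, hence $\la\,y_{xx}=0$, on $(l_0,l_1)$. If $\la\neq 0$ then $y_{xx}=0$ there, and since $\omega\equiv 0$ the beam equation reduces on $(l_0,l_1)$ to $\la^2y-by_{xxxx}=0$ with $y_{xxxx}=0$, so $y\equiv 0$ on $(l_0,l_1)$; the profile $y$ therefore has vanishing Cauchy data at an interior point while solving the clean fourth-order ODE $by_{xxxx}=\la^2 y$ on all of $(0,L)$, and ODE uniqueness propagates $y\equiv 0$ to $(0,L)$. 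The transmission relations $au_x(0)+by_{xxx}(0)=0$ and $u(0)=y(0)$ then give $u(0)=u_x(0)=0$, and since $u$ solves $\la^2 u+au_{xx}=0$ on $(-L,0)$ with these Cauchy data we get $u\equiv 0$, whence $U=0$. The degenerate case $\la=0$ is handled directly as in Case~1 of Lemma~\ref{ker}: one integrates $u_{xx}=0$ and $y_{xxxx}=0$, writes $u$ affine and $y$ cubic from the boundary data $u(-L)=y(L)=y_x(L)=y_{xx}(0)=0$, and the transmission conditions force all the constants to vanish because $a,b>0$.

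The surjectivity step mirrors Lemma~\ref{surjective}. Solving $(i\la I-\mathcal{A}_3)U=F$ reduces, after eliminating $v$, $z$ and $\omega$, to an elliptic system for $(u,y)$ on $\mathbb{H}=H^1_L(-L,0)\times H^2_R(0,L)$ whose principal part is $-au_{xx}$ on the wave side and $\big((b+d(x)\mathtt{I}_4)y_{xx}\big)_{xx}$ on the beam side. For $\la=0$, $\eta>0$ the associated sesquilinear form is coercive and Lax--Milgram applies directly; for $\la\neq 0$ one writes the system as $(\la^2\mathcal{L}^{-1}-I)U=\mathcal{L}^{-1}\mathcal{F}$ with $\mathcal{L}$ an isomorphism and $\mathcal{L}^{-1}$ compact, so Fredholm's alternative reduces surjectivity to the triviality of $\ker(\la^2\mathcal{L}^{-1}-I)$. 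The latter is obtained by taking imaginary parts and using $\Im(\mathtt{I}_4)=\la\,\mathtt{I}_8\neq 0$ (Lemma~\ref{lemI2}) to get $\tilde y_{xx}=0$ on $(l_0,l_1)$, then closing exactly as in the injectivity argument. Finally, the non-invertibility of $\mathcal{A}_3$ when $\eta=0$ (the analogue of Lemma~\ref{eta=0}) follows by exhibiting an $F$ whose formal preimage forces $\omega(x,\xi)\sim|\xi|^{\frac{2\alpha-5}{2}}$, which is not square-integrable near $\xi=0$ because $\alpha<1$.

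I expect the genuine obstacle to be the surjectivity for $\la\neq 0$. There one must verify that the $\xi$-integrals $\mathtt{I}_4,\mathtt{I}_5,\mathtt{I}_6$ arising after eliminating $\omega$ remain well defined in the beam setting (Lemmas~\ref{lemI2}--\ref{lemI3} adapted with $z_{xx}$ in place of $v_x$), that $\mathcal{L}^{-1}$ is genuinely compact for the mixed second/fourth-order operator coupled through the interface, and that the Fredholm kernel is trivial; this last point loops back to the eigenvalue analysis and relies crucially on $\Im(\mathtt{I}_4)\neq 0$ to activate the localized damping on $(l_0,l_1)$. The rest of the argument is routine once these three verifications are in place.
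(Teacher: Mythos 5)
Your proposal is correct and follows exactly the route the paper takes: the paper's own proof of Theorem \ref{Strong-Beam} is precisely ``repeat the arguments of Subsection \ref{strongStabilityWave} and apply Theorem \ref{arendtbatty},'' and your adaptation of Lemmas \ref{ker}, \ref{eta=0} and \ref{surjective} to the operator $\mathcal{A}_3$ (dissipation forcing $\omega\equiv 0$, hence $z_{xx}=0$ and then $y\equiv 0$ on $(l_0,l_1)$, propagation by uniqueness for the constant-coefficient ODEs $by_{xxxx}=\la^2 y$ and $\la^2u+au_{xx}=0$ through the transmission conditions, Lax--Milgram/Fredholm for surjectivity via $\Im(\mathtt{I}_4)\neq 0$, and the $|\xi|^{\frac{2\alpha-5}{2}}\notin W$ obstruction at $\la=0$ when $\eta=0$) is sound at every step. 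Indeed your injectivity argument is, if anything, slightly cleaner than the damped-wave case, since vanishing of $y$ on a subinterval propagates by elementary ODE uniqueness without invoking a unique continuation theorem.
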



\textbf{Proof of Theorem \ref{Strong-Beam}.}  The proof of this theorem follows by proceeding with similar arguments as in Subsection  \ref{strongStabilityWave}, and using the Arendt Batty Theorem (see Theorem \ref{arendtbatty} in Appendix).
\subsection{Polynomial Stability in the case $\eta>0$}\label{Section-poly-beam}
\noindent The aim of this part is to study the polynomial stability of system \eqref{AUG1B}-\eqref{AUG3B} in the case $\eta>0$. As the condition $ i\mathbb{R}\subset \rho(\mathcal{A}_2)$ is already checked in the subsection \ref{subsec}, it remains to prove that condition \eqref{h1} holds (see Theorem \ref{bt} in Appendix). This is established by using specific  multipliers, some interpolation inequalities and by solving differential equations of order 4.
\noindent Our main result in this part is the following theorem. 
\begin{theoreme}\label{polbeam}
Assume that $\eta>0$. The $C_0-$ semigroup $(e^{t\AA_3})_{t\geq 0}$ is polynomially stable; i.e. there exists constant $C_3>0$ such that for every $U_0\in D(\AA_3)$, we have 
\begin{equation}\label{Energypolbeam}
E_3(t)\leq \frac{C_2}{t^{\frac{2}{3-\alpha}}}\|U_0\|^2_{D(\AA_3)},\quad t>0,\,\forall\, U_0\in D(\mathcal{A}_3).
\end{equation}
\end{theoreme}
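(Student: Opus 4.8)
The plan is to invoke the Borichev--Tomilov criterion (Theorem~\ref{bt}) with $\ell=3-\alpha$, for which the decay rate $t^{-2/\ell}=t^{-\frac{2}{3-\alpha}}$ in \eqref{Energypolbeam} is equivalent to the two resolvent conditions $i\R\subset\rho(\AA_3)$ and $\sup_{\la\in\R}\|(i\la I-\AA_3)^{-1}\|_{\mathcal L(\mathcal H_3)}=O(|\la|^{3-\alpha})$. The first is already settled by the strong stability of Theorem~\ref{Strong-Beam} together with the surjectivity arguments of Subsection~\ref{subsec}, so everything reduces to the resolvent growth bound, which I would prove by contradiction exactly as in Theorem~\ref{pol}. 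Assuming it fails, one gets $\la_n\to\infty$ and $U_n=(u_n,v_n,y_n,z_n,\omega_n)$ with $\|U_n\|_{\mathcal H_3}=1$ and $\la_n^{3-\alpha}(i\la_nI-\AA_3)U_n=F_n\to0$ in $\mathcal H_3$; after dropping the index, the five components read $i\la u-v=\la^{-(3-\alpha)}f_1$ and $i\la v-au_{xx}=\la^{-(3-\alpha)}f_2$ on the wave side $(-L,0)$, together with $i\la y-z=\la^{-(3-\alpha)}f_3$, $i\la z+(S_d)_{xx}=\la^{-(3-\alpha)}f_4$ and $(i\la+\xi^2+\eta)\omega-\sqrt{d(x)}z_{xx}|\xi|^{\frac{2\alpha-1}{2}}=\la^{-(3-\alpha)}f_5$ on the beam side $(0,L)$, where $S_d=by_{xx}+\sqrt{d(x)}\kappa(\alpha)\int_\R|\xi|^{\frac{2\alpha-1}{2}}\omega\,d\xi$. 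The aim is to contradict the normalization by showing $\|U\|_{\mathcal H_3}=o(1)$.

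The first block of estimates parallels Lemmas~\ref{First-Estimation}--\ref{S-Estimation}. Taking the real part of $\langle(i\la I-\AA_3)U,U\rangle_{\mathcal H_3}$ gives the dissipation estimate $\int_0^L\int_\R(\xi^2+\eta)|\omega|^2\,d\xi\,dx=o(\la^{-(3-\alpha)})$. Solving the fifth equation for $\sqrt{d(x)}|\xi|^{\frac{2\alpha-1}{2}}z_{xx}$, multiplying by $(|\la|+\xi^2+\eta)^{-2}|\xi|$ and integrating over $\R$ as in Step~2 of Lemma~\ref{First-Estimation}, the integrals $\mathtt I_{12},\mathtt I_{13},\mathtt I_{14}$ of Lemma~\ref{lemI4} then yield a local bound for $\int_{l_0}^{l_1}|z_{xx}|^2\,dx$ on the support of $d$; combining it with $i\la y-z=\la^{-(3-\alpha)}f_3$ produces the corresponding control of $\int_{l_0}^{l_1}|y_{xx}|^2\,dx$, and together with the dissipation estimate this gives an estimate of the type $\int_{l_0}^{l_1}|S_d|^2\,dx=o(1)$ in the spirit of Lemma~\ref{S-Estimation}.

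The crucial and hardest step is to propagate these interior bounds on the short interval $(l_0,l_1)$ to the whole beam $(0,L)$ and then, through the transmission relations $au_x(0)+by_{xxx}(0)=0$, $y_{xx}(0)=0$, $u(0)=y(0)$, $v(0)=z(0)$, to the undamped wave on $(-L,0)$. This is where the fourth-order character of the beam becomes the main obstacle: the fractional damping only controls $y_{xx}$ on $(l_0,l_1)$, so recovering $y$, $y_x$ and $y_{xxx}$ elsewhere requires a piecewise multiplier argument with cut-off functions $h,\varphi$ (the analogue of Lemma~\ref{estwave}) in which the beam equation is multiplied by terms such as $\varphi\bar y_x$ and integrated by parts several times, supplemented by Gagliardo--Nirenberg interpolation inequalities to dominate the intermediate derivatives, and by the explicit analysis of the resolvent equation $\la^2y-by_{xxxx}=O(\cdot)$ whose characteristic exponents scale like $\la^{1/2}$. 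It is precisely this slow transfer of dissipation across the fourth-order operator and the single boundary junction that forces the exponent $3-\alpha$, in contrast with the $1-\frac{\alpha}{2}$ of the damped-wave model~\eqref{Sys1}.

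Once global control of $\int_0^L(|z|^2+b|y_{xx}|^2)\,dx$ is reached and the junction data $u_x(0)$, $v(0)$, $y_{xxx}(0)$, $z(0)$ are shown to be $o(1)$, a standard multiplier for the wave part, multiplying $i\la v-au_{xx}=\la^{-(3-\alpha)}f_2$ by $h\bar u_x$ and using the transmission values at $x=0$, yields $\int_{-L}^0(|v|^2+a|u_x|^2)\,dx=o(1)$, exactly as in Steps~1--3 of Lemma~\ref{lem7}. Assembling the beam, wave and memory contributions then gives $\|U\|_{\mathcal H_3}=o(1)$, which contradicts $\|U_n\|_{\mathcal H_3}=1$; hence the resolvent bound holds and Theorem~\ref{bt} delivers the decay \eqref{Energypolbeam}.
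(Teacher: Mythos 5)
Your proposal follows essentially the same route as the paper's own proof: the Borichev--Tomilov criterion with $\ell=3-\alpha$, the contradiction argument for the resolvent bound, the dissipation and local estimates of $z_{xx}$, $y_{xx}$ and $S$ on $(l_0,l_1)$, propagation to all of $(0,L)$ by cut-off multipliers and Gagliardo--Nirenberg interpolation together with the explicit solution of the fourth-order resolvent equation $\lambda^2y-by_{xxxx}=\mathtt{F}$ (whose characteristic exponents scale like $\lambda^{1/2}$), and finally the transfer of the junction data $u_x(0)$, $v(0)$, $y_{xxx}(0)$ through the transmission conditions to kill the undamped wave part. The paper carries out exactly these steps in its chain of lemmas, including the factorization $\left(\tfrac{\partial}{\partial x}-i\mu\right)\left(\tfrac{\partial}{\partial x}+i\mu\right)\left(\tfrac{\partial^2}{\partial x^2}-\mu^2\right)y=\tfrac{1}{b}\mathtt{F}$ with $\mu=\sqrt{\lambda/\sqrt{b}}$ that you identify as the core difficulty, so your plan is the paper's proof in outline.
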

\noindent According to Theorem \ref{bt}, by taking $\ell=3-\alpha$, the polynomial energy decay \eqref{Energypolbeam} holds if the following conditions 
\begin{equation}\label{R1}\tag{${\rm{R_1}}$}
i\R\subset \rho(\mathcal{A}_3),
\end{equation}
and
\begin{equation}\label{R2}\tag{${\rm{R_2}}$}
\sup_{\la\in \R}\left\|(i\la I-\AA_3)^{-1}\right\|_{\mathcal{L}(\mathcal{H}_3)}=O\left(\abs{\la}^{3-\alpha}\right)
\end{equation}
are satisfied. 
\noindent Since condition \eqref{R1}is already proved (see Subsection \ref{subsec}), we still need to prove condition \eqref{R2}. For this purpose we will use an argument of contradiction. Suppose that \eqref{R2} is false, then there exists $\left\{\left(\la_n,U_n:=(u_n,v_n,y_n,z_n,\omega_n(\cdot,\xi))^\top\right)\right\}\subset \R^{\ast}\times D(\AA_3)$ with 
\begin{equation}\label{pol1beam}
\abs{\la_n}\to +\infty \quad \text{and}\quad \|U_n\|_{\mathcal{H}_3}=\|(u_n,v_n,y_n,z_n,\omega_n(\cdot,\xi))\|_{\mathcal{H}_3}=1, 
\end{equation}
such that 
\begin{equation}\label{pol2}
\left(\la_n^{3-\alpha}\right)\left(i\la_nI-\AA_3\right)U_n=F_n:=(f_{1,n},f_{2,n},f_{3,n},f_{4,n},f_{5,n}(\cdot,\xi))^{\top}\to 0 \ \ \text{in}\ \ \mathcal{H}_3. 
\end{equation}
For simplicity, we drop the index $n$. Equivalently, from \eqref{pol2}, we have 
\begin{eqnarray}
i\la u-v&=&\dfrac{f_1}{\la^{3-\alpha}} \quad\text{in}\ H_L^1(-L,0),\label{pol3beam}\\
i\la v-au_{xx}&=&\dfrac{f_2}{\la^{3-\alpha}} \quad \text{in}\ L^2(-L,0),\label{pol4beam}\\
i\la y-z&=&\dfrac{f_3}{\la^{3-\alpha}} \quad\text{in}\ H_R^2(0,L),\label{pol5beam}\\
i\la z+S_{xx}&=&\dfrac{f_4}{\la^{3-\alpha}}\quad \text{in}\ L^2(0,L),\label{pol6beam}\\
(i\la+\xi^2+\eta)\omega(x,\xi)-\sqrt{d(x)}z_{xx}|\xi|^{\frac{2\alpha-1}{2}}&=&\dfrac{f_5(x,\xi)}{\la^{3-\alpha}}\quad \text{in}\ W,\label{pol7beam}
\end{eqnarray}
where $\displaystyle S=by_{xx}+\sqrt{d(x)}\kappa(\alpha)\int_{\mathbb{R}}|\xi|^{\frac{2\alpha-1}{2}}\omega(x,\xi)d\xi$. Here we will check the condition \eqref{R2} by finding a contradiction with \eqref{pol1beam} by showing $\|U\|_{\mathcal{H}_3}=o(1)$. For clarity, we divide the proof into several Lemmas.
\begin{Lemma}\label{First-Estimation-beam}
Assume that $\eta>0$. Then, the solution $(u,v,y,z,\omega)\in D(\AA_3)$ of system \eqref{pol3beam}-\eqref{pol7beam} satisfies the following asymptotic behavior estimations 
\begin{equation}\label{FE-1-beam}
\begin{array}{l}
\displaystyle
\int_0^L\int_{\R}\left(\abs{\xi}^2+\eta\right)\abs{\omega(x,\xi)}^2d\xi dx=\frac{o\left(1\right)}{\la^{3-\alpha}},\quad \int_{l_0}^{l_1}\abs{z_{xx}}^2dx=\frac{o\left(1\right)}{\la^2},
 \\
 \displaystyle
\int_{l_0}^{l_1}\abs{y_{xx}}^2dx=\frac{o\left(1\right)}{\la^4}\quad
\text{and}\quad \int_{l_0}^{l_1}\left|S\right|^2dx=\frac{o(1)}{\la^{3-\alpha}}.
\end{array}
\end{equation}
\end{Lemma}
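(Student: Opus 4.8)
The plan is to mirror the four-step structure of Lemma~\ref{First-Estimation}, replacing the first spatial derivative $v_x$ of the wave velocity by the second derivative $z_{xx}$ of the beam velocity, which is now the quantity coupled to the damping through \eqref{pol7beam}. First I would take the inner product of $F$ with $U$ in $\mathcal{H}_3$; since $\|U\|_{\mathcal{H}_3}=1$ and $F\to 0$, the dissipativity identity $\Re\langle\AA_3 U,U\rangle_{\mathcal{H}_3}=-\kappa(\alpha)\int_0^L\int_{\R}(\xi^2+\eta)|\omega|^2\,d\xi\,dx$ together with \eqref{pol2} yields the first estimate $\int_0^L\int_{\R}(\xi^2+\eta)|\omega(x,\xi)|^2\,d\xi\,dx=o(\la^{-(3-\alpha)})$ directly, since here $\ell=3-\alpha$.

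For the second estimate I would rewrite \eqref{pol7beam} as the pointwise bound $\sqrt{d(x)}|\xi|^{\frac{2\alpha-1}{2}}|z_{xx}|\le(|\la|+\xi^2+\eta)|\omega(x,\xi)|+|\la|^{-(3-\alpha)}|f_5(x,\xi)|$, multiply by $(|\la|+\xi^2+\eta)^{-2}|\xi|$, and integrate in $\xi$. Applying Cauchy--Schwarz exactly as in \eqref{FE-2} produces $\sqrt{d(x)}\,\mathtt{I}_{12}\,|z_{xx}|\le \mathtt{I}_{13}\bigl(\int_{\R}|\xi\omega|^2d\xi\bigr)^{1/2}+|\la|^{-(3-\alpha)}\mathtt{I}_{14}\bigl(\int_{\R}|f_5|^2d\xi\bigr)^{1/2}$, where $\mathtt{I}_{12},\mathtt{I}_{13},\mathtt{I}_{14}$ are the integrals of Lemma~\ref{lemI4} (note that $\frac{2\alpha-1}{2}+1=\alpha+\frac12$, which is precisely the exponent defining $\mathtt{I}_{12}$). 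Squaring, using $d(x)=d_0$ on $(l_0,l_1)$, and inserting the explicit orders $\mathtt{I}_{12}\sim(|\la|+\eta)^{\frac{\alpha}{2}-\frac54}$, $\mathtt{I}_{13}\sim(|\la|+\eta)^{-\frac34}$, $\mathtt{I}_{14}\sim(|\la|+\eta)^{-\frac54}$, the ratio $\mathtt{I}_{13}^2/\mathtt{I}_{12}^2\sim(|\la|+\eta)^{1-\alpha}$ multiplies the $o(\la^{-(3-\alpha)})$ bound on $\int_{l_0}^{l_1}\int_{\R}|\xi\omega|^2$ to give exactly $o(\la^{-2})$, while the $f_5$ term contributes a strictly smaller power; this yields $\int_{l_0}^{l_1}|z_{xx}|^2dx=o(\la^{-2})$. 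I expect this bookkeeping to be the main obstacle, since it is the only place where the precise asymptotics of the three fractional integrals must be balanced against the exponent $\ell=3-\alpha$.

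The remaining two estimates are then routine. Differentiating \eqref{pol5beam} twice gives $i\la y_{xx}=z_{xx}+\la^{-(3-\alpha)}(f_3)_{xx}$, and since $\|(f_3)_{xx}\|_{L^2(l_0,l_1)}=o(1)$ with $3-\alpha\ge2>1$, the $z_{xx}$ bound dominates and yields $\|\la y_{xx}\|_{L^2(l_0,l_1)}=o(\la^{-1})$, i.e. $\int_{l_0}^{l_1}|y_{xx}|^2dx=o(\la^{-4})$. Finally, for $S=by_{xx}+\sqrt{d(x)}\kappa(\alpha)\int_{\R}|\xi|^{\frac{2\alpha-1}{2}}\omega\,d\xi$, I would use $|P+Q|^2\le2P^2+2Q^2$ together with the Cauchy--Schwarz splitting $\bigl|\int_{\R}|\xi|^{\frac{2\alpha-1}{2}}\omega\,d\xi\bigr|^2\le\mathtt{I}_{15}(\alpha,\eta)\int_{\R}(\xi^2+\eta)|\omega|^2d\xi$, with $\mathtt{I}_{15}(\alpha,\eta)=\int_{\R}|\xi|^{2\alpha-1}(|\xi|^2+\eta)^{-1}d\xi<\infty$ for $\eta>0$ and $\alpha\in(0,1)$, exactly as in Lemma~\ref{S-Estimation}. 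Combining the already-established $\int_{l_0}^{l_1}|y_{xx}|^2dx=o(\la^{-4})$ with the first estimate and noting $4>3-\alpha$ gives $\int_{l_0}^{l_1}|S|^2dx=o(\la^{-(3-\alpha)})$, which completes the proof.
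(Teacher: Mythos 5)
Your proposal is correct and follows essentially the same four-step argument as the paper's proof: the dissipation identity for the $\omega$-estimate, the multiplication of \eqref{pol7beam} by $(\abs{\la}+\xi^2+\eta)^{-2}\abs{\xi}$ with the integrals $\mathtt{I}_{12},\mathtt{I}_{13},\mathtt{I}_{14}$ of Lemma \ref{lemI4} for the $z_{xx}$-estimate, the triangle inequality in \eqref{pol5beam} for the $y_{xx}$-estimate, and the splitting with $\mathtt{I}_{15}$ as in Lemma \ref{S-Estimation} for the $S$-estimate. The only difference is that you carry out explicitly the exponent bookkeeping that the paper compresses into a reference to Lemma \ref{First-Estimation}, and your computed orders $(\abs{\la}+\eta)^{1-\alpha}\cdot o(\la^{-3+\alpha})=o(\la^{-2})$ for the leading term and $o(\la^{\alpha-6})$ for the $f_5$ term are exactly right.
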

\begin{proof}
For the clarity of the proof, we divide the proof into several steps.\\ 
\textbf{Step 1.} Taking the inner product of $F$ with $U$ in $\mathcal{H}_3$, then using \eqref{pol1beam} and the fact that $U$ is uniformly bounded in $\mathcal{H}_3$, we get 
$$
\kappa(\alpha)\int_0^L\int_{\R}\left(\xi^2+\eta\right)\abs{\omega(x,\xi)}^2d\xi dx=-\Re\left(\left<\AA_3 U,U\right>_{\mathcal{H}_3}\right)=\Re\left(\left<(i\la I-\AA_3)U,U\right>_{\mathcal{H}_3}\right)=o\left(\la^{-3+\alpha}\right).
$$
\textbf{Step 2.} Our aim here is to prove the second estimation in \eqref{FE-1-beam}.\\
From \eqref{pol7beam}, we get
$$
\sqrt{d(x)}\abs{\xi}^{\frac{2\alpha-1}{2}}\abs{z_{xx}}\leq \left(\abs{\la}+\xi^2+\eta\right)\abs{\omega(x,\xi)}+\abs{\la}^{-3+\alpha}\abs{f_5(x,\xi)}.
$$
Multiplying the above inequality by $\left(\abs{\la}+\xi^2+\eta\right)^{-2}\abs{\xi}$, integrating over $\R$ and proceeding in a similar way as in Lemma \ref{First-Estimation} (Section \ref{Section-poly}), we get the second desired estimation in \eqref{FE-1-beam}.\\
\textbf{Step 3.}  From Equation \eqref{pol5beam} we have that
$$
\displaystyle{\|\la y_{xx}\|_{L^2(l_0,l_1)}\leq \|z_{xx}\|_{L^2(l_0,l_1)}+\abs{\la}^{-3+\alpha}}\|(f_3)_{xx}\|_{L^2(l_0,l_1)}.
 $$
Using Step 2, the fact that $\|f_3\|_{H^2_R(0,L)}=o(1)$, and that $\alpha\in (0,1)$, we get the third estimation in \eqref{FE-1-beam}.\\
\textbf{Step 4.}
Using the fact that $|P+Q|^2\leq 2P^2+2Q^2$, we obtain
\begin{equation*}
\begin{array}{lll}
\displaystyle
\int_{l_0}^{l_1}\left|S\right|^2dx&\displaystyle=\int_{l_0}^{l_1}\left|by_{xx}+\sqrt{d(x)}\kappa(\alpha)\int_{\mathbb{R}}|\xi|^{\frac{2\alpha-1}{2}}\omega(x,\xi)d\xi\right|^2 dx\\
&\displaystyle \leq 2b^2 \int_{l_0}^{l_1}\abs{y_{xx}}^2dx
\displaystyle
+2d_0\kappa(\alpha)^2\int_{l_0}^{l_1}\left(\int_{\R}\frac{\abs{\xi}^{\frac{2\alpha-1}{2}}\sqrt{\xi^2+\eta}}{\sqrt{\xi^2+\eta}}\omega(x,\xi)d\xi\right)^2 dx\\
&\displaystyle \leq 2b^2 \int_{l_0}^{l_1}\abs{y_{xx}}^2dx+c_2 \int_{l_0}^{l_1}\int_{\R}(\xi^2+\eta)\abs{\omega(x,\xi)}^2d\xi dx
\end{array}
\end{equation*}
where $\displaystyle{c_2=d_0\kappa(\alpha)^2\,\mathtt{I}_{15}(\alpha,\eta)}$ is defined in Lemma \ref{S-Estimation}. Thus, we get the last estimation in \eqref{FE-1-beam}. Hence, the proof is complete.
\end{proof}

\begin{Lemma}\label{lem-inter1}
Assume that $\eta>0$. The solution $(u,v,y,z,\omega)\in D(\AA_3)$ of system \eqref{pol3beam}-\eqref{pol7beam} satisfies the following asymptotic behavior 
\begin{equation}
\|z\|_{H^2(l_0,l_1)}=\frac{o(1)}{\la},\quad\|y\|_{H^2(l_0,l_1)}=\frac{o(1)}{\la^2},\quad \text{and}\quad \|S_x\|_{L^2(l_0,l_1)}=\frac{o(1)}{\la^{\frac{3-\alpha}{4}}}. 
\end{equation}
\end{Lemma}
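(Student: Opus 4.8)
The plan is to reduce all six bounds to a single sharp estimate on $\|y\|_{L^2(l_0,l_1)}$ and then propagate it to the remaining norms by Gagliardo--Nirenberg interpolation. First I would eliminate $z$ between \eqref{pol5beam} and \eqref{pol6beam}: substituting $z=i\la y-\la^{-3+\alpha}f_3$ into $i\la z+S_{xx}=\la^{-3+\alpha}f_4$ gives the reduced relation $S_{xx}=\la^2 y+g$ on $(0,L)$, with $g=i\la^{-2+\alpha}f_3+\la^{-3+\alpha}f_4$ and $\|g\|_{L^2(0,L)}=o(1)\la^{-2+\alpha}$ since $f_3\to0$ in $H^2_R(0,L)$ and $f_4\to 0$ in $L^2(0,L)$. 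On $(l_0,l_1)$ one has $d\equiv d_0$, so $S=by_{xx}+P$ with $P:=\sqrt{d_0}\kappa(\alpha)\int_\R|\xi|^{\frac{2\alpha-1}{2}}\omega\,d\xi$; Cauchy--Schwarz with the finiteness of $\mathtt{I}_{15}$ and the first estimate of Lemma \ref{First-Estimation-beam} give $\|P\|_{L^2(l_0,l_1)}=o(1)\la^{-\frac{3-\alpha}{2}}$, hence $\|S\|_{L^2(l_0,l_1)}=o(1)\la^{-\frac{3-\alpha}{2}}$ and $y_{xx}=(S-P)/b$ there. Finally, from \eqref{pol5beam} we have $\|z-i\la y\|_{H^2(l_0,l_1)}=o(1)\la^{-3+\alpha}$, so every norm of $z$ equals $\la$ times the corresponding norm of $y$ up to negligible remainders; it therefore suffices to establish $\|y\|_{H^2(l_0,l_1)}=o(1)\la^{-2}$ together with the $S_x$ bound.

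The heart of the argument is the $L^2$ estimate $\|y\|_{L^2(l_0,l_1)}=o(1)\la^{-2}$. I would test the reduced relation against $\chi^2\bar y$, where $\chi\in C_c^\infty(l_0,l_1)$ is a cut-off, integrate by parts twice (the compact support of $\chi$ annihilates all boundary terms), and use $y_{xx}=(S-P)/b$ to convert the top-order contribution into $\tfrac1b\int\chi^2 S(\bar S-\bar P)$. This produces $\la^2\|\chi y\|_{L^2}^2\lesssim \|S\|_{L^2}\bigl(\|y\|_{L^2}+\|y_x\|_{L^2}\bigr)+\|S\|_{L^2}^2+\|S\|_{L^2}\|P\|_{L^2}+\|g\|_{L^2}\|y\|_{L^2}$, all norms over $(l_0,l_1)$. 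Feeding in the crude a priori bounds $\|y\|_{L^2},\|y_x\|_{L^2}=O(1)$ (from $\|U\|_{\mathcal{H}_3}=1$) yields a first, non-sharp decay; I would then bootstrap, reinjecting the improved control of $\|y\|,\|y_x\|$ on nested cut-offs together with the interpolation inequality $\|y_x\|_{L^2}^2\lesssim\|y\|_{L^2}\|y_{xx}\|_{L^2}+\|y\|_{L^2}^2$, and iterate finitely many times until the exponent saturates at $\la^{-2}$. I expect this bootstrap---and in particular controlling the loss incurred by the cut-offs near $l_0$ and $l_1$---to be the main obstacle, since a naive single multiplier leaves uncontrolled pointwise boundary contributions and a one-shot estimate falls short of the sharp power.

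Once $\|y\|_{L^2(l_0,l_1)}=o(1)\la^{-2}$ is available, the remaining bounds follow by interpolation. The inequality $\|y_x\|_{L^2}^2\lesssim\|y\|_{L^2}\|y_{xx}\|_{L^2}+\|y\|_{L^2}^2$ combined with $\|y_{xx}\|_{L^2(l_0,l_1)}=o(1)\la^{-2}$ gives $\|y_x\|_{L^2(l_0,l_1)}=o(1)\la^{-2}$, whence $\|y\|_{H^2(l_0,l_1)}=o(1)\la^{-2}$; multiplying by $\la$ via $z=i\la y-\la^{-3+\alpha}f_3$ yields $\|z\|_{H^2(l_0,l_1)}=o(1)\la^{-1}$. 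For $S_x$ I would use $\|S_x\|_{L^2}^2\lesssim\|S\|_{L^2}\|S_{xx}\|_{L^2}+\|S\|_{L^2}^2$ with $\|S_{xx}\|_{L^2(l_0,l_1)}=\|\la^2 y+g\|_{L^2(l_0,l_1)}=o(1)$ and $\|S\|_{L^2(l_0,l_1)}=o(1)\la^{-\frac{3-\alpha}{2}}$, which delivers $\|S_x\|_{L^2(l_0,l_1)}=o(1)\la^{-\frac{3-\alpha}{4}}$, completing the three asserted estimates.
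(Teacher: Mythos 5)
Your reduction, your multiplier, and your endgame are essentially the paper's own argument in disguise: since $y=(z+\la^{-3+\alpha}f_3)/(i\la)$, testing $S_{xx}=\la^2 y+g$ against $\chi^2\bar y$ is, up to negligible terms, the same as the paper's multiplication of \eqref{pol6beam} by $-i\la^{-1}h\bar z$, and your final interpolation steps for $\|y_x\|$ and $\|S_x\|$ match the paper's use of Nirenberg's inequality. Your bootstrap itself is sound --- two passes suffice, since the ceiling imposed by the $\|S\|^2+\|S\|\|P\|$ terms is $\|\chi y\|_{L^2}=o(1)\la^{-(5-\alpha)/2}$ and $(5-\alpha)/2>2$ for $\alpha<1$ --- although it is heavier than necessary: with the paper's normalization ($-i\la^{-1}h\bar z$ in place of $\chi^2\bar y$) and the a priori bounds $\|z\|_{L^2}=O(1)$, $\|z_x\|_{L^2}=O(1)$ (Nirenberg), $\|z_{xx}\|_{L^2(l_0,l_1)}=o(1)/\la$, the inner-region estimate comes out in one shot, with no iteration.

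The genuine gap is the step you yourself flag as ``the main obstacle'' and then skip: passing from estimates on compactly supported cut-offs to the full interval $(l_0,l_1)$. Nested cut-offs $\chi\in C_c^\infty(l_0,l_1)$ can only ever produce bounds on compact subintervals $I\subset\subset(l_0,l_1)$; no finite iteration reaches the endpoints, so the sentence ``Once $\|y\|_{L^2(l_0,l_1)}=o(1)\la^{-2}$ is available'' is not justified by what precedes it. Worse, your route to $\|y_x\|_{L^2(l_0,l_1)}$ uses the plain interpolation inequality $\|y_x\|_{L^2}^2\lesssim\|y\|_{L^2}\|y_{xx}\|_{L^2}+\|y\|_{L^2}^2$ on the full interval, whose input is exactly the full-interval $L^2$ bound on $y$ that is missing, so the argument is circular at this point. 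What closes the gap --- and what the paper invokes, namely Theorem 4.23 in \cite{Adams} --- is an interpolation inequality involving a compact subdomain, of the form
\begin{equation*}
\|y_x\|_{L^2(l_0,l_1)}\lesssim\|y_{xx}\|_{L^2(l_0,l_1)}+\|y\|_{L^2(l_0+\varepsilon,l_1-\varepsilon)},
\end{equation*}
in which the low-order norm is needed only on the inner region. Combining this with your inner bound $\|y\|_{L^2(I)}=o(1)\la^{-2}$ and the known estimate $\|y_{xx}\|_{L^2(l_0,l_1)}=o(1)\la^{-2}$ of Lemma \ref{First-Estimation-beam} gives $\|y_x\|_{L^2(l_0,l_1)}=o(1)\la^{-2}$, and then $\|y\|_{L^2(l_0,l_1)}\lesssim\|y\|_{L^2(I)}+\|y_x\|_{L^2(l_0,l_1)}=o(1)\la^{-2}$. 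Once this insertion is made, the rest of your proposal (the $z$-bounds via \eqref{pol5beam} and the $S_x$ bound) goes through as written.
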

\begin{proof}
The proof of this Lemma will be divided into two steps.\\
\textbf{Step 1.} Let $ \displaystyle0<\varepsilon< \frac{l_1-l_0}{2}$. We define $h\in C^{\infty}([0,L])$, $0\leq h\leq 1$ on $[0,L]$, $h=1$ on $(l_0+\varepsilon,l_1-\varepsilon)$, and $h=0$ on $(0,l_0)\cup(l_1,L)$. Also, we define $\displaystyle\max_{x\in[0,L]}\abs{ h^{\prime}(x)}=m^{\prime}_{h}$ and $\displaystyle\max_{x\in[0,L]} \abs{h^{\prime\prime}(x)}=m^{\prime\prime}_{h}$, where $m^{\prime}_{h}$ and $m^{\prime\prime}_{h}$ are strictly positive constant numbers. Multiply equation \eqref{pol6beam} by $-i\la^{-1}h\bar{z}$ and integrate over $(l_0,l_1)$, we get
\begin{equation}\label{eqStep1}
\int_{l_0}^{l_1}h|z|^2dx=i\la^{-1}\int_{l_0}^{l_1}S(h^{\prime\prime}\bar{z}+2h^{\prime}z_x+hz_{xx})dx-i\la^{-4+\alpha}\int_{l_0}^{l_1}hf_4\bar{z}dx.
\end{equation}
Using Nirenberg inequality Theorem (see \cite{NirenbergPaper}), Equations \eqref{pol1beam} and \eqref{FE-1-beam}, we have
\begin{equation}\label{nirenberg-ineq}
\|z_x\|_{L^2(l_0,l_1)}\leq \|z_{xx}\|_{L^2(l_0,l_1)}^{1/2} \|z\|_{L^2(l_0,l_1)}^{1/2}+\|z\|_{L^2(l_0,l_1)}\leq O(1).
\end{equation}
Estimation of the term $\displaystyle \left(i\la^{-1}\int_{l_0}^{l_1}h^{\prime\prime}S\bar{z}\right)$. Using Cauchy Schwarz  inequality, last estimation in \eqref{FE-1-beam} and the fact that $z$ is uniformly bounded in $L^2(0,L)$ and that $0<\alpha<1$, we get
\begin{equation}\label{eqStep1-1}
\left|i\la^{-1}\int_{l_0}^{l_1}h^{\prime\prime}S\bar{z}dx\right|\leq \frac{m^{\prime\prime}_h}{\la} \int_{l_0}^{l_1} \abs{S}\abs{z}dx\leq \frac{o(1)}{\la^{\frac{5-\alpha}{2}}}.
\end{equation}
Estimation of the term $\displaystyle \left(i\la^{-1}\int_{l_0}^{l_1}hSz_{xx}dx\right)$. Using Cauchy Schwarz inequality, the second and the last estimations in \eqref{FE-1-beam}, we get
\begin{equation}\label{eqStep1-2}
\left|i\la^{-1}\int_{l_0}^{l_1}hS\bar{z}_{xx}dx\right|\leq \frac{m^{\prime}_h}{\la}\left(\int_{l_0}^{l_1}\abs{S}^2dx\right)^{1/2}\left(\int_{l_0}^{l_1}\abs{z_{xx}}^2dx\right)^{1/2}=\frac{o(1)}{\la^{\frac{7}{2}-\frac{\alpha}{2}}}.
\end{equation}
Estimation of the term $\displaystyle \left(i\la^{-1}\int_{l_0}^{l_1}h^{\prime}Sz_{x}dx\right)$. Cauchy Schwarz inequality , the last estimation in  \eqref{FE-1-beam}, Equation \eqref{nirenberg-ineq} and the fact that $0<\alpha<1$, we get
\begin{equation}\label{eqStep1-3}
\left|i\la^{-1}\int_{l_0}^{l_1}h^{\prime}Sz_{x}dx\right|\leq \frac{m^{\prime}_h}{\la} \int_{l_0}^{l_1} \abs{S}\abs{z_x}dx\leq \frac{o(1)}{\la^{\frac{5-\alpha}{2}}}.
\end{equation}
Estimation of the term $\displaystyle \left(i\la^{-4+\alpha}\int_{l_0}^{l_1}hf_4\bar{z}dx\right)$. Using Cauchy-Schwarz inequality, the fact that $\|f_4\|_{L^2(0,L)}=o(1)$ and the fact that $z$ is uniformly bounded in $L^2(0,L)$, we get 
\begin{equation}\label{eqStep1-4}
\left|i\la^{-4+\alpha}\int_{l_0}^{l_1}hf_4\bar{z}dx\right|=\frac{o(1)}{\la^{4-\alpha}}.
\end{equation}
Thus, using Equations \eqref{eqStep1-1}-\eqref{eqStep1-4} in \eqref{eqStep1}, we get
\begin{equation}\label{Step1Eq}
\int_{l_0}^{l_1}h\abs{z}^2dx=\frac{o(1)}{\la^{\frac{5-\alpha}{2}}}\quad\text{and}\quad \int_{l_0+\varepsilon}^{l_1-\varepsilon}\abs{z}^2dx=\frac{o(1)}{\la^{\frac{5-\alpha}{2}}}.
\end{equation}
\textbf{Step 2.} Applying the interpolation theorem involving compact subdomain (\cite{Adams}, Theorem 4.23), we obtain
$$\|z_x\|_{L^2(l_0,l_1)}\leq \|z_{xx}\|_{L^2(l_0,l_1)}+\|z\|_{L^2(l_0+\varepsilon,l_1-\varepsilon)}.$$
Then, by using \eqref{Step1Eq} and that $0<\alpha<1$, we get
\begin{equation}\label{interpolation1}
\|z_x\|_{L^2(l_0,l_1)}=\frac{o(1)}{\la}.
\end{equation}
Also, Using Theorem  yields that 
\begin{equation}\label{interpolation2}
\|z\|_{L^2(l_0,l_1)}=\frac{o(1)}{\la}.
\end{equation}
Thus, using the first estimation in \eqref{FE-1-beam}, \eqref{interpolation1} and \eqref{interpolation2}, we obtain the first estimation in Lemma \ref{lem-inter1}.\\
Now, from Equation \eqref{pol5beam} we have $\displaystyle i\la y-z=\la^{-3+\alpha}f_3$, then
$$\|y\|_{H^2(l_0,l_1)}\leq \frac{1}{\la}\|z\|_{H^2(l_0,l_1)}+\frac{1}{\la^{3-\alpha}}\|f_3\|_{H^2(l_0,l_1)},$$
using the fact that $\alpha\in(0,1)$, $\|f_3\|_{H^2(l_0,l_1)}=o(1)$, and the first estimation in Lemma \ref{lem-inter1}, we obtain the second estimation of Lemma \ref{lem-inter1}. Using Nirenberg inequality Theorem (see \cite{NirenbergPaper}), Lemma \ref{FE-1-beam} and \eqref{interpolation2},   we get 
\begin{equation}\label{SX}
\|S_x\|_{L^2(l_0,l_1)}\leq \|S_{xx}\|^{\frac{1}{2}}_{L^2(l_0,l_1)}\|S\|^{\frac{1}{2}}_{L^2(l_0,l_1)}+\|S\|_{L^2(l_0,l_1)}=\frac{o(1)}{\la^{\frac{3-\alpha}{4}}}
\end{equation}
The proof has been completed. 
\end{proof}
\begin{rem}
It is easy to see the existence of $h(x)$. For example, we can take $\displaystyle h(x)=\cos\left(\frac{\pi(l_1-x)}{l_1-l_0}\right)$ and we get that $h(l_1)=-h(l_0)=1$, $h\in C^{\infty}([0,L])$, $\displaystyle\abs{g(x)}\leq 1$
and $\displaystyle\abs{g^{\prime}(x)}\leq \frac{\pi}{l_1-l_0}$.
\end{rem}
\begin{Lemma}\label{Lem5.5}
Assume that $\eta>0$. The solution $(u,v,y,z,\omega)\in D(\AA_3)$ of system \eqref{pol3beam}-\eqref{pol7beam} satisfies the following asymptotic behavior
\begin{equation}\label{eq1-5'}
\abs{y(l_0)}=\frac{o(1)}{\la^{2}}, \abs{y_x(l_0)}=\frac{o(1)}{\la^{2}}, \abs{y(l_1)}=\frac{o(1)}{\la^{2}},\,\text{and}\,\,\abs{y_x(l_1)}=\frac{o(1)}{\la^{2}}.
\end{equation}
Moreover,
\begin{equation}\label{eq2-5'}
\frac{1}{\la^{\frac{1}{2}}}\abs{ y_{xxx}(l_0^{-})}=\frac{o(1)}{\la^{\frac{5-\alpha}{4}}}\quad \text{and}\quad \frac{1}{\la^{\frac{1}{2}}}\abs{y_{xxx}(l_1^{+})}=\frac{o(1)}{\la^{\frac{5-\alpha}{4}}}.
\end{equation}
\end{Lemma}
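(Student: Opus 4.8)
The plan is to read off the interface values of $y$ and its derivatives from the interior estimates of Lemmas~\ref{First-Estimation-beam} and~\ref{lem-inter1}, via the embedding $H^2(l_0,l_1)\hookrightarrow C^1([l_0,l_1])$ and the continuity of the effective shear across the damping interfaces. For \eqref{eq1-5'} I argue directly: since $\|y\|_{H^2(l_0,l_1)}=o(1)/\la^2$ by Lemma~\ref{lem-inter1}, the one--dimensional trace inequality
\[
\abs{y(l_0)}+\abs{y_x(l_0)}+\abs{y(l_1)}+\abs{y_x(l_1)}\le C\,\|y\|_{H^2(l_0,l_1)}
\]
immediately yields the four estimates $o(1)/\la^2$. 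This step is routine.

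For \eqref{eq2-5'} the first move is to convert the third derivative of $y$ into a quantity I can control. Because $d\equiv 0$ on $(0,l_0)\cup(l_1,L)$, the effective shear $\displaystyle S=by_{xx}+\sqrt{d(x)}\kappa(\alpha)\int_{\R}\abs{\xi}^{\frac{2\alpha-1}{2}}\omega(x,\xi)\,d\xi$ reduces to $S=by_{xx}$ on the undamped strips, so that $b\,y_{xxx}(l_0^-)=S_x(l_0)$ and $b\,y_{xxx}(l_1^+)=S_x(l_1)$; here I use that $U\in D(\AA_3)$ forces $S\in H^2(0,L)\hookrightarrow C^1([0,L])$, whence $S_x$ is continuous and its one--sided limits equal the two--sided value. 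Thus the task reduces to bounding the pointwise values $S_x(l_0),S_x(l_1)$, for which I have $\|S_x\|_{L^2(l_0,l_1)}=o(1)/\la^{\frac{3-\alpha}{4}}$ from \eqref{SX}, together with $S_{xx}=\la^{-3+\alpha}f_4-i\la z$ from \eqref{pol6beam}, whose $L^2(l_0,l_1)$--norm is $o(1)$ by the $z$--estimates of Lemma~\ref{lem-inter1}.

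The natural first attempt is a weighted multiplier on the damping interval: choosing $g\in C^1([l_0,l_1])$ with $g(l_1)=-g(l_0)=1$ and testing $S_{xx}=\la^{-3+\alpha}f_4-i\la z$ against $g\,\overline{S_x}$ produces, after integration by parts,
\[
\tfrac12\left(\abs{S_x(l_1)}^2+\abs{S_x(l_0)}^2\right)=\tfrac12\int_{l_0}^{l_1}g'\abs{S_x}^2\,dx+\Re\int_{l_0}^{l_1}\left(\la^{-3+\alpha}f_4-i\la z\right)g\,\overline{S_x}\,dx,
\]
which controls the interface shear in terms of $\|S_x\|_{L^2}$ and $\la\|z\|_{L^2}\|S_x\|_{L^2}$. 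I expect this estimate to be the main obstacle of the lemma: a plain trace inequality loses half a derivative because $\|S_{xx}\|_{L^2(l_0,l_1)}$ is only $o(1)$ up to the endpoints, and the $z$--bound itself degrades from $o(1)/\la^{\frac{5-\alpha}{4}}$ on the strict interior to $o(1)/\la$ at $l_0,l_1$. To reach the sharp power carrying the $\la^{-1/2}$ weight, I would combine this identity with the structure of the equation on the undamped side: eliminating $z$ between \eqref{pol5beam} and \eqref{pol6beam} shows that on $(l_1,L)$ the profile solves the constant--coefficient equation $by_{xxxx}-\la^2 y=\la^{-3+\alpha}(f_4+i\la f_3)$ with $y(L)=y_x(L)=0$, whose boundary--layer scale $r^{-1}=(\la^2/b)^{-1/4}\sim\la^{-1/2}$ is precisely the origin of the weight. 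Propagating the small Cauchy data $y(l_1),y_x(l_1)=o(1)/\la^2$ through the explicit growing, decaying and oscillating modes of this fourth--order ODE then pins down $y_{xxx}(l_1^+)$; the symmetric analysis on $(0,l_0)$, which must in addition transport the wave coupling at $x=0$ through the transmission conditions $au_x(0)+by_{xxx}(0)=0$, $u(0)=y(0)$, gives $y_{xxx}(l_0^-)$.
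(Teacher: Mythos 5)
Your treatment of \eqref{eq1-5'} matches the paper's: Sobolev embedding $H^2(l_0,l_1)\hookrightarrow C^1([l_0,l_1])$ applied to $\|y\|_{H^2(l_0,l_1)}=o(1)/\la^2$ from Lemma \ref{lem-inter1}. Likewise your reduction $b\,y_{xxx}(l_0^-)=S_x(l_0)$, $b\,y_{xxx}(l_1^+)=S_x(l_1)$, using $S\in H^2(0,L)\hookrightarrow C^1([0,L])$ and $S=by_{xx}$ off the damped set, is the same starting point as the paper's identity \eqref{y}. The difficulty is that neither of your two ways of estimating $S_x(l_0),S_x(l_1)$ reaches the stated rate. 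As you compute yourself, the multiplier identity with $g\overline{S_x}$ yields at best $|S_x(l_0)|^2+|S_x(l_1)|^2\lesssim \la\|z\|_{L^2(l_0,l_1)}\|S_x\|_{L^2(l_0,l_1)}+\cdots=o(1)/\la^{\frac{3-\alpha}{4}}$, i.e. $|S_x(l_i)|=o(1)/\la^{\frac{3-\alpha}{8}}$, whereas \eqref{eq2-5'} requires $|S_x(l_i)|=o(1)/\la^{\frac{3-\alpha}{4}}$: you are short by a square root, and you acknowledge it.

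The fallback you propose, solving $by_{xxxx}-\la^2y=\la^{-3+\alpha}(f_4+i\la f_3)$ on the undamped intervals and propagating the small Cauchy data $y(l_1),y_x(l_1)=o(1)/\la^2$, does not close this gap, for two reasons. First, it is quantitatively unsound uniformly in $\la$: on $(l_1,L)$ you know only two of the four Cauchy data at $l_1$ plus the two clamped conditions at $L$, so you are in fact solving a boundary value problem, and the clamped--clamped beam on $(l_1,L)$ has an infinite sequence of resonant frequencies. Near such a frequency there is a nontrivial $\phi$ with $\phi(l_1)=\phi_x(l_1)=\phi(L)=\phi_x(L)=0$ and $|\phi_{xxx}(l_1)|\sim\mu$ when $\|\phi_{xx}\|_{L^2}=1$, where $\mu=\sqrt{\la/\sqrt b}$; the global bound $\|y_{xx}\|_{L^2(0,L)}=O(1)$ only limits the component of $y$ along $\phi$ to $O(1)$, so $\la^{-1/2}|y_{xxx}(l_1^+)|$ could a priori be $O(1)$ rather than $o(1)/\la^{\frac{5-\alpha}{4}}$, and the sequence $\la_n$ in the contradiction argument cannot be assumed to avoid these resonances. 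Second, this route is circular relative to the paper's architecture: the paper does carry out exactly this ODE analysis, but only in the next lemma (Lemma \ref{Lem-yxx}), and there the interface terms $\frac{1}{\mu}|y_{xxx}(l_0^-)|$ and $\frac{1}{\mu}|y_{xxx}(l_1^+)|$ appearing in \eqref{alpha} and \eqref{modulus} are absorbed precisely by invoking \eqref{eq2-5'} as an input. The missing idea is that \eqref{eq2-5'} must be extracted from the damped interval itself: the paper sets $X=\frac{1}{i\la}\left(-S+J(f_4)\right)$ as in \eqref{changeOfVar}, where $J(f_4)$ is a double antiderivative of $\la^{-3+\alpha}f_4$, so that $X_{xx}=z$; it then combines $\|\la X\|_{L^2(l_0,l_1)}\le\|S\|_{L^2(l_0,l_1)}+\|J(f_4)\|_{L^2(l_0,l_1)}=o(1)/\la^{\frac{3-\alpha}{2}}$ with $\|X\|_{H^4(l_0,l_1)}=o(1)/\la$ (from the $z$-estimates of Lemma \ref{lem-inter1}) and interpolates, $\|\la^{1/2}X\|_{H^2(l_0,l_1)}\lesssim\|\la^{1/2}X\|_{H^4(l_0,l_1)}^{1/2}\|\la^{1/2}X\|_{L^2(l_0,l_1)}^{1/2}=o(1)/\la^{\frac{5-\alpha}{4}}$, so that the trace of $S_x=(J(f_4)-i\la X)_x$ at $l_0,l_1$ carries exactly the required power. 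This interpolation between the $S$-smallness (at the $L^2$ level) and the $z$-smallness (at the $H^2$ level) is the half-derivative gain that your multiplier identity cannot produce.
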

\begin{proof}
For the proof of \eqref{eq1-5'}. 
Since $y,z\in H^2(0,L)$, Sobolev embedding theorem implies that $y,z\in C^{1}[0,L]$. Then, using the second estimation in Lemma \ref{lem-inter1} we get \eqref{eq1-5'}.\\
Define 
\begin{equation}\label{integral}
J(z)(x)=\frac{1}{\la^{3-\alpha}}\int_x^L\int_s^Lz(\tau)d\tau ds
\end{equation}
and
\begin{equation}\label{changeOfVar}
X=\frac{1}{i\la}[-S+J(f_4)].
\end{equation}
From \eqref{pol5beam} and \eqref{changeOfVar}, we get
\begin{equation}\label{Xxx}
X_{xx}=z.
\end{equation}
From \eqref{changeOfVar}, and using the fact that $\|f_4\|_{L^2(0,L)}=o(1)$, we have that 
\begin{equation}
\|\la X\|_{L^2(l_0,l_1)}\leq \|S\|_{L^2(l_0,l_1)}+\|J(f_4)\|_{L^2(l_0,l_1)}\leq \frac{o(1)}{\la^{\frac{3-\alpha}{2}}}.
\end{equation}
It follows that,
\begin{equation}\label{x}
\| X\|_{L^2(l_0,l_1)}=\frac{o(1)}{\la^{\frac{5-\alpha}{2}}}.
\end{equation}
Using Equation \eqref{Xxx} and Lemma \ref{lem-inter1} we get
\begin{equation}\label{x''}
\| X_{xx}\|_{L^2(l_0,l_1)}=\frac{o(1)}{\la},\,\| X_{xxx}\|_{L^2(l_0,l_1)}=\frac{o(1)}{\la},\, \text{and}\quad\| X_{xxxx}\|_{L^2(l_0,l_1)}=\frac{o(1)}{\la}.
\end{equation}
Now, using the interpolation inequality theorem \cite{NirenbergPaper}, and $\alpha\in (0,1)$, we get
\begin{equation}\label{x'}
\| X_{x}\|_{L^2(l_0,l_1)}\leq \|X_{xx}\|^{\frac{1}{2}}_{L^2(l_0,l_1)}\|X\|^{\frac{1}{2}}_{L^2(l_0,l_1)}+\|X\|_{L^2(l_0,l_1)}=\frac{o(1)}{\la^\frac{7-\alpha}{4}}.
\end{equation}
By using Equations \eqref{x}-\eqref{x'}, we get
\begin{equation}
\| X\|_{H^4(l_0,l_1)}=\frac{o(1)}{\la}.
\end{equation}
From the interpolation inequality (see Theorem 4.17 in \cite{Adams}), we have
\begin{equation}\label{laHalfX}
\|\la^{1/2}X\|_{H^2(l_0,l_1)}\lesssim\|\la^{1/2} X\|^{\frac{1}{2}}_{H^4(l_0,l_1)}\cdot\| \la^{1/2}X\|^{\frac{1}{2}}_{L^2(l_0,l_1)}=\frac{o(1)}{\la^{\frac{5-\alpha}{4}}}.
\end{equation} 
We note that $S=by_{xx}$ on $(0,l_0)\cup(l_1,L)$. Also, we have that $y\in H^4(0,l_0)$ and $y\in H^4(l_1,L)$.\\
From \eqref{changeOfVar}, we have
\begin{equation}\label{y}
by_{xxx}(l_0^-)=\left(J(f_4)-i\la X\right)_x(l_0)\quad\text{and}\quad by_{xxx}(l_1^+)=\left(J(f_4)-i\la X\right)_x(l_1)
\end{equation}
Dividing Equation \eqref{y} by $\la^{\frac{1}{2}}$, and using Equation \eqref{laHalfX} and the fact that $\| f_4\|_{L^2(0,L)}=o(1)$,  we get Equation \eqref{eq2-5'}. Thus, the proof of the Lemma is complete.
\end{proof}

\begin{Lemma}\label{lem-BW}
Assume that $\eta>0$. The solution $(u,v,y,z,\omega)\in D(\AA_3)$ of system \eqref{pol3beam}-\eqref{pol7beam} satisfies the following asymptotic behavior for every $h\in C^{2}([0,L])$ and $h(0)=h(L)=0$, and for every $g\in C^1[-L,0]$
\begin{equation}\label{Bbeam}
\begin{array}{ll}
\displaystyle\int_0^Lh^{\prime}\abs{z}^2dx+2b\int_0^L h^{\prime}\abs{y_{xx}}^2dx+b\int_0^{l_0} h^{\prime}\abs{y_{xx}}^2dx+b\int_{l_1}^{L} h^{\prime}\abs{y_{xx}}^2dx+\Re\left(2\int_0^L h^{\prime\prime}S\bar{y}_xdx\right)\\
\displaystyle-\Re\left(2\int_{l_0}^{l_1} hS_x\bar{y}_{xx}dx\right)-bh(l_0)\abs{y_{xx}(l_0^-)}^2+bh(l_1)\abs{y_{xx}(l_1^+)}^2=\frac{o(1)}{\la^{3-\alpha}}
\end{array}
\end{equation}
and 
\begin{equation}\label{Wave}
\int_{-L}^0 g^{\prime}\left(\abs{v}^2+a\abs{u_x}^2dx\right)-g(0)\abs{v(0)}^2-ag(0)\abs{u_x(0)}^2+ag(-L)\abs{u_x(-L)}^2=\frac{o(1)}{\la^{3-\alpha}}.
\end{equation}
\end{Lemma}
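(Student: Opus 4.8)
The plan is to establish the two displayed identities by the standard piecewise-multiplier method, handling the wave component and the beam component separately. For \eqref{Wave} the multiplier is $2g\bar u_x$ on $(-L,0)$, and for \eqref{Bbeam} the multiplier is $2h\bar y_x$ on $(0,L)$; in both cases the first-order relations \eqref{pol3beam} and \eqref{pol5beam} are used to trade a factor of $i\la$ against a spatial derivative, after which every term is either a genuine boundary/energy term or a remainder controlled by the smallness of $F$ in \eqref{pol2} together with the a priori bounds of Lemma \ref{First-Estimation-beam}.

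First I would prove \eqref{Wave}. Multiplying \eqref{pol4beam} by $2g\bar u_x$ and integrating over $(-L,0)$ gives $\int_{-L}^0 2i\la v\,g\bar u_x\,dx - a\int_{-L}^0 2u_{xx}g\bar u_x\,dx = \frac{2}{\la^{3-\alpha}}\int_{-L}^0 f_2 g\bar u_x\,dx$. Differentiating \eqref{pol3beam} yields $i\la\bar u_x = -\bar v_x - \la^{-(3-\alpha)}\overline{(f_1)_x}$, which I substitute into the first term; taking real parts and integrating by parts produces $\int_{-L}^0 g'(\abs{v}^2 + a\abs{u_x}^2)\,dx$ together with the boundary contributions $-g(0)\abs{v(0)}^2 - ag(0)\abs{u_x(0)}^2 + ag(-L)\abs{u_x(-L)}^2$. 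Here the left endpoint $x=-L$ contributes no $v$-term because $u(-L)=0$ and $f_1\in H^1_L(-L,0)$ force $v(-L)=0$, and the right-hand side together with the $(f_1)_x$ and $f_2$ remainders is $O(\la^{-(3-\alpha)})\|F\|=o(\la^{-(3-\alpha)})$ since $v,u_x$ are bounded in $L^2$. Collecting these terms gives exactly \eqref{Wave}.

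Next I would prove \eqref{Bbeam}. Multiplying \eqref{pol6beam} by $2h\bar y_x$ over $(0,L)$ and using $i\la\bar y_x=-\bar z_x-\la^{-(3-\alpha)}\overline{(f_3)_x}$ from \eqref{pol5beam}, the inertial term yields $\int_0^L h'\abs{z}^2\,dx$ up to an $o(\la^{-(3-\alpha)})$ remainder, the $z$-boundary terms dropping out since $h(0)=h(L)=0$. For the stiffness term $\int_0^L 2S_{xx}h\bar y_x\,dx$ I integrate by parts twice; the boundary terms vanish because $h(0)=h(L)=0$, $y_x(L)=0$, and $S(0)=by_{xx}(0)=0$ (as $d(0)=0$ and $y_{xx}(0)=0$). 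This leaves $\Re\big(2\int_0^L S h''\bar y_x\,dx\big)$, the term $\int_0^L 2S h'\bar y_{xx}\,dx$, and $-\int_0^L 2S_x h\bar y_{xx}\,dx$. The crucial point is that $S=by_{xx}$ on $(0,l_0)\cup(l_1,L)$, so on those two subintervals $2Sh'\bar y_{xx}=2bh'\abs{y_{xx}}^2$ and $\Re(-2S_xh\bar y_{xx})=\Re(-2by_{xxx}h\bar y_{xx})=-bh(\abs{y_{xx}}^2)_x$; integrating the latter by parts produces the interior integrals $b\int_0^{l_0}h'\abs{y_{xx}}^2$, $b\int_{l_1}^L h'\abs{y_{xx}}^2$ and the genuine boundary terms $-bh(l_0)\abs{y_{xx}(l_0^-)}^2+bh(l_1)\abs{y_{xx}(l_1^+)}^2$ (the $x=0,L$ endpoints again vanishing). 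On the damping window $(l_0,l_1)$ I keep $-\Re(2\int_{l_0}^{l_1}hS_x\bar y_{xx}\,dx)$ unchanged and absorb the non-elastic part of $\int 2Sh'\bar y_{xx}$ — the $\omega$-integral against $\bar y_{xx}$ — into the remainder, using $\|S-by_{xx}\|_{L^2(l_0,l_1)}=O\big((\int(\xi^2+\eta)\abs{\omega}^2)^{1/2}\big)$ and $\|y_{xx}\|_{L^2(l_0,l_1)}$ from Lemma \ref{First-Estimation-beam}. Summing all contributions gives \eqref{Bbeam}.

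The main obstacle will be the bookkeeping at the interfaces $x=l_0,l_1$: because the damping is localized, $S\in H^2(0,L)$ is continuous but $y$ is only $H^4$ on each elastic piece $(0,l_0)$ and $(l_1,L)$, so $y_{xx}$ may jump across $l_0$ and $l_1$ and the one-sided traces $y_{xx}(l_0^-),y_{xx}(l_1^+)$ must be read off the smooth sides. I must therefore justify the two integrations by parts piecewise rather than on all of $(0,L)$, and verify that each cross term built from the fractional integral $\int_{\R}\abs{\xi}^{\frac{2\alpha-1}{2}}\omega\,d\xi$ is dominated by the dissipation $\int(\xi^2+\eta)\abs{\omega}^2\,d\xi=o(\la^{-(3-\alpha)})$ after a Cauchy--Schwarz split in $\xi$; the remaining $f_3,f_4$ remainders are harmless because $\|F_n\|_{\mathcal H_3}\to0$ and $z,y_{xx}$ are uniformly bounded.
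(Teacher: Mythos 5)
Your proposal is correct and follows essentially the same route as the paper's proof: the same multipliers $2h\bar{y}_x$ on $(0,L)$ and $2g\bar{u}_x$ on $(-L,0)$, the same substitution of the first-order equations \eqref{pol3beam}, \eqref{pol5beam} to trade $i\la$ for a spatial derivative, the same piecewise integration by parts using $S=by_{xx}$ on $(0,l_0)\cup(l_1,L)$ to produce the interface traces $-bh(l_0)\abs{y_{xx}(l_0^-)}^2+bh(l_1)\abs{y_{xx}(l_1^+)}^2$, and the same Cauchy--Schwarz split in $\xi$ to absorb the $\omega$-cross term into the remainder via Lemma \ref{First-Estimation-beam}. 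Your explicit accounting of the vanishing boundary terms ($h(0)=h(L)=0$, $y_x(L)=0$, $S(0)=by_{xx}(0)=0$, $v(-L)=0$) makes precise what the paper uses implicitly, but the argument is the same.
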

\begin{proof}
Multiply Equation \eqref{pol6beam} by $2h\bar{y_x}$ and integrate over $(0,L)$ we get
\begin{equation}\label{initial}
\Re\left(2i\la \int_0^Lzh\bar{y_x}dx\right)+\Re\left(2\int_0^LhS_{xx}\bar{y_x}dx\right)=\Re\left(2\la^{-3+\alpha}\int_0^L f_4h\bar{y_x}dx\right).
\end{equation}
From Equation \eqref{pol5beam} we have
$$i\la\bar{y}_x=-\bar{v}_x-\la^{-3+\alpha}(\bar{f_3})_x.$$
Then
\begin{equation}\label{1.1}
\Re\left(2i\la \int_0^Lzh\bar{y_x}dx\right)=\int_0^Lh^{\prime}\abs{z}^2dx-\Re\left(2\la^{-3+\alpha}\int_0^Lhz(\bar{f_3})_xdx\right).
\end{equation}
Estimation of the term $\displaystyle\Re\left(2\la^{-3+\alpha}\int_0^Lhz(\bar{f_3})_xdx\right)$. Using Cauchy-Schwarz inequality, the definition of $h$, the fact that $\|f_3\|_{H^2(0,L)}=o(1)$, and that $z$ is uniformly bounded in $L^2(0,L)$, we get
\begin{equation}\label{F3}
\left|\Re\left(2\la^{-3+\alpha}\int_0^Lhz(\bar{f_3})_xdx\right)\right|\lesssim \frac{1}{|\la|^{3-\alpha}}\left(\int_0^L\abs{z}^2dx\right)^{1/2}\left(\int_0^L\abs{(f_3)_x}^2dx\right)^{1/2}= \frac{o(1)}{\la^{3-\alpha}}.
\end{equation}
Then, using Equation \eqref{F3} in Equation \eqref{1.1} we get
\begin{equation}\label{I}
\Re\left(2i\la \int_0^Lzh\bar{y_x}dx\right)=\int_0^Lh^{\prime}\abs{z}^2dx+\frac{o(1)}{\la^{3-\alpha}}.
\end{equation}
Estimation of the term $\displaystyle\Re\left(2\la^{-3+\alpha}\int_0^L f_4h\bar{y_x}dx\right)$. Using Cauchy-Schwarz inequality, the definition of h, the fact that $\|f_4\|_{L^2(0,L)}=o(1)$, and that $\|y_x\|_{L^2(0,L)}\lesssim\|y_{xx}\|_{L^2(0,L)}=O(1)$, we get
\begin{equation}\label{II}
\Re\left(2\la^{-3+\alpha}\int_0^L f_4h\bar{y_x}dx\right)\lesssim \frac{1}{\la^{3-\alpha}}\left(\int_0^L\abs{f_4}^2dx\right)^{1/2}\left(\int_0^L\abs{y_{xx}}^2dx\right)^{1/2}\leq \frac{o(1)}{\la^{3-\alpha}}.
\end{equation}
For the second term of Equation \eqref{initial}, integrating by parts we get
\begin{equation}\label{2ndTerm}
\Re\left(2\int_0^LhS_{xx}\bar{y_x}dx\right)=\Re\left(2\int_0^Lh^{\prime\prime}S\bar{y}_xdx\right)+\Re\left(2\int_0^Lh^{\prime}S\bar{y}_{xx}dx\right)-\Re\left(2\int_0^LhS_{x}\bar{y}_{xx}dx\right).
\end{equation}
For the term $\displaystyle\Re\left(2\int_0^Lh^{\prime}S\bar{y}_{xx}dx\right)$, we have
\begin{equation}\label{2ndTerm1}
\Re\left(2\int_0^Lh^{\prime}S\bar{y}_{xx}dx\right)=2b\int_0^Lh^{\prime}\abs{y_{xx}}^2dx+\Re\left(2\int_0^Lh^{\prime}\bar{y}_{xx}\sqrt{d(x)}\kappa(\alpha)\int_{\R}\abs{\xi}^{\frac{2\alpha-1}{2}}\omega(x,\xi)d\xi dx\right).
\end{equation}
Estimation of the term $\displaystyle\Re\left(2\int_0^Lh^{\prime}\bar{y}_{xx}\sqrt{d(x)}\kappa(\alpha)\int_{\R}\abs{\xi}^{\frac{2\alpha-1}{2}}\omega(x,\xi)d\xi dx\right)$. Using Cauchy-Schwarz inequality, the definition of the functions $d(x)$ and $h$, and using Lemma \ref{First-Estimation-beam}, we get
\begin{equation}
\left|\Re\left(2\int_0^Lh^{\prime}\bar{y}_{xx}\sqrt{d(x)}\kappa(\alpha)\int_{\R}\abs{\xi}^{\frac{2\alpha-1}{2}}\omega(x,\xi)d\xi dx\right)\right|
=\frac{o(1)}{\la^{\frac{7-\alpha}{2}}}.
\end{equation}
This yields that,
\begin{equation}\label{III}
\Re\left(2\int_0^Lh^{\prime}S\bar{y}_{xx}dx\right)= 2b\int_0^Lh'\abs{y_{xx}}^2dx+\frac{o(1)}{\la^{\frac{7-\alpha}{2}}}.
\end{equation}
For the term $\displaystyle-\Re\left(2\int_0^LhS_{x}\bar{y}_{xx}dx\right)$. We have
\begin{equation}
\begin{array}{ll}
\displaystyle-\Re\left(2\int_0^LhS_{x}\bar{y}_{xx}dx\right)=-\Re\left(2b\int_0^{l_0}hy_{xxx}\bar{y}_{xx}dx+2b\int_{l_1}^Lhy_{xxx}\bar{y}_{xx}dx\right)-\Re\left(2\int_{l_0}^{l_1}hS_{x}\bar{y}_{xx}dx\right).
\end{array}
\end{equation}
Integrating the above Equation by parts, we get
\begin{equation}\label{2ndTerm2-2}
\begin{array}{ll}
\displaystyle-\Re\left(2\int_0^LhS_{x}\bar{y}_{xx}dx\right)= b\int_0^{l_0}h^{\prime}\abs{y_{xx}}^2dx+b\int_{l_1}^{L}h^{\prime}\abs{y_{xx}}^2dx-bh(l_0)\abs{y_{xx}(l_0)}^2\\ \displaystyle+bh(l_1)\abs{y_{xx}(l_1)}^2-\Re\left(2\int_{l_0}^{l_1}hS_{x}\bar{y}_{xx}dx\right).
\end{array}
\end{equation}
Then, substiting Equation \eqref{III} and \eqref{2ndTerm2-2} in Equation \eqref{2ndTerm}, we get
\begin{equation}\label{IIII}
\begin{array}{ll}
\displaystyle\Re\left(2\int_0^LhS_{xx}\bar{y_x}dx\right)= 2b\int_0^Lh^{\prime}\abs{y_{xx}}^2dx+b\int_0^{l_0}h^{\prime}\abs{y_{xx}}^2dx+b\int_{l_1}^{L}h^{\prime}\abs{y_{xx}}^2dx\\
\displaystyle-bh(l_0)\abs{y_{xx}(l_0)}^2+bh(l_1)\abs{y_{xx}(l_1)}^2 -\Re\left(2\int_{l_0}^{l_1}hS_{x}\bar{y}_{xx}dx\right)+\Re\left(2\int_0^Lh^{\prime\prime}S\bar{y}_xdx\right)+\frac{o(1)}{\la^{3-\alpha}}.
\end{array}
\end{equation}
Therefore, substituting Equations \eqref{I}, \eqref{II}, and \eqref{IIII} into \eqref{initial}, we get our desired result.\\
Now, we will prove Equation \eqref{Wave}. For this aim, multiply Equation \eqref{pol4beam} by $2g\bar{u}_x$ and integrate over $(-L,0)$, we get
\begin{equation}\label{bwave1}
\Re\left(2i\la\int_{-L}^0vg\bar{u}_xdx\right)-\Re\left(2a\int_{-L}^0gu_{xx}\bar{u}_xdx\right)=\Re\left(2\la^{-3+\alpha}\int_{-L}^0f_2g\bar{u}_xdx\right).
\end{equation}
We have $\displaystyle i\la\bar{u}_x=-\bar{v}_x-\la^{-3+\alpha}(\bar{f_1})_x$. Then,
\begin{equation}\label{bwave2}
\Re\left(2i\la\int_{-L}^0vg\bar{u}_xdx\right)=\int_{-L}^0g^{\prime}\abs{v}^2dx-g(0)\abs{v(0)}^2-\Re\left(2\la^{-3+\alpha}\int_{-L}^0vg(\bar{f_1})_xdx\right).
\end{equation}
Estimation of the term $\displaystyle \Re\left(2\la^{-3+\alpha}\int_{-L}^0vg(\bar{f_1})_xdx\right)$. Using Cauchy-schwarz inequality, $\|f_1\|_{H^2_L(-L,0)}=o(1)$, and the fact that $v$ is bounded in $L^2(0,L)$, we get
\begin{equation}
\left|\Re\left(2\la^{-3+\alpha}\int_{-L}^0vg(\bar{f_1})_xdx\right)\right|\leq \frac{o(1)}{\la^{3-\alpha}}.
\end{equation}
Then, using the above equation, \eqref{bwave2} becomes
\begin{equation}\label{bwave2'}
\Re\left(2i\la\int_{-L}^0vg\bar{u}_xdx\right)= \int_{-L}^0g^{\prime}\abs{v}^2dx-g(0)\abs{v(0)}^2+\frac{o(1)}{\la^{3-\alpha}}.
\end{equation}
Integrating the second term of Eqaution \eqref{bwave1}, we get
\begin{equation}\label{bwave3}
-\Re\left(2a\int_{-L}^0gu_{xx}\bar{u}_xdx\right)=a\int_{-L}^0g^{\prime}\abs{u_x}^2dx-g(0)\abs{u_x(0)}^2+ag(-L)\abs{u_x(-L)}^2.
\end{equation}
Estimation of the term $\displaystyle\Re\left(2\la^{-3+\alpha}\int_{-L}^0f_2g\bar{u}_xdx\right)$. Using Cauchy-Schwarz inequality, $\|f_2\|_{L^2(-L,0)}=o(1)$, and the fact that $u_x$ is bounded in $L^2(-L,0)$, we get
\begin{equation}\label{bwave4}
\left|\Re\left(2\la^{-3+\alpha}\int_{-L}^0f_2g\bar{u}_xdx\right)\right|= \frac{o(1)}{\la^{3-\alpha}}.
\end{equation}
Thus, summing Equations \eqref{bwave2'}, \eqref{bwave3} and  \eqref{bwave4} we obtain our desired result.
\end{proof}
\begin{Lemma}\label{Lem5.7}
Assume that $\eta>0$. The solution $(u,v,y,z,\omega)\in D(\AA_3)$ of system \eqref{pol3beam}-\eqref{pol7beam} satisfies the following asymptotic behavior 
\begin{equation}\label{O(1)}
\abs{\la y(0)}=O(1),\quad\abs{y_{xx}(l_0^-)}=O(1)\quad\text{and}\quad\abs{y_{xx}(l_{1}^+)}=O(1).
\end{equation}
\end{Lemma}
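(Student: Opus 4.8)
The plan is to read off all three rough bounds directly from the two multiplier identities \eqref{Bbeam} and \eqref{Wave} of Lemma \ref{lem-BW}, choosing the free test functions $h$ and $g$ so as to isolate exactly the boundary term we want. The point of going through these identities, rather than through a Sobolev trace inequality, is that a trace of $y_{xx}$ on an undamped subinterval or of $v$ on $(-L,0)$ would cost an $H^1$ norm that is only $O(\la)$, giving a useless $O(\sqrt{\la})$ bound; the identities instead carry each boundary trace with a clean coefficient, so that after controlling the bulk terms by the $\mathcal{H}_3$‑boundedness \eqref{pol1beam} and the interior smallness already proved in Lemma \ref{First-Estimation-beam} and Lemma \ref{lem-inter1}, the trace is pinned down to $O(1)$.

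For $\abs{\la y(0)}=O(1)$ I would use \eqref{Wave}. Taking $g\in C^1([-L,0])$ with $g(0)=1$ and $g(-L)=0$ kills the term at $-L$, and \eqref{Wave} reduces to
\begin{equation*}
\abs{v(0)}^2+a\abs{u_x(0)}^2=\int_{-L}^0 g^{\prime}\left(\abs{v}^2+a\abs{u_x}^2\right)dx-\frac{o(1)}{\la^{3-\alpha}}.
\end{equation*}
Since $\|v\|_{L^2(-L,0)}$ and $\|u_x\|_{L^2(-L,0)}$ are bounded by $\|U\|_{\mathcal{H}_3}=1$, the right‑hand side is $O(1)$, whence $\abs{v(0)}=O(1)$ (and, as a by‑product, $\abs{u_x(0)}=O(1)$). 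It then remains to convert $v(0)$ into $\la y(0)$: the transmission condition $v(0)=z(0)$ together with \eqref{pol5beam}, which gives $z(0)=i\la y(0)-\la^{-3+\alpha}f_3(0)$, and the embedding bound $\abs{f_3(0)}\le C\|f_3\|_{H^2_R(0,L)}=o(1)$, yield $\la y(0)=-i\,v(0)+o(1)$, hence $\abs{\la y(0)}=O(1)$.

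For the two interface traces of $y_{xx}$ I would exploit the boundary contribution $-bh(l_0)\abs{y_{xx}(l_0^-)}^2+bh(l_1)\abs{y_{xx}(l_1^+)}^2$ appearing in \eqref{Bbeam}. Choosing $h\in C^2([0,L])$ with $h(0)=h(L)=0$ and $h(l_0)=1,\ h(l_1)=0$ isolates $-b\abs{y_{xx}(l_0^-)}^2$, while the symmetric choice $h(l_0)=0,\ h(l_1)=1$ isolates $+b\abs{y_{xx}(l_1^+)}^2$; such smooth bump functions plainly exist. In either case the remaining terms of \eqref{Bbeam} are $O(1)$: the four weighted integrals of $\abs{z}^2$ and $\abs{y_{xx}}^2$ and the term $\Re\!\left(2\int_0^L h^{\prime\prime}S\bar y_x\,dx\right)$ are controlled by $\|z\|_{L^2(0,L)}$, $\|y_{xx}\|_{L^2(0,L)}$, $\|y_x\|_{L^2(0,L)}$ (via Poincaré on $H^2_R$) and $\|S\|_{L^2(0,L)}$, all $O(1)$; here for $S$ one combines $S=by_{xx}$ on $(0,l_0)\cup(l_1,L)$ with the smallness $\int_{l_0}^{l_1}\abs{S}^2=o(1)/\la^{3-\alpha}$ of \eqref{FE-1-beam}. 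The cross term $-\Re\!\left(2\int_{l_0}^{l_1}hS_x\bar y_{xx}\,dx\right)$ is in fact $o(1)$ by Cauchy--Schwarz, using $\|S_x\|_{L^2(l_0,l_1)}=o(1)/\la^{(3-\alpha)/4}$ from Lemma \ref{lem-inter1} and $\|y_{xx}\|_{L^2(l_0,l_1)}=o(1)/\la^{2}$ from \eqref{FE-1-beam}. Solving the resulting scalar relation for the isolated term then gives $\abs{y_{xx}(l_0^-)}=O(1)$ and $\abs{y_{xx}(l_1^+)}=O(1)$.

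The main obstacle here is bookkeeping rather than conceptual: one must check with care that $\|S\|_{L^2(0,L)}=O(1)$ by splitting the interval and using that $S$ collapses to $by_{xx}$ off the damping band, and that every weighted bulk integral produced by the non‑constant $h$ remains $O(1)$ while the interface cross term is genuinely negligible. The delicate $H^4$/interface analysis has already been absorbed into Lemmas \ref{lem-inter1}, \ref{Lem5.5} and identity \eqref{Bbeam}, so that once those are in hand the present statement follows from nothing more than judicious choices of $h$ and $g$ and elementary Cauchy--Schwarz estimates.
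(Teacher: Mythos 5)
Your proposal is correct, and it is a hybrid: the first assertion is proved exactly as in the paper, while the two interface traces are obtained by a genuinely different, more economical route.

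For $\abs{\la y(0)}=O(1)$ you and the paper do the same thing: specialize \eqref{Wave} to a $g$ vanishing at $-L$ (the paper takes $g(x)=x+L$, you normalize $g(0)=1$) to get $\abs{v(0)}=O(1)$ and $\abs{u_x(0)}=O(1)$, then transfer to $\la y(0)$ through the transmission conditions --- you via $v(0)=z(0)$ and \eqref{pol5beam}, the paper via $i\la u(0)=v(0)+o(1)$ from \eqref{pol3beam} and $u(0)=y(0)$; the two routes are interchangeable. For $\abs{y_{xx}(l_0^-)}$ and $\abs{y_{xx}(l_1^+)}$ the arguments diverge. The paper substitutes $z=i\la y-\la^{-3+\alpha}f_3$ into \eqref{pol6beam} to obtain the undamped fourth-order equation \eqref{Diffeq} on $(0,l_0)\cup(l_1,L)$ and runs a fresh multiplier computation there with $2\zeta\bar{y}_x$, $\zeta(0)=\zeta^{\prime}(l_0)=0$, $\zeta(l_0)=1$: two integrations by parts isolate $-b\zeta(l_0)\abs{y_{xx}(l_0^-)}^2$ together with the boundary product $2b\Re\left(y_{xxx}(l_0^-)\bar{y}_x(l_0)\right)$, which must be disposed of using the interface estimates of Lemma \ref{Lem5.5}, the remaining bulk terms being $O(1)$. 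You instead specialize the already-established identity \eqref{Bbeam} to bump functions with $h(l_0)=1$, $h(l_1)=0$ (and symmetrically), so that the desired trace is the unique surviving boundary term. Your bookkeeping is sound: $\|S\|_{L^2(0,L)}=O(1)$ by the split $S=by_{xx}$ off the damping band plus the last estimate in \eqref{FE-1-beam} on it, $\|y_x\|_{L^2(0,L)}\lesssim\|y_{xx}\|_{L^2(0,L)}=O(1)$ by Poincar\'e in $H^2_R(0,L)$, and the cross term is $o(1)$ by Cauchy--Schwarz with Lemma \ref{lem-inter1} and \eqref{FE-1-beam} --- the very estimate the paper performs later in \eqref{BeamEst2}. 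At $l_1$ the isolated term carries a plus sign, but since it is non-negative and equal to a quantity bounded in modulus, the bound still follows, as you implicitly use. Your route thus avoids both the ODE reduction \eqref{Diffeq} and Lemma \ref{Lem5.5} at this stage and needs no new integration by parts; what the paper's heavier set-up buys is reuse, since \eqref{Diffeq} and localized cut-off multipliers of this type are needed again anyway in Lemma \ref{Lem-yxx} (explicit resolution of the ODE) and in Lemma \ref{Wwave}.
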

\begin{proof}
Take $g(x)=x+L$ in Equation \eqref{Wave}, we get
\begin{equation}
\int_{-L}^0 \left(\abs{v}^2+a\abs{u_x}^2dx\right)-L\abs{v(0)}^2-aL\abs{u_x(0)}^2=\frac{o(1)}{\la^{3-\alpha}}.
\end{equation}
From the above Equation and using the fact that $v$ and $u_x$ are uniformly bounded in $L^2(-L,0)$, we get
\begin{equation}\label{vv}
\abs{v(0)}=O(1)\quad\text{and}\quad \abs{u_x(0)}=O(1).
\end{equation}
From Equation \eqref{pol5beam} we have
$$i\la u=v+\la^{-3+\alpha}f_1.$$
Then, using \eqref{vv} and the fact that $\|f_1\|_{L^2(-L,0)}=o(1)$, we get
\begin{equation}
\abs{\la u(0)}\leq \abs{v(0)}+\la^{-3+\alpha}\abs{f_1(0)}\\
\leq \abs{v(0)}+\la^{-3+\alpha}\sqrt{L}\|f_1\|_{L^2(-L,0)}=O(1).
\end{equation}
From the continuity condition ($u(0)=y(0)$), we deduce that $\abs{\la y(0)}=O(1)$.
In order to prove the second term in the Equation \eqref{O(1)} we proceed as follows. From Equation \eqref{pol5beam} we have 
$$z=i\la y-\la^{-3+\alpha}f_3.$$
Substituting the above Equation into Equation \eqref{pol6beam}, we get
\begin{equation}\label{Diffeq}
-\la^2y+by_{xxxx}=\mathtt{F}\quad\text{on}\quad (0,l_0)\cup(l_1,L),
\end{equation}
where $\displaystyle \mathtt{F}=\la^{-3+\alpha}(f_4+i\la f_3)$.\\
Multiply Equation \eqref{Diffeq} by $2\zeta\bar{y}_x$, where $\zeta\in C^2[0,l_0]$, $\zeta(0)=\zeta^{\prime}(l_0)=0$ and $\zeta(l_0)=1$, and integrate over $(0,l_0)$, we get
\begin{equation}\label{zeta0}
-\Re\left(2\la^2\int_0^{l_0}\zeta y\bar{y}_xdx\right)+\Re\left(2b\int_0^{l_0}\zeta y_{xxxx}\bar{y}_xdx\right)=\Re\left(2\la^{-3+\alpha}\int_0^{l_0}(f_4+i\la f_3)\zeta\bar{y}_xdx\right).
\end{equation}
Estimation of the term $\displaystyle\Re\left(2\la^2\int_0^{l_0}\zeta y\bar{y}_xdx\right)$. Integrating by parts and using Equation \eqref{eq1-5'}, we get
\begin{equation}\label{zeta1}
-\Re\left(2\la^2\int_0^{l_0}\zeta y\bar{y}_xdx\right)=\int_0^{l_0}\zeta ^{\prime}\abs{\la y}^2dx+\frac{o(1)}{\la^2}.
\end{equation}
Integrating by parts the second term in Equation \eqref{zeta0}, and using Lemma \ref{Lem5.5}, we get
\begin{equation}\label{zeta2}
\begin{array}{ll}
\displaystyle\Re\left(2b\int_0^{l_0}\zeta y_{xxxx}\bar{y}_xdx\right)&\displaystyle=-\Re\left(2b\int_0^{l_0}\zeta^{\prime} y_{xxx}\bar{y}_xdx\right)-\Re\left(2b\int_0^{l_0}\zeta y_{xxx}\bar{y}_{xx}dx\right)+\Re\left(2b y_{xxx}(l_0^-)\bar{y}_x(l_0)\right)\\
&\displaystyle
=\Re\left(2b\int_0^{l_0}\zeta^{\prime\prime} y_{xx}\bar{y}_xdx\right)+3b\int_0^{l_0}\zeta^{\prime} \abs{y_{xx}}^2dx-b\zeta(l_0)\abs{y_{xx}(l_0)}^2+\frac{o(1)}{\la^\frac{11-\alpha}{4}}.
\end{array}
\end{equation}
Integrating by parts the last term of Equation \eqref{zeta0} and using \eqref{eq1-5'} and the fact that $\abs{f_3(l_0)}\lesssim \|f_3\|_{H^2_R(0,L)}=o(1) $, we get
\begin{equation}\label{zeta3}
\begin{array}{ll}
\displaystyle\Re\left(2\la^{-3+\alpha}\int_0^{l_0}(f_4+i\la f_3)\zeta\bar{y}_xdx\right)=\Re\left(2\la^{-3+\alpha}\int_0^{l_0}f_4\zeta\bar{y}_xdx\right)-\Re\left(2i\la^{-3+\alpha}\int_0^{l_0}f_3\zeta^{\prime}\la\bar{y}dx\right)\\
\displaystyle-\Re\left(2i\la^{-3+\alpha}\int_0^{l_0}(f_3)_x\zeta\la\bar{y}dx\right)+\frac{o(1)}{\la^{4-\alpha}}.
\end{array}
\end{equation}
Substituting Equations \eqref{zeta1}, \eqref{zeta2}, and \eqref{zeta3} in Equation \eqref{zeta0}, and using the fact that $\|U\|_{\mathcal{H}_3}=1$, $\|y_x\|_{L^2(0,L)}\leq c_p\|y_{xx}\|_{L^2(0,L)}=O(1)$ and $\|f_3\|_{H^2_R(0,L)}=o(1)$, we obtain our desired term.
For the last term in Equation \eqref{O(1)}, we proceed in a similar way as above and thus the proof of the Lemma is complete.
\end{proof}

\begin{Lemma}\label{Lem-yxx}
Assume that $\eta>0$. The solution $(u,v,y,z,\omega)\in D(\AA_3)$ of system \eqref{pol3beam}-\eqref{pol7beam} satisfies the following asymptotic behavior 
\begin{equation}
\abs{y_{xx}(l_0^{-})}=\frac{o(1)}{\la}\quad\text{and}\quad\abs{y_{xx}(l_1^+)}=\frac{o(1)}{\la}.
\end{equation}
\end{Lemma}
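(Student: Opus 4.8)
The plan is to exploit that on the undamped subintervals $(0,l_0)$ and $(l_1,L)$ we have $d(x)=0$, so $S=by_{xx}$ there, and after substituting $z=i\la y-\la^{-3+\alpha}f_3$ (from \eqref{pol5beam}) into \eqref{pol6beam}, the beam displacement solves the clean fourth order ODE
\begin{equation*}
-\la^2 y+by_{xxxx}=\mathtt F\qquad\text{on }(0,l_0)\cup(l_1,L),\quad \mathtt F=\la^{-3+\alpha}(f_4+i\la f_3),
\end{equation*}
exactly as in \eqref{Diffeq}, with $\|\mathtt F\|_{L^2(0,L)}=o(1)\,|\la|^{-2+\alpha}$. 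I would treat the two intervals separately but identically; let me describe $(0,l_0)$, writing $r=|\la|^{1/2}b^{-1/4}$ for the characteristic rate, so that $r^2\sim|\la|$.

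First I would write the solution as a Cauchy expansion based at $l_0$,
$$ y(x)=y(l_0)C_0(x)+y_x(l_0)C_1(x)+y_{xx}(l_0^-)C_2(x)+y_{xxx}(l_0^-)C_3(x)+y_p(x), $$
where $C_k$ are the fundamental solutions of $b\partial_x^4-\la^2$ normalised by $C_k^{(j)}(l_0)=\delta_{kj}$ (explicit $\cosh/\sinh/\cos/\sin$ combinations, e.g. $C_2(x)=\tfrac1{2r^2}[\cosh(r(x-l_0))-\cos(r(x-l_0))]$), and $y_p$ is the particular solution with vanishing Cauchy data at $l_0$. Lemma \ref{Lem5.5} already provides $|y(l_0)|,|y_x(l_0)|=o(1)/\la^2$ and $|y_{xxx}(l_0^-)|=o(1)/\la^{(3-\alpha)/4}$, so three of the four Cauchy data are small, while the unknown $\beta:=y_{xx}(l_0^-)$ is a priori only $O(1)$ by Lemma \ref{Lem5.7}.

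The key step is then to impose the boundary condition $y_{xx}(0)=0$ from \eqref{AUG2B}. Differentiating the expansion twice and evaluating at $x=0$ gives the single scalar relation $\beta\,C_2''(0)=-[y(l_0)C_0''(0)+y_x(l_0)C_1''(0)+y_{xxx}(l_0^-)C_3''(0)+y_p''(0)]$. Every coefficient carries the factor $\cosh(rl_0)\sim\tfrac12 e^{rl_0}$, which cancels upon division by $C_2''(0)=\tfrac12[\cosh(rl_0)+\cos(rl_0)]\sim\tfrac14 e^{rl_0}$, leaving only powers of $r\sim|\la|^{1/2}$. Tracking these, the dominant contribution is $C_0''(0)/C_2''(0)\sim r^2\sim|\la|$ multiplying $|y(l_0)|=o(1)/\la^2$, which produces $o(1)/\la$; the $y_x(l_0)$ term contributes $o(1)/\la^{3/2}$, the $y_{xxx}(l_0^-)$ term $o(1)/\la^{(5-\alpha)/4}$, and the source term $y_p''(0)/C_2''(0)\lesssim r^{-1}\|\mathtt F\|_{L^2}=o(1)\la^{-5/2+\alpha}$, all negligible. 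Hence $|y_{xx}(l_0^-)|=o(1)/\la$. For $(l_1,L)$ I would run the same expansion based at $l_1$ and instead impose the clamped condition $y(L)=0$ from \eqref{AUG2B}; the analogous balance $C_0(L)/C_2(L)\sim r^2$ against $|y(l_1)|=o(1)/\la^2$ gives $|y_{xx}(l_1^+)|=o(1)/\la$.

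The main obstacle is the bookkeeping of the exponentially large fundamental solutions: one must verify that the $e^{rl_0}$ (resp. $e^{r(L-l_1)}$) factors genuinely cancel, that $C_2''(0)\neq0$ (resp. $C_2(L)\neq0$) for $|\la|$ large so the division is legitimate, and that the particular solution $y_p$ does not spoil the estimate. The decisive and delicate point is the single factor $r^2\sim|\la|$ that upgrades the $o(1)/\la^2$ smallness of $y(l_0)$ (resp. $y(l_1)$) supplied by Lemma \ref{Lem5.5} into the claimed $o(1)/\la$ bound on the bending moment at the interface.
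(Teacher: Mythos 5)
Your proposal is correct, and it reaches the estimate by a route that is recognizably parallel to, but not identical with, the paper's. The paper also works from \eqref{Diffeq} on the two undamped intervals, but instead of a fundamental-system (Krylov--Duncan) expansion it factorizes the operator as $\left(\partial_x-i\mu\right)\left(\partial_x+i\mu\right)\left(\partial_x^2-\mu^2\right)$ with $\mu=\sqrt{\la/\sqrt b}$, performs three successive first-order integrations starting from $l_0$ (resp.\ $l_1$), evaluates the resulting identity at the outer endpoint, and multiplies by $\mu e^{-\mu l_0}$ (resp.\ $\mu e^{-\mu(L-l_1)}$) to tame the growing exponentials; see \eqref{alpha} and \eqref{modulus}. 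The decisive dominant balance is the same in both arguments: a factor of order $\mu^2\sim\la$ hits the $o(1)/\la^2$ smallness of $y(l_0)$, $y(l_1)$ from Lemma \ref{Lem5.5}, and the shear, source and remaining terms are of strictly higher order. The genuine difference is in what closes the estimate at the outer endpoint. On $(0,l_0)$ the paper never invokes the condition $y_{xx}(0)=0$; instead it keeps the unknown traces $y(0)$, $y_x(0)$ and the a priori $O(1)$ bounds $\abs{\la y(0)}=O(1)$, $\abs{y_{xx}(l_0^-)}=O(1)$ of Lemma \ref{Lem5.7}, and suppresses them with the quantitative bound $e^{-\mu l_0}\lesssim \la^{-2}$ (via $e^{\sqrt\zeta}\geq \zeta^2/5$). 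You instead impose the homogeneous conditions $y_{xx}(0)=0$ and $y(L)=0$ from \eqref{AUG2B} and solve the resulting scalar relation for the bending moment, so the exponentials cancel exactly in the ratios $C_k''(0)/C_2''(0)$ and $C_k(L)/C_2(L)$, and neither Lemma \ref{Lem5.7} nor any explicit decay rate for $e^{-\mu l_0}$ is needed (your citation of Lemma \ref{Lem5.7} for the a priori $O(1)$ bound on $\beta$ is actually superfluous in your scheme, since $\beta$ is solved for, not absorbed). What your version buys is a cleaner linear-algebra structure, no self-referential exponentially small terms in $y_{xx}(l_0^-)$, and independence from Lemma \ref{Lem5.7}; what the paper's version buys is that it needs only polynomial boundedness of the traces at the outer end, so it does not depend on which homogeneous condition happens to hold there. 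The nondegeneracy points you flag ($C_2''(0)\neq 0$, $C_2(L)\neq 0$) are indeed harmless, since $\cosh s+\cos s>0$ for all $s$ and $\cosh s-\cos s>0$ for $s\neq 0$, both being comparable to $e^{s}$ for large $s$, and your bound $\abs{y_p''(0)}\lesssim r^{-1}e^{r l_0}\|\mathtt F\|_{L^2}$ for the particular solution is the correct one.
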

\begin{proof}
Equation \eqref{Diffeq} can be written as 
\begin{equation}\label{lineareq}
\left(\frac{\partial}{\partial x}-i\mu\right)\left(\frac{\partial}{\partial x}+i\mu\right)\left(\frac{\partial^2}{\partial^2 x}-\mu^2\right)y=\frac{1}{b}\mathtt{F},\, \text{on}\,\, (0,l_0)\cup(l_1,L).
\end{equation}
where $\displaystyle\mu=\sqrt{\frac{\la}{\sqrt{b}}}$. \\
\underline{On the interval $(0,l_0)$:}\\
Let 
$\displaystyle Y^1_{l_0}=\left(\frac{\partial}{\partial x}+i\mu\right)\left(\frac{\partial^2}{\partial x^2}-\mu^2\right)y$.
Solving on the interval $(0,l_0)$ the following Equation 
$$\left(\frac{\partial}{\partial x}-i\mu\right)Y^1_{l_0}=\frac{1}{b}\mathtt{F}$$
we get
\begin{equation}\label{lineareq2}
Y^1_{l_0}=K_1 e^{i\mu(x-l_0)}+\frac{1}{b}\int_{l_0}^x e^{i\mu(x-z)}\mathtt{F}(z)dz,
\end{equation}
where 
$$K_1=y_{xxx}(l_0^-)-\mu^2 y_x(l_0)+i\mu y_{xx}(l_0^-)-i\mu^3 y(l_0).$$
Let $\displaystyle Y^2_{l_0}=\left(\frac{\partial^2}{\partial x^2}-\mu^2\right)y$. We will solve the following differential equation
\begin{equation}\label{diffY}
\left(\frac{\partial}{\partial x}+i\mu\right)Y^2_{l_0}=Y^1_{l_0}.
\end{equation}
By using the solution $Y^1_{l_0}$, we obtain the solution of the differential equation \eqref{diffY}  
\begin{equation}\label{diffY2}
Y^2_{l_0}=K_2e^{-i\mu(x-l_0)}+\frac{K_1}{\mu}\sin((x-l_0)\mu)+\frac{1}{b\mu}\int_{l_0}^x\int_{l_0}^s e^{i\mu(2s-x-z)}\mathtt{F}(z)dzds.
\end{equation}
Integrating by parts the last term of the above Equation, we get
\begin{equation*}
\frac{1}{b\mu}\int_{l_0}^x\int_{l_0}^s e^{i\mu(2s-x-z)}\mathtt{F}(z)dzds=\frac{1}{b\mu}\int_{l_0}^x \sin((x-z)\mu)\mathtt{F}(z)dz.
\end{equation*}
Inserting the above Equation in \eqref{diffY2}, we get
\begin{equation}\label{Y2}
Y^2_{l_0}=K_2e^{-i\mu(x-l_0)}+\frac{K_1}{\mu}\sin((x-l_0)\mu)+\frac{1}{b\mu}\int_{l_0}^x \sin((x-z)\mu)\mathtt{F}(z)dz
\end{equation}
where
$$K_2=y_{xx}(l_0^-)-\mu^2 y(l_0).$$
Let $\displaystyle Y^3_{l_0}=\left(\frac{\partial}{\partial x}-\mu\right)y$. The solution of the following differential equation 
\begin{equation}
\left(\frac{\partial}{\partial x}+\mu\right)Y^3_{l_0}=Y^2_{l_0}
\end{equation}
is given by
\begin{equation}\label{Y3}
\begin{array}{ll}
\displaystyle Y^3_{l_0}=K_3e^{-\mu(x-l_0)}+\frac{K_2}{\mu(1-i)}\left[e^{-i\mu(x-l_0)}-e^{-\mu(x-l_0)}\right]
+\frac{1}{b\mu}\int_{l_0}^{x}\int_{l_0}^{s}e^{\mu(s-x)}\sin((s-z)\mu)\mathtt{F}(z)dzds\\
\displaystyle-\frac{K_1}{2\mu^2}\left[\cos((x-l_0)\mu)-\sin((x-l_0)\mu)-e^{-\mu(x-l_0)}\right]
\end{array}
\end{equation}
where
$$K_3=y_x(l_0)-\mu y(l_0).$$
Take $x=0$ in \eqref{Y3} and multiply the equation by $\mu e^{-\mu l_0}$, we get
\begin{equation}\label{alpha}
\begin{array}{lll}
\displaystyle\sqrt{\frac{5}{4}}\abs{y_{xx}(l_0^-)}\leq \abs{\mu y_x(0)}e^{-\mu l_0}+\abs{\mu^2 y(0)}e^{-\mu l_0}+\abs{y_x(l_0)}+\abs{\mu y(l_0)}+\frac{1}{2\mu}\abs{y_{xxx}(l_0)}\left[2e^{-\mu l_0}+1\right]\\
\displaystyle
+\frac{1}{2}\abs{\mu^2 y(l_0)}\left(2e^{-\mu l_0}+1\right)+\frac{1}{2}\abs{\mu y_x(l_0)}\left(2e^{-\mu l_0}+1\right)+ \frac{1}{\sqrt{2}}\abs{\mu^2 y(l_0)}\left(e^{-\mu l_0}+1\right)\\
\displaystyle
+\left(1+\frac{1}{\sqrt{2}}\right)\abs{y_{xx}(l_0^-)}e^{-\mu l_0}+\left|\frac{1}{b}\int_{0}^{l_0}\int_{l_0}^{s}e^{(s-l_0)\mu}\sin((s-z)\mu)\mathtt{F}(z)dzds\right|.
\end{array}
\end{equation}
For the integral in the above Equation, we have
\begin{equation}\label{integral}
\begin{array}{ll}
\displaystyle\frac{1}{b}\int_{0}^{l_0}\int_{l_0}^{s}e^{\mu(s-l_0)}\sin((s-z)\mu)\mathtt{F}(z)dzds=\frac{1}{b\la^{3-\alpha}}\int_{0}^{l_0}\int_{l_0}^{s}e^{\mu(s-l_0)}\sin((s-z)\mu)f_4(z)dzds\\
\displaystyle
+\frac{1}{b\la^{3-\alpha}}\int_{0}^{l_0}\int_{l_0}^{s}e^{\mu(s-l_0)}\sin((s-z)\mu) i\la f_3(z)dzds.
\end{array}
\end{equation}
Estimation of the first integral in the right side of Equation \eqref{integral}.
\begin{equation}\label{integ1}
\left|\frac{1}{b\la^{3-\alpha}}\int_{0}^{l_0}\int_{l_0}^{s}e^{\mu(s-l_0)}\sin((s-z)\mu)f_4(z)dzds\right|\leq \frac{l_0^{3/2}}{\la^{3-\alpha}}\int_0^{l_0}\abs{f_4(z)}^2dz =\frac{o(1)}{\la^{3-\alpha}}.
\end{equation}
Estimation of the second term in the second side of Equation \eqref{integral}. Integrating by parts, and using the fact that $\abs{f_3(0)}\lesssim\|(f_3)_x\|_{L^2(0,L)}=o(1)$ and $\abs{f_3(l_0)}\lesssim\|(f_3)_x\|_{L^2(0,L)}=o(1)$ and $\|f_3\|_{L^2(0,L)}=o(1)$, we get
\begin{equation}\label{integ2}
\small{\begin{array}{rl}
\displaystyle\left|\frac{1}{b\la^{\ell}}\int_0^{l_0}\int_{l_0}^{s}e^{\mu(s-l_0)}\sin((s-z)\mu) i\la f_3(z)dzds\right|&\displaystyle\vspace{0.2cm}=\left|\frac{i\la e^{-\mu l_0}}{b\la^{\ell}}\int_{0}^{l_0}\left(\int_{0}^{z}e^{\mu s}\sin((s-z)\mu)ds\right) f_3(z)dz\right|\\[0.1in] 
&\displaystyle
=\left|\frac{i\la e^{-\mu l_0}}{2b\mu \la^{\ell}}\int_0^{l_0}\left(\cos(\mu z)+\sin(\mu z)-e^{\mu z}\right)f_3(z)dz\right|\\[0.1in]
&\displaystyle \vspace{0.2cm}
\leq \frac{\la}{2b e^{\mu l_0}\la^{\ell}\mu}\left[2\int_0^{l_0}\abs{f_3(z)}dz+\left|\int_0^{l_0}e^{\mu z}f_3(z)dz\right|\right]\\[0.1in]
&\displaystyle
\leq \frac{\la}{2be^{\mu l_0}\la^{\ell}\mu}\left[2\int_0^{l_0}\abs{f_3(z)}dz+\frac{1}{\mu}(\abs{f_3(0)}+\abs{f_3(l_0)})e^{\mu l_0}\right.\\[0.1in]
&\displaystyle
\left.+\frac{e^{\mu l_0}}{\mu}\int_0^{l_0}\abs{f^{\prime}_3(z)}dz\right]=\frac{o(1)}{\la^{3-\alpha}}.
\end{array}}
\end{equation}
Hence,
\begin{equation}\label{Integ}
\left|\frac{1}{b}\int_{0}^{l_0}\int_{l_0}^{s}e^{\mu(s-l_0)}\sin((s-z)\mu)\mathtt{F}(z)dzds\right|=\frac{o(1)}{\la^{3-\alpha}}.
\end{equation}
Since $\displaystyle e^{\sqrt{\zeta}}\geq \frac{1}{5}\zeta^2$, and taking $\zeta=\frac{\la l_0^2}{\sqrt{b}}$ $\forall \la\in \R_{+}^{*}$, and using Lemmas \ref{Lem5.5} and \ref{Lem5.7}, and Equation \eqref{Integ} in Equation \eqref{alpha}, we get that 
$$\abs{y_{xx}(l_0^-)}=\frac{o(1)}{\la}.$$
\underline{On the interval $(l_1,L)$: } \\
Proceeding with a similar computation as on $(0,l_0)$, we get
\begin{equation}\label{Y2'}
Y^2_{l_1}=\mathbf{K}_2e^{-i\mu(x-l_1)}+\frac{\mathbf{K}_1}{\mu}\sin((x-l_1)\mu)+\frac{1}{b\mu}\int_{l_1}^x \sin((x-z)\mu)\mathtt{F}(z)dz
\end{equation}
where
$$\mathbf{K}_1=y_{xxx}(l_1^+)-\mu^2 y_x(l_1)+i\mu y_{xx}(l_1^+)-i\mu^3 y(l_1)$$
and
$$\mathbf{K}_2=y_{xx}(l_1^+)-\mu^2y(l_1).$$
We will solve the following differential Equation
\begin{equation}\label{Y-beta}
\left(\frac{\partial}{\partial x}-\mu\right)Y^3_{l_1}=Y^2_{l_1}
\end{equation}
where
$\displaystyle Y^3_{l_1}=\left(\frac{\partial}{\partial x}+\mu\right)y.$
The solution of \eqref{Y-beta} is
\begin{equation}\label{forBeta}
\begin{array}{ll}
\displaystyle Y^3_{l_1}=\mathbf{K}_3e^{\mu(x-l_1)}-\frac{\mathbf{K}_2}{1+i}\left[e^{-i\mu(x-l_1)}-e^{\mu(x-l_1)}\right]\\
\displaystyle -\frac{\mathbf{K}_1}{2\mu^2}\left[\cos((x-l_1)\mu)+\sin((x-l_1)\mu)-e^{\mu(x-l_1)}\right]+
\frac{1}{b\mu}\int_{l_1}^x\int_{l_1}^{s}e^{\mu(x-s)}\sin((s-z)\mu)\mathtt{F}(z)dzds
\end{array}
\end{equation}
where $$\mathbf{K}_3=y_{x}(l_1)+\mu y(l_1).$$
Taking $x=L$ in Equation \eqref{forBeta} and multiplying by $\mu e^{-\mu(L-l_1)}$, we get
\begin{equation}\label{modulus}
\begin{array}{lll}
\displaystyle
\frac{1}{\sqrt{2}}\,\abs{y_{xx}(l_1^+)}\leq \abs{\mu y_x(l_1)}+\abs{\mu^2 y(l_1)}+\frac{e^{-\mu(L-l_1)}}{\sqrt{2}}\abs{y_{xx}(l_1^+)}+\frac{1}{\sqrt{2}}\abs{\mu^2y_{x}(l_1)}\left(e^{-\mu(L-l_1)}+1\right)\\
\displaystyle
+\frac{1}{\mu}y_{xxx}(l_1^+)\left(2e^{-\mu(L-l_1)}+1\right)+\abs{\mu y_x(l_1)}\left(2e^{-\mu(L-l_1)}+1\right)
+2\abs{y_{xx}(l_1^+)}e^{-\mu(L-l_1)}
\\
 \displaystyle+\abs{\mu^2 y(l_1)}\left(2e^{-\mu(L-l_1)}+1\right)+
\left|\frac{1}{b}\int_{l_1}^L\int_{l_1}^{s}e^{-\mu(s-l_1)}\sin((s-z)\mu)\mathtt{F}(z)dzds\right|.
\end{array}
\end{equation}
Using the same computation in Equation \eqref{integral}, we get
\begin{equation}\label{Integ2}
\frac{1}{b}\int_{l_1}^x\int_{l_1}^{s}e^{-\mu(s-l_1)}\sin((s-z)\mu)\mathtt{F}(z)dzds=\frac{o(1)}{\la^{3-\alpha}}.
\end{equation}
Since $\displaystyle e^{\sqrt{\zeta}}\geq \frac{1}{5}\zeta^2$ and taking $\zeta=\la (L-l_1)^2$, and using Lemmas \ref{Lem5.5}, \ref{Lem5.7}, and Equation \eqref{Integ2} in \eqref{modulus}, we get
$$\abs{y_{xx}(l_1^+)}=\frac{o(1)}{\la}.$$
Thus the proof of this Lemma is complete.
\end{proof}

\begin{Lemma}\label{Stab-beam}
Assume that $\eta>0$. The solution $(u,v,y,z,\omega)\in D(\AA_3)$ of system \eqref{pol3beam}-\eqref{pol7beam} satisfies the following asymptotic behavior 
\begin{equation}
\int_0^L\abs{z}^2dx=\frac{o(1)}{\la^2}\quad\text{and}\quad b\int_0^L\abs{y_{xx}}^2dx=\frac{o(1)}{\la^2}.
\end{equation}
\end{Lemma}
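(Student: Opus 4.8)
The plan is to feed the multiplier identity \eqref{Bbeam} of Lemma \ref{lem-BW} a cut-off $h$ that is \emph{affine on the two undamped intervals} $(0,l_0)$ and $(l_1,L)$, so that the only genuinely $O(1)$ contribution in \eqref{Bbeam}, namely $\Re\left(2\int_0^L h^{\prime\prime}S\bar y_xdx\right)$, becomes supported inside $(l_0,l_1)$, where every quantity has already been shown to be small. Concretely, I would take $h\in C^2([0,L])$ with $h(0)=h(L)=0$, $h(x)=x$ on $[0,l_0]$ and $h(x)=x-L$ on $[l_1,L]$, interpolated smoothly on $(l_0,l_1)$; then $h^{\prime}\equiv 1$ and $h^{\prime\prime}\equiv 0$ on $(0,l_0)\cup(l_1,L)$, while $h^{\prime\prime}$ stays bounded and is supported in $(l_0,l_1)$. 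With this $h$ the bending terms $2b\int_0^L h^{\prime}\abs{y_{xx}}^2dx+b\int_0^{l_0}h^{\prime}\abs{y_{xx}}^2dx+b\int_{l_1}^L h^{\prime}\abs{y_{xx}}^2dx$ collapse to $3b\int_0^{l_0}\abs{y_{xx}}^2dx+3b\int_{l_1}^L\abs{y_{xx}}^2dx$ plus a $2b\int_{l_0}^{l_1}h^{\prime}\abs{y_{xx}}^2dx$ piece, and $\int_0^L h^{\prime}\abs{z}^2dx$ yields $\int_0^{l_0}\abs{z}^2dx+\int_{l_1}^L\abs{z}^2dx$ plus a $(l_0,l_1)$ piece; all four retained integrals carry positive coefficients.

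Next I would argue that every remaining term in \eqref{Bbeam} is $o(1)/\la^2$. The obstruction term reduces to $\Re\left(2\int_{l_0}^{l_1} h^{\prime\prime}S\bar y_xdx\right)$, which by Cauchy--Schwarz with $\|S\|_{L^2(l_0,l_1)}=o(1)/\la^{(3-\alpha)/2}$ (last estimate of \eqref{FE-1-beam}) and $\|y_x\|_{L^2(l_0,l_1)}\le\|y\|_{H^2(l_0,l_1)}=o(1)/\la^2$ (Lemma \ref{lem-inter1}) is of order $o(1)/\la^{2+(3-\alpha)/2}$; likewise $-\Re\left(2\int_{l_0}^{l_1}hS_x\bar y_{xx}dx\right)$ is bounded by $\|S_x\|_{L^2(l_0,l_1)}\|y_{xx}\|_{L^2(l_0,l_1)}=o(1)/\la^{2+(3-\alpha)/4}$ using Lemma \ref{lem-inter1} and \eqref{FE-1-beam}. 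The interior pieces $\int_{l_0}^{l_1}h^{\prime}\abs{z}^2dx$ and $2b\int_{l_0}^{l_1}h^{\prime}\abs{y_{xx}}^2dx$ are $o(1)/\la^2$ and $o(1)/\la^4$ by the same two lemmas, so they can be moved to the right-hand side regardless of the sign of $h^{\prime}$ there; the boundary terms $-bh(l_0)\abs{y_{xx}(l_0^-)}^2+bh(l_1)\abs{y_{xx}(l_1^+)}^2$ are $o(1)/\la^2$ thanks to the sharp bounds $\abs{y_{xx}(l_0^-)}=\abs{y_{xx}(l_1^+)}=o(1)/\la$ of Lemma \ref{Lem-yxx}; and the right-hand side of \eqref{Bbeam} is of order $o(1)/\la^{3-\alpha}$, hence $o(1)/\la^2$ since $\alpha<1$.

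Collecting everything, the four nonnegative undamped-interval integrals together satisfy $\int_0^{l_0}\abs{z}^2dx+\int_{l_1}^L\abs{z}^2dx+3b\int_0^{l_0}\abs{y_{xx}}^2dx+3b\int_{l_1}^L\abs{y_{xx}}^2dx\le o(1)/\la^2$, so each is $o(1)/\la^2$ on its own. Adding the damping-region contributions $\int_{l_0}^{l_1}\abs{z}^2dx=o(1)/\la^2$ (from $\|z\|_{H^2(l_0,l_1)}=o(1)/\la$ in Lemma \ref{lem-inter1}) and $\int_{l_0}^{l_1}\abs{y_{xx}}^2dx=o(1)/\la^4$ (from \eqref{FE-1-beam}) then assembles $\int_0^L\abs{z}^2dx=o(1)/\la^2$ and $b\int_0^L\abs{y_{xx}}^2dx=o(1)/\la^2$, which is the assertion. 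The one delicate point, and the reason the preceding lemmas were set up, is exactly the elimination of $\Re\left(2\int_0^L h^{\prime\prime}S\bar y_xdx\right)$: on $(0,l_0)\cup(l_1,L)$ one has $S=by_{xx}$ with $\|y_{xx}\|_{L^2}=O(1)$, so a generic $h$ would leave an $O(1)$ remainder; forcing $h^{\prime\prime}$ to live only on $(l_0,l_1)$, where the high-order decay of $S$, $y$ and their derivatives has already been proved, is what lets the identity close at the scale $o(1)/\la^2$.
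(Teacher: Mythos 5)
Your proof is correct and follows essentially the same route as the paper: the paper feeds the identity \eqref{Bbeam} with $h(x)=x\theta_1(x)$ and then $h(x)=x\theta_2(x)$, treating $(0,l_0)$ and $(l_1,L)$ in two passes, and estimates the cross term $\Re\left(2\int_0^L h^{\prime\prime}S\bar{y}_xdx\right)$, the term $\Re\left(2\int_{l_0}^{l_1}hS_x\bar{y}_{xx}dx\right)$ and the boundary terms with exactly the same ingredients you use (Lemmas \ref{First-Estimation-beam}, \ref{lem-inter1} and \ref{Lem-yxx}), before adding the damped-region estimates. Your single multiplier, affine with slope one on both undamped intervals and with $h^{\prime\prime}$ supported in $(l_0,l_1)$, merely merges the two passes into one (and, incidentally, satisfies the hypothesis $h(L)=0$ of Lemma \ref{lem-BW} cleanly, which the paper's second choice $x\theta_2(x)$ formally does not).
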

\begin{proof}
Taking $h(x)=x\theta_1(x)$ in Equation \eqref{Bbeam}, where $\theta_1\in C^1([0,L])$ is defined as follows
 \begin{equation}
\theta_{1}(x)=\left\{\begin{array}{ccc}
1&\text{if}&x\in [0,l_0],\\
0&\text{if}& x\in [l_1,L],\\
0\leq\theta_1\leq 1&& elsewhere.
\end{array}\right.
\end{equation}
Estimation of the term $\displaystyle \Re\left(2\int_0^Lh^{\prime\prime}S\bar{y}_xdx\right)$. Using Cauchy-Schwarz inequality, the definition of $h$, Lemmas \ref{First-Estimation-beam} and \ref{lem-inter1}, we get
\begin{equation}\label{BeamEst1}
\left|\Re\left(2\int_0^Lh^{\prime\prime}S\bar{y}_xdx\right)\right|=\frac{o(1)}{\la^{\frac{7-\alpha}{2}}}.
\end{equation}
Estimation of the term $\displaystyle \Re\left(2\int_{l_0}^{l_1}hS_x\bar{y}_{xx}dx\right)$. Using Cauchy-Schwarz inequality, the definition of $h$, Lemmas \ref{First-Estimation-beam} and \ref{lem-inter1}, we get
\begin{equation}\label{BeamEst2}
\left|\Re\left(2\int_{l_0}^{l_1}hS_x\bar{y}_{xx}dx\right)\right|\leq \left(\int_{l_0}^{l_1}\abs{S_x}^2dx\right)^{1/2} \left(\int_{l_0}^{l_1}\abs{y_{xx}}^2dx\right)^{1/2}=\frac{o(1)}{\la^{\frac{11-\alpha}{4}}}.
\end{equation}
Using Lemma \ref{Lem-yxx}, Equations \eqref{BeamEst1} and \eqref{BeamEst2} in \eqref{Bbeam}, we get 
\begin{equation}
\int_0^Lh^{\prime}\abs{z}^2dx+2b\int_0^Lh^{\prime}\abs{y_{xx}}^2dx+b\int_0^{l_0}h^{\prime}\abs{y_{xx}}^2dx+b\int_{l_1}^Lh^{\prime}\abs{y_{xx}}^2dx=\frac{o(1)}{\la^2}.
\end{equation}
By using the definition of $h$, we obtain
\begin{equation}\label{BEAMfinal1}
\int_0^{l_0}\abs{z}^2dx=\frac{o(1)}{\la^2}\quad\text{and}\quad b\int_0^{l_0}\abs{y_{xx}}^2dx=\frac{o(1)}{\la^2}.
\end{equation}
Now, taking $h(x)=x\theta_2(x)$ in Equation \eqref{Bbeam}, where $\theta_2\in C^1([0,L])$ is defined as follows
\begin{equation}
\theta_{2}(x)=\left\{\begin{array}{ccc}
1&\text{if}&x\in [l_1,L],\\
0&\text{if}& x\in [ 0,l_0],\\
0\leq\theta_2\leq 1&& elsewhere.
\end{array}\right.
\end{equation}
Proceeding in a similar way as above we get
\begin{equation}\label{BEAMfinal2}
\int_{l_1}^L\abs{z}^2dx=\frac{o(1)}{\la^2}\quad\text{and}\quad b\int_{l_1}^{L}\abs{y_{xx}}^2dx=\frac{o(1)}{\la^2}.
\end{equation}
Therefore, using the third estimation of \eqref{FE-1-beam}, Lemma \ref{lem-inter1} and combining Equations \eqref{BEAMfinal1}, and \eqref{BEAMfinal2} we get our desired result.
\end{proof}

\begin{Lemma}\label{estOfyxxx}
Assume that $\eta>0$. The solution $(u,v,y,z,\omega)\in D(\AA_3)$ of system \eqref{pol3beam}-\eqref{pol7beam} satisfies the following asymptotic behavior 
\begin{equation}
\abs{y_{xxx}(0)}=o(1).
\end{equation}
\end{Lemma}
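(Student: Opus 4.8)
The plan is to work entirely on the undamped sub-interval $(0,l_0)$, where $d\equiv 0$ so that $S=by_{xx}$ and, by \eqref{Diffeq}, $y$ solves the clean fourth-order ODE $by_{xxxx}-\la^2 y=\mathtt F$ with $\mathtt F=\la^{-3+\alpha}(f_4+i\la f_3)$, whence $\|\mathtt F\|_{L^2(0,l_0)}\le \la^{-3+\alpha}\big(\|f_4\|_{L^2}+\la\|f_3\|_{L^2}\big)=o(1)\la^{-2+\alpha}$. I would deliberately avoid reaching $y_{xxx}(0)$ through the wave side: the transmission condition gives $by_{xxx}(0)=-au_x(0)$, but a multiplier identity for the wave (for instance \eqref{Wave} with a constant weight) only balances $|u_x(0)|$ against the uncontrolled boundary term $|u_x(-L)|$ and so cannot yield an absolute bound. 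Instead the idea is to produce $|y_{xxx}(0)|^2$ as a boundary term of the beam operator itself.

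First I would sharpen the interior bounds. From Lemma \ref{Stab-beam} we have $\|z\|_{L^2(0,L)}=o(1)\la^{-1}$ and $\|y_{xx}\|_{L^2(0,L)}=o(1)\la^{-1}$; feeding $\|z\|_{L^2}$ into $i\la y-z=\la^{-3+\alpha}f_3$ (Equation \eqref{pol5beam}) upgrades the naive Poincar\'e estimate to the sharp $\|y\|_{L^2(0,l_0)}=o(1)\la^{-2}$, hence $\la^2\|y\|_{L^2(0,l_0)}=o(1)$. The ODE then gives $\|y_{xxxx}\|_{L^2(0,l_0)}\le b^{-1}\big(\la^2\|y\|_{L^2}+\|\mathtt F\|_{L^2}\big)=o(1)$, and an interpolation inequality applied to $y_{xx}$ (as in \eqref{nirenberg-ineq} and Lemma \ref{lem-inter1}) yields $\|y_{xxx}\|_{L^2(0,l_0)}\lesssim \|y_{xxxx}\|_{L^2}^{1/2}\|y_{xx}\|_{L^2}^{1/2}+\|y_{xx}\|_{L^2}=o(1)\la^{-1/2}$.

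With these in hand I would fix a cut-off $p\in C^1([0,l_0])$ with $p(0)=1$ and $p(l_0)=0$, multiply \eqref{Diffeq} by $2p\bar y_{xxx}$, integrate over $(0,l_0)$ and take the real part. Since $2b\,\Re(y_{xxxx}\bar y_{xxx})=b\tfrac{d}{dx}|y_{xxx}|^2$, integration by parts produces the boundary contribution $b\big[p|y_{xxx}|^2\big]_0^{l_0}=-b|y_{xxx}(0)|^2$, the value at $l_0$ dropping out because $p(l_0)=0$ (so the jump of $y_{xxx}$ at $l_0$ never enters). This gives
\begin{equation*}
b|y_{xxx}(0)|^2=-b\int_0^{l_0}p'|y_{xxx}|^2dx-2\la^2\Re\int_0^{l_0}p\,y\,\bar y_{xxx}dx-2\Re\int_0^{l_0}p\,\mathtt F\,\bar y_{xxx}dx.
\end{equation*}
Finally I would estimate the three terms by Cauchy--Schwarz using the bounds above: the first is $O(1)\|y_{xxx}\|_{L^2}^2=o(1)\la^{-1}$, the second is $\le 2\la^2\|y\|_{L^2}\|y_{xxx}\|_{L^2}=o(1)\la^{-1/2}$, and the third is $\le 2\|\mathtt F\|_{L^2}\|y_{xxx}\|_{L^2}=o(1)$, so $|y_{xxx}(0)|^2=o(1)$, which is the claim. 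The delicate point is not the multiplier manipulation but securing the sharp $\|y\|_{L^2}=o(1)\la^{-2}$ and the $L^2$-control of $y_{xxx}$: both rely on combining the first beam equation with the damping-free ODE \eqref{Diffeq} on $(0,l_0)$, rather than Poincar\'e alone, so that the factor $\la^2$ multiplying $y$ is absorbed.
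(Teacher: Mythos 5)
Your proposal is correct, and its core coincides with the paper's proof: both arguments work exclusively on the undamped interval $(0,l_0)$, where $S=by_{xx}$; both extract $\|y_{xxxx}\|_{L^2(0,l_0)}=o(1)$ from the equation (the paper via $by_{xxxx}=-i\la z+\la^{-3+\alpha}f_4$ together with $\|z\|_{L^2}=o(1)\la^{-1}$ from Lemma \ref{Stab-beam}; you, equivalently, via the sharpened bound $\la^2\|y\|_{L^2(0,l_0)}=o(1)$ and \eqref{Diffeq}); and both use the Nirenberg interpolation inequality to get $\|y_{xxx}\|_{L^2(0,l_0)}=o(1)\la^{-1/2}$. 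The only genuine divergence is the final step, i.e.\ how the pointwise value at $x=0$ is extracted. The paper simply notes that $\|y_{xxx}\|_{H^1(0,l_0)}=o(1)$ and invokes the Sobolev embedding $H^1(0,l_0)\subset C([0,l_0])$, which finishes in one line. You instead push the cutoff multiplier $2p\bar{y}_{xxx}$, with $p(0)=1$ and $p(l_0)=0$, through \eqref{Diffeq}, so that $-b\abs{y_{xxx}(0)}^2$ appears as the sole boundary term; this costs a short computation but buys two things: a quantitative rate, $\abs{y_{xxx}(0)}^2=o(1)\la^{-1/2}$ (dominated by the term $\la^2\|y\|_{L^2}\|y_{xxx}\|_{L^2}$) instead of bare $o(1)$, and, via the choice $p(l_0)=0$, complete independence from any information on $y_{xxx}(l_0^-)$, for which only the weaker estimate of Lemma \ref{Lem5.5} is available. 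Your preliminary remark that one should not route this estimate through the wave side is also consistent with the paper: the transmission condition $by_{xxx}(0)=-au_x(0)$ is used there in the opposite direction, namely this lemma is what later yields $\abs{u_x(0)}=o(1)$ in Lemma \ref{Wwave}.
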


\begin{proof}
From the interpolation inequality Theorem (see \cite{NirenbergPaper}), and using the fact that $y\in H^4(0,l_0)$, $\|f_4\|_{L^2(0,l_0)}=o(1)$, Equation \eqref{pol6beam} and Lemma \ref{Stab-beam}, we get
\begin{equation}\label{1yxxx0}
\begin{array}{lll}
\displaystyle\|y_{xxx}\|_{L^2(0,l_0)}& \displaystyle\lesssim \|y_{xxxx}\|^{1/2}_{L^2(0,l_0)}\cdot\|y_{xx}\|^{1/2}_{L^2(0,l_0)}+\|y_{xx}\|_{L^2(0,l_0)}\\
&\displaystyle
\leq \left[\la^{\frac{1}{2}}\|z\|^{\frac{1}{2}}_{L^2(0,l_0)}+\frac{1}{\la^{\frac{3-\alpha}{2}}}\|f_4\|^{\frac{1}{2}}_{L^2(0,l_0)}\right]\|y_{xx}\|^{\frac{1}{2}}_{L^2(0,l_0)}+\|y_{xx}\|_{L^2(0,l_0)}=\frac{o(1)}{\la^{\frac{1}{2}}}.
\end{array}
\end{equation}
Using Equation \eqref{pol6beam} and the definition of $S$ on $(0,l_0)$ and \eqref{Stab-beam}, we get 
\begin{equation}\label{2yxxx0}
\|y_{xxxx}\|_{L^2(0,l_0)}=o(1).
\end{equation}
Then, from \eqref{1yxxx0} and  \eqref{2yxxx0}, we get 
$$
\|y_{xxx}\|_{H^1(0,l_0)}=o(1).
$$
Since $H^1(0,l_0)\subset C([0,l_0])$, then we get the desired result. Thus, the proof is complete.
\end{proof}
\begin{Lemma}\label{Wwave}
Assume that $\eta>0$. The solution $(u,v,y,z,\omega)\in D(\AA_3)$ of system \eqref{pol3beam}-\eqref{pol7beam} satisfies the following asymptotic behavior 
\begin{equation}
\int_{-L}^0\left(\abs{v}^2+a\abs{u_x}^2\right)dx=o(1).
\end{equation}
\end{Lemma}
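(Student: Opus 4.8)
The plan is to extract the full wave energy $\int_{-L}^0(|v|^2 + a|u_x|^2)\,dx$ from the multiplier identity \eqref{Wave} of Lemma \ref{lem-BW} by choosing the multiplier $g$ so that its derivative is constant and it vanishes at the clamped end $x=-L$. Concretely, I would take $g(x) = x + L$, so that $g'\equiv 1$, $g(-L)=0$ and $g(0)=L$; identity \eqref{Wave} then collapses to
\begin{equation*}
\int_{-L}^0\left(|v|^2 + a|u_x|^2\right)dx = L|v(0)|^2 + aL|u_x(0)|^2 + \frac{o(1)}{\la^{3-\alpha}}.
\end{equation*}
Thus everything reduces to showing that the two transmission traces $|u_x(0)|$ and $|v(0)|$ are $o(1)$, since the remainder is already $o(1)$.

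For the trace $|u_x(0)|$ I would use the force-balance transmission condition $au_x(0) = -by_{xxx}(0)$ from \eqref{AUG2B}, together with Lemma \ref{estOfyxxx}, which gives $|y_{xxx}(0)| = o(1)$. This immediately yields $a|u_x(0)| = b|y_{xxx}(0)| = o(1)$, hence $|u_x(0)|^2 = o(1)$.

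The delicate point, which I expect to be the main obstacle, is the velocity trace $|v(0)|$. By the domain condition $v(0) = z(0)$, so it suffices to control $z$ pointwise at $x=0$. The naive route through $z(0) = i\la y(0) - \la^{-3+\alpha}f_3(0)$ and Lemma \ref{Lem5.7} only yields $z(0) = O(1)$, which is not enough. To overcome this I would instead exploit the sharpened beam decay from Lemma \ref{Stab-beam}, namely $\|z\|_{L^2(0,L)} = o(1)/\la$ and $\|y_{xx}\|_{L^2(0,L)} = o(1)/\la$. Differentiating \eqref{pol5beam} gives $z_{xx} = i\la y_{xx} - \la^{-3+\alpha}(f_3)_{xx}$ in $L^2(0,L)$, whence $\|z_{xx}\|_{L^2(0,L)} \le \la\|y_{xx}\|_{L^2(0,L)} + \la^{-3+\alpha}\|(f_3)_{xx}\|_{L^2(0,L)} = o(1)$. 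Combining $\|z\|_{L^2(0,L)} = o(1)$ with $\|z_{xx}\|_{L^2(0,L)} = o(1)$ through an interpolation estimate for $\|z_x\|_{L^2}$ yields $\|z\|_{H^2(0,L)} = o(1)$; the Sobolev embedding $H^2(0,L)\hookrightarrow C^1([0,L])$ then gives $|v(0)| = |z(0)| \le \|z\|_{C([0,L])} = o(1)$.

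Inserting $|u_x(0)| = o(1)$ and $|v(0)| = o(1)$ into the collapsed identity yields $\int_{-L}^0(|v|^2 + a|u_x|^2)\,dx = o(1)$, which is the claim. Together with the dissipation estimate (first line of Lemma \ref{First-Estimation-beam}) and the beam estimate of Lemma \ref{Stab-beam}, this forces $\|U\|_{\mathcal{H}_3} = o(1)$, contradicting $\|U\|_{\mathcal{H}_3}=1$ and thereby establishing condition \eqref{R2}.
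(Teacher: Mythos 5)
Your proof is correct, and it reaches the decisive trace estimate $\abs{v(0)}=o(1)$ by a genuinely different route than the paper. The outer structure coincides: both arguments take $g(x)=x+L$ in \eqref{Wave} to reduce the lemma to the two traces at $x=0$, and both get $\abs{u_x(0)}=o(1)$ from the transmission condition $au_x(0)=-by_{xxx}(0)$ together with Lemma \ref{estOfyxxx}. The divergence is at the velocity trace. The paper performs a second multiplier computation: it multiplies the ODE \eqref{Diffeq} by $q\bar{y}_x$ on $(0,l_0)$, with $q\in C^2([0,l_0])$, $q(0)=1$, $q(l_0)=q_x(l_0)=0$, and uses Lemmas \ref{Stab-beam} and \ref{estOfyxxx} to obtain $\abs{\la y(0)}=o(1)$ (see \eqref{q-eq1}); the continuity condition $u(0)=y(0)$ and \eqref{pol3beam} then give $\abs{v(0)}=o(1)$. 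You instead convert the global beam estimates of Lemma \ref{Stab-beam} directly into trace information: the identity $z_{xx}=i\la y_{xx}-\la^{-3+\alpha}(f_3)_{xx}$ together with $\|y_{xx}\|_{L^2(0,L)}=o(1)/\la$ gives $\|z_{xx}\|_{L^2(0,L)}=o(1)$, Nirenberg interpolation then gives $\|z_x\|_{L^2(0,L)}=o(1)$, and the one-dimensional Sobolev embedding combined with the domain condition $v(0)=z(0)$ yields $\abs{v(0)}=o(1)$; all ingredients (Lemmas \ref{lem-BW}, \ref{Stab-beam}, \ref{estOfyxxx}) precede this lemma, so there is no circularity. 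Your route is arguably more economical: it involves no new integration by parts, it reuses the interpolation machinery already deployed in Lemma \ref{lem-inter1}, and at the last step the embedding $H^1(0,L)\hookrightarrow C^0([0,L])$ already suffices (the $z_{xx}$ bound enters only through the interpolation for $z_x$). What the paper's computation buys is the displacement form $\abs{\la y(0)}=o(1)$ of the same information (equivalent via \eqref{pol5beam} evaluated at $x=0$), obtained by a multiplier identity \eqref{q-eq} that the paper then reuses for the coupled-beam model (cf.\ \eqref{q-eq-cp} and \eqref{cp5}); your argument additionally delivers the slightly stronger uniform bound $\|z\|_{C^0([0,L])}=o(1)$ on the whole beam.
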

\begin{proof}
Using the transmission condition and Lemma \ref{estOfyxxx}, we have
\begin{equation}\label{ux0}
\abs{u_x(0)}=\abs{y_{xxx}(0)}=o(1).
\end{equation}
Now, let $q\in C^2([0,l_0])$ such that $q(l_0)=q_x(l_0)=0$, $q(0)=1$. Multiply Equation \eqref{Diffeq} by $q\bar{y}_x$ and integrate over $(0,l_0)$, and using the fact that $\|f_3\|_{H^2_R(0,L)}=o(1)$, $ \|f_4\|_{L^2(0,L)}=o(1)$, and $\|y_{x}\|_{L^2(0,l_0)}\leq\|y_{x}\|_{L^2(0,L)}\lesssim \|y_{xx}\|_{L^2(0,L)}=O(1)$ , we get
\begin{equation}\label{q-eq}
\frac{1}{2}\int_0^{l_0}q^{\prime}\left(\abs{\la y}^2+3\abs{y_{xx}}^2\right)dx+b\int_0^{l_0}q^{\prime\prime}y_{xx}\bar{y}_xdx+\frac{1}{2}\abs{\la y(0)}^2-\Re\left(by_{xxx}(0)\bar{y}_x(0)\right)=\frac{o(1)}{\la^{2-\alpha}}.
\end{equation}
By using Lemmas \ref{Stab-beam}, \ref{estOfyxxx} and the fact that $\abs{y_x(0)}\lesssim \|y_{xx}\|_{L^2(0,L)}=O(1)$ and $\|y_{x}\|_{L^2(0,l_0)}\leq\|y_{x}\|_{L^2(0,L)}\lesssim \|y_{xx}\|_{L^2(0,L)}=O(1)$ in Equation \eqref{q-eq}, we obtain
\begin{equation}\label{q-eq1}
\abs{\la y(0)}^2=o(1).
\end{equation}
From the transmission conditions we have
$$\abs{\la u(0)}=\abs{\la y(0)}=o(1).$$
It follows from the above Equation and  Equation \eqref{pol3beam}  that
\begin{equation}\label{v0}
\abs{v(0)}=o(1).
\end{equation}
Thus, by taking $g(x)=x+L$ in Equation \eqref{Wave}, and using Equations \eqref{ux0} and \eqref{v0} we get our desired result.
\end{proof}

\noindent \textbf{Proof of Theorem \ref{pol}.} 
From Lemmas \ref{Stab-beam} and \ref{Wwave}, we get that $\|U\|_{\HH_3}=o(1)$, which contradicts \eqref{pol1beam}. Consequently, condition \eqref{R2} holds. This implies, from Theorem \ref{bt}, the energy decay estimation \eqref{Energypolbeam}. The proof is thus complete.
\begin{rem}
$\bullet$
The result in \cite{Fathi-2015} can be improved. Indeed, in \cite{Fathi-2015}, Fathi considered a Euler-Bernoulli beam and wave equations coupled through transmission conditions. The damping is locally distributed and acts on the wave equation, and the rotation vanishes at the connecting point $(y_x(\ell)=0)$. The system is given in the left side of Equation \eqref{Fathi}. 
He proved the polynomial stability with energy decay rate of type $t^{-2}$. By using similar computations as in Section \ref{DAMPED-WAVE} by taking $\alpha=1$, and by solving the ordinary differential equations in Section \ref{Section I} we can reach that $\abs{y_{xx}(\ell)}=o(1)$. Thus, with the same technique of the proof of Section \ref{DAMPED-WAVE}, we can reach that energy of the system  (the left system in Equation \eqref{Fathi}) of the mentioned paper satisfies the decay rate $t^{-4}$. \\
$\bullet$ In \cite{Fathi-2015}, when the damping acts on the beam equation the energy decay rate reached was $t^{-2}$ (the left system in Equation \eqref{Fathi}), when taking the condition $y_x(\ell)=0$ at the connecting point. However, in this paper, we proved in section \ref{Section I} the polynomial energy decay rate of type $t^{-1}$, when taking $y_{xx}(0)=0$ at the connecting point. From this comparison, we see that the boundary conditions play a critical role in the energy decay rate for the system \eqref{AUG1B}.
\end{rem}

\section{(EBB)$_{FKV}$ Model}\label{Beam-Alone}
\noindent In this section, we consider the Euler-Bernoulli beam with localized fractional Kelvin-Voigt damping. We study the polynomial stability of the system \eqref{beamAlone}.
In Theorem \ref{theorem1}, taking the input $V(x,t)=\sqrt{d(x)}y_{xxt}(x,t)$, then  using \eqref{Caputo}, we get the output $O$ is given by 
$$
O(x,t)=\sqrt{d(x)}I^{1-\alpha,\eta}y_{xxt}(x,t)=\frac{\sqrt{d(x)}}{\Gamma(1-\alpha)}\int_0^t(t-s)^{-\alpha}e^{-\eta(t-s)}\partial_sy_{xx}(x,s)ds=\sqrt{d(x)}\partial_t^{\alpha,\eta}y_{xx}(x,t).
$$
Then, we deduce that system \eqref{beamAlone} can be recast into the following augmented model 
\begin{equation}\label{AUG1Beam}
\left\{\begin{array}{ll}
\displaystyle{y_{tt}+\left(by_{xx}+\sqrt{d(x)}\kappa(\alpha)\int_{\mathbb{R}}\abs{\xi}^{\frac{2\alpha-1}{2}}\omega(x,\xi,t)d\xi\right)_{xx}=0},&(x,t)\in (0,L)\times \times \R_{\ast}^+,\\[0.1in]
\omega_t(x,\xi,t)+\left(|\xi|^2+\eta\right)\omega(x,\xi,t)-\sqrt{d(x)}y_{xxt}(x,t)\abs{\xi}^{\frac{2\alpha-1}{2}}=0,&(x,\xi,t)\in (0,L)\times \mathbb{R}\times \R_{\ast}^+,
\end{array}
\right.
\end{equation}
with the boundary conditions 
\begin{equation}\label{AUG2-las}
\begin{array}{ll}
y(0,t)=y_{x}(0,t)=0,\,y_{xx}(L,t)=0, \,y_{xxx}(L,t)=0,&t\in (0,\infty),
\end{array}
\end{equation}
and with the following initial conditions 
\begin{equation}\label{AUG3B-las}
\begin{array}{lll}
y(x,0)=y_0(x),\, y_t(x,0)=y_1(x),\,\omega(x,\xi,0)=0,\, x\in (0,L),\xi\in\mathbb{R}.
\end{array}
\end{equation}
The energy of the system \eqref{AUG1Beam}-\eqref{AUG3B-las} is given by  
\begin{equation*}
E_3(t)=\frac{1}{2}\int_0^L\left(\abs{y_t}^2+b\abs{y_{xx}}^2\right)dx+\frac{\kappa(\alpha)}{2}\int_{0}^L\int_{\R}\abs{\omega(x,\xi,t)}^2d\xi dx.
\end{equation*}
\begin{lemma}\label{Denergybeam-las}
Let $U=(y,y_t,\omega)$ be a regular solution of the System \eqref{AUG1Beam}-\eqref{AUG3B-las}. Then, the energy $E_4(t)$ satisfies the following estimation 
\begin{equation}\label{denergybeam-las}
\frac{d}{dt}E_4(t)=-\kappa(\alpha)\int_{0}^L\int_{\mathbb{R}}(\xi^2+\eta)\abs{\omega(x,\xi,t)}^2d\xi dx.
\end{equation}
\end{lemma}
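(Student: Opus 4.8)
The plan is to mirror the energy-dissipation computation already carried out in Lemma \ref{Denergy} for the first model, since the structure of \eqref{AUG1Beam} is identical to that of \eqref{AUG1} except that the wave component and the coupling boundary conditions are absent here. The strategy is to multiply the beam equation by $\overline{y}_t$, integrate over $(0,L)$, integrate by parts twice using the cantilever boundary conditions \eqref{AUG2-las}, and then combine the result with a separate estimate obtained by multiplying the $\omega$-equation by $\kappa(\alpha)\overline{\omega}$.

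\textbf{First}, I would multiply the first equation of \eqref{AUG1Beam} by $\overline{y}_t$, integrate over $(0,L)$, and take the real part. The term $\frac{1}{2}\frac{d}{dt}\int_0^L\abs{y_t}^2dx$ appears immediately. For the fourth-order term, I integrate by parts twice; the boundary contributions all vanish because of \eqref{AUG2-las}: at $x=0$ we have $y_t(0)=0$ (since $y(0,t)=0$ for all $t$) and $\partial_x y_t(0)=0$ (since $y_x(0,t)=0$), while at $x=L$ the factors $y_{xx}(L)$ and $y_{xxx}(L)$ both vanish together with the stress term $by_{xx}+\sqrt{d(x)}\kappa(\alpha)\int_\R\abs{\xi}^{\frac{2\alpha-1}{2}}\omega\,d\xi$ evaluated there. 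What survives is
\begin{equation*}
\frac{1}{2}\frac{d}{dt}\int_0^L\left(\abs{y_t}^2+b\abs{y_{xx}}^2\right)dx+\Re\left(\kappa(\alpha)\int_0^L\sqrt{d(x)}\,\overline{y}_{xxt}\int_\R\abs{\xi}^{\frac{2\alpha-1}{2}}\omega\,d\xi\,dx\right)=0.
\end{equation*}

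\textbf{Second}, I multiply the $\omega$-equation of \eqref{AUG1Beam} by $\kappa(\alpha)\overline{\omega}$, integrate over $(0,L)\times\R$, and take the real part, exactly as in \eqref{denergy2}. This produces
\begin{equation*}
\frac{\kappa(\alpha)}{2}\frac{d}{dt}\int_0^L\int_\R\abs{\omega}^2d\xi dx+\kappa(\alpha)\int_0^L\int_\R(\xi^2+\eta)\abs{\omega}^2d\xi dx=\Re\left(\kappa(\alpha)\int_0^L\sqrt{d(x)}\,y_{xxt}\int_\R\abs{\xi}^{\frac{2\alpha-1}{2}}\overline{\omega}\,d\xi\,dx\right).
\end{equation*}
The two cross terms are complex conjugates of one another, so their real parts coincide. \textbf{Finally}, adding the two displayed identities cancels the coupling terms and yields precisely \eqref{denergybeam-las}, recalling that $E_4(t)$ is the energy written just above the lemma.

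\emph{The main obstacle} is bookkeeping rather than conceptual: I must verify that every boundary term generated by the double integration by parts of the fourth-order operator genuinely vanishes under the mixed clamped/free conditions \eqref{AUG2-las}, in particular that the free-end conditions $y_{xx}(L)=y_{xxx}(L)=0$ kill the two surface terms at $x=L$ while the clamped-end conditions $y(0)=y_x(0)=0$ (differentiated in $t$) kill the two at $x=0$. Since $U$ is a regular solution, all integrations by parts are justified and the cross terms are finite by Remark \ref{Rem1}, so no further regularity argument is needed.
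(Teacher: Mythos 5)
Your proof is correct and is essentially the argument the paper intends: the paper states this lemma without a written proof, deferring implicitly to the computation of Lemma \ref{Denergy}, and your two-multiplier argument (multiplying the beam equation by $\overline{y}_t$, multiplying the $\omega$-equation by $\kappa(\alpha)\overline{\omega}$, and cancelling the two cross terms, which are complex conjugates) is precisely that computation adapted to the present model. The only ingredient not already in Lemma \ref{Denergy} is the boundary-term check under the clamped/free conditions, and you handle it correctly: $y_t(0)=y_{xt}(0)=0$ at the clamped end, and $y_{xx}(L)=y_{xxx}(L)=0$ together with the vanishing of $d$ on $(l_1,L)$ kill both surface terms at the free end.
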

$\newline$
\noindent Since $\alpha\in (0,1)$, then $\kappa(\alpha)>0$, and therefore $\displaystyle\frac{d}{dt}E_4(t)\leq 0$. Thus, system \eqref{AUG1Beam}-\eqref{AUG3B-las} is dissipative in the sense that its energy is a non-increasing function with respect to time variable $t$. Now, we define the following Hilbert energy space $\mathcal{H}_{4}$ by 
$$
\mathcal{H}_{4}= H^2_L(0,L)\times L^2(0,L)\times W,
$$
where $W=L^2\left((0,L)\times \R\right)$ and $H^2_L(0,L)=\left\{y\in H^2(0,L);\, y(0)=y_x(0)=0\right\}$.\\
The energy space $\mathcal{H}_4$ is equipped with the inner product defined by 
$$
\begin{array}{lll}
\displaystyle
\left<U,U_1\right>_{\mathcal{H}_4}&=&\displaystyle
\int_{0}^L  z\bar{z_1}dx+b\int_{0}^L y_{xx}(\overline{y_1})_{xx}dx
+\kappa(\alpha)\int_{0}^L \int_{\mathbb{R}}\omega(x,\xi)\overline{\omega_1}(x,\xi)d\xi dx,
\end{array}
$$
for all $U=(y,z,\omega)$ and $U_1=(y_1,z_1,\omega_1)$ in $\mathcal{H}_{4}$. We use $\|U\|_{\mathcal{H}_{4}}$ to denote the corresponding norm. We define the unbounded linear operator $\AA_{4}:D(\AA_{4})\subset\mathcal{H}_{4}\rightarrow\mathcal{H}_{4}$ by 
\begin{equation*}
D(\mathcal{A}_{4})=\left\{\begin{array}{c}
\displaystyle{U=(y,z,\omega)\in \mathcal{H}_4; z\in H^2_L(0,L)},\\[0.1in]  
\displaystyle{\left(by_{xx}+\sqrt{d(x)}\kappa(\alpha)\int_{\mathbb{R}}\abs{\xi}^{\frac{2\alpha-1}{2}}\omega(x,\xi,t)d\xi\right)_{xx}}\in L^2(0,L),\vspace{0.2cm}\\ [0.1in]
-\left(\abs{\xi}^2+\eta\right)\omega(x,\xi)+\sqrt{d(x)}z_{xx}|\xi|^{\frac{2\alpha-1}{2}},\quad|\xi|\omega(x,\xi)\in W,\\[0.1in]
y_{xx}(L)=0,\, \text{and}\, y_{xxx}(L)=0.
\end{array}
\right\},
\end{equation*}
and for all $U=(y,z,\omega)\in D(\mathcal{A}_{4})$, 
$$
\mathcal{A}_{4}(y,z,\omega)^{\top}=\begin{pmatrix}
z\\\vspace{0.2cm}\displaystyle{-\left(by_{xx}+\sqrt{d(x)}\kappa(\alpha)\int_{\mathbb{R}}\abs{\xi}^{\frac{2\alpha-1}{2}}\omega(x,\xi,t)d\xi\right)_{xx}}\\ \vspace{0.2cm}
-\left(\abs{\xi}^2+\eta\right)\omega(x,\xi)+\sqrt{d(x)}z_{xx}|\xi|^{\frac{2\alpha-1}{2}}
\end{pmatrix}.
$$
\noindent If $U=(y,y_t,\omega)$ is a regular solution of system \eqref{AUG1Beam}-\eqref{AUG3B-las}, then the system can be rewritten as evolution equation on the Hilbert space $\mathcal{H}_4$ given by
\begin{equation}\label{evolution-las}
U_t=\mathcal{A}_{4}U,\quad U(0)=U_0,
\end{equation}
where $U_0=(y_0,y_1,0)$.\\
We can see that (in a smiliar way as in Section \ref{DAMPED-WAVE}) the unbounded linear operator $\mathcal{A}_4$ is m-dissipative in the energy space $\mathcal{H}_4$. Also, the $C_0$-semigroup of contractions $e^{t\mathcal{A}_4}$ is strongly stable on $\mathcal{H}_4$ in the sense that $\displaystyle{\lim_{t\to+\infty}\left\|e^{t\mathcal{A}_{4}}U_0\right\|_{\mathcal{H}_{4}}}=0$. 
\begin{theoreme}\label{polbeam-las}
Assume that $\eta>0$. The $C_0-$semigroup $(e^{t\AA_4})_{t\geq 0}$ is polynomially stable; i.e. there exists constant $C_4>0$ such that for every $U_0\in D(\AA_4)$, we have 
\begin{equation}\label{Energypolbeam-las}
E_4(t)\leq \frac{C_4}{t^{\frac{2}{1-\alpha}}}\|U_0\|^2_{D(\AA_4)},\quad t>0,\,\forall\,U_0\in D(\AA_4).
\end{equation}
\end{theoreme}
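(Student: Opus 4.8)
The plan is to invoke the Borichev--Tomilov theorem (Theorem \ref{bt}) with $\ell=1-\alpha$, so that the decay rate $t^{-2/(1-\alpha)}$ claimed in \eqref{Energypolbeam-las} follows once the two conditions
\begin{equation*}
i\R\subset\rho(\mathcal{A}_4)\qquad\text{and}\qquad \sup_{\la\in\R}\left\|(i\la I-\mathcal{A}_4)^{-1}\right\|_{\mathcal{L}(\mathcal{H}_4)}=O\!\left(|\la|^{1-\alpha}\right)
\end{equation*}
are established. The first condition is obtained exactly as in Subsection \ref{strongStabilityWave}: $i\la I-\mathcal{A}_4$ has no imaginary eigenvalue because the dissipation identity forces $\omega\equiv0$, hence $z_{xx}=0$ and $y_{xx}=0$ on $(l_0,l_1)$, and the clamped--free boundary conditions $y(0)=y_x(0)=0$, $y_{xx}(L)=y_{xxx}(L)=0$ together with unique continuation for $\la^2 y-by_{xxxx}=0$ then give $y\equiv0$; surjectivity follows by a Lax--Milgram/Fredholm argument as in Lemma \ref{surjective}. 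I would prove the resolvent bound by contradiction: if it fails there are sequences $\la_n\to+\infty$, $\|U_n\|_{\mathcal{H}_4}=1$ and $\la_n^{1-\alpha}(i\la_n I-\mathcal{A}_4)U_n=F_n\to0$ in $\mathcal{H}_4$, and the aim is to show $\|U_n\|_{\mathcal{H}_4}=o(1)$, a contradiction. Note that, compared with Section \ref{Section I}, there is no wave component, so neither the transmission analysis nor the analogue of Lemma \ref{Wwave} is needed.

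Dropping the index $n$ and writing $S=by_{xx}+\sqrt{d(x)}\kappa(\alpha)\int_{\R}|\xi|^{\frac{2\alpha-1}{2}}\omega\,d\xi$, the first step mimics Lemma \ref{First-Estimation-beam}. Testing the resolvent equation with $U$ and using the dissipation gives $\int_0^L\int_{\R}(\xi^2+\eta)|\omega|^2\,d\xi dx=o(\la^{-(1-\alpha)})$; multiplying the $\omega$-equation by $(|\la|+\xi^2+\eta)^{-2}|\xi|$, integrating, and invoking the closed-form integrals $\mathtt{I}_{12},\mathtt{I}_{13},\mathtt{I}_{14}$ of Lemma \ref{lemI4} then yields $\int_{l_0}^{l_1}|z_{xx}|^2\,dx=o(1)$, $\int_{l_0}^{l_1}|y_{xx}|^2\,dx=o(\la^{-2})$ and $\int_{l_0}^{l_1}|S|^2\,dx=o(\la^{-(1-\alpha)})$. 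Next, the Gagliardo--Nirenberg inequality \cite{NirenbergPaper} and the compact-subdomain interpolation of \cite{Adams}, exactly as in Lemma \ref{lem-inter1}, upgrade these to decay of $\|z\|_{H^2(l_0,l_1)}$, $\|y\|_{H^2(l_0,l_1)}$ and $\|S_x\|_{L^2(l_0,l_1)}$, and a Sobolev embedding controls the traces $y(l_0),y_x(l_0),y(l_1),y_x(l_1)$ together with the rescaled shear traces $\la^{-1/2}y_{xxx}(l_0^-)$, $\la^{-1/2}y_{xxx}(l_1^+)$, as in Lemma \ref{Lem5.5}.

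The principal difficulty, as in Section \ref{Section I}, is transferring these interior bounds to the undamped subintervals $(0,l_0)$ and $(l_1,L)$, where $S=by_{xx}$ and $y$ solves $-\la^2 y+by_{xxxx}=\mathtt{F}$ with $\mathtt{F}=\la^{-(1-\alpha)}(f_4+i\la f_3)$. Factoring this operator as $(\partial_x-i\mu)(\partial_x+i\mu)(\partial_x^2-\mu^2)y=b^{-1}\mathtt{F}$ with $\mu=\sqrt{\la/\sqrt b}$ and integrating the three first-order problems successively, I would obtain explicit representations for $y$ on each subinterval and evaluate them at the clamped end $x=0$ and the free end $x=L$. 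Here the boundary data differ from Section \ref{Section I}: at $x=0$ one uses $y(0)=y_x(0)=0$, while at $x=L$ one uses $y_{xx}(L)=y_{xxx}(L)=0$; combined with the elementary bound $e^{\sqrt{\zeta}}\ge\frac15\zeta^2$ (taking $\zeta=\la l_0^2/\sqrt b$ and $\zeta=\la(L-l_1)^2$) this should produce $|y_{xx}(l_0^-)|=o(\la^{-1})$ and $|y_{xx}(l_1^+)|=o(\la^{-1})$, the analogue of Lemma \ref{Lem-yxx}. Carefully absorbing the growing exponentials against the boundary terms under the tighter scaling $\ell=1-\alpha$ is the step I expect to be most delicate.

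To conclude, I would run the beam multiplier identity (the analogue of \eqref{Bbeam}) with the cut-off multipliers $h(x)=x\theta_1(x)$ and $h(x)=x\theta_2(x)$ localizing to $(0,l_0)$ and $(l_1,L)$ respectively, where $\theta_1,\theta_2$ are the plateau functions of Lemma \ref{Stab-beam}. Inserting the trace estimates from the previous step and the interior bounds on $S_x$ and $y_{xx}$ kills the interface terms and yields $\int_0^L|z|^2\,dx=o(1)$ and $b\int_0^L|y_{xx}|^2\,dx=o(1)$. Since $\eta>0$, the dissipation estimate already gives $\kappa(\alpha)\int_0^L\int_{\R}|\omega|^2\,d\xi dx\le\eta^{-1}\int_0^L\int_{\R}(\xi^2+\eta)|\omega|^2\,d\xi dx=o(1)$, so altogether $\|U\|_{\mathcal{H}_4}=o(1)$, contradicting $\|U_n\|_{\mathcal{H}_4}=1$. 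This establishes the resolvent bound and, via Theorem \ref{bt}, the energy estimate \eqref{Energypolbeam-las}.
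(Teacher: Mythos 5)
Your proposal follows the same route as the paper: Borichev--Tomilov with $\ell=1-\alpha$, the contradiction argument, the chain dissipation estimate $\to$ interior estimates on $(l_0,l_1)$ $\to$ interpolation and trace bounds $\to$ ODE factorization $\left(\partial_x-i\mu\right)\left(\partial_x+i\mu\right)\left(\partial_x^2-\mu^2\right)y=b^{-1}\mathtt{F}$ on the undamped intervals $\to$ multiplier identity with the cut-offs $x\theta_1$, $x\theta_2$; this is exactly how the paper argues (it simply transposes Section \ref{Section-poly-beam} with the weaker scaling). One quantitative claim in your third paragraph is, however, not achievable and should be corrected: under the scaling $\ell=1-\alpha$ the representation formula cannot give $\abs{y_{xx}(l_0^-)}=o(\la^{-1})$ and $\abs{y_{xx}(l_1^+)}=o(\la^{-1})$; the paper's own statement for this model is only $\abs{y_{xx}(l_0^-)}=o(1)$, $\abs{y_{xx}(l_1^+)}=o(1)$. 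The reason is that the inputs feeding the representation are weaker here than in Section \ref{Section I}: one only has $\abs{y(l_0)}=o(\la^{-1})$, $\abs{y_x(l_0)}=o(\la^{-1})$ (see \eqref{eq1-5}, versus $o(\la^{-2})$ in \eqref{eq1-5'}) and $\la^{-1/2}\abs{y_{xxx}(l_0^-)}=o\!\left(\la^{-\frac{1-\alpha}{4}}\right)$ (see \eqref{eq2-5}), so the dominant term $\abs{\mu^2 y(l_0)}\sim\la\,\abs{y(l_0)}$ in the analogue of \eqref{alpha} is only $o(1)$, not $o(\la^{-1})$. Fortunately this does not break your argument: in the final multiplier step the interface terms $-bh(l_0)\abs{y_{xx}(l_0^-)}^2+bh(l_1)\abs{y_{xx}(l_1^+)}^2$ only need to be $o(1)$ to conclude $\int_0^L\abs{z}^2dx=o(1)$ and $b\int_0^L\abs{y_{xx}}^2dx=o(1)$ as in \eqref{only2}, and together with the first estimate of \eqref{only1} this yields $\|U\|_{\mathcal{H}_4}=o(1)$ and the contradiction. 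So replace the claimed rate $o(\la^{-1})$ by $o(1)$ and the proof matches the paper's.
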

\noindent According to Theorem \ref{bt}, by taking $\ell=1-\alpha$, the polynomial energy decay \eqref{Energypolbeam-las} holds if the following conditions 
\begin{equation}\label{P1}\tag{${\rm{P_1}}$}
i\R\subset \rho(\mathcal{A}_4),
\end{equation}
and
\begin{equation}\label{P2}\tag{${\rm{P_2}}$}
\sup_{\la\in \R}\left\|(i\la I-\AA_4)^{-1}\right\|_{\mathcal{L}(\mathcal{H}_4)}=O\left(\abs{\la}^{1-\alpha}\right)
\end{equation}
are satisfied. 
\noindent Since condition \eqref{P1}is already satisfied (similar way as in Subsection \ref{subsec}), we still need to prove condition \eqref{P2}. For this purpose we will use an argument of contradiction. Suppose that \eqref{P2} is false, then there exists $\left\{\left(\la_n,U_n:=(y_n,z_n,\omega_n(\cdot,\xi))^\top\right)\right\}\subset \R^{\ast}\times D(\AA_4)$ with 
\begin{equation}\label{pol1beam-las}
\abs{\la_n}\to +\infty \quad \text{and}\quad \|U_n\|_{\mathcal{H}_4}=1, 
\end{equation}
such that 
\begin{equation}\label{pol2-las}
\left(\la_n^{1-\alpha}\right)\left(i\la_nI-\AA_4\right)U_n=F_n:=(f_{1,n},f_{2,n},f_{3,n}(\cdot,\xi))^{\top}\to 0 \ \ \text{in}\ \ \mathcal{H}_4. 
\end{equation}
For simplicity, we drop the index $n$. Equivalently, from \eqref{pol2-las}, we have 
\begin{eqnarray}
i\la y-z&=&\dfrac{f_1}{\la^{1-\alpha}} \quad\text{in}\ H_L^2(0,L),\\
i\la z+S_{xx}&=&\dfrac{f_2}{\la^{1-\alpha}} \quad \text{in}\ L^2(0,L),\\
(i\la+\xi^2+\eta)\omega(x,\xi)-\sqrt{d(x)}z_{xx}|\xi|^{\frac{2\alpha-1}{2}}&=&\dfrac{f_3(x,\xi)}{\la^{1-\alpha}} \quad \text{in}\ W,
\end{eqnarray}
where $\displaystyle S=by_{xx}+\sqrt{d(x)}\kappa(\alpha)\int_{\mathbb{R}}|\xi|^{\frac{2\alpha-1}{2}}\omega(x,\xi)d\xi$. Here we will check the condition \eqref{P2} by finding a contradiction with \eqref{pol1beam-las} by showing $\|U\|_{\mathcal{H}_4}=o(1)$. 
In order to reach this contradiction, we proceed in a similar way as in Section \ref{Section-poly-beam}. We give the estimation results directly considering that the proof of these results can follow using similar computations as in Section \ref{Section-poly-beam}.\\
Assume that $\eta>0$. Then, the solution $(y,z,\omega)\in D(\AA_4)$ of system \eqref{AUG1Beam}-\eqref{AUG3B-las} satisfies the asymptotic behavior estimations mentioned below.\\
Similar to Lemma  \ref{First-Estimation-beam}, we obtain
\begin{equation}\label{only1}
\int_0^L\int_{\R}\left(\abs{\xi}^2+\eta\right)\abs{\omega(x,\xi)}^2d\xi dx=\frac{o\left(1\right)}{\la^{1-\alpha}}\, ,\,\int_{l_0}^{l_1}\abs{z_{xx}}^2dx=o(1)\quad\text{and}\quad
 \int_{l_0}^{l_1}\abs{y_{xx}}^2dx=\frac{o\left(1\right)}{\la^2}.
\end{equation}
Similar to Lemma \ref{First-Estimation-beam}, we obtain
\begin{equation}\label{eqS1only}
\int_{l_0}^{l_1}\left|S\right|^2dx=\frac{o(1)}{\la^{1-\alpha}}.
\end{equation}
Similar to Lemma \ref{lem-inter1}, we get
\begin{equation}
\|z\|_{H^2(l_0,l_1)}=o(1).
\end{equation}
Similar to Lemma \ref{Lem5.5}, we obtain that the solution of the system \eqref{AUG1Beam}-\eqref{AUG3B-las} satisfies the following estimations
\begin{equation}\label{eq1-5}
\abs{y(l_0)}=\frac{o(1)}{\la},\,\abs{y_x(l_0)}=\frac{o(1)}{\la},\,\abs{y(l_1)}=\frac{o(1)}{\la},\,\abs{y_x(l_1)}=\frac{o(1)}{\la}
\end{equation}
and
\begin{equation}\label{eq2-5}
\frac{1}{\la^{\frac{1}{2}}} \abs{y_{xxx}(l_0^{-})}=\frac{o(1)}{\la^{\frac{1}{4}-\frac{\alpha}{4}}} \quad\text{and}\quad \frac{1}{\la^{\frac{1}{2}}}\abs{y_{xxx}(l_1^{+})}=\frac{o(1)}{\la^{\frac{1}{4}-\frac{\alpha}{4}}}.
\end{equation}
Similar to Lemma \ref{Lem-yxx}, we obtain
\begin{equation}
\abs{y_{xx}(l_0^{-})}=o(1)\quad\text{and}\quad\abs{y_{xx}(l_1^+)}=o(1).
\end{equation}
Finally, by proceeding in a similar way as in Lemma \ref{Stab-beam}, we reach our desired result
\begin{equation}\label{only2}
\int_0^L\abs{z}^2dx=o(1)\quad\text{and}\quad b\int_0^L\abs{y_{xx}}^2dx=o(1)
\end{equation}
\noindent \textbf{Proof of Theorem \ref{pol1beam-las}.} 
From the first estimation in Equation \eqref{only1} and Equation \eqref{only2}, we get that $\|U\|_{\HH_4}=o(1)$, which contradicts \eqref{pol1beam-las}. Consequently, condition \eqref{P2} holds. This implies, from Theorem \ref{bt}, the energy decay estimation \eqref{Energypolbeam-las}. The proof is thus complete.

      
\section{(EBB)-(EBB)$_{FKV}$ Model}      
In this section, we consider a system of two Euler-Bernoulli beam equation coupled via boundary connections with a localized non-regular fractional Kelvin-Voigt damping acting on one of the two equations only. In this part, we study the polynomial stabilty of the system.\\
In Theorem \ref{theorem1}, taking the input $V(x,t)=\sqrt{d(x)}y_{xxt}(x,t)$, then  using \eqref{Caputo}, we get the output $O$ is given by 
$$
O(x,t)=\sqrt{d(x)}I^{1-\alpha,\eta}y_{xxt}(x,t)=\frac{\sqrt{d(x)}}{\Gamma(1-\alpha)}\int_0^t(t-s)^{-\alpha}e^{-\eta(t-s)}\partial_sy_{xx}(x,s)ds=\sqrt{d(x)}\partial_t^{\alpha,\eta}y_{xx}(x,t).
$$
Then, we deduce that system \eqref{coupledBeam} can be recast into the following augmented model 
\begin{equation}\label{AUG1-cp}
\left\{\begin{array}{ll}
u_{tt}+au_{xxxx}=0,&(x,t)\in (-L,0)\times \R_{\ast}^+,\\[0.1in]
\displaystyle{y_{tt}+\left(by_{xx}+\sqrt{d(x)}\kappa(\alpha)\int_{\mathbb{R}}\abs{\xi}^{\frac{2\alpha-1}{2}}\omega(x,\xi,t)d\xi\right)_{xx}=0},&(x,t)\in (0,L) \times \R_{\ast}^+,\\[0.1in]
\omega_t(x,\xi,t)+\left(|\xi|^2+\eta\right)\omega(x,\xi,t)-\sqrt{d(x)}y_{xxt}(x,t)\abs{\xi}^{\frac{2\alpha-1}{2}}=0,&(x,\xi,t)\in (0,L)\times \mathbb{R}\times \R_{\ast}^+,
\end{array}
\right.
\end{equation}
with the following transmission and boundary conditions 
\begin{equation}\label{AUG2-cp}
\left\{\begin{array}{ll}
u(-L,t)=u_x(-L,t)=y(L,t)=y_{x}(L,t)=0,&t\in (0,\infty),\\[0.1in]
au_{xxx}(0,t)-by_{xxx}(0,t)=0,u_{xx}(0,t)=y_{xx}(0,t)=0,&t\in (0,\infty),\\[0.1in]
u(0,t)=y(0,t),&t\in (0,\infty),
\end{array}\right.
\end{equation}
and with the following initial conditions 
\begin{equation}\label{AUG3-cp}
\begin{array}{lll}
u(x,0)=u_0(x),& u_t(x,0)=u_1(x)& x\in (-L,0)\\[0.1in]
y(x,0)=y_0(x),& y_t(x,0)=y_1(x),\hspace{0.2cm}\omega(x,\xi,0)=0& x\in (0,L),\xi\in\mathbb{R}.
\end{array}
\end{equation}
The energy of the system \eqref{AUG1-cp}-\eqref{AUG3-cp} is given by  
\begin{equation*}
E_5(t)=\frac{1}{2}\int_{-L}^0\left(\abs{u_t}^2+a\abs{u_{xx}}^2\right)dx+\frac{1}{2}\int_{0}^L\left(\abs{y_t}^2+b\abs{y_{xx}}^2\right)dx+\frac{\kappa(\alpha)}{2}\int_{0}^L\int_{\R}\abs{\omega(x,\xi,t)}^2d\xi dx.
\end{equation*}
By similar computation to Lemma \ref{Denergy}, it is easy to see that the energy $E_5(t)$ satisfies the following estimation 
\begin{equation}\label{denergybeam}
\frac{d}{dt}E_5(t)=-\kappa(\alpha)\int_{0}^L\int_{\mathbb{R}}(\xi^2+\eta)\abs{\omega(x,\xi,t)}^2d\xi dx.
\end{equation}
$\newline$
\noindent Since $\alpha\in (0,1)$, then $\kappa(\alpha)>0$, and therefore $\displaystyle\frac{d}{dt}E_5(t)\leq 0$. Thus, system \eqref{AUG1-cp}-\eqref{AUG3-cp} is dissipative in the sense that its energy is a non-increasing function with respect to time variable $t$. Now, we define the following Hilbert energy space $\mathcal{H}_{5}$ by 
$$
\mathcal{H}_{5}=\left\{(u,v,y,z,\omega)\in H_L^2(-L,0)\times L^2(-L,0)\times H^2_R(0,L)\times L^2(0,L)\times W;\ \ u(0)=y(0)\right\},
$$
where $W=L^2\left((0,L)\times \R\right)$ and
\begin{equation}
\left\{
\begin{array}{ll}
H_L^2(-L,0)=\{u\in H^2(-L,0); u(-L)=u_x(-L)=0\},\\[0.1in]
H_R^2(0,L)=\{y\in H^2(0,L); y(L)=y_x(L)=0\}.
\end{array}\right.
\end{equation} 
The energy space $\mathcal{H}_5$ is equipped with the inner product defined by 
$$
\begin{array}{lll}
\displaystyle
\left<U,U_1\right>_{\mathcal{H}_5}&=&\displaystyle
\int_{-L}^0 v\bar{v_1}dx+a\int_{-L}^0u_{xx}(\overline{u_1})_{xx}dx+\int_{0}^L z\bar{z_1}dx+b\int_{0}^L y_{xx}(\overline{y_1})_{xx}dx
+\kappa(\alpha)\int_{0}^L\int_{\mathbb{R}}\omega(x,\xi)\overline{\omega_1}(x,\xi)d\xi dx,
\end{array}
$$
for all $U=(u,v,y,z,\omega)$ and $U_1=(u_1,v_1,y_1,z_1,\omega_1)$ in $\mathcal{H}_{5}$. We use $\|U\|_{\mathcal{H}_{5}}$ to denote the corresponding norm. We define the unbounded linear operator $\AA_{5}:D(\AA_{5})\subset\mathcal{H}_{5}\rightarrow\mathcal{H}_{5}$ by 
\begin{equation*}
D(\mathcal{A}_{5})=\left\{\begin{array}{c}
\displaystyle{U=(u,v,y,z,\omega)\in \mathcal{H}_5;\ (v,z)\in H_L^2(-L,0)\times H^2_R(0,L),\ u\in  H^4(-L,0)},\\[0.1in]  
\displaystyle{\left(by_{xx}+\sqrt{d(x)}\kappa(\alpha)\int_{\mathbb{R}}\abs{\xi}^{\frac{2\alpha-1}{2}}\omega(x,\xi)d\xi\right)_{xx}}\in L^2(0,L),\vspace{0.2cm}\\ [0.1in]
-\left(\abs{\xi}^2+\eta\right)\omega(x,\xi)+\sqrt{d(x)}z_{xx}|\xi|^{\frac{2\alpha-1}{2}},\quad|\xi|\omega(x,\xi)\in W, \vspace{0.2cm}\\[0.1in]
au_{xxx}(0)-by_{xxx}(0)=0,\,  u_{xx}(0)=y_{xx}(0)=0,\,\text{and}\, v(0)=z(0)
\end{array}
\right\},
\end{equation*}
and for all $U=(u,v,y,z,\omega)\in D(\mathcal{A}_{5})$, 
$$
\mathcal{A}_{5}(u,v,y,z,\omega)^{\top}=\begin{pmatrix}
v\\ \vspace{0.2cm} -au_{xxxx}\\ \vspace{0.2cm} z\\\vspace{0.2cm}\displaystyle{-\left(by_{xx}+\sqrt{d(x)}\kappa(\alpha)\int_{\mathbb{R}}\abs{\xi}^{\frac{2\alpha-1}{2}}\omega(x,\xi)d\xi\right)_{xx}}\\ \vspace{0.2cm}
-\left(\abs{\xi}^2+\eta\right)\omega(x,\xi)+\sqrt{d(x)}z_{xx}|\xi|^{\frac{2\alpha-1}{2}}
\end{pmatrix}.
$$
\noindent If $U=(u,u_t,y,y_t,\omega)$ is a regular solution of system \eqref{AUG1-cp}-\eqref{AUG3-cp}, then the system can be rewritten as evolution equation on the Hilbert space $\mathcal{H}_5$ given by
\begin{equation}\label{evolution-cp}
U_t=\mathcal{A}_{5}U,\quad U(0)=U_0,
\end{equation}
where $U_0=(u_0,u_1,y_0,y_1,0)$.\\
\noindent Similar to Proposition \ref{mdissipatif}, the operator $\mathcal{A}_5$ is m-dissipative on $\mathcal{H}_5$, consequently it  generates a $C_0$-semigroup of contractions $(e^{t\mathcal{A}_5})_{t\geq 0}$ following Lummer-Phillips theorem (see in \cite{LiuZheng01} and \cite{Pazy01}). Then the solution of the evolution Equation \eqref{evolution-cp} admits the following representation
$$
U(t)=e^{t\mathcal{A}_5}U_0,\quad t\geq 0,
$$
which leads to the well-posedness of \eqref{evolution-cp}. Hence, we have the following result. 
\begin{theoreme}
Let $U_0\in \mathcal{H}_5$, then problem \eqref{evolution} admits a unique weak solution $U$ satisfies 
$$
U(t)\in C^0\left(\R^+,\mathcal{H}_5\right).
$$
Moreover, if $U_0\in D(\mathcal{A}_5)$, then problem \eqref{evolution} admits a unique strong solution $U$ satisfies 
$$
U(t)\in C^1\left(\R^+,\mathcal{H}_5\right)\cap C^0\left(\R^+,D(\mathcal{A}_5)\right).
$$
\end{theoreme}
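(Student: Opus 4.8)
The plan is to obtain the theorem as an immediate consequence of the Lumer--Phillips generation theorem: once $\mathcal{A}_5$ is shown to be m-dissipative on $\mathcal{H}_5$, it generates a $C_0$-semigroup of contractions $(e^{t\mathcal{A}_5})_{t\geq 0}$, and the two regularity assertions are then the standard weak/strong solution statements for $C_0$-semigroups (see \cite{Pazy01,LiuZheng01}). Since the situation is entirely parallel to Proposition \ref{mdissipatif}, I would only verify the two hypotheses of Lumer--Phillips, namely dissipativity and maximality, and then quote the abstract theory.

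For dissipativity I would take $U=(u,v,y,z,\omega)\in D(\mathcal{A}_5)$ and compute $\Re\langle \mathcal{A}_5 U,U\rangle_{\mathcal{H}_5}$ by integrating by parts in the four spatial integrals. Integrating the $-au_{xxxx}$ term against $\bar v$ twice and the $-S_{xx}$ term against $\bar z$ twice, with $S=by_{xx}+\sqrt{d(x)}\kappa(\alpha)\int_{\R}\abs{\xi}^{\frac{2\alpha-1}{2}}\omega\,d\xi$, the fourth-order bulk integrals match those arising from the $u_{xx}$ and $y_{xx}$ parts of the inner product and cancel upon taking real parts. The decisive point is the vanishing of the junction traces at $x=0$: the clamped conditions $u(-L)=u_x(-L)=0$ and $y(L)=y_x(L)=0$ kill the boundary terms at $x=\pm L$, while at the interface the conditions $u_{xx}(0)=y_{xx}(0)=0$, $v(0)=z(0)$ together with the transmission identity $au_{xxx}(0)=by_{xxx}(0)$ annihilate the remaining third-order trace contributions, and $S(0)=0$ because $d$ vanishes near $x=0$. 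Likewise the two terms coupling $z_{xx}$ with the $\omega$-integral are complex conjugates up to sign and cancel, leaving exactly $\Re\langle \mathcal{A}_5 U,U\rangle_{\mathcal{H}_5}=-\kappa(\alpha)\int_0^L\int_{\R}(\xi^2+\eta)\abs{\omega(x,\xi)}^2\,d\xi\,dx\leq 0$, in agreement with \eqref{denergybeam}.

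For maximality I would fix $F=(f_1,f_2,f_3,f_4,f_5)\in\mathcal{H}_5$ and solve $(I-\mathcal{A}_5)U=F$. Setting $v=u-f_1$, $z=y-f_3$ and eliminating $\omega$ from the algebraic relation $(1+\xi^2+\eta)\omega=\sqrt{d(x)}z_{xx}\abs{\xi}^{\frac{2\alpha-1}{2}}+f_5$, the problem reduces to a coupled fourth-order elliptic system for $(u,y)$ on $(-L,0)$ and $(0,L)$ with the interface coupling, whose weak form I would pose on $\{(\varphi,\psi)\in H^2_L(-L,0)\times H^2_R(0,L):\varphi(0)=\psi(0)\}$; the damping contributes a term with the positive factor $\mathtt{I}_1(\eta,\alpha)>0$ from Lemma \ref{lemI123}, so the sesquilinear form is continuous and coercive and Lax--Milgram applies. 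Elliptic regularity then gives $u\in H^4(-L,0)$ and the required regularity of $S$, after which I would recover $\omega$ and verify $\omega,\abs{\xi}\omega\in W$ using the finiteness of $\mathtt{I}_2(\eta,\alpha)$ and $\mathtt{I}_3(\eta,\alpha)$, exactly as in Proposition \ref{mdissipatif}. The main obstacle is bookkeeping rather than conceptual: one must track the interface traces with care, both so that the boundary terms cancel in the dissipativity computation and so that the recovered $U$ genuinely belongs to $D(\mathcal{A}_5)$, since the beam--beam coupling at $x=0$ involves third-order traces weighted by the two distinct constants $a$ and $b$. With m-dissipativity established, Lumer--Phillips furnishes the contraction semigroup, so \eqref{evolution-cp} admits a unique weak solution $U\in C^0(\R^+,\mathcal{H}_5)$ for $U_0\in\mathcal{H}_5$ and a unique strong solution $U\in C^1(\R^+,\mathcal{H}_5)\cap C^0(\R^+,D(\mathcal{A}_5))$ for $U_0\in D(\mathcal{A}_5)$, which is the claim.
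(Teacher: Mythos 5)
Your proposal is correct and takes essentially the same route as the paper: the paper also deduces this theorem from m-dissipativity of $\mathcal{A}_5$ (asserted to hold ``similar to Proposition \ref{mdissipatif}'') together with the Lumer--Phillips theorem and standard semigroup theory. Your explicit verification of the interface-trace cancellations (using $u_{xx}(0)=y_{xx}(0)=0$, $v(0)=z(0)$, $au_{xxx}(0)=by_{xxx}(0)$, and $S(0)=0$ since $d$ vanishes near $x=0$) and the Lax--Milgram/regularity argument for $(I-\mathcal{A}_5)U=F$ simply fills in the details that the paper leaves implicit by reference to Proposition \ref{mdissipatif}.
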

\begin{theoreme}\label{polbeam-cp}
Assume that $\eta>0$. The $C_0-$ semigroup $(e^{t\AA_5})_{t\geq 0}$ is polynomially stable; i.e. there exists constant $C_5>0$ such that for every $U_0\in D(\AA_5)$, we have 
\begin{equation}\label{Energypolbeam-cp}
E_5(t)\leq \frac{C_5}{t^{\frac{2}{3-\alpha}}}\|U_0\|^2_{D(\AA_5)},\quad t>0,\,\forall\, U_0\in D(\mathcal{A}_5).
\end{equation}
\end{theoreme}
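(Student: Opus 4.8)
\textbf{Proof of Theorem \ref{polbeam-cp} (plan).}
The plan is to invoke the Borichev--Tomilov criterion (Theorem \ref{bt}) with $\ell=3-\alpha$, so that \eqref{Energypolbeam-cp} reduces to verifying $i\R\subset\rho(\AA_5)$ together with the resolvent growth bound $\sup_{\la\in\R}\|(i\la I-\AA_5)^{-1}\|_{\mathcal{L}(\HH_5)}=O(|\la|^{3-\alpha})$. The first condition follows from the strong stability analysis exactly as in the earlier sections (absence of pure imaginary eigenvalues together with surjectivity of $i\la I-\AA_5$). For the second, I would argue by contradiction: its failure produces a sequence $(\la_n,U_n)\subset\R^{\ast}\times D(\AA_5)$ with $|\la_n|\to\infty$, $\|U_n\|_{\HH_5}=1$ and $\la_n^{3-\alpha}(i\la_n I-\AA_5)U_n\to0$ in $\HH_5$, and the goal is to force $\|U_n\|_{\HH_5}=o(1)$. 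Dropping the index, the resolvent equation becomes
\begin{equation*}
i\la u-v=\tfrac{f_1}{\la^{3-\alpha}},\quad i\la v+au_{xxxx}=\tfrac{f_2}{\la^{3-\alpha}},\quad i\la y-z=\tfrac{f_3}{\la^{3-\alpha}},\quad i\la z+S_{xx}=\tfrac{f_4}{\la^{3-\alpha}},
\end{equation*}
together with $(i\la+\xi^2+\eta)\omega-\sqrt{d(x)}z_{xx}|\xi|^{\frac{2\alpha-1}{2}}=\la^{-(3-\alpha)}f_5$, where $S=by_{xx}+\sqrt{d(x)}\kappa(\alpha)\int_\R|\xi|^{\frac{2\alpha-1}{2}}\omega\,d\xi$.

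The key structural observation is that the damped beam on $(0,L)$, the dissipation variable $\omega$, and the boundary data $y(L)=y_x(L)=0$, $y_{xx}(0)=0$ are identical to those treated in Section \ref{Section I} for the \eqref{beam1} model. Consequently I would transport, essentially verbatim, the whole chain of estimates on the damped side: the dissipation and first estimations (Lemma \ref{First-Estimation-beam}), the interpolation estimates (Lemma \ref{lem-inter1}), the boundary traces of $y$ at $l_0,l_1$ (Lemma \ref{Lem5.5}), the fourth-order ODE argument yielding $|y_{xx}(l_0^-)|=|y_{xx}(l_1^+)|=o(\la^{-1})$ (Lemma \ref{Lem-yxx}), the damped-side stability $\int_0^L|z|^2\,dx+b\int_0^L|y_{xx}|^2\,dx=o(\la^{-2})$ (Lemma \ref{Stab-beam}), and finally $|y_{xxx}(0)|=o(1)$ (Lemma \ref{estOfyxxx}) and $|\la y(0)|=o(1)$ through the localized multiplier $q$ on $(0,l_0)$ (first part of Lemma \ref{Wwave}). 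None of these uses the nature of the left-hand equation, so they carry over unchanged.

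The genuinely new ingredient is the undamped side, where the wave equation of \eqref{beam1} is replaced by a second beam. Here I would substitute for the wave multiplier identity \eqref{Wave} a beam multiplier: multiply the $u$-equation by $2g\bar u_x$ with $g\in C^1([-L,0])$ and integrate over $(-L,0)$, integrating the fourth-order term by parts to obtain an identity controlling $\int_{-L}^0(|v|^2+a|u_{xx}|^2)\,dx$ by boundary contributions at $x=-L$ and $x=0$. The clamped conditions $u(-L)=u_x(-L)=0$ (hence $v(-L)=0$) annihilate the boundary terms at $-L$, while at $x=0$ the transmission conditions $u_{xx}(0)=y_{xx}(0)=0$, $au_{xxx}(0)=by_{xxx}(0)$ and the continuities $u(0)=y(0)$, $v(0)=z(0)$ feed in the data $u_{xxx}(0)=(b/a)y_{xxx}(0)=o(1)$, $|\la u(0)|=|\la y(0)|=o(1)$ and $|v(0)|=|z(0)|=o(1)$, all already established on the damped side. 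Choosing $g(x)=x+L$ then yields $\int_{-L}^0(|v|^2+a|u_{xx}|^2)\,dx=o(1)$.

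Combining this with $\int_0^L|z|^2\,dx+b\int_0^L|y_{xx}|^2\,dx=o(\la^{-2})$ and the dissipation bound $\kappa(\alpha)\int_0^L\int_\R|\omega|^2\,d\xi dx=o(1)$ gives $\|U\|_{\HH_5}=o(1)$, contradicting $\|U\|_{\HH_5}=1$; this establishes the resolvent estimate and hence \eqref{Energypolbeam-cp}. The main obstacle I anticipate is precisely the interface transfer: because the beam--beam transmission couples $u_{xxx}(0)$ to $y_{xxx}(0)$ rather than the wave--beam coupling of $u_x(0)$ to $y_{xxx}(0)$, the boundary terms generated by the new beam multiplier must be matched against exactly the traces controlled on the damped side, and in particular the derivation of $|\la y(0)|=o(1)$ through the localized $q$-multiplier on $(0,l_0)$ remains the delicate step that propagates smallness across the junction.
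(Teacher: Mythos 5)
Your proposal is correct and follows essentially the same route as the paper: Borichev--Tomilov with $\ell=3-\alpha$, a contradiction argument, verbatim transport of the damped-beam estimates from Section \ref{Section I} (including $|y_{xxx}(0)|=o(1)$ and the $q$-multiplier step giving $|\la y(0)|=o(1)$), and then a multiplier $2(x+L)\bar u_x$ on the undamped beam, whose boundary terms are absorbed exactly via the transmission conditions $u_{xxx}(0)=(b/a)y_{xxx}(0)=o(1)$ and $|\la u(0)|=|\la y(0)|=o(1)$. The paper's only cosmetic difference is that it first eliminates $v$ to write the undamped equation as $-\la^2u+au_{xxxx}=\la^{-3+\alpha}f_2+i\la^{-2+\alpha}f_1$ before applying this multiplier, obtaining the identity \eqref{q-eq-cp}, which is exactly your $g(x)=x+L$ identity.
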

\noindent According to Theorem \ref{bt}, by taking $\ell=3-\alpha$, the polynomial energy decay \eqref{Energypolbeam-cp} holds if the following conditions 
\begin{equation}\label{Q1}\tag{${\rm{Q_1}}$}
i\R\subset \rho(\mathcal{A}_5),
\end{equation}
and
\begin{equation}\label{Q2}\tag{${\rm{Q_2}}$}
\sup_{\la\in \R}\left\|(i\la I-\AA_5)^{-1}\right\|_{\mathcal{L}(\mathcal{H}_5)}=O\left(\abs{\la}^{3-\alpha}\right)
\end{equation}
are satisfied. 
\noindent Since condition \eqref{Q1}is already proved (see Subsection \ref{subsec}), we still need to prove condition \eqref{Q2}. For this purpose we will use an argument of contradiction. Suppose that \eqref{Q2} is false, then there exists $\left\{\left(\la_n,U_n:=(u_n,v_n,y_n,z_n,\omega_n(\cdot,\xi))^\top\right)\right\}\subset \R^{\ast}\times D(\AA_5)$ with 
\begin{equation}\label{pol1beam-cp}
\abs{\la_n}\to +\infty \quad \text{and}\quad \|U_n\|_{\mathcal{H}_5}=\|(u_n,v_n,y_n,z_n,\omega_n(\cdot,\xi))\|_{\mathcal{H}_5}=1, 
\end{equation}
such that 
\begin{equation}\label{pol2-cp}
\left(\la_n^{2-\alpha}\right)\left(i\la_nI-\AA_5\right)U_n=F_n:=(f_{1,n},f_{2,n},f_{3,n},f_{4,n},f_{5,n}(\cdot,\xi))^{\top}\to 0 \ \ \text{in}\ \ \mathcal{H}_5. 
\end{equation}
For simplicity, we drop the index $n$. Equivalently, from \eqref{pol2-cp}, we have 
\begin{eqnarray}
i\la u-v&=&\dfrac{f_1}{\la^{3-\alpha}} \quad\text{in}\ H_L^2(-L,0),\label{pol3beam-cp}\\
i\la v+au_{xxxx}&=&\dfrac{f_2}{\la^{3-\alpha}} \quad \text{in}\ L^2(-L,0),\label{pol4beam-cp}\\
i\la y-z&=&\dfrac{f_3}{\la^{3-\alpha}} \quad\text{in}\ H_R^2(0,L),\label{pol5beam-cp}\\
i\la z+S_{xx}&=&\dfrac{f_4}{\la^{3-\alpha}}\quad \text{in}\ L^2(0,L),\label{pol6beam-cp}\\
(i\la+\xi^2+\eta)\omega(x,\xi)-\sqrt{d(x)}z_{xx}|\xi|^{\frac{2\alpha-1}{2}}&=&\dfrac{f_5(x,\xi)}{\la^{3-\alpha}} \quad \text{in}\ W,\label{pol7beam-cp}
\end{eqnarray}
where $\displaystyle S=by_{xx}+\sqrt{d(x)}\kappa(\alpha)\int_{\mathbb{R}}|\xi|^{\frac{2\alpha-1}{2}}\omega(x,\xi)d\xi$. Here we will check the condition \eqref{Q2} by finding a contradiction with \eqref{pol1beam-cp} by showing $\|U\|_{\mathcal{H}_5}=o(1)$. We need to prove several asymptotic behavior estimations for the solution to obtain this contradiction.  Here, we give the estimations directly since the proof can be done in a smiliar way as in Subsection \ref{Section-poly-beam}. Assume that $\eta>0$. 
Similar to Lemma \ref{First-Estimation-beam}, the solution $(u,v,y,z,\omega)\in D(\AA_5)$ of system \eqref{pol3beam-cp}-\eqref{pol7beam-cp} satisfies the following asymptotic behavior estimations 
\begin{equation}\label{only1-cp}
\begin{array}{l}
\displaystyle
\int_0^L\int_{\R}\left(\abs{\xi}^2+\eta\right)\abs{\omega(x,\xi)}^2d\xi dx=\frac{o\left(1\right)}{\la^{3-\alpha}},\quad \int_{l_0}^{l_1}\abs{z_{xx}}^2dx=\frac{o\left(1\right)}{\la^2},
 \\
 \displaystyle
\int_{l_0}^{l_1}\abs{y_{xx}}^2dx=\frac{o\left(1\right)}{\la^4}\quad
\text{and}\quad \int_{l_0}^{l_1}\left|S\right|^2dx=\frac{o(1)}{\la^{3-\alpha}}.
\end{array}
\end{equation}
Similar to Lemma  \ref{lem-inter1}, we have that the solution of the system \eqref{pol1beam-cp}-\eqref{pol7beam-cp} satisfies
\begin{equation}
\|z\|_{H^2(l_0,l_1)}=\frac{o(1)}{\la},\quad \|y\|_{H^2(l_0,l_1)}=\frac{o(1)}{\la^{2}}\quad \text{and}\quad \|S_x\|_{L^2(l_0,l_1)}=\frac{o(1)}{\la^{\frac{3-\alpha}{4}}}. 
\end{equation}
Similar to Lemma \ref{Lem5.5}, we get that the solution $(u,v,y,z,\omega)\in D(\AA_5)$ of system \eqref{pol3beam-cp}-\eqref{pol7beam-cp} satisfies the following asymptotic behavior
\begin{equation}\label{cp1}
\abs{y(l_0)}=\frac{o(1)}{\la^{2}}, \abs{y_x(l_0)}=\frac{o(1)}{\la^{2}}, \abs{y(l_1)}=\frac{o(1)}{\la^{2}},\,\text{and}\,\,\abs{y_x(l_1)}=\frac{o(1)}{\la^{2}}.
\end{equation}
Moreover,
\begin{equation}\label{cp2}
\frac{1}{\la^{\frac{1}{2}}}\abs{ y_{xxx}(l_0^{-})}=\frac{o(1)}{\la^{\frac{5-\alpha}{4}}}\quad \text{and}\quad \frac{1}{\la^{\frac{1}{2}}}\abs{y_{xxx}(l_1^{+})}=\frac{o(1)}{\la^{\frac{5-\alpha}{4}}}.
\end{equation}
Similar to Lemma \ref{lem-BW}, by multiplying Equation \eqref{pol6beam-cp} by $2h\bar{y}_x$, where $h\in C^{2}([0,L])$ and $h(0)=h(L)=0$ and proceeding as Lemma \ref{lem-BW}, we obtain
\begin{equation}\label{Bbeam-cp}
\begin{array}{ll}
\displaystyle\int_0^Lh^{\prime}\abs{z}^2dx+2b\int_0^L h^{\prime}\abs{y_{xx}}^2dx+b\int_0^{l_0} h^{\prime}\abs{y_{xx}}^2dx+b\int_{l_1}^{L} h^{\prime}\abs{y_{xx}}^2dx+\Re\left(2\int_0^L h^{\prime\prime}S\bar{y}_xdx\right)\\
\displaystyle-\Re\left(2\int_{l_0}^{l_1} hS_x\bar{y}_{xx}dx\right)-bh(l_0)\abs{y_{xx}(l_0^-)}^2+bh(l_1)\abs{y_{xx}(l_1^+)}^2=\frac{o(1)}{\la^{3-\alpha}}.
\end{array}
\end{equation}
Similar to Lemma \eqref{Lem5.7}, we get that
\begin{equation}\label{O(1)-cp}
\abs{y_{xx}(l_0^-)}=O(1)\quad\text{and}\quad\abs{y_{xx}(l_{1}^+)}=O(1).
\end{equation}
Similar to Lemma \ref{Lem-yxx}, we obtain that the solution $(u,v,y,z,\omega)\in D(\AA_5)$ of system \eqref{pol3beam-cp}-\eqref{pol7beam-cp} satisfies the following asymptotic behavior 
\begin{equation}
\abs{y_{xx}(l_0^{-})}=\frac{o(1)}{\la}\quad\text{and}\quad\abs{y_{xx}(l_1^+)}=\frac{o(1)}{\la}.
\end{equation}
Similar to Lemma \ref{Stab-beam}, we have that the solution $(u,v,y,z,\omega)\in D(\AA_5)$ of system \eqref{pol3beam-cp}-\eqref{pol7beam-cp} satisfies the following asymptotic behavior 
\begin{equation}\label{coupled1}
\int_0^L\abs{z}^2dx=\frac{o(1)}{\la^2}\quad\text{and}\quad b\int_0^L\abs{y_{xx}}^2dx=\frac{o(1)}{\la^2}.
\end{equation}
Similar to Lemma \ref{estOfyxxx}, we reach that 
\begin{equation}\label{cp3}
\abs{y_{xxx}(0)}=o(1).
\end{equation}
Using the transmission condition ($au_{xxx}(0)-by_{xxx}(0)=0$) and Equation \eqref{cp3}, we obtain
\begin{equation}\label{cp4}
\abs{u_{xxx}(0)}=o(1).
\end{equation}
Similar the computations in Equations \eqref{q-eq} and using the fact that $\abs{y_x(0)}\lesssim \|y_{xx}\|_{L^2(0,L)}=O(1)$, we obtain
\begin{equation}\label{cp5}
\abs{\la y(0)}^2=o(1).
\end{equation}
By using the transmission conditions and Equation \eqref{cp5}, we get
\begin{equation}\label{cp6}
\abs{\la u(0)}^2=\abs{\la y(0)}^2=o(1).
\end{equation}
Substituting $v=i\la u-\la^{-2+\alpha}$ into Equation \eqref{pol4beam-cp}, we get 
\begin{equation}\label{mult}
-\la^2 u+au_{xxxx}=\la^{-3+\alpha}f_2+i\la^{-2+\alpha}f_1.
\end{equation} 
Now, multiply Equation \eqref{mult} by $2(x+L)\bar{u_x}$ and integrate over $(-L,0)$, and by using boundary conditions and the fact that $\|f_1\|_{H^2_L(-L,0)}=o(1)$, $ \|f_2\|_{L^2(0,L)}=o(1)$, $\abs{u_x(0)}\leq \|u_{xx}\|_{L^2(-L,0)}=O(1)$,  $\|u_{x}\|_{L^2(-L,0)}\leq c_p\|u_{xx}\|_{L^2(-L,0)}=O(1)$ , we get
\begin{equation}\label{q-eq-cp}
\int_{-L}^{0}\left(\abs{\la u}^2+3\abs{u_{xx}}^2\right)dx-L\abs{\la y(0)}^2+\Re\left(2aLu_{xxx}(0)\bar{u}_x(0)\right)=\frac{o(1)}{\la^{2-\alpha}}.
\end{equation}
By using equations \eqref{cp4} and \eqref{cp6}, and the fact that 
 $\abs{u_x(0)}\lesssim \|u_{xx}\|_{L^2(-L,0)}=O(1)$ in Equation \eqref{q-eq-cp}, we obtain
 \begin{equation}\label{coupled2}
\int_{-L}^0\abs{v}^2dx=o(1)\quad\text{and}\quad a\int_{-L}^0\abs{u_{xx}}^2dx=o(1).
\end{equation}
\noindent \textbf{Proof of Theorem \ref{pol1beam-cp}.} 
From the first estimation in Equation \eqref{only1-cp} and Equations \eqref{coupled1} and \eqref{coupled2}, we get that $\|U\|_{\HH_5}=o(1)$, which contradicts \eqref{pol1beam-cp}. Consequently, condition \eqref{Q2} holds. This implies, from Theorem \ref{bt}, the energy decay estimation \eqref{Energypolbeam-cp}. The proof is thus complete.

\section{Appendix}
\noindent In this section, we introduce the notions of stability that we encounter in this work.


\begin{definition}\label{Defsta}
{Assume that $A$ is the generator of a C$_0$-semigroup of contractions $\left(e^{tA}\right)_{t\geq0}$  on a Hilbert space  $\mathcal{H}$. The  $C_0$-semigroup $\left(e^{tA}\right)_{t\geq0}$  is said to be
\begin{enumerate}
\item[1.]  strongly stable if 
$$\lim_{t\to +\infty} \|e^{tA}x_0\|_{H}=0, \quad\forall \ x_0\in H;$$
\item[2.]  exponentially (or uniformly) stable if there exist two positive constants $M$ and $\epsilon$ such that
\begin{equation*}
\|e^{tA}x_0\|_{H} \leq Me^{-\epsilon t}\|x_0\|_{H}, \quad
\forall\  t>0,  \ \forall \ x_0\in {H};
\end{equation*}
\item[3.] polynomially stable if there exists two positive constants $C$ and $\alpha$ such that
\begin{equation*}
 \|e^{tA}x_0\|_{H}\leq C t^{-\alpha}\|x_0\|_{H},  \quad\forall\ 
t>0,  \ \forall \ x_0\in D\left(\mathcal{A}\right).
\end{equation*}
In that case, one says that the semigroup $\left(e^{tA}\right)_{t\geq 0}$ decays  at a rate $t^{-\alpha}$.
\noindent The  $C_0$-semigroup $\left(e^{tA}\right)_{t\geq0}$  is said to be  polynomially stable with optimal decay rate $t^{-\alpha}$ (with $\alpha>0$) if it is polynomially stable with decay rate $t^{-\alpha}$ and, for any $\varepsilon>0$ small enough, the semigroup $\left(e^{tA}\right)_{t\geq0}$  does  not decay at a rate $t^{-(\alpha-\varepsilon)}$.
\end{enumerate}}
\end{definition}
\noindent To show the strong stability of a $C_0-$semigroup of contraction  $(e^{tA})_{t\geq 0}$ we rely on the following result due to Arendt-Batty \cite{Arendt01}.

\begin{theoreme}\label{arendtbatty}
Assume that $A$ is the generator of a C$_0-$semigroup of contractions $\left(e^{tA}\right)_{t\geq0}$  on a Hilbert space $\mathcal{H}$. If
 \begin{enumerate}
 \item[1.]  $A$ has no pure imaginary eigenvalues,
  \item[2.]  $\sigma\left(A\right)\cap i\mathbb{R}$ is countable,
 \end{enumerate}
where $\sigma\left(A\right)$ denotes the spectrum of $A$, then the $C_0-$semigroup $\left(e^{tA}\right)_{t\geq0}$  is strongly stable.
\end{theoreme}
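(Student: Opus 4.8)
The plan is to show that the stable subspace $\mathcal H_0:=\{x\in\mathcal H:\ \|e^{tA}x\|\to 0\text{ as }t\to\infty\}$ exhausts $\mathcal H$, arguing by contradiction. First I would record the elementary fact that, because $(e^{tA})_{t\ge 0}$ is a contraction semigroup, the function $t\mapsto\|e^{tA}x\|$ is non-increasing, so the limit $p(x):=\lim_{t\to\infty}\|e^{tA}x\|$ exists for every $x\in\mathcal H$. One checks that $p$ is a seminorm dominated by $\|\cdot\|$, that $p(e^{sA}x)=p(x)$ for all $s\ge 0$, and that its kernel is exactly the closed $A$-invariant subspace $\mathcal H_0$. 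The goal then becomes to prove $p\equiv 0$.

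\emph{Passing to an isometric quotient.} On the quotient $\mathcal H/\mathcal H_0$ the seminorm $p$ induces a genuine norm; completing, one obtains a Hilbert space $\widetilde{\mathcal H}$ on which $(e^{tA})$ descends to a $C_0$-semigroup $(\widetilde T(t))$ satisfying $\|\widetilde T(t)\widetilde x\|=\|\widetilde x\|$, i.e. an isometric semigroup. Its generator $\widetilde A$ then satisfies $\mathrm{Re}\langle\widetilde A\widetilde x,\widetilde x\rangle=0$, and a standard extension argument shows $(\widetilde T(t))$ prolongs to a unitary $C_0$-group, whose generator I still call $\widetilde A$ (so $i^{-1}\widetilde A$ is self-adjoint). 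The key spectral input I would establish next is the inclusion $\sigma(\widetilde A)\subset\sigma(A)\cap i\mathbb R$, obtained by transporting resolvents of $A$ to resolvents of $\widetilde A$ away from the boundary spectrum. Hence, by hypothesis $(2)$, $\sigma(\widetilde A)$ is a countable subset of $i\mathbb R$.

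\emph{Forcing an imaginary eigenvalue.} Now I would invoke the spectral theorem for the self-adjoint operator $i^{-1}\widetilde A$: its spectrum is countable, so the underlying spectral measure is purely atomic and $i^{-1}\widetilde A$ has pure point spectrum. In particular $\widetilde A$ possesses an eigenvalue $i\beta\in i\mathbb R$ with eigenvector $\widetilde x\ne 0$. Pulling this back through the quotient map, and using the self-duality of contraction semigroups on a Hilbert space (for an m-dissipative $A$ an imaginary eigenvector of $A$ is simultaneously an eigenvector of $A^{*}$, so condition $(1)$ is equivalent to the usual hypothesis $\sigma_p(A^{*})\cap i\mathbb R=\emptyset$), one produces a non-trivial solution of $Ax=i\beta x$. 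This contradicts assumption $(1)$ that $A$ has no purely imaginary eigenvalue, so $\mathcal H_0=\mathcal H$, which is precisely strong stability.

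\emph{Main obstacle.} The delicate point is the middle step: rigorously transporting resolvent information from $A$ to the isometric/unitary quotient so as to obtain $\sigma(\widetilde A)\subset\sigma(A)\cap i\mathbb R$, and then making the eigenvector pull-back genuine rather than merely approximate. This is exactly where the Hilbert-space structure (through $A^{*}$ and dissipativity) is essential to convert the eigenvalue of $\widetilde A$ into an actual eigenvalue of $A$. A purely Banach-space route avoids the quotient altogether and instead applies a Tauberian theorem of Ingham--Karamata type to the scalar orbits $t\mapsto\langle e^{tA}x,x^{*}\rangle$, controlling the countable singular set on $i\mathbb R$ by a transfinite, Baire-category induction; either way, the countability of $\sigma(A)\cap i\mathbb R$ is the hypothesis that is genuinely exploited.
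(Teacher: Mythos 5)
You should first note a mismatch with the framing of this exercise: the paper contains no proof of Theorem \ref{arendtbatty} at all. It is quoted from Arendt and Batty \cite{Arendt01} and used as a black-box tool (in the proofs of Theorems \ref{Strong} and \ref{Strong-Beam}), so there is no internal argument to compare yours against. Judged on its own terms, your proposal follows the isometric-quotient (Lyubich--V\~u) route rather than Arendt--Batty's original Tauberian/transfinite-induction proof that you mention at the end; that is a perfectly legitimate way to prove the statement, and your overall architecture (stable subspace $\mathcal H_0$, quotient limit norm, countable boundary spectrum forcing an imaginary eigenvalue, duality back to $A$) is the standard one. However, as written the argument has one genuine logical gap.

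The gap is the step where you claim that, because $(\widetilde T(t))$ is isometric and $\Re\langle\widetilde A\widetilde x,\widetilde x\rangle=0$, ``a standard extension argument'' upgrades it to a unitary $C_0$-group, with the spectral inclusion $\sigma(\widetilde A)\subset\sigma(A)\cap i\mathbb{R}$ established only \emph{afterwards}. Isometry plus skew-symmetry of the generator does not imply extendability to a group: the right-shift semigroup on $L^2(0,\infty)$, whose generator is $-\frac{d}{dx}$ on $\left\{f\in H^1(0,\infty):\ f(0)=0\right\}$, is isometric with skew-symmetric generator, yet no $\widetilde T(t)$, $t>0$, is surjective, and the generator's spectrum is the whole closed left half-plane $\left\{\lambda\in\mathbb{C}:\Re\lambda\leq 0\right\}$; this example also violates the conclusion of the theorem, so the group extension cannot be obtained for free. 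The countability hypothesis must enter \emph{before} the extension, not after: one first proves $\sigma(\widetilde A)\subset\sigma(A)$ by transporting resolvents to the quotient (for $\Re\lambda>0$ via the Laplace representation, then by analytic continuation of the induced pseudo-resolvent), deduces from hypothesis (2) that $\sigma(\widetilde A)\cap i\mathbb{R}$ is countable, and only then invokes the dichotomy for isometric semigroups: either $\sigma(\widetilde A)$ equals the entire closed left half-plane (impossible, since its boundary trace would be uncountable) or every $\widetilde T(t)$ is a surjective isometry, hence unitary on a Hilbert space, and the semigroup embeds in a unitary group. With this reordering, the rest of your sketch is sound: a self-adjoint operator with countable nonempty spectrum has an eigenvalue, giving $\widetilde A\widetilde x=i\beta\widetilde x$; the eigenvector is transported through the \emph{adjoint} of the quotient map $q:\mathcal H\to\widetilde{\mathcal H}$ (which is injective since $q$ has dense range) to yield an imaginary eigenvalue of $A^{*}$; and the contraction identity on Hilbert space (if $T^{*}y=\lambda y$ with $|\lambda|=1$ and $\|T\|\leq 1$, then $Ty=\overline{\lambda}y$) converts it into an eigenvalue of $A$, contradicting hypothesis (1).
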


\noindent  Concerning the characterization of exponential stability of a $C_0-$semigroup of contraction  $(e^{tA})_{t\geq 0}$ we rely on the following result due to Huang \cite{Huang01} and Pr\"uss \cite{pruss01}. 
\begin{theoreme}\label{hp}
Let $A:\ D(A)\subset H\rightarrow H$ generate a $C_0-$semigroup of contractions $\left(e^{tA}\right)_{t\geq 0}$ on $H$. Assume that $i\la \in \rho(A)$, $\forall \la \in \R$. Then, the $C_0-$semigroup $\left(e^{tA}\right)_{t\geq 0}$ is exponentially stable if and only if 
$$
\varlimsup_{\la\in\R,\ \abs{\la}\to +\infty}\|(i\la I-A)^{-1}\|_{\mathcal{L}(H)}<+\infty.
$$
\end{theoreme}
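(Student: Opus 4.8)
The statement to prove is the Gearhart--Prüss--Huang characterization of exponential stability, so the plan is to treat the two implications separately: the forward implication is routine, while the converse is the substantive part in which the Hilbert space structure is essential. Throughout I write $T(t)=e^{tA}$ and $R(s)=(sI-A)^{-1}$. The first observation is that, since $i\la\in\rho(A)$ for every $\la\in\R$, the map $\la\mapsto\|R(i\la)\|_{\mathcal{L}(H)}$ is continuous on $\R$; hence finiteness of $\varlimsup_{|\la|\to+\infty}\|R(i\la)\|_{\mathcal{L}(H)}$ together with boundedness on compact sets upgrades to a global bound $M:=\sup_{\la\in\R}\|R(i\la)\|_{\mathcal{L}(H)}<+\infty$, which is the form I would actually use.

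For necessity ($\Rightarrow$), suppose $\|T(t)\|_{\mathcal{L}(H)}\le M_0\,e^{-\epsilon t}$ for some $M_0,\epsilon>0$. Then for $\Re(s)>-\epsilon$ the Laplace representation $R(s)=\int_0^{\infty}e^{-st}T(t)\,dt$ is valid and gives $\|R(s)\|_{\mathcal{L}(H)}\le M_0/(\Re(s)+\epsilon)$; restricting to $s=i\la$ yields the uniform bound $M_0/\epsilon$, so the $\varlimsup$ is finite. This direction uses nothing beyond the decay estimate and the vector-valued Laplace transform.

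For sufficiency ($\Leftarrow$) I would argue by Plancherel and Datko's theorem. Since $T$ is a contraction semigroup, $\{\Re(s)>0\}\subset\rho(A)$ with $\|R(s)\|\le 1/\Re(s)$, and a Neumann series centered at the points $i\la$, convergent for $|s-i\la|<1/M$, extends the resolvent boundedly across the imaginary axis, so that $\sup_{\Re(s)\ge-\beta}\|R(s)\|_{\mathcal{L}(H)}<+\infty$ for some $\beta>0$ (one may take $\beta=1/(2M)$, with bound $2M$). The engine is then the Plancherel identity in $H$: for each $x\in H$ and each $\mu>0$ the orbit $t\mapsto e^{-\mu t}T(t)x$ lies in $L^2(\R_+;H)$ with Fourier transform $\la\mapsto R(\mu+i\la)x$, which gives
\begin{equation*}
\int_0^{\infty}e^{-2\mu t}\|T(t)x\|^2\,dt=\frac{1}{2\pi}\int_{\R}\|R(\mu+i\la)x\|^2\,d\la.
\end{equation*}
The aim is to bound the right-hand side uniformly as $\mu\downarrow0$; monotone convergence would then deliver $T(\cdot)x\in L^2(\R_+;H)$ for every $x\in H$, and Datko's theorem converts this orbitwise $L^2$-integrability into the uniform estimate $\|T(t)\|_{\mathcal{L}(H)}\le M'e^{-\epsilon t}$.

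The main obstacle is precisely this uniform-in-$\mu$ control of $\int_{\R}\|R(\mu+i\la)x\|^2\,d\la$: the resolvent is only bounded, not square-integrable, along vertical lines, so estimating it through the resolvent identity $R(\mu+i\la)=(I-\mu R(\mu+i\la))R(i\la)$ is circular. I would resolve this by the square-function argument of Gearhart and Prüss: on the dense set $D(A^2)$ one isolates the absolutely convergent tail in $R(s)x=s^{-1}x+s^{-2}Ax+s^{-2}R(s)A^2x$ (the last term being $O(|s|^{-2})$ by the strip bound), shifts the inversion contour into $\{\Re(s)=-\beta\}$ to extract the decay factor $e^{-\beta t}$, and repeats the computation for the adjoint semigroup $e^{tA^*}$, whose generator inherits the conjugate resolvent bound, so as to obtain a two-sided estimate; a density and uniform-boundedness argument then removes the restriction to $D(A^2)$. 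This is the heart of the theorem and the only step where the Hilbert space hypothesis is used in an essential way, the rest being soft.
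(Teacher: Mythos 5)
The paper itself gives no proof of Theorem \ref{hp}: it is recalled in the appendix from Huang \cite{Huang01} and Pr\"uss \cite{pruss01}, so your proposal can only be measured against the standard proofs in the literature. Your necessity direction is correct, and the skeleton of your sufficiency direction (globalizing the bound by continuity of the resolvent, extending it to a strip $\Re s\ge-\beta$ by Neumann series, Plancherel for $H$-valued functions, Datko's theorem) is the right one.

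The gap sits exactly at the step you yourself identify as the heart of the matter: the uniform-in-$\mu$ bound for $\int_{\R}\|R(\mu+i\lambda)x\|^2\,d\lambda$ is never validly established. The mechanism you propose --- expand $R(s)x$ on $D(A^2)$, shift the inversion contour to $\Re s=-\beta$, repeat for the adjoint, then remove the restriction by ``density and uniform boundedness'' --- cannot close the argument. The contour shift yields $\|e^{tA}x\|\le Ce^{-\beta t}\|A^2x\|$ for $x\in D(A^2)$ (and its adjoint analogue), but exponential decay on a dense set, measured in a graph norm, together with contractivity does not imply operator-norm exponential decay: the uniform boundedness principle needs pointwise bounds on all of $H$, not on a dense subspace, and no interpolation removes the factor $\|A^2x\|$. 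Indeed, everything in that portion of your argument is equally valid on Banach spaces, where the conclusion of the theorem is false (uniform resolvent bounds on a half-plane do not imply exponential stability there; this is precisely why $s_0(A)=\omega_0$ is a Hilbert-space theorem), so no correct proof can be assembled from those ingredients alone --- the Hilbert structure must enter through Plancherel applied to \emph{all} $x\in H$, which your sketch never achieves.

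Moreover, the step you dismissed as circular is the one that works, once anchored correctly. Anchoring the resolvent identity at the boundary line $\Re s=0$ indeed fails, since square-integrability of $R(i\cdot)x$ is unknown for general $x$; but anchoring it at the interior line $\Re s=1$ succeeds. By Plancherel and contractivity,
\begin{equation*}
\int_{\R}\|R(1+i\lambda)x\|^2\,d\lambda=2\pi\int_0^{\infty}e^{-2t}\|e^{tA}x\|^2\,dt\le\pi\|x\|^2\quad\text{for every }x\in H,
\end{equation*}
and the identity $R(\mu+i\lambda)=R(1+i\lambda)+(1-\mu)R(\mu+i\lambda)R(1+i\lambda)$, combined with your strip bound $\|R(\mu+i\lambda)\|\le 2M$ for $\mu>0$, gives $\|R(\mu+i\lambda)x\|\le(1+2M)\|R(1+i\lambda)x\|$ for $0<\mu\le 1$. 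Hence $\int_0^{\infty}e^{-2\mu t}\|e^{tA}x\|^2\,dt\le\tfrac12(1+2M)^2\|x\|^2$ uniformly in $\mu$, and monotone convergence plus Datko finish the proof for all $x\in H$ at once. (The adjoint semigroup does appear in the classical Gearhart--Pr\"uss argument, but through Cauchy--Schwarz in $\lambda$ applied to $\langle R(i\lambda)^2x,y\rangle=\langle R(i\lambda)x,R(-i\lambda;A^{*})y\rangle$ in the inversion formula for $t\,e^{tA}$, yielding $t\|e^{tA}\|\le C$; that proof also relies on the interior-line anchor you rejected, not on shifted contours over $D(A^2)$.)
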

\noindent  Also, concerning the characterization of polynomial stability of a $C_0-$semigroup of contraction  $(e^{tA})_{t\geq 0}$ we rely on the following result due to Borichev and Tomilov \cite{Borichev01} (see also \cite{RaoLiu01} and \cite{Batty01}). 

\begin{theoreme}\label{bt}
Assume that $A$ is the generator of a strongly continuous semigroup of contractions $\left(e^{tA}\right)_{t\geq0}$  on $H$.   If   $ i\mathbb{R}\subset \rho(A)$, then for a fixed $\ell>0$ the following conditions are equivalent
\begin{equation}\label{h1}
\sup_{\lambda\in\mathbb{R}}\left\|\left(i\lambda I-A\right)^{-1}\right\|_{\mathcal{L}\left(H\right)}=O\left(|\lambda|^\ell\right),
\end{equation}
\begin{equation}\label{h2}
\|e^{tA}U_{0}\|^2_{H} \leq \frac{C}{t^{\frac{2}{\ell}}}\|U_0\|^2_{D(A)},\hspace{0.1cm}\forall t>0,\hspace{0.1cm} U_0\in D(A),\hspace{0.1cm} \text{for some}\hspace{0.1cm} C>0.
\end{equation}
\end{theoreme}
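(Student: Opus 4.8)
The statement is the Borichev--Tomilov characterization, so rather than inventing a new argument I would reconstruct their proof, treating the two implications separately and isolating where the Hilbert-space hypothesis is used. First I would reduce \eqref{h2} to an operator estimate: since $i\R\subset\rho(A)$ forces $0\in\rho(A)$, the operator $A^{-1}$ is bounded, and from $\|e^{tA}U_0\|\le\|e^{tA}A^{-1}\|\,\|AU_0\|\le\|e^{tA}A^{-1}\|\,\|U_0\|_{D(A)}$ (and, conversely, by testing on $U_0=A^{-1}x$) one sees that \eqref{h2} is equivalent to the smoothed-orbit decay
\[
\|e^{tA}A^{-1}\|_{\mathcal{L}(H)}=O\!\left(t^{-1/\ell}\right),\qquad t\to\infty .
\]
Thus the whole theorem becomes the equivalence of the polynomial resolvent growth \eqref{h1} with this decay, and only the behaviour as $|\la|\to\infty$ matters, since $R(\cdot,A)$ is continuous on compact parts of $i\R$.

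For the implication \eqref{h1}$\Rightarrow$\eqref{h2} — the direction actually used in Sections~\ref{DAMPED-WAVE}--\ref{Beam-Alone} — I would first propagate the bound off the imaginary axis via the Neumann series
\[
R(\la,A)=R(is,A)\sum_{n\ge 0}\big((is-\la)R(is,A)\big)^{n},
\]
which converges whenever $|is-\la|\,\|R(is,A)\|<1$. With \eqref{h1} this shows that $A$ has no spectrum in the region $\{\Re\la\ge -c\,(1+|\Im\la|)^{-\ell}\}$ and that $\|R(\la,A)\|\lesssim(1+|\Im\la|)^{\ell}$ throughout it. I would then represent $e^{tA}A^{-1}$ by an inverse-Laplace contour integral and shift the vertical contour $\Re\la=\varepsilon>0$ leftward into this region. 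The Hilbert-space structure enters exactly as in the Gearhart--Pr\"uss--Huang proof of Theorem~\ref{hp}, through the Plancherel identity
\[
\int_{-\infty}^{\infty}\big\|R(\varepsilon+is,A)x\big\|^{2}\,ds=2\pi\int_{0}^{\infty}e^{-2\varepsilon t}\,\big\|e^{tA}x\big\|^{2}\,dt ,
\]
which converts vertical-line $L^{2}$ control of the resolvent into $L^{2}$-in-time control of the orbit. Balancing the exponential gain $e^{t\Re\la}$ obtained by moving a distance $\sim(1+|\Im\la|)^{-\ell}$ to the left against the polynomial growth, and optimizing the contour at each fixed $t$, yields precisely the rate $t^{-1/\ell}$.

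The converse \eqref{h2}$\Rightarrow$\eqref{h1} is the softer half and does not need the full Hilbert-space strength. Choosing an integer $k>\ell$ so that $\|e^{tA}A^{-k}\|\lesssim t^{-k/\ell}$ is integrable at infinity, I would represent the smoothed resolvent by the absolutely convergent Laplace transform $R(is,A)A^{-k}x=\int_0^\infty e^{-ist}e^{tA}A^{-k}x\,dt$, bound it uniformly in $s$, and then strip off the smoothing by a moment/interpolation inequality relating $\|R(is,A)\|$ to $\|R(is,A)A^{-k}\|$; this recovers the $|s|^{\ell}$ growth.

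I expect the main obstacle to be the sharp contour analysis in \eqref{h1}$\Rightarrow$\eqref{h2}: justifying the contour shift requires decay of $R(\la,A)$ as $|\Im\la|\to\infty$ so that Cauchy's theorem applies to the smoothed orbit, and extracting the \emph{exact} exponent $1/\ell$ rather than a lossy one is precisely where the Plancherel/interpolation estimate is indispensable. On a general Banach space the same scheme only gives $t^{-1/\ell}$ up to a logarithmic factor, so the Hilbert-space hypothesis cannot be dropped. For the present paper this reconstruction is of course unnecessary in detail, since the result may simply be invoked from \cite{Borichev01}.
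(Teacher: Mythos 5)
A preliminary remark: the paper contains no proof of Theorem \ref{bt}; the statement is quoted from Borichev--Tomilov \cite{Borichev01} (see also \cite{RaoLiu01,Batty01}), so your reconstruction can only be measured against the literature, and your closing observation that a citation suffices for this paper is exactly what the authors do. Your reductions are sound: \eqref{h2} is equivalent to $\|e^{tA}A^{-1}\|_{\mathcal{L}(H)}=O(t^{-1/\ell})$, and the Neumann series does give the spectrum-free region with the stated bound. The genuine gap sits in the implication \eqref{h1}$\Rightarrow$\eqref{h2}, precisely at the sentence ``balancing \dots yields precisely the rate $t^{-1/\ell}$''. The two tools you invoke live in disjoint regions of the complex plane and you never say how they interact: the Plancherel identity you write is Parseval for the Laplace transform of the bounded orbit $t\mapsto e^{tA}x$, hence valid only on vertical lines $\Re\lambda=\varepsilon>0$ (on a curve entering the left half-plane the corresponding time integral $\int_0^\infty e^{2c(1+|s|)^{-\ell}t}\|e^{tA}x\|^2\,dt$ diverges), whereas on your shifted contour the only available information is the pointwise bound $\|R(\lambda,A)\|\lesssim(1+|\Im\lambda|)^{\ell}$. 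Running the contour argument with pointwise bounds is exactly Batty--Duyckaerts \cite{Batty01}: with $k$-fold smoothing, the low frequencies $|\Im\lambda|\le N$ gain $e^{-ct(1+N)^{-\ell}}$, the tail costs $N^{-(k-\ell-1)}$, and optimizing forces $N\simeq(t/\log t)^{1/\ell}$, so one obtains $O\bigl((\log t/t)^{1/\ell}\bigr)$ and never $t^{-1/\ell}$; the logarithm is intrinsic to this method, and Borichev--Tomilov showed it cannot be removed in Banach spaces, so no amount of rebalancing erases it. The known sharp Hilbert-space proofs do not deform the contour at all: they stay on vertical lines, write $t\,e^{tA}$ as the Fourier transform of $R(\lambda,A)^2=-\frac{d}{d\lambda}R(\lambda,A)$, split $R^2=R\cdot R$ so that Cauchy--Schwarz produces one Plancherel integral for the semigroup and one for its adjoint (the quantified Gearhart--Pr\"uss mechanism behind Theorem \ref{hp}), and then insert the polynomial resolvent bound into that $L^2$ framework by a careful frequency decomposition. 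That insertion is the whole theorem, and it is the step your sketch asserts rather than performs.

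Your converse \eqref{h2}$\Rightarrow$\eqref{h1} is also quantitatively lossy as described. Integrating $\|e^{tA}A^{-k}\|\lesssim t^{-k/\ell}$ over all of $(0,\infty)$ retains only $\sup_{s\in\mathbb{R}}\|R(is,A)A^{-k}\|<\infty$, and the identity $R(is,A)=(is)^kR(is,A)A^{-k}-\sum_{j=1}^{k}(is)^{j-1}A^{-j}$ then yields $\|R(is,A)\|=O(|s|^{k})$ with integer $k>\ell$, which is strictly weaker than \eqref{h1}. No moment or interpolation inequality can repair this, because the retained information is genuinely insufficient: the diagonal operator on $\ell^2$ with eigenvalues $-n^{-k}+in$ generates a contraction semigroup with $i\mathbb{R}\subset\rho(A)$, satisfies $\sup_{s}\|R(is,A)A^{-k}\|<\infty$, and yet $\|R(in,A)\|=n^{k}$. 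The correct argument keeps the finite-time information instead of integrating it away: from $\frac{d}{dt}\bigl(e^{-ist}\,e^{tA}R(is,A)x\bigr)=-e^{-ist}e^{tA}x$ one gets $R(is,A)x=\int_0^{\tau}e^{-ist}e^{tA}x\,dt+e^{-is\tau}e^{\tau A}R(is,A)x$, then bounds the last term by $\|e^{\tau A}A^{-1}\|\,\|isR(is,A)x-x\|\le C\tau^{-1/\ell}\bigl(|s|\,\|R(is,A)x\|+\|x\|\bigr)$ and chooses $\tau\simeq|s|^{\ell}$ so that half of $\|R(is,A)x\|$ is absorbed into the left-hand side, giving exactly $\|R(is,A)\|\lesssim|s|^{\ell}$. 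In short, you identified the right ingredients (smoothing by $A^{-1}$, Neumann series, Plancherel as the Hilbertian input), but in both directions the step that produces the sharp exponent is the one left unproved, and the specific combinations you propose demonstrably fall short of it.
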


\section{Conclusion}
We have studied the stabilization of five models of systems. We considered a Euler-Bernoulli beam equation and a wave equation coupled through boundary connections with a localized non-regular fractional Kelvin-Voigt damping that acts through the wave equation only. We proved the strong stability of the system using Arendt-Batty criteria. In addition, we established a polynomial energy decay rate of type $t^{\frac{-4}{2-\alpha}}$. Also, we considered two wave equations coupled via boundary connections with localized non-smooth fractional Kelvin-Voigt damping. We showed a polynomial energy decay rate of type $t^{\frac{-4}{2-\alpha}}$. Moreover, we studied the system of Euler-Bernoulli beam and wave equations coupled through boundary connections where the dissipation acts through the beam equation. We proved a polynomial energy decay rate of type $t^{\frac{-2}{3-\alpha}}$. In addition, we considered the Euler-Bernoulli beam alone with the same localized non-smooth damping. We established a polynomial energy decay rate of type $t^{\frac{-2}{1-\alpha}}$. In the last model, we studied the polynomial stability of a system of two Euler-Bernoulli beam equations coupled through boundary conditions with a localized non-regular fractional Kelvin-Voigt damping acting only on one of the two equations. We reached a polynomial energy decay rate of type $t^{\frac{-2}{3-\alpha}}$.
\section*{Acknowledgements }
\noindent Mohammad Akil would like to thank LAMA laboratory of Mathematics of the Université
Savoie Mont Blanc for their supports.\\

\noindent Ibtissam Issa would like to thank the Lebanese University for its funding and LAMA laboratory of Mathematics of the Université Savoie Mont Blanc for their support.\\

\noindent Ali Wehbe would like to thank the CNRS and the LAMA laboratory of Mathematics of the Université Savoie Mont Blanc for their supports.

\end{document}